\documentclass[11pt, reqno]{amsart} 
\usepackage{marvosym}
\usepackage{bm}
\usepackage{tikz, tikz-cd}
\usepackage{float}
\usetikzlibrary{arrows.meta,calc}

\newcommand{\verticalbond}{
  \begin{tikzpicture}[baseline=0.1ex, scale=1]
    \draw[thick] (-1ex, -0.2ex)--(-1ex, 0.8ex)--(0,0.8ex) -- (0,1.8ex);
    \draw[thick] (0, -0.2ex)--(0,0.8ex) -- (0,1.8ex);
    \fill (-1ex, -0.2ex) circle (0.2ex);
    \fill (-1ex, 0.8ex) circle (0.2ex);
    \fill (0,-0.2ex) circle (0.2ex);
    \fill (0,0.8ex) circle (0.2ex);
    \fill (0,1.8ex) circle (0.2ex);
  \end{tikzpicture}
}

\newcommand{\samedirNotUnique}{
  \begin{tikzpicture}[baseline=0.1ex, scale=1]

    \draw[thick] (-1ex, -0.2ex)--(-1ex, 0.8ex) -- (0, 0.8ex) -- (0, 1.8ex);
    \draw[thick] (-1ex, -1.2ex)--(-1ex, -0.2ex)--(0, -0.2ex)--(0, 0.8ex) -- (0, 1.8ex);
    
    \fill (0, 1.8ex) circle (0.2ex);  
    \fill (0, 0.8ex) circle (0.2ex);  
    \fill (-1ex, 0.8ex) circle (0.2ex);  
    \fill (-1ex, -0.2ex) circle (0.2ex);  
    \fill (0, -0.2ex) circle (0.2ex); 
    \fill (-1ex, -0.2ex) circle (0.2ex); 
    \fill (-1ex, -1.2ex) circle (0.2ex); 
   
  \end{tikzpicture}
}

\newcommand{\nocoal}{
  \begin{tikzpicture}[baseline=0.3ex, scale=1]
    \coordinate (origin) at (1.2ex, 1.8ex);

    \draw[thick] (0.2ex, 1.8ex) -- (0.2ex, 0.8ex) -- (-0.8ex, 0.8ex)--(-0.8ex, -0.2ex);

    \fill (0.2ex, 1.8ex) circle (0.2ex);
    \fill (0.2ex, 0.8ex) circle (0.2ex);
    \fill (-0.8ex, 0.8ex) circle (0.2ex);
    \fill (-0.8ex, -0.2ex) circle (0.2ex);
    
    \draw[thick] (origin) -- (1.2ex, 0.8ex) -- (1.2ex, -0.2ex)-- (0.2ex, -0.2ex);

    \fill (1.2ex, 0.8ex) circle (0.2ex);
    \fill (1.2ex, -0.2ex) circle (0.2ex);
    \fill (0.2ex, -0.2ex) circle (0.2ex);

    \fill (origin) circle (0.2ex);
  \end{tikzpicture}%
}

\newcommand{\nocoaltouch}{%
  \begin{tikzpicture}[baseline=0.3ex, scale=1]
    \coordinate (origin) at (1.2ex, 1.8ex);

    \draw[thick] (0.2ex, 1.8ex) -- (0.2ex, 0.8ex) -- (-0.8ex, 0.8ex)--(-0.8ex, -0.2ex);

    \fill (0.2ex, 1.8ex) circle (0.2ex);
    \fill (0.2ex, 0.8ex) circle (0.2ex);
    \fill (-0.8ex, 0.8ex) circle (0.2ex);
    \fill (-0.8ex, -0.2ex) circle (0.2ex);

    \draw[thick] (origin) -- (1.2ex, 0.8ex) -- (0.2ex, 0.8ex)-- (0.2ex, -0.2ex);

    \fill (1.2ex, 0.8ex) circle (0.2ex);
    \fill (0.2ex, 0.8ex) circle (0.2ex);
    \fill (0.2ex, -0.2ex) circle (0.2ex);

    \fill (origin) circle (0.2ex);
    
  \end{tikzpicture}%
}

\usepackage[T1]{fontenc}
\usepackage{lmodern}
\usepackage{mathtools, amsfonts, amssymb, mathrsfs, bm, stmaryrd}

\usepackage[utf8]{inputenc}
\usepackage[T1]{fontenc}
\usepackage{graphicx}
\usepackage{tikz}
\usetikzlibrary{shapes, arrows.meta, positioning, calc, backgrounds, lindenmayersystems, decorations.pathreplacing}
\usepackage{pgfplots, pgfplotstable}
\pgfplotsset{compat=1.17}

\usepackage{float}
\usepackage{svg}
\usepackage{wrapfig}
\usepackage{subcaption}
\usepackage{hyperref}
\usepackage{marginnote}

\pgfdeclarelindenmayersystem{cantor set}{
  \rule{F -> FfF}
  \rule{f -> fff}
}
\numberwithin{equation}{section}
\raggedright
\allowdisplaybreaks

\usepackage[letterpaper,hmargin=1.0in,vmargin=1.0in]{geometry}
\parskip 	\smallskipamount

\newcounter{dummy} \numberwithin{dummy}{section}
\newtheorem{corollary}[dummy]{Corollary}
\newtheorem{proposition}[dummy]{Proposition}
\newtheorem{theorem}[dummy]{Theorem}
\newtheorem{definition}[dummy]{Definition}
\newtheorem{lemma}[dummy]{Lemma}

\newtheorem*{remark}{Remark}

\newcommand{\N}{\ensuremath{\mathbb{N}}}
\newcommand{\R}{\ensuremath{\mathbb{R}}}
\newcommand{\Z}{\ensuremath{\mathbb{Z}}}
\newcommand{\Q}{\ensuremath{\mathbb{Q}}}

\newcommand{\PP}{\ensuremath{\mathbb{P}}}
\newcommand{\F}{\mathcal{F}}

\DeclareMathOperator{\supp}{supp}

\DeclareMathOperator{\diam}{diam}

\newcommand{\norm}[1]{\left\lVert#1\right\rVert}

 \newcommand\restrict[2]{% make the whole thing an ordinary symbol
   \left.\kern-\nulldelimiterspace % automatically resize the bar with \right
   #1% the function
   \littletaller % pretend it's a little taller at normal size
   \right|_{#2}%
   }
 \newcommand{\littletaller}{\mathchoice{\vphantom{\big|}}{}{}{}}

 \makeatletter
 \newcommand*{\scaleddelims}[3]{%
   \ensuremath{%
     \mathpalette{\@scaleddelims{#1}{#2}}{#3}%
   }%
 }   
 \newcommand*{\@scaleddelims}[4]{%
   \begingroup
     #3%
     \sbox0{$\m@th#3\vphantom{A}#4$}%
     \setbox2\vbox{\hbox{$\m@th#3#1$}\kern\z@}%
     \setbox4\vbox{\hbox{$\m@th#3#2$}\kern\z@}%
     \setbox6\hbox{$#3\vcenter{}$}%
     \ifx\downharpoonleft#1\relax  
       \let\DelimLeft=L%
     \else\ifx\upharpoonleft#1%
       \let\DelimLeft=L%
     \else\ifx\downharpoonright#1%
       \let\DelimLeft=R%
     \else\ifx\upharpoonright#1%
       \let\DelimLeft=R%
     \fi\fi\fi\fi
     \ifx\downharpoonleft#2\relax
       \let\DelimRight=L%
     \else\ifx\upharpoonleft#2\relax
       \let\DelimRight=L%
     \else\ifx\downharpoonright#2\relax
       \let\DelimRight=R%
     \else\ifx\upharpoonright#2\relax
       \let\DelimRight=R%
     \fi\fi\fi\fi
     \ifx\DelimLeft L%
       \wd2=.6\wd2
     \fi
     \ifx\DelimRight L%
       \wd4=.6\wd4
     \fi
     \ifx\DelimLeft R%
       \sbox2{\kern-.4\wd2\box2}%
     \fi
     \ifx\DelimRight R%
       \sbox4{\kern-.4\wd4\box4}%
     \fi
     \dimen0=\ht0 %
     \advance\dimen0 by -\ht6 %
     \dimen2=\dp0 %
     \advance\dimen2 by \ht6 %
     \ifdim\dimen2>\dimen0 %  
       \dimen0=\dimen2 %
     \else
       \dimen0=\dimen0 %
     \fi
     \dimen2=\ht6 %
     \advance\dimen2 by -\dimen0 %
     \dimen0=2\dimen0 %
     \def\DelimCorr{%  
       \mskip.5\thinmuskip
       \nonscript\mskip.5\thinmuskip
     }%
     \mathopen{%
       \ifx\DelimLeft R\DelimCorr\fi
       \raisebox{\dimen2}{\resizebox{!}{\dimen0}{\box2}}%
       \ifx\DelimLeft L\DelimCorr\fi
     }%
     \begingroup
       #3#4%
     \endgroup
     \mathclose{%
       \ifx\DelimRight R\DelimCorr\fi
       \raisebox{\dimen2}{\resizebox{!}{\dimen0}{\box4}}%
       \ifx\DelimRight L\DelimCorr\fi
     }%
   \endgroup
 }\makeatother

 \newcommand{\restr}[2]{#1\scaleddelims{\kern-0.5\nulldelimiterspace\upharpoonright}{\vphantom{.}}{_{#2}}}
\newcommand{\diff}{\ensuremath{\operatorname{d}\!}}

\begin{document}

\title{Geometry of semi-infinite geodesics in the Airy line ensemble}

\author{Pantelis Tassopoulos}
\address{Department of Pure Mathematics and Mathematical Statistics, 
University of Cambridge, Cambridge, United Kingdom}
\email{pkt28@cam.ac.uk}

\author{Sourav Sarkar}
\address{Department of Pure Mathematics and Mathematical Statistics, 
University of Cambridge, Cambridge, United Kingdom}
\email{ss2871@cam.ac.uk}

\subjclass[2010]{$82B23$, $82C22$ and $60H15$}%{$82C22$,  and  $60H15$.}
\date{}

\subjclass[2010]{$82B23$, $82C22$ and $60H15$}%{$82C22$,  and  $60H15$.}
\date{}

\begin{abstract}
We study the geometry of semi-infinite geodesics in the Airy line ensemble. 

We establish sharp estimates for disjointness probabilities involving geodesics with nearby directions and endpoints in the top line of the Airy line ensemble. Moreover, we obtain a power law upper bound on the coalescence depth of two semi-infinite geodesics with same asymptotic direction and establish exponential concentration estimates for geodesic jump times more generally.

We also establish pathwise H\"{o}lder $1/2^-$-continuity of Busemann profiles associated to semi-infinite geodesics with common asymptotic directions (in terms of the asymptotic direction). We characterize exceptional
behaviour of semi-infinite geodesics through their Busemann profiles and establish geometric properties of the associated sets of non-coalescence and non-uniqueness. 

Finally, we determine the Hausdorff dimensions of collections of semi-infinite geodesics: collections with a fixed top-line endpoint and arbitrary positive asymptotic direction almost surely have Hausdorff dimension $1$, while collections with a fixed positive direction and arbitrary endpoint almost surely have Hausdorff dimension $3/2$.
\end{abstract}

\maketitle

\begin{figure}[ht]
    \centering
    \includegraphics[width = 0.7\linewidth]{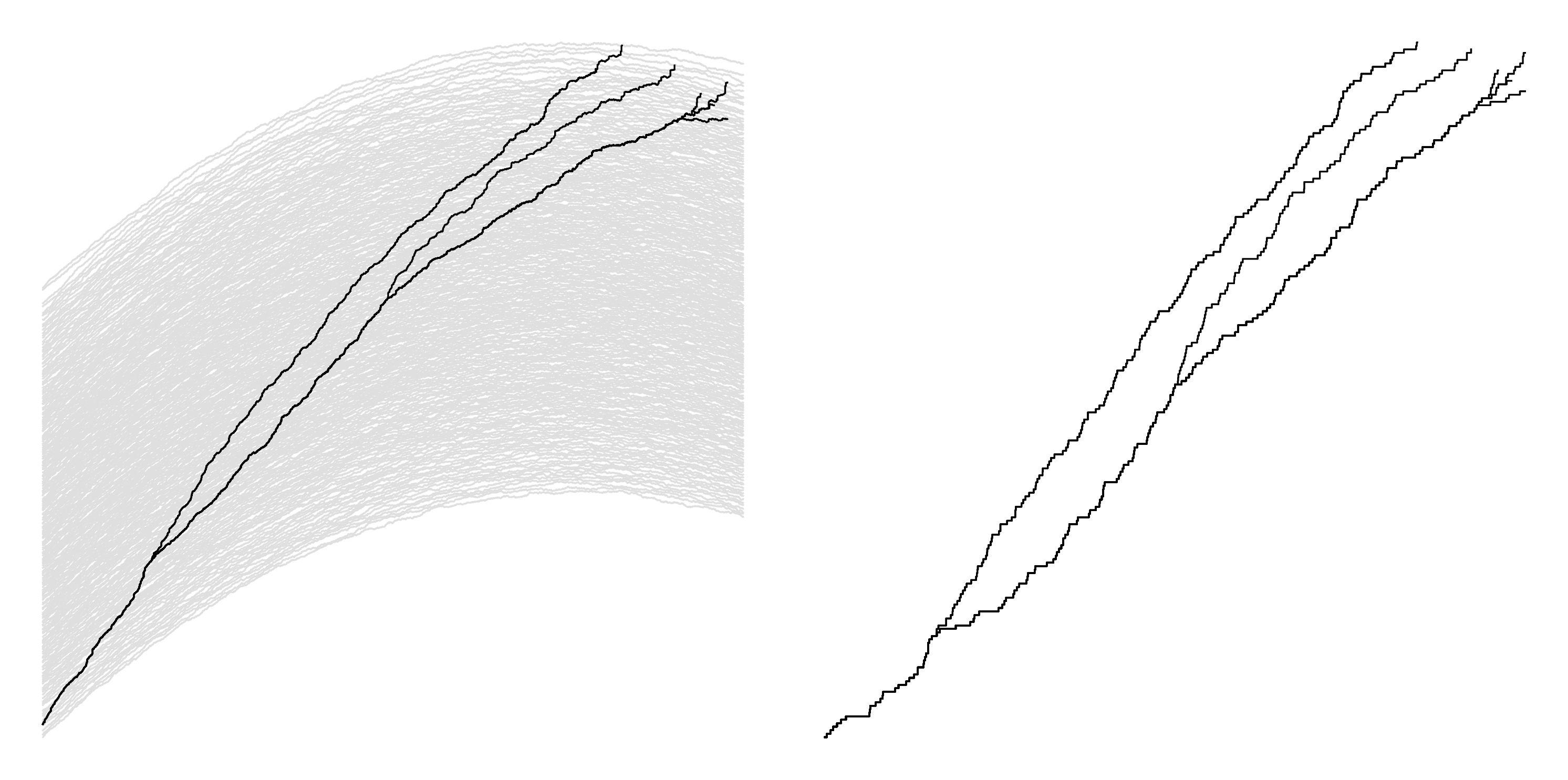}
    \caption{Restrictions of semi-infinite geodesics on the Airy line ensemble.  Plotted \textbf{Left:} on the Airy line ensemble itself and  \textbf{Right:} as integer-valued `zig-zags'.}
    \label{fig: geod coals airy}
\end{figure}

\tableofcontents

\section{Introduction}\label{sec: introduction}
In 1986, Kardar, Parisi and Zhang \cite{kardar1986dynamic} predicted that many planar random growth processes possess universal scaling behaviour. Central to the KPZ universality class is the Airy line ensemble,  \(\mathcal{A} \). It is a non-intersecting family of random continuous functions 
$\mathcal{A}_1 > \mathcal{A}_2 > \cdots$, supported on some Polish probability space $(\Omega, \mathcal{F}, \PP)$, where we assume $\F$ is complete with respect to null sets. 
It was introduced by Pr\"{a}hofer and Spohn \cite{prahofer2002scale} in the version \((\mathcal{A}^{\mathrm{stat}}_i)_{i\in \N}\coloneqq (\mathcal{A}_i(\cdot)+(\cdot)^2)_{i\in \N}\), which is stationary
in time, see also \cite{corwin2014brownian} and \cite{corwin2014ergodicity}. We will thus call it the \textbf{stationary Airy line ensemble}. The top line $\mathcal{A}_1$ is known as the parabolic $\text{Airy}_2$ process that appears as the
limiting spatial fluctuation of random growth models starting from a single point. The Airy sheet was constructed by Dauvergne, Ortmann and Vir\'ag \cite{DOV} as the scaling limit of one of the KPZ-class models, Brownian last passage percolation. It is a random continuous function on $\R^2$ and enjoys a rich fractal structure, see \cite{BatesGangulyHammond2022}, \cite{ganguly2021local} and \cite{ganguly2021random}. Moreover, it can be thought of as random `directed metric' on pairs of space-time points $(x, 0)$ and $(y, 1)$, $x, y \in \R$. Importantly, the Airy sheet $\mathcal{S}:\R^2\mapsto \R$ is a random continuous function and can defined in terms of the Airy line ensemble so that $\mathcal{S}(0,\cdot)=\mathcal{A}_1(\cdot)$.

Following the construction of the Airy sheet and its coupling to the Airy line ensemble in \cite{DOV}, a rich structure of coalescing semi-infinite paths in the Airy line ensemble was uncovered. These paths ended at the top line of the Airy line ensemble, have asymptotic parabolic shapes and were locally last passage geodesics. They were later called \emph{semi-infinite geodesics} in \cite{sarkar2021brownian} and \cite{dauvergne2022disjointoptimizersdirectedlandscape}, wherein their qualitative properties were studied. In particular, for a fixed asymptotic direction and endpoint, there exists an almost surely unique semi-infinite geodesic, and geodesics with a common fixed direction eventually coalesce.

The coalescing nature of semi-infinite paths with the same asymptotic shape meant certain `Busemann' limits (a notion originating in hyperbolic geometry, first appearing in the context of first-passage percolation in \cite{newman1995surface},\cite{hoffman2008geodesics}, and later in last-passage percolation in \cite{georgiou2017stationary}) of relative last passage values across the Airy line ensemble along the prelimiting paths are well-defined, thereby allowing for a consistent notation of `relative length' of two semi-infinite paths. Importantly, these limits appear as increments of the Airy sheet, also known as `Airy difference profiles', see for instance \cite{ganguly2021local}.

Informally, for $x> 0, y\in \R, \ell \ge 1$, an $x$-directed semi-infinite geodesic ending at $(y,\ell)$, where $x>0$, is a non-increasing path $\pi[x\to(y,\ell)]:(-\infty,y]\to\{\ell,\ell+1,\ldots\}$ whose finite restrictions are geodesics and has asymptotic shape
\[
\lim_{t\to-\infty}\frac{\pi[x\to(y,\ell)](t)}{t^2}=2x\,.
\]
We write $\pi[x\to(y,\ell)]$ when the path is unique. We can also define formally the Busemann limits
\[
\Delta^{(t, m)}_{(t', m')}(x):= 
    \displaystyle\lim_{k\to\infty}(\mathcal{A}[x_k\to (t',m')]-\mathcal{A}[x_k\to (t,m)])\,,\quad x>0, (t,m), (t', m')\in \R\times \N\,.
\]
By the preceding discussion, for $m, m' = 1$, we have by the coupling with the Airy sheet,
\[
\Delta^{(t, 1)}_{(t', 1)}(x) = \mathcal{S}(x, t) - \mathcal{S}(x, t')\,,\qquad t, t'\in \R, x> 0\,.
\]

In this paper, we construct semi-infinite geodesics with arbitrary endpoints in the Airy line ensemble, and study their geometry, coalescence structure and fractal properties. In particular, our main results establish sharp estimates on the disjointness probabilities of semi-infinite geodesics, the existence and pathwise regularity of Busemann profiles with arbitrary endpoints, give concentration bounds on coalescence depths and geodesic jump times, study exceptional events of semi-infinite geodesics and geometric properties of collections of semi-infinite paths. We now gives a brief description of the statements of the main results in the paper, in the above order.

\subsection{Main results}

\noindent\emph{Rarity of disjoint semi-infinite geodesics} We also obtain sharp estimates on disjointness when the directions
are close. For fixed $x>0$ and $y_1<y_2$, Theorem~\ref{thm: semi-inf tail disj proba} gives for any fixed $\eta \in (0, 1/2)$ and all $\varepsilon> 0$ sufficiently small depending on $\eta$,
\[
c\varepsilon^{\frac{1}{2}+\eta}\le \mathbb P\bigl(
\pi[x-\varepsilon\to(y_1,1)]
\text{ and }
\pi[x+\varepsilon\to(y_2,1)]
\text{ are disjoint}
\bigr)
\le
C\varepsilon^{1/2+o(1)}
\]
as $\varepsilon\to 0$. This estimate is used both to
study exceptional geodesic directions and to obtain the coalescence-depth
bound.

\vspace{0.1cm}
\noindent \emph{Regularity of Busemann profiles} In Theorem~\ref{thm: busemann cont} we establish simultaneously for
$s\in\mathbb R$ and $\ell\ge2$, that the Busemann profiles
\[
x\longmapsto\Delta^{(s,1)}_{(s,\ell)}(x)
\]
are locally H\"older continuous on $(0,\infty)$ with every exponent
$\alpha<1/2$.

Additionally, Theorem~\ref{thm: Busemann equality in law} gives on a suitable coupling between the AIry line ensemble $\mathcal{A}$ and Airy sheet $\mathcal{S}$,
\[
\Delta^{(0,1)}_{(0,\ell)}(x)+\mathcal S(x,0)-\mathcal A_\ell(0)
\overset d =  U_\ell(x),
\qquad x>0,
\]
 on paths $\mathcal{C}(0, \infty)$, where $U_\ell$ has the process law of $x\mapsto\mathcal A[(0,\ell)\to(x,1)]$. This relates Busemann profiles to finite-depth last
passage values across the Airy line ensemble and is used in the concentration and coalescence estimates for semi-infinite geodesic jump times.

\vspace{0.1cm}
\noindent \emph{Geodesic concentration bounds and coalescence estimates} For fixed $x>0$ and $\ell\ge2$, let $N_\ell(x)$ be the first depth
after which the geodesics $\pi[x\to(0,1)]$ and
$\pi[x\to(0,\ell)]$ coalesce. In
Theorem~\ref{thm: coal depth}, we prove that, for every $\eta>0$,
\[
\mathbb P(N_\ell(x)>n)
\le C_{x,\ell,\eta}\,n^{-1/56+\eta}\,.
\]
We also give a more explicit bound retaining the dependence
on $\ell$. 

Next, we obtain one-sided concentration estimates for semi-infinite geodesic jump times. In particular, for $k\ge 2$, with $Z_k(x,y)$ the jump time of $\pi[x\to(y, 1)]$ from level $k$ to level $k-1$, Theorem~\ref{thm: exponential intercept tails} gives universal constants $c,C>0$ such that
\[
\mathbb P(Z_k(x,y)>y-r)
\le
\min\{1,C\exp(Cx^3+Cr^3-ck)\},
\qquad x,r>0,\quad k\ge2.
\]
These estimates controls the likelihood of `large deviations' of semi-infinite geodesics near their endpoint.

\vspace{0.1cm}
\noindent \emph{Exceptional behaviour of semi-infinite geodesics} In Proposition \ref{prop: local variation geod disj} we show that the points of non-constancy of Busemann profiles encode disjointness of semi-infinite geodesics. We use this relation to identify exceptional
sets of geodesic directions and endpoints, where uniqueness or coalescence may fail. In particular, for any fixed $y_1<y_2$ and $\ell\ge1$, Corollary~\ref{corollary: geod two end points disjoint} gives
\[
\dim_{\mathscr H}
\left\{
x>0:
\pi^L[x\to(y_1,1)]
\text{ and }
\pi^R[x\to(y_2,\ell)]
\text{ are disjoint}
\right\}
=\frac12
\quad\text{almost surely}\,,
\]
where $\dim_{\mathscr H}$ denotes the Euclidean Hausdorff dimension and the superscripts $L, R$ denote leftmost and rightmost geodesics, see Lemma \ref{lemma: semi-inf geod properties}. Theorem~\ref{thm: geod no coal h dim} also shows that the set of
directions admitting two non-coalescing paths with the same asymptotic direction has Hausdorff dimension $1/2$. Moreover, the subset where such paths intersect without coalescing is at most countable.

Non-uniqueness has a different description on fixed slices. For a
fixed top-line endpoint, the exceptional directions form a dense
countably infinite set
(Proposition~\ref{prop: multiple geod fixed endpt}). For a fixed
direction, the horizontal coordinates admitting multiple paths
to some terminal level form an at most countable set
(Theorem~\ref{thm: fixed dir coal}). We further relate joint
direction--endpoint exceptional geometry to the support of the
measure defined by rectangular increments of the Airy sheet
(Theorem~\ref{thm: airy sheet monotone haus dim} and
Corollary~\ref{cor: geod disj same endpt}).

\vspace{0.1cm}
\noindent \emph{Geometry of collections of geodesics} Finally, we study geometric aspects of collections of semi-infinite geodesics. The collection of semi-infinite geodesics ending at a fixed top-line point in the Airy line ensemble forms a tree almost surely and has Hausdorff dimension $1$
(Theorem~\ref{thm: geodesic tree}). By contrast, the collection with
a fixed positive direction and varying top-line endpoint has
Hausdorff dimension $3/2$. However, it can be expressed as the union of two trees (Theorem~\ref{thm: geodesic gasket}). Excluding certain trivial extremal geodesics, the full network of geodesics with finite positive directions also has Hausdorff dimension
$3/2$ (Theorem~\ref{thm: geodesic network haus dim}). We also establish measurability of
the embedded downward trees certain ergodic properties for translates thereof.

\subsection{Related work}
Aside from the Airy line ensemble, for which our work is the first to provide quantitative control on the coalescence of geodesics, one can consider such coalescence in other geodesic environments; it is a well-studied and significant problem in probability. They have been studied in various models of the KPZ universality class: the prelimiting models of the last passage percolation \cite{Pimentel2016}, \cite{basu2018coalescencegeodesicsexactlysolvable}, \cite{SeppalainenShen2020}, \cite{Zhang2020}, \cite{BusaniFerrari2022}, in polymer models \cite{RassoulAghaSeppalainenShen2023}, \cite{GangulyHegdeZhang2023} and in the related first passage percolation models \cite{Newman1995}, \cite{DembinElboimPeled2024} and in first passage percolations in higher dimensions \cite{Alexander2023}. Geodesic networks are widely investigated in other random surface models, like the Brownian map \cite{MillerQian2023}, \cite{AngelKolesnikMiermont2017}, \cite{LeGall2010} and the Liouville quantum gravity \cite{Gwynne2021}, that arise as scaling limits of random planar maps. 

Airy sheet difference profiles were studied in \cite{Basu2021Fractal}, where the authors established that their sets of points of non-constancy on the real line have Hausdorff dimension one half almost surely. Moreover, in \cite{ganguly2021local} the authors relate these profiles to Brownian local time and Busemann limits of Airy line ensemble last passage values with arbitrary endpoint. Their treatment gives a pointwise construction and does not consider any pathwise regularity of these profiles with respect to the geodesic asymptotic direction. Our regularity results precisely address this, obtaining H\"{o}lder $1/2^-$-modifications for the Busemann profiles. Furthermore, by Theorem \ref{thm: Busemann equality in law} and the recently discovered coupling between the Airy sheet and the Airy line ensemble in \cite{dauvergne2026downthemiddle}, we anticipate connections to the study of disjoint optimizers in \cite{DOV,dauvergne2022disjointoptimizersdirectedlandscape}.

Exceptional geometry in the directed landscape is closely related
to the geometry studied here. In particular,
\cite{BatesGangulyHammond2022} investigates Hausdorff dimensions of
sets of endpoint pairs admitting disjoint directed geodesics, expressing these exceptional sets in terms of the Airy sheet. Using the coupling between the Airy line ensemble, we give a different description of these exceptional sets in terms of exceptional behaviour of semi-infinite paths in the Airy line ensemble. 

\subsection{Organisation}
Section~\ref{sec: notation} establishes notation used throughout, and
Section~\ref{sec: preliminaries} collects key last-passage,
Brownian-melon and Airy-line estimates used throughout the paper.
Section~\ref{sec: semi-inf geod} establishes existence and qualitative properties
of semi-infinite geodesics.
Then in Section~\ref{sec: geod geom}, we establish sharp disjointness estimates for semi-infinite geodesics.
Section~\ref{sec: Busemann} treats the formally defined Busemann profiles, establishing their existence, regularity and relate their laws to finite last passage percolation across the Airy line ensemble.
Section~\ref{sec: quantitative geodesics} proves concentration for geodesic jump times
and upper tail estimates for coalescence-depths of semi-infinite geodesics.
Section~\ref{sec: exceptional geod geom} describes exceptional
directions and endpoints, and Section~\ref{sec: geod collections}
studies geometric properties of semi-infinite geodesics. The appendix,
(Section~\ref{sec: polish meas}) provides the measure-theoretic details allowing us to treat embedded, downward geodesic collections as proper random objects.

\section{Notation}\label{sec: notation}

We introduce some notation and conventions we will be using throughout.

When in some estimates a constant appears that will depend on some parameters $a,b,c,\cdots$, it will be denoted by $C_{a,b,c,\cdots}$, unless otherwise specified. Constants without subscripts are deemed to be universal. Additionally, for ease of notation, such constants are allowed to change from line to line. Moreover, for ease of notation such constants may be dropped and instead replaced with the symbols $\lesssim_{a,b,c,\cdots}(\equiv O_{a,b,c,\cdots}(\cdot))$ and $\gtrsim_{a,b,c,\cdots}$ for some parameters $a,b,c,\cdots$ which stand for $\le C_{a,b,c,\cdots}$ and $\ge C'_{a,b,c,\cdots}$ for some positive constants $C_{a,b,c,\cdots}, C'_{a,b,c,\cdots}$ respectively. We will also enlarge or reduce constants as needed, without necessarily relabelling them.

We take the set of natural numbers $\N$ to be $\{1,2, \ldots \}$.

We now turn to some notational conventions for the path spaces that will be used throughout. For general domains of paths $J$, we denote the space of continuous paths in $\R^n$ $n\ge 1$ in the usual local uniform topologies, by $\mathcal{C}^n J$. For random functions taking values in these spaces, we will always endow them with their respective Borel $\sigma$-algebras generated by the above topologies. For $a<b$, $k \in \N$ and $\underline{x},\underline{y}\in \R^k_{>}$, let $\mathcal{C}^k_{\underline{x}, \underline{y}}[a,b]$ denote the space 
\[
\Big\{g \in \mathcal{C}^k[a,b]: \forall 1\le i \le k,\,  g_{i}(a) = x_i \; \text{and}\; g_{i}(b) = y_i\Big\}\,.
\] 

We say that a Brownian motion or a Brownian bridge has \textit{rate $v$} (or \emph{diffusivity} $v$ if its quadratic variation in an interval $[s,t]$ is equal to $v(t-s)$. We say that a Dyson's Brownian motion or a Brownian $k$-melon has rate $v$ if the component Brownian motions have rate $v$. From now on, all Brownian motions are rate two unless stated otherwise. 

Finally, we will always on probability spaces where the underlying probability measure has been completed with respect to null sets.

\section{Preliminaries}\label{sec: preliminaries}
In this section, we will recall some basic definitions that appear in the KPZ universality class, namely, last passage percolation, the Pitman transform and melons; and collect some basic results that will be useful later on including some elementary estimates involving Brownian bridge, Radon-Nikodym derivatives (against Brownian motion) estimates and properties of the Airy line ensemble. We start with the central probabilistic object of study, namely random line ensembles.

The following definition makes precise the notion of a \textit{random line ensemble}. It is a random variable taking values in an indexed (at most countably infinite) family of continuous paths defined on a common subset of $\R$. 

\begin{definition}[Random ensemble]\label{def: random ensemble}
    Let $\Sigma$ be a (possibly infinite) interval of $\Z$, and let $\Lambda$ be an interval of $\R$. Consider the set \(X:=\mathcal{C}^\Sigma\) of continuous functions $f:\Sigma\times \Lambda \rightarrow \R$. We endow it with the topology of uniform convergence on compact subsets of $\Sigma\times\Lambda$. Let $\mathcal{C}$ denote the sigma-field  generated by Borel sets in $X$.

    A {\it $\Sigma$-indexed line ensemble} $L$ is a random variable defined on a probability space $(\Omega,\mathfrak{B},\PP)$, taking values in $X$ such that $L$ is a $(\mathfrak{B},\mathcal{C})$-measurable function. Furthermore, we write $L_i:=(L(\omega))(i,\cdot)$ for the line indexed by $i\in\Sigma$. 
\end{definition}% Random Ensemble

\subsection{Last passage percolation} We begin with the collection of some preliminary facts regarding last passage percolation (sometimes abbreviated as LPP in the paper) over ensembles of functions following \cite{DOV}.

Formally, let \(I\subset \mathbb{Z}\) be a possibly finite index set and define the space \(\mathcal{C}^{I}\) of sequences of continuous functions with real domains, that is, the space
\[f: \mathbb{R}\times I\to \mathbb{R}\quad (x,i)\mapsto f_i(x).\]
Let \(x\le y \in \mathbb{R}\), and \(m\le \ell \in \mathbb{Z}\) respectively. A \emph{path} from \((x,\ell)\) to \((y,m)\) is a non-increasing  function \(\pi: [x,y] \to I\) which is cadlag on \([x,y)\) and takes the values \(\pi(x) \le \ell\) and \(\pi(y)= m\). We adopt the convention that $\lim_{z\nearrow x}\pi(z) = \ell$ and $\lim_{z\searrow y}\pi(z) = m$. The above convention can be thought of as extending the path $\pi$ in a cadlag way from its original domain $[x, y]$ to the entire real line, starting at $\ell$ level before $x$ and ending at $m$ after $y$.

Equivalently, associated to a path $
\pi$ as above, is a unique \emph{zig-zag}, 
\begin{equation}\label{eq: zig-zag}
    \mathsf{ZigZag}(\pi) := \{(t, z)\in [x,y]\times \R: \pi(t)\le z\le \lim_{s\nearrow t}\pi(s)]\}\,.
\end{equation}
Informally, $\mathsf{ZigZag}(\pi)$ is obtained from the graph of $\pi$ by filling in its gaps during jumps.

We now define two notions of disjointness between geodesics in last passage percolation.

\begin{definition}[Disjointness]\label{def: disjointness}
    Fix $x_1\le x_2$, $x'_1\le x'_2$ and $\ell \ge m$. Then, we say the paths $\pi_1$ and $\pi_2$ from $(x_1, \ell)$ to $(x_2, m)$ and $(x'_1, \ell)$ to $(x'_2, m)$, respectively are
    \begin{itemize}
        \item \emph{essentially disjoint} if the intersection of zigzags $\mathsf{ZigZag}(\pi_1)$ and $\mathsf{ZigZag}(\pi_2)$ contains no horizontal line segment of positive length,
        \item \emph{disjoint} if the zigzags $\mathsf{ZigZag}(\pi_1)$ and $\mathsf{ZigZag}(\pi_2)$ have empty intersection.
    \end{itemize}
    Note that being disjoint implies being essentially disjoint, as the latter is equivalent to the zig-zags $\mathsf{ZigZag}(\pi_1)$ and $\mathsf{ZigZag}(\pi_2)$ sharing no interior of any of their horizontal segments.
\end{definition}

In what is to follow, since we will primarily be considering the Airy line ensemble (see Section \ref{subsec: Airy line ensemble} for a definition), we will take the indexing set to be \(I = \mathbb{N}\). We now define an important quantity associated to each such path, namely, its \textit{length} as the sum of increments of \(f\) along \(\pi\). This also leads one to naturally define a derived quantity, namely the \textit{last passage value}.

\begin{definition}[Length]\label{def: length} Let \(x\le y \in \mathbb{R}\) and \(m < k\in\mathbb{Z}\). For each \(m\le i \le k-1\), let \(t_{k-i}\) denote the jump of the path \(\pi\), on an ensemble \((f_i)_{i\in I}\), from \(f_{i+1}\) to \(f_{i}\). Then the length of \(\pi\) is defined as
\begin{equation}\label{eq: last passage}
\ell(\pi) = f_m(y)-f_m(t_{k-m}) + \displaystyle\sum_{i = 1}^{k-m-1}(f_{k-i}(t_{i+1})-f_{k-i}(t_{i}))+f_{k}(t_{1})-f_{k}(x).
\end{equation}
\end{definition}%Length definition
With \(x\le y, m<k\) as before and \(f\in \mathcal{C}^{I}\), define the \textbf{last passage value} of \(f\) from \((x,k)\) to \((y,m)\) as
    \[
    f[(x,k)\to(y,m)] \coloneqq \displaystyle \sup_{\pi}\ell(\pi)\,,
    \]
where the supremum is over precisely the paths \(\pi\) from \((x,k)\) to \((y,m)\).
If $m=k$, we set
\[
    f[(x,k)\to(y,k)] \coloneqq f_k(y)-f_k(x)\,,
\]
Any path \(\pi\) from \((x,k)\) to \((y,m)\) such that its length is equal to its last passage value is called a \textbf{geodesic}, and can be formally identified with its corresponding zig-zag, $\mathsf{ZigZag}(\pi)$. Note the restriction of any geodesic  \(\pi\) from \((x,k)\) to \((y,m)\) to a subinterval $[z, w] \subseteq [x, y]$ is again a geodesic (after possibly readjusting $\pi$ outside $z, w]$, in line with the conventions above). We may refer to this as the \emph{restriction property} of geodesics. The existence of geodesics follows from a deterministic compactness argument.

First we define a path space for geodesics with shared start and end points, say $(x, \ell)$ and $(y, m)$ for $x\le y$ and $\ell \ge m$, which we take as fixed in what is to follow. Each zig-zag induced by some geodesic is a closed subset of $\R\times [1, \infty)$. Consider the set $\mathcal{Z}$ of `zig-zags' of admissible paths $\pi: [x, y] \to \N$ starting from $(x, \ell)$ and ending at $(y, m)$, as defined above.

By the \emph{topology of jump times} we mean that a family of zig-zags $(\mathsf{ZigZag}_m)_m$ converges to a zig-zag $\mathsf{ZigZag}$ if and only if the jump times of $\mathsf{ZigZag}_m$ converge to the jump times of $\mathsf{ZigZag}$.

Since every zig-zag is determined by the path's finite collection of jump times in the interval $[x, y]$, the topology of jump times on the collection of zig-zags is generated by a complete metric, and is \emph{compact}.

One can now proceed by noticing that the length of a path \(\ell(\pi)\), can be viewed as a continuous map from \(\mathcal{Z}\) to $\R$. A compactness argument now implies that the supremum over admissible paths is indeed attained.

Last passage percolation enjoys the following \textbf{metric composition law}, Lemma 3.2 in DOV \cite{DOV}.

\begin{lemma}[Metric composition law]\label{lemma: Metric Composition}
    Let \(x\le y \in \mathbb{R}\), \(m < \ell\in\mathbb{Z}\) and \(f\in \mathcal{C}^I\). If \(k\in \{m, \dots, \ell\}\), then we have
    \[
    f[(x,\ell)\to(y,m)] = \displaystyle \sup_{z\in[x,y]}(f[(x,\ell)\to(z,k)]+f[(z,k)\to(y,m)])\,,
    \]
    and if \(k\in \{m+1, \dots, \ell\}\), then 
    \[
    f[(x,\ell)\to(y,m)] = \displaystyle \sup_{z\in[x,y]}(f[(x,\ell)\to(z,k)]+f[(z,k-1)\to(y,m)]).
    \]
    Furthermore for any \(z\in [x,y]\), 
    \begin{equation}\label{eq: composition}
    f[(x,\ell)\to(y,m)] = \displaystyle \sup_{k\in \{m, \dots, \ell\}}(f[(x,\ell)\to(z,k)]+f[(z,k)\to(y,m)])
    \end{equation}
\end{lemma}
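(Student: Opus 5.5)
The plan is to prove all three identities the same way. First I show that the right-hand side is at most the left-hand side by concatenating two paths. Then I show the reverse inequality by splitting a geodesic, which exists by the compactness argument above. Throughout, the length $\ell(\pi)$ of Definition~\ref{def: length} is a telescoping sum of increments of $f$ along the zig-zag. So if $\pi$ is a path from $(x,\ell)$ to $(y,m)$ and $z\in[x,y]$, then
\[
\ell(\pi)=\ell\bigl(\pi|_{[x,z]}\bigr)+\ell\bigl(\pi|_{[z,y]}\bigr),
\]
where the restrictions are read with the cadlag conventions: the first piece ends at level $\lim_{s\nearrow z}\pi(s)$ or $\pi(z)$, depending on the identity, and the second piece starts there.

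\textbf{First identity, $k\in\{m,\dots,\ell\}$.} For the lower bound, take any $z\in[x,y]$, a path $\pi_1$ from $(x,\ell)$ to $(z,k)$ and a path $\pi_2$ from $(z,k)$ to $(y,m)$. Their concatenation is non-increasing, cadlag, and runs from $(x,\ell)$ to $(y,m)$. Its length equals $\ell(\pi_1)+\ell(\pi_2)$, because the $f_k$-increments on $[\cdot,z]$ and $[z,\cdot]$ add up. Taking suprema gives $\ge$ in the left-hand side. For the upper bound, take a geodesic $\pi$ from $(x,\ell)$ to $(y,m)$. Set $z=\inf\{t:\pi(t)\le k\}$, with the convention that $z=y$ if $k=m$. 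Splitting $\pi$ at $z$ gives a path to $(z,k)$ and a path from $(z,k)$, because $\pi$ occupies level $k$ at $z$ or jumps through it there. Each piece has length at most the corresponding last passage value, which gives $\le$.

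\textbf{Second identity, $k\in\{m+1,\dots,\ell\}$.} The argument is the same, except that the concatenation now jumps from level $k$ to level $k-1$ at time $z$, and the length formula counts no increment across the jump. For the upper bound, choose $z$ to be the jump time of $\pi$ from at least level $k$ to at most level $k-1$. Splitting there bounds $\ell(\pi)$ by $f[(x,\ell)\to(z,k)]+f[(z,k-1)\to(y,m)]$. This uses the monotonicity fact that a path arriving at $z$ at a level above $k$ can be viewed as reaching $(z,k)$ by jumping down at time $z$, which adds zero length.

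\textbf{Third identity, $z$ fixed, supremum over $k$.} The lower bound is again concatenation at the fixed time $z$. For the upper bound, take a geodesic $\pi$ and let $k=\pi(z)$. Its restriction to $[x,z]$ ends at $(z,k)$, possibly after jumping down at $z$, which is allowed by the convention on paths. Its restriction to $[z,y]$ starts at $(z,k)$, so the same length decomposition applies.

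The only delicate point is the bookkeeping at the split time. When $\pi$ jumps several levels exactly at $z$, one has to check that the convention $\pi(z)\le\ell$ and the cadlag convention allow the pieces to be admissible paths ending or starting at the required level, and that no increment is lost or double-counted. I expect this to be routine once the formula for $\ell(\pi)$ is written out with the jump times.
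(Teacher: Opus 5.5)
Your proof is correct. Concatenation gives the lower bounds, and splitting a path gives the upper bounds: at the entry time into level $k$, at the jump from $k$ to $k-1$, or at the level $\pi(z)$ occupied at the fixed time $z$. This is the standard verification from the jump-time description of paths, and the paper does not reprove it but simply cites \cite[Lemma 3.2]{DOV}, where it follows directly from the definitions in this way. You also do not need a geodesic for the upper bounds: splitting an arbitrary path $\pi$ and then taking the supremum over $\pi$ gives the same conclusion.
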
%Metri composition Law

We end with a preliminary result on the locations of geodesics in last passage percolation, that is an immediate consequence of the metric composition law, Lemma \ref{lemma: Metric Composition}.

\begin{lemma}\label{lemma: geod location brown env lpp}
    Fix $2\le \ell \le n$, $a< b\in \R$ and some $f\in\mathcal{C}^n([a, b])$. Then for a last passage geodesic $\pi$ from $(a, n)$ to $(b, 1)$, the jump times $Z_\ell(a, b)$ from level $\ell$ to level $\ell-1$ satisfy 
    \begin{equation}\label{eq: geod jump argmax brownian abs env lpp}
        Z_\ell(a, b) \in \operatorname*{argmax}_{s\in [a, b]}(f[(a, n)\to (s, \ell)]+f[(s, \ell-1)\to (b,1)])\,.
    \end{equation}
\end{lemma}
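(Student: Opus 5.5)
The plan is to split the geodesic $\pi$ at its jump from level $\ell$ to level $\ell-1$, and compare the two resulting pieces with the two last passage values appearing in \eqref{eq: geod jump argmax brownian abs env lpp}, using the second identity of Lemma~\ref{lemma: Metric Composition} with $k=\ell$.

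Write $z := Z_\ell(a,b)$. On $[a,z)$ the path $\pi$ takes values in $\{\ell,\dots,n\}$, with $\pi(z^-)\ge \ell$; on $[z,b]$ it takes values in $\{1,\dots,\ell-1\}$. Because of the cadlag convention, $\pi(z)\le \ell-1$, and $\pi$ may skip levels at $z$ exactly as it may at any jump time. By Definition~\ref{def: length}, the length splits additively at $z$:
\[
\ell(\pi)=\ell(\pi_1)+\ell(\pi_2).
\]
Here $\pi_1$ is the restriction of $\pi$ to $[a,z]$, viewed as a path from $(a,n)$ to $(z,\ell)$ by letting it sit at level $\ell$ at time $z$. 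The piece $\pi_2$ is the restriction to $[z,b]$, viewed as a path from $(z,\ell-1)$ to $(b,1)$, with its first jumps occurring at time $z$ if $\pi(z)<\ell-1$. The main thing to check is that the increment terms of \eqref{eq: last passage} attached to levels $\ell$ and $\ell-1$ are assigned correctly. The level-$\ell$ segment ends at $z$ and goes entirely into $\pi_1$. The level-$(\ell-1)$ segment begins at $z$ and goes into $\pi_2$. Any levels skipped at time $z$ contribute zero-length segments, so nothing is lost or double counted. This bookkeeping is the only genuinely delicate step.

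It then follows directly that
\[
\ell(\pi)\le f[(a,n)\to(z,\ell)]+f[(z,\ell-1)\to(b,1)].
\]
The right-hand side is at most $\sup_{s\in[a,b]}\bigl(f[(a,n)\to(s,\ell)]+f[(s,\ell-1)\to(b,1)]\bigr)$. By the second identity in Lemma~\ref{lemma: Metric Composition} with $k=\ell\in\{2,\dots,n\}$, this supremum equals $f[(a,n)\to(b,1)]$. Since $\pi$ is a geodesic, $\ell(\pi)=f[(a,n)\to(b,1)]$, so every inequality in the chain is an equality. In particular $z$ attains the supremum, which is \eqref{eq: geod jump argmax brownian abs env lpp}.

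For the degenerate cases: if $z=a$ or $z=b$, the same argument applies, using the convention $f[(x,k)\to(x,k)]=0$ together with the boundary conventions for paths. The supremum is attained because both last passage values are continuous in $s$.
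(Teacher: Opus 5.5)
Your argument is correct and is the reasoning the paper intends: the paper gives no written proof and only says the lemma is an immediate consequence of the metric composition law (Lemma~\ref{lemma: Metric Composition}). Splitting $\ell(\pi)$ additively at $Z_\ell(a,b)$ and then applying the second identity there with $k=\ell$ spells out exactly that consequence. In fact only its easy concatenation direction, $\sup_s(\cdots)\le f[(a,n)\to(b,1)]$, is needed to close your chain of inequalities.
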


\begin{remark}
    For fixed $a< b$, if the environment $f\in\mathcal{C}^n[a, b]$ is random and $f-f(a)$ is absolutely continuous with respect to $n$ independent Brownian motions on $[a, b]$, then by \cite[Lemma B.1]{hammond2019patchwork} (also Lemmas \cite[2.15, 2.16]{dauvergne2022disjointoptimizersdirectedlandscape}), there is a unique geodesic almost surely, that is the left-most and right-most geodesics coincide almost surely. For instance, the above is true for the Airy line ensemble and Dyson Brownian motion (for Dyson Brownian motion, this claim is only true for intervals away from the origin, that is $a> 0$), discussed in Sections \ref{subsec: melons} and \ref{subsec: Airy line ensemble}. Note Brownian $n$-melons and $n$-level Dyson Brownian motion are equal in law for all $n\ge 1$. For Dyson Brownian motion, this is essentially due to the Brownian Gibbs property, see Definition 2.2 in \cite{corwin2014brownian} and Lemmas 2.15-2.16 in \cite{dauvergne2022disjointoptimizersdirectedlandscape}. 
    
    Under this supposed local absolute continuity with respect to Brownian motion, one obtains almost surely for all rational $q< r$ in $[a, b]$ and $m \le k$, the existence of unique geodesics in $f$ with endpoints $(q, k)$ and $(r, m)$. Henceforth, we will tacitly assume in such environments, the existence of unique geodesics starting and ending at countably many endpoints.
\end{remark}

Geodesics in last passage percolation exhibit a `tree-like' structure. This is the content of the following lemma.

\begin{lemma}\label{lemma: geod tree}
    Let $f:\N\times\R\to\R$ be any sequence of random continuous functions such that when restricted to $\{1, \ldots, n\}\times [a, b]$ for $a< b$, $f-f(a)$ is absolutely continuous with respect to independent Brownian motions. Then, for any fixed $x< y< z \in \R$ and $m\le n$, almost surely the set of overlap of geodesics in the environment $f$ starting from $(x, n), (y,n)$ and ending at $(z, m)$
    \begin{align*}
        I:= \bigg\{ s\in [y, z]: [&\pi[(x, n)\to(z,m)](s), \lim_{r\to s^-}\pi[(x, n)\to(z,m)](r)]\\
        \cap  [&\pi[(y, n)\to(z,m)](s), \lim_{r\to s^-}\pi[(y, n)\to(z,m)](r)]\neq \emptyset\bigg\}
    \end{align*}
    is a non-empty interval (with a possibly empty interior).
\end{lemma}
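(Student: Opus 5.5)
Write $\pi_1=\pi[(x,n)\to(z,m)]$ and $\pi_2=\pi[(y,n)\to(z,m)]$; both are almost surely unique by the preceding remark. The set $I$ is non-empty because both geodesics end at $(z,m)$: the vertical segments of the two zig-zags at time $z$ both contain the level $m$, so $z\in I$. The content of the lemma is therefore the interval property. Since $I\subseteq[y,z]$, it suffices to show the following: if $s_1<s_2$ both lie in $I$, then every $s\in(s_1,s_2)$ lies in $I$.

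The tool is the standard crossing/exchange argument, used together with uniqueness of geodesics between the relevant endpoints. Suppose $s_1\in I$, so the two zig-zags share a point $(s_1,k_1)$. Likewise $s_2\in I$ gives a common point $(s_2,k_2)$ with $k_2\le k_1$. By the restriction property, both $\pi_1$ and $\pi_2$ restricted to $[s_1,s_2]$ are geodesics from $(s_1,k_1)$ to $(s_2,k_2)$. Here one must be careful with the convention when the shared point sits in the middle of a vertical jump segment: in that case the restriction is a path starting at level $k_1$ at time $s_1$, and it is still a geodesic by the metric composition law, Lemma \ref{lemma: Metric Composition}.

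If geodesics between these two points are unique, the restrictions coincide on $(s_1,s_2)$, so every intermediate $s$ lies in $I$. The only issue is that $(s_1,k_1)$ and $(s_2,k_2)$ are random points, while uniqueness is only guaranteed almost surely for countably many fixed endpoints.

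That random-endpoint issue is the main obstacle, and I would avoid it by a purely deterministic ordering argument. Geodesics with a common endpoint $(z,m)$ and ordered starting points can be chosen ordered: $\pi_1\le\pi_2$ pointwise on $[y,z]$. This is the usual fact that if two geodesics cross, swapping their pieces at the crossing gives geodesics again, which is justified by Lemma \ref{lemma: Metric Composition}. By almost sure uniqueness at the fixed endpoints $(x,n),(y,n),(z,m)$, the swapped paths equal the originals, so $\pi_1\le\pi_2$.

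Now take $s\in(s_1,s_2)$ and suppose $s\notin I$. Then strict separation holds on a neighbourhood: $\lim_{r\to s^-}\pi_1(r)<\pi_2(s)$. Define $\tilde\pi$ to follow $\pi_2$ on $[y,s_1]$, then $\pi_1$ on $[s_1,s_2]$, then $\pi_2$ on $[s_2,z]$. This is an admissible path because the zig-zags meet at times $s_1$ and $s_2$. Its length equals that of $\pi_2$, because the two restrictions on $[s_1,s_2]$ are both geodesics between the common points and so have equal length. Thus $\tilde\pi$ is a geodesic from the fixed point $(y,n)$ to $(z,m)$, and it differs from $\pi_2$ at time $s$, contradicting almost sure uniqueness of $\pi[(y,n)\to(z,m)]$.

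Hence $I\supseteq[s_1,s_2]$, and $I$ is an interval. It may be degenerate, for example $\{z\}$, or a single time at which only the vertical segments touch. The care needed is in the bookkeeping of levels at jump times. When splicing, one should check that a shared point on a vertical segment allows concatenation with the correct level convention. This works because on the common vertical segment one can jump at the same time at the level $k_1$.
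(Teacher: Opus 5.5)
Your proof is correct and uses the same switching (``geodesic bubble'') argument as the paper. You add one piece of care the paper leaves implicit: you splice the bubble into the fixed-endpoint geodesic $\pi[(y,n)\to(z,m)]$, so only almost sure uniqueness at deterministic endpoints is used, and you correctly note $z\in I$ where the paper has the typo $y$. The ordering step $\pi_1\le\pi_2$ is not needed, since $s\notin I$ already forces $\pi_1(s)\neq\pi_2(s)$ and hence $\tilde\pi\neq\pi_2$.
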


\begin{proof}
    Since the paths $\pi_1: = \pi[(x, n)\to(z,m)]$, $\pi_2:= \pi[(y, n)\to(z,m)]$ have the same endpoint, the interval $I$ contains $y$, and is not empty. Now, suppose $s< t$ both in $[x, y]$ are also in $I$. Then, we show that all $u\in (s, t)$ belong to $I$ also. Indeed, suppose for a contradiction $u$ was not in $I$, then the restrictions of the zig-zags of $\pi_1, \pi_2$ to $[s, t]$ would not coincide. This would result in a `geodesic bubble' (see Figure \ref{fig: geod tree}, and so by the restriction property of geodesics, one would then be able to violate geodesic uniqueness in $f$, by switching between the two (red and black) line segments forming the `geodesic bubble'. 
\end{proof}

\begin{remark}
    This lemma also gives as a result the \emph{geodesic monotonicity} property on a locally Brownian environment $f$ as in the statement therein. This means for any endpoints $(x, n)$, $(y, n)$ as in the above statement, the zig-zags of the geodesic starting from $x$ is to the left and below (above in the figure due to the reverse ordering) that of $y$. One can obtain an entirely analogous ordering of geodesics when considering distinct endpoints $(z, m)$ and $(w, m)$, $z < w$.
\end{remark}

\begin{figure}[hb]
    \centering
    \includegraphics[width=0.5\linewidth]{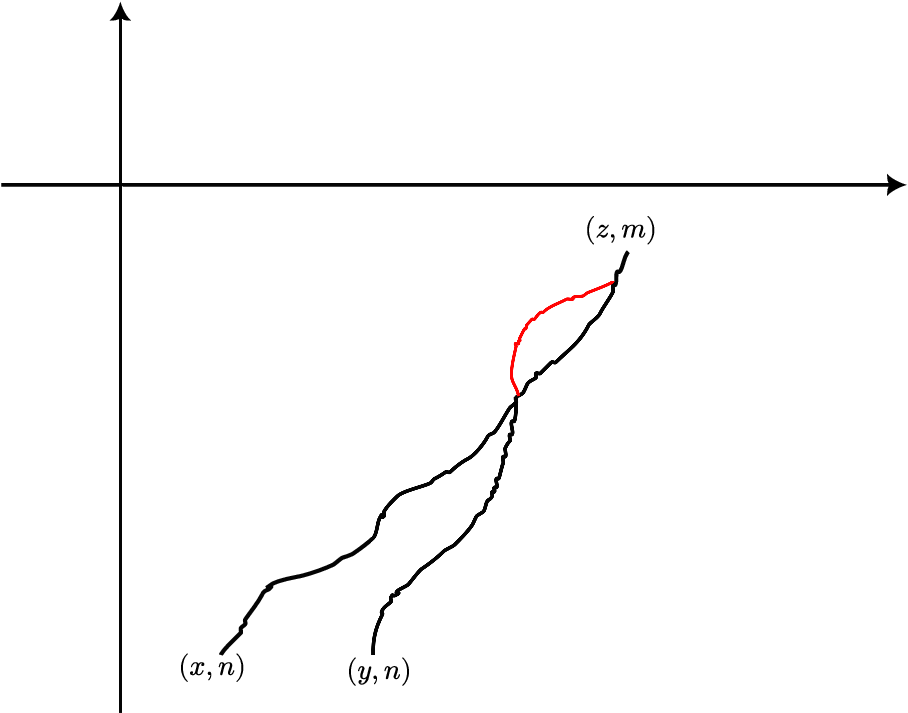}
    \caption{"Once they touch, they coalesce." Two geodesics $ \pi[(x, n)\to(z,m)]$, $\pi[(y, n)\to(z,m)]$ in a Brownian environment $f$ as in the statement of Lemma \ref{lemma: geod tree} must coalesce after their first point of contact. For otherwise, this would result in the existence of some \color{red}red \color{black} path, creating a `geodesic bubble'; this would violate up to a null set the uniqueness of geodesics in the environment $f$.}
    \label{fig: geod tree}
\end{figure}

We now state a certain inequality between last passage paths which follows from Lemma \ref{lemma: Metric Composition} and `planarity' of geodesics. For a proof, see \cite{Basu2021Fractal,DOV,balazs2020non,cator2019attractiveness}.

\begin{lemma}[Crossing lemma]\label{lemma: crossing}
Let $f:\N\times\R\to\R$ be any sequence of continuous functions. Let $m_1\le m_2\le n_1\le n_2$, and $y_1\le y_2\le x_1\le x_2$. Then we have
\[
f[(y_1,n_1)\to(x_1,m_1)] + f[(y_2,n_2)\to(x_2,m_2)] \ge
f[(y_1,n_1)\to(x_2,m_2)] + f[(y_2,n_2)\to(x_1,m_1)].
\]
\end{lemma}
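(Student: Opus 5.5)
We prove the crossing inequality by a path-surgery argument, using the metric composition law (Lemma~\ref{lemma: Metric Composition}) together with the planar ordering of the two endpoint pairs. Write $p_1=(y_1,n_1)\to q_2=(x_2,m_2)$ and $p_2=(y_2,n_2)\to q_1=(x_1,m_1)$ for the two ``crossed'' pairs on the right-hand side. Let $\gamma$ be a geodesic from $(y_1,n_1)$ to $(x_2,m_2)$ and $\gamma'$ a geodesic from $(y_2,n_2)$ to $(x_1,m_1)$; both exist by the compactness argument given earlier. If either last passage value on the right is $-\infty$, there is nothing to prove, so we may assume both paths exist.

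The key step is to show that the zig-zags of $\gamma$ and $\gamma'$ must meet. On the common time interval $[y_2,x_1]$, compare their levels at the two ends.
\begin{itemize}
\item At time $y_2$, the path $\gamma'$ starts at level $n_2\ge n_1\ge \gamma(y_2)$. In the paper's orientation, where levels decrease as time increases, this says $\gamma'$ is weakly ``below'' $\gamma$.
\item At time $x_1$, the path $\gamma'$ ends at level $m_1\le m_2\le \gamma(x_1)$, so $\gamma'$ is weakly ``above'' $\gamma$.
\end{itemize}
Both paths are non-increasing cadlag step functions, and their zig-zags fill in the vertical segments at jumps. A discrete intermediate value argument therefore produces a time $z\in[y_2,x_1]$ and a level $k$ such that $(z,k)$ lies in both zig-zags.

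Concretely, let $z=\inf\{t\in[y_2,x_1]:\gamma'(t)\le\gamma(t)\}$. Then on $[y_2,z)$ we have $\gamma'>\gamma$, and at $z$ the vertical intervals $[\gamma(z),\gamma(z^-)]$ and $[\gamma'(z),\gamma'(z^-)]$ must overlap. We can take $k$ to be a common integer in this overlap, using that $\gamma(z^-)\le\gamma'(z^-)$ and $\gamma'(z)\le\gamma(z)$. Handling this boundary bookkeeping, including the degenerate cases where $z=y_2$ or $z=x_1$ and the cadlag conventions at the endpoints, is the main technical obstacle.

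Once the meeting point $(z,k)$ is found, apply \eqref{eq: composition} of Lemma~\ref{lemma: Metric Composition} at time $z$ to split each geodesic there:
\[
f[p_1\to q_2]=f[(y_1,n_1)\to(z,k)]+f[(z,k)\to(x_2,m_2)],\qquad
f[p_2\to q_1]=f[(y_2,n_2)\to(z,k)]+f[(z,k)\to(x_1,m_1)].
\]
Each equality holds because the restriction of a geodesic to a subinterval is a geodesic, so each piece attains the corresponding last passage value.

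Now swap the tails. Concatenating the first piece of $\gamma$ with the second piece of $\gamma'$ gives an admissible path from $(y_1,n_1)$ to $(x_1,m_1)$. Concatenating the first piece of $\gamma'$ with the second piece of $\gamma$ gives an admissible path from $(y_2,n_2)$ to $(x_2,m_2)$. Lemma~\ref{lemma: Metric Composition} bounds each last passage value below by the sum over such a split at $(z,k)$, so
\[
f[(y_1,n_1)\to(x_1,m_1)]+f[(y_2,n_2)\to(x_2,m_2)]\ge f[p_1\to q_2]+f[p_2\to q_1],
\]
which is the claimed inequality.

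One detail needs care: the length convention, since the split point at a jump must be consistent with Definition~\ref{def: length}. Choosing $k$ so that the level is occupied at time $z$ by both paths makes the lengths add exactly.
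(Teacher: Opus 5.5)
Your proof is correct and takes essentially the paper's route. The paper does not write out a proof; it says the lemma follows from the metric composition law (Lemma~\ref{lemma: Metric Composition}) and planarity of geodesics and defers to the literature, and your argument (find a common zig-zag point of the two crossed geodesics, then swap tails) is exactly that standard proof. One small simplification: under the stated orderings every last passage value in the inequality is taken over a nonempty set of paths and is therefore finite, so the $-\infty$ caveat is never needed.
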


We end with a characterisation of the disjointness of last passage geodesics in terms of a strict inequality between last passage values. 

\begin{lemma}\label{lemma: crossing equiavlence}
        Let $f:\N\times\R\to\R$ be any sequence of random continuous functions such that when restricted to $\{1, \ldots, n\}\times [a, b]$ for $a< b$, $f-f(a)$ is absolutely continuous with respect to independent Brownian motions. Then, with $1\le m_1\le  m_2\le n_1\le n_2$ and $y_1\le y_2\le x_1\le x_2$ as in Lemma \ref{lemma: crossing}, the event
    \[
    \pi[(y_1,n_1)\to(x_1,m_1)] \; \text{and}\; \pi[(y_2,n_2)\to(x_2,m_2)]\;\text{are disjoint},
    \]
    is equivalent (up to null sets) to the event
    \[
    f[(y_1,n_1)\to(x_1,m_1)] + f[(y_2,n_2)\to(x_2,m_2)] >
    f[(y_1,n_1)\to(x_2,m_2)] + f[(y_2,n_2)\to(x_1,m_1)]
    \]
    where $\pi[\cdot \to \cdot]$ denote the almost sure unique geodesics in the environment $f$. Here, we emphasise disjointness is in terms of the \emph{zig-zags} of the paths in question.
\end{lemma}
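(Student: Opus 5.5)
The plan is to prove the two inclusions separately, using the crossing lemma (Lemma~\ref{lemma: crossing}) as the baseline and geodesic uniqueness to upgrade inequalities. Write $\pi_1=\pi[(y_1,n_1)\to(x_1,m_1)]$, $\pi_2=\pi[(y_2,n_2)\to(x_2,m_2)]$, $\pi_3=\pi[(y_1,n_1)\to(x_2,m_2)]$ and $\pi_4=\pi[(y_2,n_2)\to(x_1,m_1)]$. Intersect with the full-measure event on which each of these four geodesics is unique; this is possible by the Remark after Lemma~\ref{lemma: geod location brown env lpp}, since the endpoints are fixed. Lemma~\ref{lemma: crossing} already gives $\ge$, so the task is to show that equality holds if and only if the zig-zags of $\pi_1$ and $\pi_2$ intersect.

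\emph{Intersecting implies equality.} Suppose $\mathsf{ZigZag}(\pi_1)\cap\mathsf{ZigZag}(\pi_2)\ni(s,k)$. Since zig-zags are connected and contain vertical segments at jump times, we can take $k$ to be an integer level with $s$ in the closure of the time $\pi_1$ and $\pi_2$ both spend on level $k$. The geodesics can then be cut at $(s,k)$ using Lemma~\ref{lemma: Metric Composition}, in the form~\eqref{eq: composition} at the fixed time $s$. Concatenate the initial piece of $\pi_1$ (from $(y_1,n_1)$ to $(s,k)$) with the final piece of $\pi_2$, and the initial piece of $\pi_2$ with the final piece of $\pi_1$. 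This produces admissible paths from $(y_1,n_1)$ to $(x_2,m_2)$ and from $(y_2,n_2)$ to $(x_1,m_1)$, and their total length is $\ell(\pi_1)+\ell(\pi_2)$. Monotonicity of the levels holds because both paths pass through level $k$ at time $s$. Hence $f[(y_1,n_1)\to(x_2,m_2)]+f[(y_2,n_2)\to(x_1,m_1)]\ge \ell(\pi_1)+\ell(\pi_2)$, and with Lemma~\ref{lemma: crossing} this gives equality. This direction is deterministic.

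\emph{Disjoint implies strict inequality.} Suppose the two zig-zags are disjoint but equality holds. By planarity, $\pi_3$ must cross $\pi_4$: $\pi_3$ starts left/below at $(y_1,n_1)$ and ends at the right endpoint $(x_2,m_2)$, while $\pi_4$ does the reverse, and the ordering hypotheses $y_1\le y_2\le x_1\le x_2$, $m_1\le m_2\le n_1\le n_2$ force their zig-zags to meet at some $(s,k)$. Swapping tails at $(s,k)$ then gives paths $\tilde\pi_1$ from $(y_1,n_1)$ to $(x_1,m_1)$ and $\tilde\pi_2$ from $(y_2,n_2)$ to $(x_2,m_2)$ with $\ell(\tilde\pi_1)+\ell(\tilde\pi_2)=\ell(\pi_3)+\ell(\pi_4)$. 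Under the assumed equality this sum is $f[(y_1,n_1)\to(x_1,m_1)]+f[(y_2,n_2)\to(x_2,m_2)]$. Since each $\tilde\pi_i$ has length at most the corresponding last passage value, both must be geodesics. Uniqueness then forces $\tilde\pi_1=\pi_1$ and $\tilde\pi_2=\pi_2$. But $\tilde\pi_1$ and $\tilde\pi_2$ share the point $(s,k)$, contradicting disjointness. Hence the event of disjointness is contained in the strict-inequality event, up to the null set where uniqueness fails.

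The main obstacle is the planarity step: making rigorous that $\pi_3$ and $\pi_4$ must meet, and that the tail-swap at a meeting point produces legitimate non-increasing cadlag paths with the stated endpoint conventions. I would handle this with an intermediate value argument on the level functions. Near $y_2$, the path $\pi_4$ sits at level $n_2\ge$ the level of $\pi_3$ (reverse ordering), while near $x_1$ the path $\pi_4$ ends at $m_1\le$ the level of $\pi_3$. Since each zig-zag includes its vertical jump segments, the sign change forces a common point. The degenerate cases of equal endpoints or levels are dealt with directly, since there the paths trivially share a point.
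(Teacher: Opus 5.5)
Your proof is correct and follows essentially the paper's argument: a meeting point of the zig-zags of $\pi_3$ and $\pi_4$, obtained by planarity, combined with uniqueness of $\pi_1,\pi_2$ gives the strict inequality on the disjointness event, and the tail-swap at a common point of $\pi_1,\pi_2$ gives equality otherwise; the latter is the ``switching argument'' the paper only sketches for the reverse implication. One minor correction: you cannot always choose $k$ so that both paths spend time on level $k$ near $s$ (for instance, both may jump at time $s$ with overlapping vertical segments), but you do not need to, since any integer $k$ with $\pi_i(s)\le k\le \lim_{t\nearrow s}\pi_i(t)$ for $i=1,2$ already allows the cut via \eqref{eq: composition}.
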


\begin{proof}
    We prove the forward implication. Observe that the metric composition law, Lemma \ref{lemma: Metric Composition} implies that for any $a< b$, $m\le n$ and geodesic $\pi$ from $(a, n)$ to $(b, m)$ in the environment $f$, (recall Definition \ref{def: length})
    \begin{align}\label{eq: last passage zigzag}
        &\ell(\pi|_{[a, z]}) = f[(a, n)\to (z, \pi(z))]\; \text{and}\nonumber\\
        &\ell(\pi|_{[z, b]}) = f[(z, j)\to (b, m)]\;\text{for all}\; j\in \{\pi(z), \ldots, \lim_{t\to z^-}\pi(t)\}.
    \end{align}

    Now, by planarity, there exists some $(z, \ell)$ that lies on the intersection of the induced zigzags (recall their definition in \eqref{eq: zig-zag}) of $\pi[(y_1,n_1)\to(x_2,m_2)]$ and $\pi[(y_2,n_2)\to(x_1,m_1)]$. 
    
    On the event 
    \[
    \pi[(y_1,n_1)\to(x_1,m_1)] \; \text{and}\; \pi[(y_2,n_2)\to(x_2,m_2)]\;\text{are disjoint},
    \]
    $(z, \ell)$ must not be in the zig-zag of at least one of $\pi[(y_1,n_1)\to(x_1,m_1)]$ and $\pi[(y_2,n_2)\to(x_2,m_2)]$. Since geodesics are unique, we can without loss of generality, suppose that $(z, \ell)$ does not lie on $\pi[(y_1,n_1)\to(x_1,m_1)]$. This in conjunction with \eqref{eq: last passage zigzag} gives the inequalities
    \[
    f[(y_1,n_1)\to(x_1,m_1)] > f[(y_1,n_1)\to(z,\ell)] + f[(z,\ell)\to(x_1,m_1)]
    \]
    and
    \[
    f[(y_2,n_2)\to(x_2,m_2)] \ge  f[(y_2,n_2)\to(z,\ell)] + f[(z,\ell)\to(x_2,m_2)]\,.
    \]
    Adding the above inequalities gives the conclusion by observing (using \eqref{eq: last passage}) that 
    \begin{align*}
        f[(y_1,n_1)\to(z,\ell)] + &f[(z,\ell)\to(x_1,m_1)] + f[(y_2,n_2)\to(z,\ell)] + f[(z,\ell)\to(x_2,m_2)]\\
        = &f[(y_1,n_1)\to(x_2,m_2)] + f[(y_2,n_2)\to(x_1,m_1)].
    \end{align*}

    The reverse implication is proved in an entirely analogous manner. In fact, this is a deterministic result and can be established by considering the contrapositive of this implication, by a switching argument (see the proof of Lemma \ref{lemma: geod tree}).
\end{proof}

\subsection{Pitman transform}

Recall that with $f=(f_1,f_2)$ where $f_i:[0,\infty)\mapsto \R$ for $i=1,2$, for $f\in \mathcal{C}^2[0,\infty)$, we define $\mathrm{W}f=(\mathrm{W} f_1,\mathrm{W} f_2)\in \mathcal{C}^2[0,\infty)$, the \textbf{Pitman transform} of $f$ as follows. Define the maximal gap size
\[G(f_1,f_2)(x,y):=\max\left(\max_{s\in [x,y]}\big(f_2(s)-f_1(s)\big)\,,\,0\right)\,,\qquad 0\le x \le y\,.
\]
Then define
\begin{equation}\label{eq: pitmantrans}
\mathrm{W} f_1(t)=f_1(t)+G(f_1,f_2)(0,t)\,,
\end{equation}
$$\mathrm{W} f_2(t)=f_2(t)-G(f_1,f_2)(0,t)\,,$$
for all $t\in [0,\infty)$.

One can express the top line of the Pitman transform in terms of last passage values. The proof is a straightforward application of Lemma \ref{lemma: Metric Composition}, and so we omit it.

\begin{lemma}\label{lemma: Pitman melon}
    Let \(f\in \mathcal{C}^2([0, \infty))\) and let \(Wf = (Wf_1, Wf_2)\) be as above. Then for all \(t\in[0,\infty)\),
    \[
    Wf_1(t) = \displaystyle \max_{i=1,2}\{f_i(0) + f [(0, i) \to (t, 1)]\} .
    \]
\end{lemma}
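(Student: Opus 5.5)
We prove the identity by computing the last passage values on the right directly from the definitions. We then match the result against the definition \eqref{eq: pitmantrans} of $\mathrm{W}f_1$. Note that $G(f_1,f_2)(0,t)=\max(\max_{s\in[0,t]}(f_2(s)-f_1(s)),0)$. So the target is
\[
\mathrm{W}f_1(t)=\max\Big(f_1(t),\ \max_{s\in[0,t]}\big(f_1(t)+f_2(s)-f_1(s)\big)\Big).
\]

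First, the $i=1$ term. A path from $(0,1)$ to $(t,1)$ stays on level $1$, so by the convention for $m=k$ we get $f[(0,1)\to(t,1)]=f_1(t)-f_1(0)$. Hence $f_1(0)+f[(0,1)\to(t,1)]=f_1(t)$.

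Second, the $i=2$ term. We apply the second form of the metric composition law, Lemma \ref{lemma: Metric Composition}, with $\ell=2$, $m=1$, $k=2$. This gives
\[
f[(0,2)\to(t,1)]=\sup_{s\in[0,t]}\big(f[(0,2)\to(s,2)]+f[(s,1)\to(t,1)]\big)=\sup_{s\in[0,t]}\big(f_2(s)-f_2(0)+f_1(t)-f_1(s)\big).
\]
Equivalently, we can read this off Definition \ref{def: length}: there is a single jump time $s$, and the length is $f_1(t)-f_1(s)+f_2(s)-f_2(0)$. By continuity the supremum is attained, and so
\[
f_2(0)+f[(0,2)\to(t,1)]=f_1(t)+\max_{s\in[0,t]}\big(f_2(s)-f_1(s)\big).
\]

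Taking the maximum of the two terms gives $f_1(t)+\max\big(0,\max_{s\in[0,t]}(f_2(s)-f_1(s))\big)=f_1(t)+G(f_1,f_2)(0,t)=\mathrm{W}f_1(t)$, which is the claim. There is no real obstacle here. The only point needing care is the convention for the $i=1$ term, namely that it has length $f_1(t)-f_1(0)$ and involves no maximisation. The $0$ inside $G$ corresponds exactly to the choice $i=1$.
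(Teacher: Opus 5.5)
Your proof is correct and matches what the paper intends: the paper omits the proof as a straightforward application of the metric composition law (Lemma \ref{lemma: Metric Composition}), which is exactly what you apply to the $i=2$ term. The $i=1$ term comes from the single-level convention $f[(0,1)\to(t,1)]=f_1(t)-f_1(0)$, and that term is what produces the $0$ inside $G(f_1,f_2)(0,t)$.
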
%Two curve Melon

In the case where \(f_1(0) = f_2(0) = 0\), we obtain that 
\[
Wf_1(t) = f [(0, 2) \to (t, 1)].
\]
\(Wf\) is commonly referred to as the \textbf{2-melon} (which will be generalised in the following section to the so-called $n$-melons) of $f$, since paths in \(Wf\) do not cross (and so preserve their ordering), they resemble the stripes of a watermelon.

\subsection{Melons}\label{subsec: melons}An application of the above that is of interest is that of two independent Brownian motions (starting from zero) \(B = (B_1, B_2)\).  Let \(\hat{B}=(\hat{B}_1, \hat{B}_2)\) be two independent Brownian motions conditioned not to collide, in the sense of Doob with entrance law $(0, 0)$ (in other words, a $2$-Dyson Brownian motion with the same diffusivity as $B$ starting from the origin). Then, the law of the melon \(WB^2\) as defined above in (\ref{eq: pitmantrans}) is the same as that of \(\hat{B}\). In \cite{o2002representation}, a generalisation was proved for \(n\) Brownian motions, using a continuous analogue of the Robinson–Schensted–Knuth (RSK) correspondence, where each level in the \textit{$n$-melon} $WB^n = (WB^n_1, WB^n_2, \cdots, WB^n_n)$ is obtained from a family of $n$ Brownian motions by a sequence of deterministic operations that are analogous to the sorting algorithm `bubble sort'. Moreover, by Theorem 7 in \cite{o2002representation} for any fixed $n\ge 1$, the lw of the Brownian $n$-melon $WB^n_1$ coincides with that of an $n$-level Dyson Brownian motion. For a more complete definition of melons involving the remaining lines, see \cite[sec. 2]{DOV} and \cite{o2002representation}.

\subsection{Airy line ensemble}\label{subsec: Airy line ensemble}
After appropriate rescaling, the melons $WB^n$ converge in law as $n\to \infty$ to a non-intersecting random
ensemble \(\mathcal{A} = (\mathcal{A}_1, \mathcal{A}_2, \dots)\) in $\mathcal{C}^{\N}$ (Theorem 2.1 in \cite{DOV}), such that $\mathcal{A}_1 > \mathcal{A}_2 > \cdots$. The random ensemble $\mathcal{A}$ is called the (\textbf{parabolic)
Airy line ensemble}. It was introduced by Pr\"{a}hofer and Spohn \cite{prahofer2002scale} in the version \((\mathcal{R}_i)_{i\in \N}\coloneqq (\mathcal{A}_i(\cdot)+(\cdot)^2)_{i\in \N}\), which is stationary
in time, see also \cite{corwin2014brownian} and \cite{corwin2014ergodicity}. We will thus call it the \textbf{stationary Airy line ensemble}. The top line $\mathcal{A}_1$ is known as the parabolic $\text{Airy}_2$ process that appears as the
limiting spatial fluctuation of random growth models starting from a single point.

\begin{theorem}\label{thm: melon scaling Airy}
    Let \(WB^n\)  be a Brownian \(n\)-melon constructed from a family of independent standard (rate one) Brownian motions $B_1, \ldots, B_n$.  Define the rescaled melon \(A^n = (A^n_1, \dots, A^n_n)\) for $1\le i \le n$ and $y\ge -n^{1/3}/2$ by
\[
A^n_i(y) = n^{1/6} \left((WB^n)_i(1 + 2yn^{-1/3}) - 2\sqrt{n} - 2yn^{1/6} \right).
\]
Then \(A^n\) converges to a random sequence of functions \(\mathcal{A} = (\mathcal{A}_1, \mathcal{A}_2, \dots) \in \mathcal{C}^\mathbb{N}\) in law with respect to product of uniform-on-compact topology on \(\mathcal{C}^\mathbb{N}\). Almost surely, for all \(y \in \mathbb{R}\) and \(i < j\), we have \(\mathcal{A}_i(y) > \mathcal{A}_j(y)\). The function \(\mathcal{A}\) is called the \textbf{(parabolic) Airy line ensemble.} 
\end{theorem}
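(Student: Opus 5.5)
The plan is to follow the route of \cite{DOV} (Theorem 2.1 there), which reduces the statement to three ingredients: convergence of the top line, tightness of each fixed line, and uniqueness of the limit law. Working in Brownian last passage percolation, the representation of the melon via last passage values (the $n$-line generalisation of Lemma~\ref{lemma: Pitman melon}, from \cite{o2002representation}) gives
\[
(WB^n)_1(t)=B[(0,n)\to(t,1)]\,,
\]
and more generally $\sum_{i\le k}(WB^n)_i(t)$ is the maximal total length of $k$ disjoint paths. By Theorem 7 of \cite{o2002representation}, $WB^n(t)$ has the law of Dyson Brownian motion, so its fixed-time marginal is the GUE eigenvalue law scaled by $\sqrt t$.

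First I will establish finite-dimensional convergence. For fixed times $y_1,\dots,y_m$, the determinantal structure of Dyson Brownian motion (the extended Hermite kernel) under the scaling $A^n_i(y)$ is known to converge to the extended Airy kernel. Since locally uniform kernel convergence, together with trace-class tail bounds, gives convergence of the gap probabilities, this yields convergence of the point processes $\{A^n_i(y_j)\}_i$ jointly in $j$. As $\mathcal{A}_1$ is the top particle, these determinantal laws identify the limiting finite-dimensional distributions of the ordered lines.

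Second, and this is the step I expect to be the main obstacle, I need tightness in $\mathcal{C}^{\N}$ with the product locally uniform topology. Here I will use the Brownian Gibbs property of the melon: conditionally on its values outside $[a,b]$ and on the lines below level $k$, the top $k$ curves on $[a,b]$ are independent Brownian bridges conditioned not to intersect. The rescaling preserves this property with rate-two bridges, which matches the paper's convention. I then follow Corwin--Hammond \cite{corwin2014brownian}. One-point tightness comes from the GUE edge, and endpoint control is given. From these, the Gibbs property lower-bounds the acceptance probability of non-intersection, which transfers the Brownian bridge modulus of continuity to $A^n_i$ uniformly in $n$. The delicate point is controlling the lower boundary curve from below; I would handle it by induction on $k$ together with stochastic monotonicity of the Gibbs resampling.

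Third, I will argue that any subsequential limit has the finite-dimensional distributions from the first step. Together with tightness, this identifies a unique limit $\mathcal{A}$, so the whole sequence converges. For strict ordering, any limit inherits the Brownian Gibbs property, because the conditioning on non-intersection passes to the limit. Since non-intersecting Brownian bridges are almost surely strictly ordered on open intervals, a countable union over rational intervals gives $\mathcal{A}_i(y)>\mathcal{A}_j(y)$ for all $y$ almost surely.
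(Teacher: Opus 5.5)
Your outline is correct and is essentially the route the paper relies on. The paper gives no proof of its own: it quotes Theorem~2.1 of \cite{DOV}, which combines determinantal finite-dimensional convergence (extended Hermite kernel to extended Airy kernel) with the Corwin--Hammond Brownian Gibbs tightness argument of \cite{corwin2014brownian}, i.e.\ your three steps.

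One point should be made explicit. Passing the non-intersection conditioning to the limit requires the limit to be strictly ordered at the fixed boundary times, and you get this from the simplicity of the fixed-time Airy point process identified in your first step.
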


We now recall the Brownian Gibbs resampling property enjoyed by the Airy line ensemble, first established in \cite{corwin2014brownian}. Informally, it states that for $a< b$, $k\in \N$, the law of the Airy line ensemble restricted to $\{1,2,\cdots,k\}\times(a,b)$, ${\mathcal{A}}|_{\{1,2,\cdots,k\}\times(a,b)}$, conditionally on all the data generated by the Airy line ensemble outside of this region, $\mathscr{F}_k:= \sigma(\{\mathcal{A}_i(x): (i,x)\notin \llbracket 1,k\rrbracket \times (a,b)\})$, is given by non-intersecting Brownian bridges with entry data $\underline{x} = (\mathcal{A}_i(a))_{1\le i\le k}$, $\underline{y} = (\mathcal{A}_i(b))_{1\le i\le k}$ and also conditioned to stay above $f = \mathcal{A}_{k+1}$ on $(a,b)$.

\subsection{Airy counting estimates}

We now record some useful lemmas regarding the concentration of points at a given spatial location in the Airy line ensemble. Crucial inputs come from asymptotics of the expected number of particles above a certain threshold, \cite[Lemma 5.1]{Dauvergne_2021}, in particular the Bernoulli representation of this threshold \cite[Lemma 5.2]{Dauvergne_2021}. 

\begin{lemma}\label{lemma: airy counting 1}
Fix $\ell \ge 1$. Then there exist universal constants $c, C>0$ such that for all $a> 0$, 
    \begin{align*}
 \PP\bigl(\mathcal{A}_\ell(0)\le-a\bigr)\le C\exp\bigl(\ell-ca^{3/2}\bigr)\,.
\end{align*}

\end{lemma}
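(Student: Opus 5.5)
The plan is to reduce the event $\{\mathcal{A}_\ell(0)\le -a\}$ to a statement about how few Airy points sit above the level $-a$ at time $0$, and then use the counting estimates cited from \cite{Dauvergne_2021}. Write $N(a):=\#\{i\in\N:\mathcal{A}_i(0)> -a\}$ for the number of points of the Airy point process, i.e.\ of the ensemble at time $0$, lying above $-a$. Because the lines are strictly ordered, $\mathcal{A}_1(0)>\mathcal{A}_2(0)>\cdots$, we have
\[
\{\mathcal{A}_\ell(0)\le -a\}=\{N(a)\le \ell-1\}.
\]
So it suffices to show that $N(a)$ is very unlikely to be small when $a$ is large.

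The first step uses \cite[Lemma 5.1]{Dauvergne_2021}, which gives the asymptotics of the mean: $\mathbb{E}N(a)=\frac{2}{3\pi}a^{3/2}+O(\log a)$. In particular, there are universal $c_0,a_0>0$ such that $\mathbb{E}N(a)\ge c_0a^{3/2}$ for all $a\ge a_0$.

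The second step uses the Bernoulli representation \cite[Lemma 5.2]{Dauvergne_2021}. Since the Airy point process is determinantal with a Hermitian trace-class kernel restricted to $(-a,\infty)$, $N(a)$ has the law of $\sum_j \xi_j$ with independent $\xi_j\sim\mathrm{Bernoulli}(\lambda_j)$, where $\sum_j\lambda_j=\mathbb{E}N(a)$. For $\theta=1$ the exponential Chebyshev bound gives
\[
\PP(N(a)\le \ell-1)\le e^{\ell-1}\,\mathbb{E}e^{-N(a)}=e^{\ell-1}\prod_j\bigl(1-\lambda_j(1-e^{-1})\bigr)\le e^{\ell}\exp\bigl(-(1-e^{-1})\mathbb{E}N(a)\bigr),
\]
using $1-u\le e^{-u}$. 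Combining this with the first step yields $\PP(\mathcal{A}_\ell(0)\le -a)\le \exp(\ell-ca^{3/2})$ for $a\ge a_0$, where $c=(1-e^{-1})c_0$.

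The last step handles small $a$. For $0<a<a_0$ the probability is at most $1\le e^{\ell}$, and this is at most $C\exp(\ell-ca^{3/2})$ once we take $C=e^{ca_0^{3/2}}$. The resulting constants are universal and do not depend on $\ell$, since the exponential tilt was taken at $\theta=1$ independently of $\ell$.

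The main obstacle is pinning down precisely the form of the cited counting lemmas, namely a lower bound on the mean valid uniformly for $a\ge a_0$ and the Bernoulli-sum representation. Both are standard for determinantal processes; if only the asymptotic form of the mean is available, one takes $a_0$ large enough that the error term is absorbed.
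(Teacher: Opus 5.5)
Your proposal is correct and follows the paper's route step for step. Both arguments count the Airy points above $-a$ at time $0$, use the mean asymptotics from \cite[Lemma 5.1]{Dauvergne_2021} and the independent-Bernoulli representation, apply the exponential Markov bound $\PP(N(a)\le \ell-1)\le e^{\ell}\,\mathbb{E}e^{-N(a)}\le e^{\ell}\exp\bigl(-(1-e^{-1})\mathbb{E}N(a)\bigr)$, and enlarge $C$ for small $a$. The only cosmetic difference is that your strict-inequality count gives the exact identity $\{\mathcal{A}_\ell(0)\le -a\}=\{N(a)\le \ell-1\}$, where the paper uses the inclusion into $\{N_a\le \ell\}$.
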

\begin{proof}
    Let $N_a=\#\{i:\mathcal{A}_i(0)\ge-a\}$ count the number of points in the Airy line ensemble at zero that exceed the threshold $-a$. By
\cite[Lemma~5.1(i)]{Dauvergne_2021}, the expectation of $N_a$ satisfies the asymptotics
\[
 \mathbb{E} N_a=\frac{2}{3\pi}a^{3/2}+O(1)
 \qquad a\to\infty\,.
\]
Moreover, $N_a$ has the distribution of a sum of independent Bernoulli
random variables, as recalled in the proof of
\cite[Lemma~5.2]{Dauvergne_2021}. Denoting their success probabilities by $(p_j)_{j\ge 1}$,
then
\[
 \mathbb{E} \mathrm{e}^{-N_a}
 =\prod^\infty_{j=1}\bigl(1-(1-\mathrm{e}^{-1})p_j\bigr)
 \le \exp\bigl(-(1-\mathrm{e}^{-1})\mathbb{E} N_a\bigr).
\]
Markov's inequality applied to $\mathrm{e}^{-N_a}$ gives
\begin{align*}
 \PP\bigl(\mathcal{A}_\ell(0)\le-a\bigr)
 &\le\PP(N_a\le\ell)\le e^\ell\mathbb{E} \mathrm{e}^{-N_a}\le C\exp\bigl(\ell-ca^{3/2}\bigr)\,, \qquad a\to \infty
\end{align*}
for some universal constants $c, C> 0$. Enlarging $C$ extends the above bound to all $a> 0$.
\end{proof}

For every $k\ge 1$, the typical value of point on the Airy line ensemble at level $k$ is $\mathcal{A}_k(0) \sim -(3\pi k/2)^{2/3}$, see \cite{Dauvergne_2021}. We now prove concentration for upper tail events of these points at the same scale as above.
\begin{lemma}\label{lemma: airy counting 2}
For every sufficiently small $a>0$, there exist constants
$c_a,C_a>0$ such that
\[
    \PP\bigl(\mathcal{A}_k(0)>-4a k^{2/3}\bigr)
    \le C_a\exp\bigl(-c_a k\log(k+1)\bigr),
    \qquad k\ge1.
\]
\end{lemma}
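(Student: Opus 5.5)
We use the same counting device as in Lemma \ref{lemma: airy counting 1}, now for an upper tail. Set $t = 4ak^{2/3}$ and let $N_t=\#\{i:\mathcal{A}_i(0)>-t\}$. Since $\mathcal{A}_1>\mathcal{A}_2>\cdots$, the event $\{\mathcal{A}_k(0)>-t\}$ is exactly $\{N_t\ge k\}$. By \cite[Lemma~5.1(i)]{Dauvergne_2021}, $\mathbb{E}N_t=\frac{2}{3\pi}t^{3/2}+O(1)$. With $t=4ak^{2/3}$ this gives
\[
\mathbb{E}N_t=\tfrac{16}{3\pi}a^{3/2}k+O(1).
\]
For $a$ small enough, say $\frac{16}{3\pi}a^{3/2}\le \frac1{e^2}$, the mean satisfies $\mathbb{E}N_t\le \mu:=e^{-2}k+C_0$ for a universal $C_0$. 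So the target is $k$, much larger than the mean. The $O(1)$ error matters only for bounded $k$, and those cases are absorbed into $C_a$.

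Next we use the Bernoulli representation recalled in the proof of \cite[Lemma~5.2]{Dauvergne_2021}. It says $N_t\overset{d}{=}\sum_j \xi_j$ with independent $\xi_j\sim\mathrm{Bernoulli}(p_j)$ and $\sum_j p_j=\mathbb{E}N_t\le\mu$. For $\lambda>0$,
\[
\mathbb{E}\mathrm{e}^{\lambda N_t}=\prod_j\bigl(1+(\mathrm{e}^\lambda-1)p_j\bigr)\le\exp\bigl((\mathrm{e}^\lambda-1)\mu\bigr).
\]
Markov's inequality then gives $\PP(N_t\ge k)\le\exp\bigl((\mathrm{e}^\lambda-1)\mu-\lambda k\bigr)$. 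Optimising with $\mathrm{e}^\lambda=k/\mu$ yields the Poisson-type Chernoff bound
\[
\PP(N_t\ge k)\le \exp\bigl(k-\mu-k\log(k/\mu)\bigr).
\]

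It remains to extract the $k\log k$ rate, and this is where the main difficulty lies. With $\mu\asymp k$, the bound above is only $\exp(-ck\log(k/\mu))=\exp(-ck)$ with $k/\mu\approx e^2$. That is exponential decay, not $\exp(-c k\log k)$. A $k\log k$ rate from Chernoff would require $\mathbb{E}N_t=O(k^{1-\delta})$, which fails at the scale $t\asymp k^{2/3}$.

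So plain counting only gives $\exp(-c_a k)$, and the extra logarithm must come from something finer. I would first try the determinantal structure of the Airy point process at time $0$ with the Airy kernel. Requiring $k$ points in $(-t,\infty)$ when only about $a^{3/2}k$ are expected forces about $k$ points into an interval of length $O(k^{2/3})$. This is a large deviation with Coulomb-gas cost of order $k^2$ in the bulk scaling, which is far more than $k\log k$.

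Concretely, I would bound $\PP(N_t\ge k)$ through the Fredholm representation. The quantity $\PP(N_t\ge k)$ is a sum over $j\ge k$ of the coefficients of $z^j$ in $\det(I-(1-z)K_{\mathrm{Ai}})_{L^2(-t,\infty)}$. Using Hadamard's inequality and the decay of Airy kernel eigenvalues gives $\prod_{j\ge k}p_j$-type bounds. The representation gives $\PP(N_t\ge k)\le \sum_{|S|=k}\prod_{j\in S}p_j\le \mu^k/k!$. If $\mu\le \varepsilon k$, this is $\exp(k\log(e\varepsilon))$, which is still only exponential.

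The genuine $\log$ therefore has to come from the decay of the ordered $p_j$. Since $p_j\to0$ rapidly once $j\gg \mathbb{E}N_t$, I would use the bound $\PP(N_t\ge k)\le\prod_{j\le k}$ of the largest tail $p_j$'s. The step I expect to be hardest is showing that $p_j$ decays superexponentially in $j/\mu$, via the Airy-kernel eigenvalue asymptotics at scale $t$.
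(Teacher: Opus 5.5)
\textbf{There is a genuine gap: the $k\log(k+1)$ rate is never actually proved.} Your reduction to $\PP(N_t\ge k)$, the Bernoulli representation and the Poisson-type Chernoff bound are all correct. You are also right that information about $\mathbb{E}N_t\asymp k$ alone cannot give better than $\exp(-c_ak)$. But the proposal stops there. The last two paragraphs (Fredholm expansion, Hadamard's inequality, superexponential decay of the ordered $p_j$ via Airy-kernel eigenvalue asymptotics) are a plan rather than an argument. The step you flag as hardest is never carried out. Your $\mu^k/k!$ bound also uses only the mean, so it hits the same wall.

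\textbf{The missing ingredient is a variance bound, and it is already in the lemma you cite.} \cite[Lemma~5.1]{Dauvergne_2021} also gives $\operatorname{Var}(N_t)=O(\log t)$; this is rigidity of the Airy point process. In the Bernoulli representation, $\operatorname{Var}(N_t)=\sum_j p_j(1-p_j)=:v$. This is exactly the finer information about the $p_j$ you were looking for, in aggregated form: apart from a set of total weight $O(\log k)$, the $p_j$ are close to $0$ or $1$.

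The paper uses it through Bennett's inequality,
\[
\PP(N_t-\mathbb{E}N_t\ge b)\le\exp\bigl(-v\,h(b/v)\bigr),\qquad h(r)=(1+r)\log(1+r)-r .
\]
Take $b=k-\mathbb{E}N_t\ge k/2$ (valid for small $a$) and $v\le C_a\log k$. Then $v\,h(b/v)\ge b\bigl[\log(1+b/v)-1\bigr]\ge c_ak\log k$.

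You can get the same conclusion inside your own framework. Let $J_1=\{j:p_j>1/2\}$. Then $|J_1|\le\mathbb{E}N_t+2v$ and $\sum_{j\notin J_1}p_j\le 2v=O(\log k)$. So for large $k$, the event $N_t\ge k$ forces $\sum_{j\notin J_1}\xi_j\ge k-\mathbb{E}N_t-2v\ge k/2$. Your Poisson-type Chernoff bound, applied with mean $O(\log k)$ at level $k/2$, then gives $\exp\bigl(-ck\log(k/\log k)\bigr)$. No eigenvalue asymptotics for the Airy kernel are needed.
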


\begin{proof}
As in Lemma \ref{lemma: airy counting 1}, set $  N_R:=\#\{i\ge1:\mathcal{A}_i(0)\ge-R\}$.
The Airy counting estimates in \cite[Lemma~5.1]{Dauvergne_2021} give
\[
    \mathbb{E} N_R=(R/\kappa)^{3/2}+O(1),
    \qquad
    \operatorname{Var}(N_R)=O(\log R),
    \qquad R\to\infty\,.
\]
where $\kappa = (3\pi/2)^{2/3}$. Moreover, recall from Lemma \ref{lemma: airy counting 1}, $N_R$ has the law of a sum of independent Bernoulli
random variables. Choose $a>0$ so small that $(4a/\kappa)^{3/2}<\frac12$ and set $R_k=4a k^{2/3}$. For all sufficiently large $k$, we estimate the mean and variance of of $N_{R_k}$
\[
    \mu_k:=\mathbb{E} N_{R_k}\le k/2,
    \qquad
    v_k:=\operatorname{Var}(N_{R_k})\le C_a\log k\,,
\]
for some constant $C_a> 0$.

Now, for a Bernoulli random variable $X$ with success probability $p$, we can estimate the moment generating function of the centred random variable $Y:= X-p$, obtaining by a Taylor expansion using $|Y|\le 1$, $\mathbb{E}Y = 0$ and $\mathbb{E}Y^2 = p(1-p)$,
\[
\mathbb{E}\mathrm{e}^{\theta Y} \le 1 + p(1-p)(\mathrm{e}^\theta - 1- \theta)\,,\qquad \theta \ge 0\,.
\]
By the above Bernoulli representation for $N_R$, the above bound gives
\[
    \log\mathbb{E} \mathrm{e}^{\theta(N_{R_k}-\mu_k)}
    \le v_k(e^\theta-1-\theta),
    \qquad \theta\ge 0\,.
\]
The Markov inequality applied to $\mathrm{e}^{\theta(N_{R_k}-\mu_k)}$ for a suitable choice of $\theta \ge 0$ gives
\[
    \PP(N_{R_k}-\mu_k\ge b)
    \le
    \exp\left[-v_k h\left(\frac{b}{v_k}\right)\right]\,, \quad b \ge 0\,, \qquad h(r):=(1+r)\log(1+r)-r\,.
\]
For $b=k-\mu_k\ge k/2$ and $v_k\le C_a\log k$, the exponent is
at most $-c_a k\log k$, for some constant $c_a> 0$. Indeed, this follows from the bounds
\[
\begin{aligned}
v_k h(b/v_k)
&= (v_k+b)\log(1+b/v_k)-b\\
&\ge b\bigl[\log(1+b/v_k)-1\bigr]\\
&\ge \frac{k}{2}
\left[
\log\!\left(1+\frac{k}{2C_a\log k}\right)-1
\right]\\
&\ge c_a k\log k,
\end{aligned}
\]
for all sufficiently large \(k\). Hence we arrive at the tail bounds,
\[
    \PP(N_{R_k}\ge k)\le C_a \mathrm{e}^{-c_a k\log k}\,,
\]
for all $k$ large enough. Since the inclusion $\{\mathcal{A}_k(0)>-R_k\}\subseteq\{N_{R_k}\ge k\}$ holds for any $k\ge 1$, the result follows after adjusting the constants to accommodate the remaining finitely many values of $k$.
\end{proof}

We now establish sub-gaussian concentration bounds for the upper tail of last passage values over the (stationary) Airy line ensemble using the uniform modulus of continuity result in \cite{wu2025applicationsoptimaltransportdyson}. For a similar statement, the reader is referred to Lemma 5.7 in \cite{dauvergne2026downthemiddle}.

\begin{lemma}
\label{lemma: finite-depth-tail}
Fix $k\ge 1$ and $T>0$. Then, there exist universal constants $c,C>0$ such that
\[
    \PP\bigl(\mathcal{A}[(0, k)\to (T, 1)]+T^2>u\bigr)
    \le \exp\left(Ck-\frac{cu^2}{T}\right)
\]
for every $k\ge1$, $T>0$, and $u>0$.

In particular, for every fixed $a,T>0$, there exist constants
$c_{a,T},C_{a,T}>0$ such that
\[
    \PP\bigl(\mathcal{A}[(0, k)\to (T, 1)]>a k^{2/3}\bigr)
    \le C_{a,T}\exp(-c_{a,T}k^{4/3}).
\]
\end{lemma}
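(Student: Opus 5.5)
We would prove the first bound by writing the last passage value as a supremum over jump times and reducing it to increments of the stationary ensemble $\mathcal{R}_i = \mathcal{A}_i(\cdot)+(\cdot)^2$. For a path from $(0,k)$ to $(T,1)$ with jump times $0\le t_1\le \cdots \le t_{k-1}\le T$, the length in the parabolic ensemble equals its length in $\mathcal{R}$ plus a parabolic correction. The parabolic terms telescope: $\sum_i (t_{i}^2-t_{i-1}^2)$ with $t_0=0$, $t_k=T$, which equals $T^2$ up to sign. Thus $\mathcal{A}[(0,k)\to(T,1)]+T^2=\mathcal{R}[(0,k)\to(T,1)]$; the sign convention makes the $+T^2$ exactly cancel the parabola. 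It therefore suffices to bound the upper tail of $\mathcal{R}[(0,k)\to(T,1)]$.

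Next we bound the LPP value by the sum of the $k$ largest oscillations. Each path crosses at most $k$ lines, and on each line $i$ it picks up an increment $\mathcal{R}_i(t_i)-\mathcal{R}_i(t_{i-1})$. Hence
\[
\mathcal{R}[(0,k)\to(T,1)]\le \sum_{i=1}^k \sup_{0\le s\le t\le T}\bigl(\mathcal{R}_i(t)-\mathcal{R}_i(s)\bigr).
\]
This crude bound loses the order structure, but only a factor $k$ in the exponent is affordable, which is compatible with $e^{Ck}$.

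We then invoke the uniform modulus of continuity from \cite{wu2025applicationsoptimaltransportdyson}. It gives, uniformly over the line index $i$ and over windows of length $T$, sub-gaussian control
\[
\PP\Bigl(\sup_{0\le s\le t\le T}|\mathcal{R}_i(t)-\mathcal{R}_i(s)|>u\Bigr)\le C e^{-cu^2/T},
\]
with universal constants. The bound is inherited from Dyson Brownian motion via optimal transport, which is where stationarity matters. We need the joint tail of the sum of $k$ such oscillations, not just each one. Two routes are available. The preferred one is that the cited result yields a Lipschitz-type exponential moment $\mathbb{E}\exp(\lambda X_i)\le \exp(C+C\lambda^2 T)$ for each oscillation $X_i$, uniformly in $i$. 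Hölder's inequality across the $k$ lines (no independence needed) then gives $\mathbb{E}\exp(\lambda\sum X_i)\le \exp(Ck+Ck\lambda^2T)$. Optimising $\lambda$ yields $\exp(Ck-cu^2/(kT))$, which has a spurious $k$ in the denominator. To remove it, we use a union bound instead. On the event $\sum_i X_i>u$, some $i$ has $X_i>u/k$; this is also too weak. The main obstacle is therefore obtaining $e^{Ck-cu^2/T}$ without losing a factor $k$ in the gaussian exponent. We would first try applying the modulus result of \cite{wu2025applicationsoptimaltransportdyson} directly to the LPP functional. That functional is $1$-Lipschitz in the sup-norm of increments of the whole $k$-line configuration relative to the Brownian reference. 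Transport concentration for the top $k$ lines of Dyson Brownian motion, compared with $k$ independent Brownian motions, then costs entropy $O(k)$, giving the $e^{Ck}$ prefactor. For independent Brownian motions, the $k$-line LPP over time $T$ has mean $O(\sqrt{kT})$ and gaussian concentration at scale $\sqrt{T}$, and $\sqrt{kT}$ can be absorbed into $Ck$. This matches the form of Lemma 5.7 of \cite{dauvergne2026downthemiddle}, which we would follow.

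The second statement follows by substitution. Set $u=ak^{2/3}+T^2$. Then $cu^2/T\ge c a^2k^{4/3}/T$, which dominates $Ck$ for large $k$. Adjusting constants for small $k$ gives $C_{a,T}\exp(-c_{a,T}k^{4/3})$.
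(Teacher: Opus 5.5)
Your parabola cancellation $\mathcal{A}[(0,k)\to(T,1)]+T^2=\mathcal{R}[(0,k)\to(T,1)]$ is correct, as is your observation that summing single-line oscillations can only give a Gaussian exponent of order $u^2/(kT)$, and your final substitution $u=ak^{2/3}+T^2$. The gap is the decisive comparison step. You assert it rather than prove it, and the mechanism you propose for it does not work.

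\begin{itemize}
\item The last passage functional is not $1$-Lipschitz in the sup-norm of increments. A staircase perturbation of size $\epsilon$ on each of the $k$ lines can raise it by $k\epsilon$. Its useful Lipschitz constant is $\sqrt{T}$, with respect to the $\ell^2$ (Cameron--Martin) norm of increments, because a path occupies exactly one line at each time.
\item There is no comparison with independent Brownian motions that ``costs entropy $O(k)$''. Already for $k=1$, the relative entropy of $\mathcal{R}_1|_{[0,T]}$ with respect to rate-two Brownian motion is unbounded in $T$: the increment $\mathcal{R}_1(T)-\mathcal{R}_1(0)$ stays tight, while the Brownian increment has variance $2T$. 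So no entropy-based change of measure can give constants uniform in $T$.
\item Gaussian concentration around the mean (e.g.\ from log-concavity) would still require $\mathbb{E}\,\mathcal{R}[(0,k)\to(T,1)]\le C\sqrt{kT}$, which you have not supplied. The elementary bound $\mathcal{R}[(0,k)\to(T,1)]\le\mathcal{R}_1(T)-\mathcal{R}_k(0)$ gives only a mean of order $k^{2/3}$. After squaring, this produces a prefactor $e^{Ck^{4/3}/T}$ rather than $e^{Ck}$.
\end{itemize}

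Your instinct to apply the machinery of \cite{wu2025applicationsoptimaltransportdyson} to the last passage functional itself is the right one. What is missing is the idea that makes it work, namely \emph{convexity}.

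\begin{itemize}
\item For a finite mesh $D\subset[0,T]$, the coarse-grained value $\Lambda_{k,D}(f)$ is a finite maximum of linear functionals of $f$. Hence $f\mapsto\exp(\lambda\Lambda_{k,D}(f))$ is convex.
\item The law of $n$-level Dyson Brownian motion on $\{1,\dots,n\}\times D$ is a log-concave perturbation of independent Brownian motions \cite[Lemma 3.2]{wu2025applicationsoptimaltransportdyson}. The contraction result \cite[Proposition 2.9]{wu2025applicationsoptimaltransportdyson} then gives the convex-order bound $\mathbb{E}\exp(\lambda\Lambda_{k,D}(\Gamma-\mathbb{E}\Gamma))\le\mathbb{E}\exp(\lambda\Lambda_{k,D}(B))$, with no prefactor at all.
\item The paper then sends the starting point to the origin (giving the melon). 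It passes to the edge-scaled limit using convergence of the means $\mathbb{E}A^n_i(y)\to\mathbb{E}\mathcal{A}_i(y)$ and Fatou's lemma, and refines the mesh by monotone convergence.
\item The centring is harmless: $\mathcal{A}-\mathbb{E}\mathcal{A}=\mathcal{R}-\mathbb{E}\mathcal{R}(0)$ differs from $\mathcal{R}$ by a constant on each line, which leaves last passage values unchanged.
\end{itemize}

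This yields $\mathbb{E}\exp(\lambda\mathcal{R}[(0,k)\to(T,1)])\le\mathbb{E}\exp(\sqrt2\lambda B[(0,k)\to(T,1)])$. The factor $e^{Ck}$ then comes solely from the GUE mean $\mathbb{E}B[(0,k)\to(T,1)]\le A\sqrt{kT}$, via $(u-A\sqrt{kT})^2\ge u^2/2-A^2kT$. It does not come from any transport or entropy cost.
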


\begin{proof}
Fix $k\ge 1, T> 0$ and $u> 0$.

Fix a finite mesh $D\subset[0,T]$ containing $0$ and $T$. Moreover, for a continuous environment $f=(f_1,\ldots,f_k)$, define the `coarse-grained' last passage values with respect to the mesh $D$,
\[
    \Lambda_{k, D}(f;T)
    :=
    \sup_{\substack{0=t_k\le t_{k-1}\le\cdots\le t_0=T\\t_i \in D}}
    \sum_{i=1}^k
       \bigl(f_i(t_{i-1})-f_i(t_i)\bigr).
\]
Since the environment is continuous, we have the monotone convergence of $\Lambda_{k, D_n}(f;T)$ to the actual last passage value $f[(0, k) \to (T, 1)]$ for a family $(D_n)_n$ of nested meshes with mesh size tending to zero as $n\to \infty$. For every $\lambda>0$, $n\ge 1$ the functional
\[
    f\longmapsto
    \exp\bigl(\lambda\Lambda_{k,D_n}(f;T)\bigr)
\]
is convex and depends on finitely many evaluations of $f$.
Indeed, $\Lambda_{k,D_n}$ is a finite maximum of linear
functionals, and the exponential is convex and increasing. 

Fix $\lambda > 0$ and some mesh $D$ of $[0, T]$. Let $\underline{x} = (x_1, \ldots, x_n)$ be such that $x_1 > x_2 > \ldots > x_n$. Let $\Gamma^{n, \underline{x}}: =(\Gamma^{n, \underline{x}}_1, \ldots \Gamma^{n, \underline{x}}_n)$ by a $n$-level rate one Dyson Brownian motion starting from $\underline{x}$. Since its law on $\{1, \ldots, n\}\times D$ is a log-concave perturbation of independent Brownian motions (see Lemma 3.2 in \cite{wu2025applicationsoptimaltransportdyson}), an application of the contraction result in Proposition 2.9 of \cite{wu2025applicationsoptimaltransportdyson} gives the estimate
\[
    \mathbb{E}\exp\bigl(\lambda\Lambda_{k,D}(\Gamma^{n, \underline{x}} - \mathbb{E}[\Gamma^{n, \underline{x}}];T)\bigr)
    \le
    \mathbb{E}\exp\bigl(\lambda\Lambda_{k,D}(B;T)\bigr),
\]
where $B:= (B_1, \ldots, B_n)$ is an independent family of standard Brownian motions. By \cite[Proposition 4.3.5]{anderson2010introduction}, as $\underline {x}\to \underline{0}$ pointwise, the laws of $\Gamma^{\underline{x}}$ converge in path space $\mathcal{C}^n(\R_{\ge 0})$ to those of $n$-level Dyson Brownian motion starting from the origin (recall it has the same law as the $n$-melon $WB^n$ from Subsection \ref{subsec: melons}). Also, using Lemma 3.2 in \cite{konig2001eigenvalues}, it is not hard to establish the convergence of expectations $\mathbb{E}[\Gamma^{n, \underline{x}}(t)]$ to $\mathbb{E}[\Gamma^{n, \underline{0}}(t)] = \mathbb{E}[WB^n(t)]$ for all $t\ge 0$. Thus, by Fatou's lemma, taking $\underline{x}\to 0$, we have 
\[
    \mathbb{E}\exp\bigl(\lambda\Lambda_{k,D}(WB^n - \mathbb{E}[WB^n];T)\bigr)
    \le
    \mathbb{E}\exp\bigl(\lambda\Lambda_{k,D}(B;T)\bigr)\,.
\]
Now, in accordance with the edge scaling in Theorem \ref{thm: melon scaling Airy}, the above estimate in conjunction with the invariance of Brownian motion under diffusive scaling gives
\[
    \mathbb{E}\exp\bigl(\lambda\Lambda_{k,D}(A^n - \mathbb{E}[A^n];T)\bigr)
    \le
    \mathbb{E}\exp\bigl(\lambda\Lambda_{k,D}(\sqrt{2}B;T)\bigr)\,,
\]
where
\[
A^n_i(y) = n^{1/6} \left((WB^n)_i(1 + 2yn^{-1/3}) - 2\sqrt{n} - 2yn^{1/6} \right)\,,\quad 1\le i \le n\,.
\]
By Skorokhod's representation theorem, we can take the $A^n_i$ to converge locally uniformly to some coupled version of the Airy line ensemble $\mathcal{A}$. Moreover, by
Lemmas 2.2-2.3 \cite{gustavsson2005gaussian} and \cite{ledoux2010small} (for details, see the proof of Lemma 5.7 in \cite{dauvergne2026downthemiddle}), the expectations $\mathbb{E}[A^n_i(y)]\to \mathbb{E}[\mathcal{A}_i(y)]$ as $n\to \infty$ for all $i\ge 1$, $y\in \R$. Thus, another application of Fatou's lemma gives the estimate (recalling that the last passage values of an environment where each level is shifted by the same function are related to the original last passage values simply by the addition of the increment of the perturbing function from the initial to the terminal spatial values)
\[
     \mathbb{E}\exp\bigl(\lambda\Lambda_{k,D}(\mathcal{R};T)\bigr) = \mathbb{E}\exp\bigl(\lambda\Lambda_{k,D}(\mathcal{A}-\mathbb E\mathcal{A};T)\bigr)
    \le
    \mathbb{E}\exp\bigl(\lambda\Lambda_{k,D}(\sqrt{2} B;T)\bigr)\,,
\]
with $\mathcal{R}(\cdot) = \mathcal{A}(\cdot) + (\cdot)^2$ the stationary Airy line ensemble. 

Taking nested meshes $D_n$ with dyadic times, continuity and monotone convergence yield
\[
    \mathbb{E}\exp\bigl(\lambda\mathcal{A}[(0, k) \to (T, 1)]+\lambda T^2\bigr)=\mathbb{E}\exp\bigl(\lambda\mathcal{R}[(0, k) \to (T, 1)]\bigr)
    \le \mathbb{E}\exp\bigl(\sqrt{2}\lambda B[(0,k)\to(T,1)]\bigr)\,.
\]
By the Brownian last-passage/GUE identity (see \cite{baryshnikov2000random}), the last passage value $B[(0,k)\to(T,1)]$ has the
law of the largest eigenvalue of a suitably normalized
$k\times k$ GUE matrix. Gaussian concentration, \cite[Proposition 3.1]{ledoux2007deviation}, gives
\[
\PP\big(B[(0,k)\to(1,1)] \ge r + \mathbb{E}B[(0,k)\to(1,1)]\big)\le 2\mathrm{e}^{-\frac{r^2}{2}}\,,\quad r\ge 0\,.
\]
Since by equation (3.4) in \cite[p. 194]{ledoux2007deviation}, $\mathbb{E}B[(0,k)\to(T,1)] \le 2\sqrt{2kT}$, we estimate by Fubini's theorem and the invariance of Brownian motion under diffusive rescaling,
\[
\mathbb{E}\exp\bigl(\sqrt{2}\lambda B[(0,k)\to(T,1)]\bigr)
\le C\exp\bigl(A\lambda\sqrt{kT}+D\lambda^2T\bigr)\,,\qquad \lambda > 0\,,
\]
for absolute constants $A,C, D > 0$. An application of Markov's inequality gives for any $\lambda > 0$
\begin{align}\label{eq: moment gen func airy lpp}
     \PP\bigl(\mathcal{A}[(0, k)\to (T, 1)]+T^2>u\bigr)
    &\le \frac{\mathbb{E}\exp\bigl(\lambda\mathcal{A}[(0, k) \to (T, 1)]+\lambda T^2\bigr)}{\exp \lambda u}\\
    &\le  \frac{\mathbb{E}\exp\bigl(\sqrt{2}\lambda B[(0,k)\to(T,1)]\bigr)}{\exp \lambda u}\\
    &\le C\mathrm{e}^{A\lambda\sqrt{kT}+ D\lambda^2 T-\lambda u}\qquad u\ge0\,.
\end{align}
Taking $\lambda= (u-A\sqrt{kT})/(2DT)$, we have
\[
\PP\bigl(\mathcal{A}[(0,k)\to(T,1)]+T^2>u\bigr)
\le C\exp\left(
    -\frac{(u-A\sqrt{kT})^2}{4DT}
\right)\,, \qquad u \ge A\sqrt{kT}\,.
\]
Extending the bound trivially for all $0 \le u \le A\sqrt{kT}$ we obtain
\begin{align*}
    \PP\bigl(\mathcal{A}[(0, k)\to (T, 1)]+T^2>u\bigr)\le \exp\left(Ck -\frac{cu^2}{T}\right)\qquad u\ge 0\,,
\end{align*}
for some absolute constants $c, C > 0$.

For the final part, simply use $u = T^2 + ak^{\frac{2}{3}}$.
\end{proof}

The above upper tail concentration, which matches its Brownian counterpart has a two-sided counterpart, albeit with worse decay. In particular, using the bridge representation of \cite{Dauvergne_2021}, we obtain the following concentration for last passage values across the Airy line ensemble.

\begin{lemma}[A quantitative last-passage estimate]\label{lemma: shape}
Fix $0<a<b<\infty$, then there exist some (possibly $a, b$-dependent) constants $c,C>0$ such that for
$T\in[a,b]$, $k\ge3$ and $1\le s\le k^{1/14}$,
\begin{align}\label{eq:shape}
 &\PP\left(\left|\mathcal{A}[(0,k)\to(T,1)]+T^2
                 -2\sqrt{2kT}\right|>Ck^{3/7}s^{3/2}\right)\notag\\
 &\hspace{25mm}\le Ck^2e^{-cs}+Cke^{-ck^{1/63}}.
\end{align}
\end{lemma}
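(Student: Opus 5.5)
The idea is to compare the $k$-level last passage value across $\mathcal{A}$ with the corresponding value across $k$ independent Brownian bridges, using the bridge representation of \cite{Dauvergne_2021}. The Brownian value has an explicit GUE-type law with mean $2\sqrt{2kT}$ and fluctuations of order $k^{-1/6}$. The plan has three steps.

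\textbf{Step 1 (bridge representation).} On an interval $[-M,M]$ with $M\asymp k^{1/3}$ (or a suitable power), apply the Brownian Gibbs property to the top $k$ lines. I would use the quantitative statement in \cite{Dauvergne_2021}: on a good event, the top $k$ lines of $\mathcal{R}=\mathcal{A}+(\cdot)^2$ on $[0,T]$ are close, up to an error of order $k^{\gamma}$ uniformly, to $k$ independent rate-two Brownian bridges with drifts determined by the boundary values. The good event has probability at least $1-Cke^{-ck^{1/63}}$. The boundary values are controlled via Lemma \ref{lemma: airy counting 1} and Lemma \ref{lemma: airy counting 2}, which place $\mathcal{A}_i(\pm M)$ near $-(3\pi i/2)^{2/3}-M^2$. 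I expect this step to produce the $Cke^{-ck^{1/63}}$ term.

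\textbf{Step 2 (LPP comparison).} Last passage values are $1$-Lipschitz in sup norm with a factor of $2k$: changing each line by at most $\delta$ changes the value by at most $2k\delta$. That factor is too crude, so instead I would use the structure of the representation. Each line is written as a bridge plus a linear drift, and the drifts are comparable across levels to within a precision inherited from the boundary concentration. A common linear shift changes the last passage value only by its increment. The residual level-dependent drifts have size roughly $i^{2/3}/M$ per level, so their total contribution through a geodesic is bounded by a sum that I would optimise against $M$. This optimisation should produce the exponent $k^{3/7}$: balancing the drift error against the Gibbs approximation error at $M=k^{\alpha}$ gives $3/7$.

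\textbf{Step 3 (Brownian concentration).} For $k$ independent Brownian motions, or bridges on a long interval (equivalent on $[0,T]$ up to a drift term), $B[(0,k)\to(T,1)]$ is distributed as $\sqrt{2T}$ times the top GUE eigenvalue. Its mean is therefore $2\sqrt{2kT}+O(k^{-1/6})$ and it has $e^{-cs^{3/2}}$ tails. Together with Lemma \ref{lemma: finite-depth-tail} for the upper side, this gives deviations far smaller than $k^{3/7}s^{3/2}$, with failure probability dominated by $Ck^2e^{-cs}$. The $k^2$ prefactor comes from a union bound over the $k$ levels and the $O(k)$ mesh points used in discretising the jump times. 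The restriction $s\le k^{1/14}$ ensures that $k^{3/7}s^{3/2}\le k^{1/2}$, which keeps the perturbation inside the regime where the bridge approximation of Step 1 is valid. Uniformity over $T\in[a,b]$ follows from monotonicity and continuity in $T$ on a mesh.

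The main obstacle is Step 2: controlling the level-dependent drifts without paying the naive factor $k$. For this I would first try bounding the geodesic's jump locations using Lemma \ref{lemma: geod location brown env lpp} in the Brownian model, so that only $O(1)$ effective drift per level accumulates along the geodesic.
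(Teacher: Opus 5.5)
There is a genuine gap. It sits where you expected, in Step 2, but it already starts in Step 1.

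\textbf{Step 1 misstates the bridge representation.} The representation of \cite{Dauvergne_2021} does not say that the top $k$ lines on $[0,T]$ are uniformly close to $k$ \emph{independent} Brownian bridges with drifts fixed by boundary values at $\pm M$. No statement of that type can hold. Near level $k$ the lines are non-intersecting with spacing of order $k^{-1/3}$, so independent bridges on an interval of order one would cross each other, and depart from the Airy lines, on time scales of order $k^{-2/3}$.

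What \cite[Theorem~1.1]{Dauvergne_2021} actually gives, and what the paper uses, is a total-variation coupling with an ensemble $\mathcal{B}$ built as follows:
\begin{itemize}
\item it lives on a fine lattice $s_j=Tj/\ell$ with $\ell=\lceil Tk^{2/3+\gamma}\rceil$;
\item its lattice values are those of $\mathcal{A}$ itself;
\item it has independent bridges on each cell;
\item non-intersection is retained only inside connected components of the graph $G_{2k}$ of $\delta$-close neighbours.
\end{itemize}
With $\gamma=4/21$, the total-variation error is what produces the term $Cke^{-ck^{1/63}}$. Lemmas \ref{lemma: airy counting 1}--\ref{lemma: airy counting 2} only locate $\mathcal{A}_i$ within a window of order $i^{2/3}$, so they cannot supply the boundary precision your Step 1 needs.

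\textbf{Step 2 has no mechanism for the non-intersection interaction, which is the whole difficulty.} The mean profile $-(3\pi i/2)^{2/3}-t^2$ creates no relative drift between levels, so your ``drifts of size $i^{2/3}/M$'' do not describe the discrepancy. The exponent $3/7$ is asserted rather than obtained from a computed balance. Localising jump times via Lemma \ref{lemma: geod location brown env lpp} does not bound the interaction either.

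The upper deviation could be taken from the log-concave comparison \eqref{eq: moment gen func airy lpp} in the proof of Lemma \ref{lemma: finite-depth-tail}. It cannot come from that lemma's stated form, whose $e^{Ck}$ factor is useless at $u\approx 2\sqrt{2kT}$. The lower deviation is where the interaction must actually be controlled.

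\textbf{The missing idea is a three-part decomposition} $\mathcal{B}_i=H_i+R_i+X_i$. Here $H_i$ are independent rate-two Brownian motions, $R_i$ is piecewise affine, and $X_i$ vanishes on every cell $(i,j)$ that is isolated in $G_{2k}$.
\begin{itemize}
\item The modulus of continuity of \cite{wu2025applicationsoptimaltransportdyson}, union-bounded over the $k\ell$ cells, gives oscillation at most $C\sqrt{hs}+Ch$ per cell, where $h=T/\ell$. Hence $\|R_i'\|_\infty\le C\sqrt{s/h}+C$, and the $R$-contribution along any path is at most $CT(\sqrt{s/h}+1)$.
\item Proposition 7.4 of \cite{Dauvergne_2021} bounds by $Cs$ the number of non-isolated vertices in each block of $\lfloor k^{1/7}\rfloor$ consecutive levels at each lattice time, with probability $1-Ce^{-cs}$. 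This is exactly where the restriction $1\le s\le k^{1/14}$ comes from. Your explanation of that restriction is arithmetically wrong: $s\le k^{1/14}$ only gives $k^{3/7}s^{3/2}\le k^{15/28}$, not $\le k^{1/2}$.
\item The union bound over about $k^{6/7}$ blocks and $\ell$ lattice times gives the $Ck^2e^{-cs}$ term. It does not come from discretised jump times.
\item Any zig-zag meets at most $N_k+\ell\le Ck^{6/7}$ block-cells, so the $X$-contribution is at most $Ck^{6/7}s(\sqrt{hs}+h)$.
\end{itemize}
With $h\asymp k^{-6/7}$, both error terms are $\lesssim k^{3/7}s^{3/2}$. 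This balance of $T/\sqrt h$ against $k^{6/7}\sqrt h$ is the true source of the exponent $3/7$.

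Only after this comparison does your Step 3 close the argument. That step, Brownian last-passage concentration around $2\sqrt{2kT}$, matches the paper, which uses \cite[Theorem~2.5]{DOV}.
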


\begin{proof}
The bridge representation of \cite[Theorem~7.2]{Dauvergne_2021} couples the top $k$ curves of the Airy line ensemble with another bridge ensemble $\mathcal B$ on $[0,T]$, close in total variation distance. Motivated by the Brownian Gibbs property for the Airy line ensemble, $\mathcal{B}$ is defined by first sampling $\mathcal{A}$ on a lattice approximation of $[0, T]\times \{1, \ldots, k\}$ and then for every line interval on the lattice, sampling independent rate two Brownian bridges with endpoints the values of $\mathcal{A}$ on the lattice. Finally, for adjacent endpoints that are sufficiently close together, a non-intersection condition on the Brownian bridges is imposed.

One can precisely construct $\mathcal{B}$ as follows. Fix $0< a< b<\infty$, $k, \ell\ge 3$, $\delta > 0$. With $s_j = Tj/\ell$, $0\le j \le \ell$, sample the points $\mathcal{A}_i(s_j)$, $1\le i \le 2k$, $0\le j \le \ell$. Let $G_{2k}$ be the random sub-graph of $\{1, \ldots, 2k\}\times \{1, \ldots, \ell\}$ where two edges $(i, j)$ and $(i+1, j)$, $1\le i \le 2k-1$, $1\le j \le \ell$ are connected if an only if at least one of $|\mathcal{A}_{i+1}(s_j)-\mathcal{A}_{i}(s_{j})|$, $|\mathcal{A}_{i+1}(s_{j-1})-\mathcal{A}_{i}(s_{j-1})|$ is at most $\delta$. Then for every $(i, j) \in \{1, \ldots, 2k\}\times \{1, \ldots, \ell\}$ sample independent rate two Brownian bridges $B_{i, j}: [s_{j-1}, s_j]\to \R$ vanishing at both endpoints. The ensemble $\mathcal{B}$ is obtained by connecting up all of the bridges $B_{i, j}$ by random affine terms $L_{i, j}$ so that for all $(i, j)\in \{1, \ldots, 2k\}\times \{1, \ldots, \ell\}$, $\mathcal{B}_i(s_j) = \mathcal{A}_i(s_j)$ and then imposing the non-intersection condition on the shifted bridges $B_{i, j} + L_{i, j}$ and $B_{i+1, j}+L_{i+1, j}$, for $1\le i \le 2k-1$, $1\le j \le \ell$ if an only if $(i, j)$ and $(i+1, j)$ are in the same connected component of the graph $G_{2k}$.

Theorem~1.1 in \cite{Dauvergne_2021} then gives that with the choice of parameters 
\[
 \gamma>0,\qquad
 \ell=\lceil Tk^{2/3+\gamma}\rceil,\qquad \delta = k^{-1/3-\gamma/4}\,,
\]
for $\gamma> 0$ to be chose later, the ensembles $\mathcal{B}|_{ \{1, \ldots, k \}\times [0, T]}$ and $\mathcal{A}|_{ \{1, \ldots, k \}\times [0, T]}$ are within $Ck^{2/3+\gamma}\mathrm{e}^{-ck^{\gamma/12}}$ in total variation distance, for some absolute constant $c>0$ and some $C>0$ (whence the second term in \eqref{eq:shape} for appropriate $\gamma$). Here we absorbed $T$ into $C$, and taking into account that $a\le T
\le b$, suppressing dependence on $a, b$. There thus exists a coupling between $\mathcal{A}, \mathcal{B}$ such that the first $k$ lines agree on $[0, T]$ with probability at least $1-Ck^{2/3+\gamma}\mathrm{e}^{-ck^{\gamma/12}}$. We will henceforth work with this coupling.

We enlarge the probability space with $N_{i, j}$ independent centred Gaussian with unit variance and independent rate two Brownian bridges $W_{i,j}$, $1\le i \le k$, $1\le j \le \ell$ with domains $[s_{j-1}, s_j]$ vanishing at both endpoints, also jointly independent from the bridge ensemble $\mathcal{B}$ and the $N_{i, j}$. Condition on the values of the Bridge ensemble on the lattice $\{\mathcal{B}_i(s_j) : 0\le j \le \ell, 1\le i \le 2k\}$. Note in particular, the random graph $G_{2k}$ depends on the Airy line ensemble on this lattice. Then, for any isolated vertex $(i, j)$, observe that the Bridge ensemble $\mathcal{B}$ on $[s_{j-1}, s_j]\times\{i\}$ Brownian is a deterministic affine shift of a rate two Brownian bridge vanishing at both endpoints also independent from $\mathcal{B}$ on $[s_{j-1}, s_j]\times\{i'\}$, $i'\neq i$. If on the other hand $(i, j)$ is a non-isolated vertex of $G_{2k}$, one can express the Bridge ensemble $\mathcal{B}$ on $[s_{j-1}, s_j]\times\{i\}$ as the sum of a deterministic affine term depending on the Bridge ensemble lattice values, the Brownian Bridge $W_{i, j}$ and another random continuous $X_{i, j} := B_{i, j} - W_{i, j}$. In this case, the Brownian bridge $B_{i, j}$, vanishing at both endpoints, is conditioned to not intersect $U_{i, j}-1$ other bridges, where $U_{i, j}$ denotes the size of the connected component in $G_{2k}$ containing $(i,j)$. Concatenating the above paths, and undoing the conditioning on the Bridge ensemble lattice values gives the path decomposition for the line $\mathcal{B}$,
\begin{equation}\label{eq: bridge decomp}
    \mathcal B_{i}(s)= H_{i}(s)+R_{i}(s)+X_i(s)\,,\qquad 1\le i\le k\,, 0\le s\le T\,,
\end{equation}
where the $H_i$ are independent rate two Brownian motions on $[0, T]$ constructed from the Brownian bridges and affine shifts of Gaussian $N_{i,j}$, $R_{i}$ is a random continuous, piecewise affine term and $X_{i}=0$ on $[s_{j-1}, s_j]$ for $(i, j)$ isolated in $G_{2k}$ and $X_{i} = B_{i, j}-W_{i,j}$ on $[s_{j-1}, s_j]$ for non-isolated $(i,j)$. 

The uniform modulus of continuity estimates for the Airy line ensemble in \cite[Corollary~1.3]{wu2025applicationsoptimaltransportdyson}, 
gives that exist universal constants $C_1, C_2>0$ such that,
for any $i\ge 1$, $1\le j \le \ell$, $K\geq 0$, 
    \begin{align*}
    \mathbb{P}\left(\sup_{t,s\in [s_{j-1},s_j], t\neq s}  \frac{|\mathcal{A}_i(t)+t^2-\mathcal{A}_i(s)-s^2|}{\sqrt{|t-s| \log (2(s_j-s_{j-1})/|t-s|)}} > K \right)\leq C_1\mathrm{e}^{-C_2K^2}\,.
\end{align*}  
and similarly for the rate two Brownian motions $H_i$.

These estimates, combined with the elementary bounds
\begin{equation*}
\sqrt{|t-s|\log 2h/(t-s)}\le \sqrt{h}\sup_{0< \varepsilon \le 1}\sqrt{\varepsilon\log2/\varepsilon}\le Ch^{1/2}\,,\qquad 0 < t-s\le h
\end{equation*}
where $h = T/\ell$ (taking $k$ sufficiently large, $\ell \ge T$ and so $h\le 1$, which we henceforth assume), in conjunction with a union bound over $i, j$ gives for fixed $s\ge 1$ on an event $\mathrm{Osc}_{s, k}$ of probability at least $1- Ck\ell \mathrm{e}^{-cs}$,
\[
 \max_{1\le i\le k,\,1\le j\le \ell}
 \left\{\mathsf{osc}_{[s_{j-1},s_{j}]}(\mathcal{A}_i),
        \mathsf{osc}_{[s_{j-1},s_{j}]}(H_i)\right\}
\le C\sqrt{hs}+C\max_{j}|s^2_j-s^2_{j-1}| \le C\sqrt{hs}+Ch\,,
\]
where for $f\in \mathcal{C}[a,b]$, we set
\[
\mathsf{osc}_{[a, b]}(f):= \sup_{t,s\in [a,b], t\neq s}|f(t)-f(s)| \,.
\]
On the event where $\mathcal{B}|_{ \{1, \ldots, k \}\times [0, T]}= \mathcal{A}|_{ \{1, \ldots, k \}\times [0, T]}$, the above bounds and the bridge decomposition in \eqref{eq: bridge decomp} imply the $X_i$ and the slopes $R'_i$ of the linear terms $R_i$ satisfy the bounds
\[
 \max_{1\le i\le k,1\le j\le \ell}\mathsf{osc}_{[s_{j-1},s_{j}]}X_i\le C\sqrt{hs}+Ch\,,\qquad \text{ and } \displaystyle \max_{1\le i\le k}\|R_i'\|_{\infty, T}\le C\sqrt{s/h}+C\,.
\]
Here we use $\norm{\cdot}_{\infty, T}$ to denote the uniform norm of a continuous function on $[0, T]$. 

\begin{figure}[hb]
    \centering
    \includegraphics[width=0.65\linewidth]{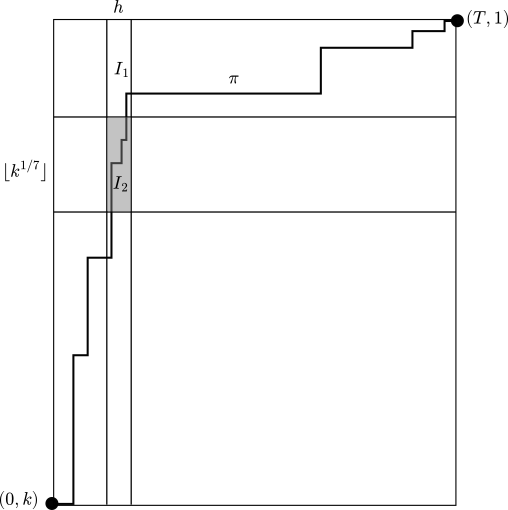}
    \caption{Illustration of path $\pi$ from $(0, k)$ to $(T, 1)$ superimposed on a `coarse-grained' lattice given by slices $[s_{j-1}, s_j]\times \{i\}$ of width $h$, for $(i, j) \in G_{2k}$. For $1\le s \le k^{1/14}$, on the event $E_{k,s}$, the Airy line ensemble in any box $I_n$, $1\le n \le N_k$ is up to $Cs$ lines, simply independent rate two Brownian motions with random linear terms interpolating between the values of $\mathcal{A}$ on the lattice $\{(s_j, i): 0\le j \le \ell\,, 1\le i \le k\}$. Now, the boxes $I_n$ hit by $\pi$ (shaded in \color{gray} gray\color{black}) are at most $N_k+\ell$, as they themselves can be thought of as a `coarse-grained' version of $\pi$.}
    \label{fig: airy lpp coarse graining}
\end{figure}

Fix $\alpha=1/7$ and $3\gamma/4=1/7$, $1\le s \le k^{1/14}$. Partition $1\le i \le k$ into $N_k$ blocks $(I_{n})_{1\le n \le N_k}$ of $\lfloor k^{1/7}\rfloor$ consecutive indices (except for say some residual block, which can be augmented to a block of size $\lfloor k^{1/7} \rfloor$ using indices from $\{k+1, \ldots, 2k\}$ if necessary). By \cite[Proposition 7.4]{Dauvergne_2021}, for any fixed $1\le j \le \ell$, the size of the set
\[
V_j := \{i: 1\le i \le k\,, (i, j) \text{ is not isolated in } G_{2k}\}
\]
intersected with $I_n$ exceeds $Cs$ with probability at most $Ce^{-cs}$. The restriction $s\le k^{1/14}$ is precisely the parameter restriction in the statement of \cite[Proposition~7.4]{Dauvergne_2021}. A union bound over the $N_k$ such blocks and $1\le j \le \ell$ gives
\[
\displaystyle \PP\left(\bigcup_{\substack{1\le n \le N_k\\ 1\le j \le \ell}}\{|V_j\cap I_n|\ge Cs\}\right) \le Ck^2\cdot \mathrm{e}^{-cs}\,.
\]

Define events
\begin{align*}
    A_{k} &:= \{\mathcal{B}|_{ \{1, \ldots, k \}\times [0, T]} \neq \mathcal{A}|_{ \{1, \ldots, k \}\times [0, T]}\}\\
    V_{k, s}&:= \left\{\begin{aligned}
        &\text{There exist } 1\le j \le \ell, 1\le n \le N_k \text{ such that the total}\\
        &\text{number of non-isolated points } (i, j)\in G_{2k} \text{ where } i \text{ is in}\\
    &\text{the block } I_n \text{ of }\lfloor k^{1/7}\rfloor \text{ consecutive indices exceeds } Cs
    \end{aligned}\right\}\,.
\end{align*}
We have thus established that with the choice $\gamma = 4/21$,
\[
\PP(A_k\cup V_{k, s}\cup \mathrm{Osc}^c_{s, k})\le Ck^2\cdot \mathrm{e}^{-cs}+Ck\mathrm{e}^{-ck^{1/63}}\,,\qquad 1\le s \le k^{1/14}\,.
\]

We henceforth suppose we are in the good event $E_{k,s}:= A^c_k\cap V^c_{k, s}\cap \mathrm{Osc}_{s,k}$. Now the zig-zag (recall from Definition \ref{eq: zig-zag}) of a non-increasing path $\pi:[0, T] \to \N$ from $(0, k)$ to $(T, 1)$ (recall the definition of last passage percolation, Definition \ref{def: length} in Section \ref{sec: preliminaries}) visits at most $N_k+\ell$ rectangles $I_n\times [s_{j-1}, s_j]$, which with the above choice of $\gamma = 4/21$, is at most $Ck^{6/7}$. For an illustration, see Figure \ref{fig: airy lpp coarse graining}. In each visited rectangle, $X_i$ does not identically vanish on at most $Cs$
line segments, on the event $E_{k, s}$. Thus, on $E_{k, s}$ the lengths $\ell^R(\pi)$, $\ell^X(\pi)$ of $\pi$ in the environments give by the ensembles $R_i$ and $X_i$, respectively satisfy
\[
 |\ell^R(\pi)|\le CT\bigl(\sqrt{s/h}+1\bigr),\qquad
 |\ell^X(\pi)|\le Ck^{6/7}s(\sqrt{hs}+h).
\]
Fix $0 < a < b$. Then for $a\le T\le b$, we have $C_a k^{-6/7} \le h \le  C_b k^{-6/7}$, for some constants $C_a, C_b> 0$. Taking suprema over $\pi$ yields on the event $E_{k, s}$
\[
 \left|\mathcal B[(0,k)\to(T,1)]+T^2-H[(0,k)\to(T,1)]\right|
 \le Ck^{3/7}s^{3/2}\,,
\]
absorbing dependence on $a, b$ into $C$ and the $T^2$ term can be absorbed by enlarging the constant $C$, since $k^{3/7}s^{3/2}\ge 1$, where we again suppress dependence on $a, b$. Finally, the Brownian last-passage tail estimates of Theorem 2.5 in \cite{DOV} give the bounds
\[
\sup_{a\le T\le b}\PP(|H[(0,k)\to(T,1)]-2\sqrt{2kT}|\ge Ck^{3/7}s^{3/2})\le C\mathrm{e}^{-cs}\,,
\]
again, absorbing $a, b$ dependence into $c, C>0$.

Combining the above two-sided estimates and rarity of $E^c_{k, s}$ gives the bounds \eqref{eq:shape}.
\end{proof}

\subsection{Airy sheet}\label{subsec: Airy sheet}
The standard Airy sheet $\mathcal{S}:\R^2\to \R$ is a random continuous function and can be thought of as assigning `distances' between pairs of space-times points $(x, 0)$ and $(y, 1)$. It is closely related to the Airy line ensemble, a connection which we will use frequently.

We collect some invariance properties of the Airy sheet. From Lemma 9.1 \cite{DOV} and Proposition 14.1 in \cite{dauvergne2022scalinglimitlongestincreasing}, we have that as a random continuous function in $\R^2$, for any $c \in \R$ the Airy sheet satisfies
\begin{equation}\label{eq: Airy sheet symmetry}
\mathcal{S}(x, y) \stackrel{\mathrm{d}}{=} \mathcal{S}(-y, -x)\,, \quad \mathcal{S}(x, y) \stackrel{\mathrm{d}}{=} \mathcal{S}(-x, -y) \quad \mathrm{and} \quad \mathcal{S}(x, y) \stackrel{\mathrm{d}}{=} \mathcal{S}(x, y + c) + 2c(y-x) + c^2.
\end{equation}
Also, almost surely for all $x \le x'$, $y\le y'$, we have
\begin{equation}\label{eq: quadrangle ineq airy sheet}
\mathcal{S}(x, y) + \mathcal{S}(x', y') \ge \mathcal{S}(x, y') + \mathcal{S}(x', y)\,.
\end{equation}
Observe in particular that the symmetries \eqref{eq: Airy sheet symmetry}, give that the process $\mathcal{S}(x, \cdot) + (\cdot -x)^2$ is stationary for all $x\in \R$. Moreover, by Lemma 5.3 \cite{sarkarthrehalves2022} the Airy sheet satisfies the almost sure pointwise bounds 
\begin{equation}\label{eq: airy shape bnds}
|\mathcal{S}(x,y)+(x-y)^2|\le \mathfrak{C}+c\log^{2/3}(2+|x|+|y|)\,,\qquad \text{ for all } x,y\in \R 
\end{equation}
for some universal constant $c>0$ and some $\mathfrak{C}\ge 0$ satisfying $\mathbb{E}[a^{\mathfrak{C}^{3/2}}]<\infty$ for some universal constant $a>1$, \cite{sarkarthrehalves2022}.  Moreover, by Proposition 10.5 in \cite{DOV} Airy sheet has a jointly H\"{o}lder $1/2^-$-continuous modification.

We end this subsection by making more precise the connection between the Airy sheet and the Airy line ensemble. In fact the law of the Airy sheet on the positive half-plane $\R_{\ge 0} \times \R$ is uniquely determined by that of the Airy line ensemble (cf. \cite[Definition 8.1]{DOV}). In particular, we have the following coupling in terms of the Airy line ensemble Busemann profiles in \eqref{eq: Airy limit busemann}.

\begin{definition}(Airy sheet coupling)\label{coupling: Airy sheet}
The Airy sheet $\mathcal{S}(\cdot, \cdot)$ can be coupled with the (parabolic) Airy line ensemble $\mathcal{A}$ so that $\mathcal{S}(0,\cdot)=\mathcal{A}_1(\cdot)$ and almost surely for all $(x,y,z)\in (\Q\cap(0, \infty))\times \Q^2$\footnote{The above coupling can alternatively be established for all triplets $(x, y , z)\in K$, where $K$ is any fixed countable dense subset of $(0, \infty)\times \R^2$. Moreover, the above limits also exist simultaneously for all $(x, y, z)\in (0, \infty)\times \R^2$, see remark 8.1 in \cite{DOV}.}, there exists a random integer $K_{x,y,z}$ such that for all $k\ge K_{x,y,z}$ \begin{equation*}\label{eq: defS}
\mathcal{A}\bigg[\bigg(-\sqrt{\frac{k}{2x}}, k\bigg)\to (z,1)\bigg]-\mathcal{A}\bigg[\bigg(-\sqrt{\frac{k}{2x}}, k\bigg)\to (y,1)\bigg]=\mathcal{S}(x,z)-\mathcal{S}(x,y)\,.
\end{equation*}
\end{definition}
 The above coupling can be interpreted as saying the increments of the Airy sheet, themselves encoding lengths of geodesics in the directed landscape, see Section 12 in \cite{DOV} are precisely the relative lengths of coalescing semi-infinite geodesics in the Airy line ensemble, see Definition \eqref{def: semi-inf geo} and Lemma \ref{lemma: semi-inf geod properties}. This therefore suggests there is some sort of duality between the geometric information contained in the Airy sheet and Airy line ensemble. Over the course of Section \ref{sec: geod geom} and Section \ref{sec: exceptional geod geom}, this will hopefully become more apparent.

In the recent work of \cite{dauvergne2026downthemiddle}, the authors constructed a `down the middle' coupling between the Airy sheet and Airy line ensemble on the quarter-plane $\R_{\ge 0} \times \R_{\ge 0}$ using only finite depth last passage values over the Airy line ensemble. We will explore the connection between the semi-infinite Busemann functions in \eqref{eq: Airy limit busemann} and finite depth last passage values across the Airy line ensemble in Theorem \ref{thm: Busemann equality in law}.

\section{Semi-infinite geodesics: the basics}\label{sec: semi-inf geod}

In this section, we establish existence and uniqueness of semi-infinite geodesics in the Airy line ensemble, and establish some structural properties thereof, resembling geodesics in last passage percolation more generally. An analysis of the prelimiting Brownian melons in Theorem \ref{thm: melon scaling Airy} provides some basic structural results about semi-infinite geodesics in the Airy line ensemble.

We start with a definition.
\begin{definition}\label{def: semi-inf geo} For any fixed $x > 0$ and $y\in \R$, $\ell \ge 1$. A non-increasing cadlag path
\[
\pi[x\to (y, \ell)]: (-\infty, y] \longrightarrow \{\ell, \ell+1,\ldots\}
\]
is called an $x$-directed \emph{semi-infinite geodesic} with endpoint $(y, \ell)$ if all restrictions thereof to bounded intervals are geodesics, and 
\[
\lim_{t\to -\infty}\frac{\pi[x\to (y, \ell)](t)}{t^2} = 2x\,.
\]
\end{definition}

It is not clear a priori that these infinite paths exist. The fact that these semi-infinite paths exist is the content of Lemma \ref{lemma: semi-inf geod properties}.

\begin{lemma}\label{lemma: semi-inf geod properties}
	\begin{enumerate}
		\item[(1)] For any fixed $x > 0$, $y\in \R$, $\ell\ge 1$, almost surely there exists a unique semi-infinite geodesic $\pi[x\to (y, \ell)]$ in $\mathcal{A}$ from $x$ to $(y, \ell)$. 
        \item[(2)] There exists an event $\Omega_0$ of full probability such that on $\Omega_0$, for every $x\ge 0$, $y\in \R$, there exist semi-infinite geodesics $\pi^L[x \to (y, \ell)]$ and $\pi^R[x \to (y, \ell)]$ from $x$ to $(y, \ell)$ satisfying $\pi^L[x \to (y, \ell)](t) \le\pi (t) \le\pi^R[x \to (y, \ell)](t)$ for any geodesic $\pi$ from $x$ to $(y, \ell)$ and all $t \in (-\infty, y]$. We call $\pi^L[x \to (y, \ell)]$ and $\pi^R[x \to (y, \ell)]$ the \textbf{leftmost and rightmost geodesics from $x$ to $(y, \ell)$}.
		\item[(3)] (Coalescence) For a fixed $x > 0$ and $y, y' \in \R$, $\ell, \ell'\ge 1$ and any geodesics $\pi$ and $\pi'$ from $x$ to $(y, \ell)$ and $x$ to $(y', \ell')$, almost surely we have $\pi(z) = \pi'(z)$ for all sufficiently negative $z$.\\
        \item[(4)] (Shift invariance) For any $x > 0$, $y\in \R$ and $\ell \ge 1$, we have the equality in law between
        \[
        \pi[x\to (0, \ell)](\cdot) \text{ and } \pi[x\to (y, \ell)](y+\cdot)\,.
        \]
	\end{enumerate} 
\end{lemma}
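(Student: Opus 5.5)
The plan is to build semi-infinite geodesics as limits of finite geodesics started deep in the ensemble, as in the Airy sheet coupling (Definition \ref{coupling: Airy sheet}). For rational $x>0$ and $k\ge \ell$, let $\pi_k$ be the almost surely unique geodesic from $(-\sqrt{k/(2x)},k)$ to $(y,\ell)$; uniqueness comes from the Remark after Lemma \ref{lemma: geod location brown env lpp}, using the Brownian Gibbs property. By geodesic monotonicity (the Remark after Lemma \ref{lemma: geod tree}) and planarity, these finite geodesics are ordered consistently in their starting points. The key input is a \emph{stabilisation} statement: for every fixed $T<y$ and level $m$, the restriction of $\pi_k$ to $[T,y]$ is eventually constant in $k$. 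This is essentially the content of the coupling with the Airy sheet in \cite{DOV} (Remark 8.1 there), where the last passage increments $\mathcal{A}[(-\sqrt{k/2x},k)\to(z,1)]-\mathcal{A}[(-\sqrt{k/2x},k)\to(y,1)]$ stabilise for $k\ge K_{x,y,z}$. I would prove stabilisation of the paths from stabilisation of these increments together with Lemma \ref{lemma: crossing equiavlence}. If two consecutive $\pi_k,\pi_{k'}$ differ on $[T,y]$, the tree property (Lemma \ref{lemma: geod tree}) forces them to be disjoint beyond some point. By Lemma \ref{lemma: crossing equiavlence} this gives a strict crossing inequality, which contradicts eventual constancy of the increments at rational points.

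Given stabilisation, I define $\pi[x\to(y,\ell)]$ as the pointwise eventual limit. Every bounded restriction is then a restriction of a finite geodesic, hence a geodesic. For the direction, I need $\pi(t)/t^2\to 2x$. I would get this from the fact that $\pi_k$ travels about $k$ levels over horizontal distance about $\sqrt{k/2x}$. To make this quantitative, combine Lemma \ref{lemma: shape} (last passage values $\approx 2\sqrt{2kT}-T^2$) with the location characterisation Lemma \ref{lemma: geod location brown env lpp}, and show via Borel--Cantelli that the jump time from level $m$ lies within $o(\sqrt{m})$ of $-\sqrt{m/2x}$. Inverting this gives the parabolic asymptotics.

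Uniqueness in (1) then follows. Any other $x$-directed semi-infinite geodesic $\pi'$ is, by its direction, eventually sandwiched between $\pi_{k}$ for directions $x\pm\delta$ with $\delta$ rational. Together with ordering and the no-bubble argument of Lemma \ref{lemma: geod tree}, it must coincide with the limit. This gives part (1) for fixed $x$ on a full-measure event.

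For (2), I work on the countable intersection $\Omega_0$ of these events over rational $x,y$. For general $(x,y)$, I take monotone limits of $\pi[q\to(r,\ell)]$ along rationals $q\uparrow x$, $r\uparrow y$ (respectively $\downarrow$) in the jump-time topology, which is compact on bounded windows. Monotonicity makes these limits exist, and the restriction to compact intervals is a geodesic by continuity of length. The extremal ordering $\pi^L\le\pi\le\pi^R$ comes from planarity: any geodesic $\pi$ for $(x,y)$ cannot cross the approximating rational geodesics without creating a bubble. Part (3) follows from the construction. Both paths are eventual limits of geodesics from the same starting points $(-\sqrt{k/2x},k)$, and by the tree lemma two geodesics with a common start agree until they separate, so they agree for sufficiently negative $z$. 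For leftmost or rightmost versions I would use the uniqueness from (1). Part (4) follows from stationarity of $\mathcal{A}_i(\cdot)+(\cdot)^2$: a horizontal shift plus the parabolic correction only adds an affine function of the same form to every line, which preserves geodesics. It also maps $x$-directed starting points to $x$-directed ones up to a vanishing correction, and by (1) the limit is unchanged.

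The main obstacle is the stabilisation step, in particular upgrading stabilisation of Busemann increments at countably many points to stabilisation of whole paths, and controlling the direction. I would try first to deduce it from \cite[Remark 8.1]{DOV} plus the crossing equivalence, and fall back to the quantitative jump-time estimates if needed.
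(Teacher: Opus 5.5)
\textbf{There are genuine gaps in (1) and (3).} Your plan rebuilds the level-one theory from scratch. The paper instead imports it: items (1)--(3) for $\ell=1$ come from \cite[Lemma 4.2]{dauvergne2022disjointoptimizersdirectedlandscape}, and convergence of $\pi[(-\sqrt{k/2x},k)\to(y,1)]$ with parabolic shape comes from \cite[Lemma 3.4]{sarkar2021brownian} and the coupling of \cite{DOV}. Your from-scratch steps do not go through.
\begin{itemize}
\item \emph{Stabilisation.} The argument fails at its first step. $\pi_k$ and $\pi_{k'}$ share the endpoint $(y,\ell)$, so they are never disjoint. Their pre-merge portions cannot be fed into Lemma~\ref{lemma: crossing equiavlence} either: that lemma needs starts $(y_1,n_1),(y_2,n_2)$ with $y_1\le y_2$ and $n_1\le n_2$, but $(-\sqrt{k'/2x},k')$ lies to the left of, and deeper than, $(-\sqrt{k/2x},k)$. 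Moreover, the increments that stabilise in Definition~\ref{coupling: Airy sheet} compare two \emph{endpoints} from one start, so they give no inequality relating two different starts.
\item \emph{Coalescence.} This does not follow from the construction. A common start at each finite $k$ only says the geodesics to $(y,\ell)$ and $(y',\ell')$ agree up to a split time $s_k$. Nothing prevents $s_k\to-\infty$, even if every window $[T,y]$ stabilises.
\item \emph{Uniqueness.} Sandwiching between $(x\pm\delta)$-directed geodesics only traps $\pi'$ between the leftmost and rightmost $x$-directed geodesics. Their equality at a fixed $x$ is a genuinely probabilistic fact; it fails on a random dense set of directions (Proposition~\ref{prop: multiple geod fixed endpt}). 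A second geodesic that splits off and never rejoins creates no finite bubble, so the no-bubble argument cannot exclude it.
\item \emph{Direction.} Lemma~\ref{lemma: shape} only controls horizontal distances $T\in[a,b]$, with constants depending on $a,b$. Locating the level-$m$ jump needs distances of order $\sqrt m$, uniformly in the starting depth $k$. The paper's localisation, Lemma~\ref{lemma: jump parab}, relies on Busemann results that presuppose this very lemma, so it cannot be used here.
\end{itemize}

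\textbf{The missing idea is to reduce $\ell\ge2$ to $\ell=1$ instead of redoing $\ell=1$.} Write $x_k=(-\sqrt{k/2x},k)$. Use shift invariance and the level-one parabolic shape to pick a random $Y>y$ with $\pi[x\to(Y,1)](y)\ge\ell+1$. Ordering then gives $\pi[x_k\to(y,1)]\le\pi[x_k\to(y,\ell)]\le\pi[x_k\to(Y,1)]$. The two outer geodesics converge and coalesce by the level-one results. Hence for all large $k$ they pass through a common rational point $(T,d(T))$ with $d(T)\ge\ell$. By finite-geodesic uniqueness, $\pi[x_k\to(y,\ell)]$ is then the concatenation of $\pi[x_k\to(y,1)]$ up to time $T$ with the fixed geodesic from $(T,d(T))$ to $(y,\ell)$. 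This one observation gives stabilisation, the direction (from the sandwich) and coalescence together.

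Uniqueness then follows in three steps:
\begin{enumerate}
\item Trap any $x$-directed $\sigma$ between $\pi[x-\varepsilon\to(y,1)]$ and $\pi[x+\varepsilon\to(Y,1)]$.
\item Identify the $\varepsilon\to0$ limits as $\pi[x\to(y,1)]$ and $\pi[x\to(Y,1)]$ via level-one uniqueness.
\item Use their coalescence, followed by finite-geodesic uniqueness from a rational point on the common tail.
\end{enumerate}
Your treatment of (2) and (4) is essentially the paper's.
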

\begin{proof}
We tacitly restrict ourselves to the event of the existence and uniqueness of geodesics in the Airy line ensemble simultaneously for any countable collection of endpoints, which we may augment as needed. Recall this is guaranteed by the remark after Lemma \ref{lemma: geod location brown env lpp}.

Items $(1), (2), (3)$ with $\ell = \ell' = 1$ are established in Lemma 4.2 of \cite{dauvergne2022disjointoptimizersdirectedlandscape}. Item $(4)$ with $\ell = 1$ is established in Lemma 3.4 of \cite{sarkar2021brownian}. We now focus on $\ell, \ell'\ge 2$. 

Fix $x> 0$, $y\in \R$, $\ell \ge 2$. By \cite[Lemma 3.4]{sarkar2021brownian}, one can construct an $x$-directed semi-infinite geodesic starting ending at $(y, 1)$ as the pointwise limit of geodesics $\pi[x_k\to (y, 1)]$ as $k\to \infty$ (recall from Section \ref{sec: semi-inf geod}, $x_k:=(-\sqrt{k}/\sqrt{2x}, k)$, $k\ge 1$). 

We now show that in the same fashion, for any $\ell \ge 2$ the paths $\pi[x_k\to (y, \ell)]$ also lead to consistent object, that is converge pointwise to a limiting path (recall from Definition \ref{def: length}) $\sigma$, satisfying $\lim_{t\to -\infty}\sigma(t)/t^2 = 2x$ whose restriction on any interval is a geodesic. 

By \cite[Lemma 3.4]{sarkar2021brownian}, that is the shift invariance (which incidentally is the statement of item $(4)$ with $\ell = 1$) and the parabolic shape of semi-infinite geodesics ending at the top line of the Airy line ensemble, there exists some random integer $Y > y$ such that $\pi[x\to (Y, 1)](y) \ge \ell + 1$. Now, by geodesic uniqueness in the Airy line ensemble, we have the ordering
\[
\pi[x_k\to (y, 1)](t)\le \pi[x_k\to (y, \ell)](t)\le \pi[x_k\to (Y, 1)](t)
\]
for all $k\ge \ell$ sufficiently large, $t\in [-\sqrt{k}/\sqrt{2x}, y]$. Taking $k\to \infty$, using the convergence of $\pi[x_k\to (y, 1)]$ and $\pi[x_k\to (Y, 1)]$ as $k\to \infty$ we see the jump times of $\pi[x_k\to (y, \ell)]$ remain bounded and thus have sub-sequential limits. We now show they have a unique limit. 

By \cite[Lemma 3.4]{sarkar2021brownian}, the geodesics $\pi[x\to (y, 1)]$ and $\pi[x\to (Y, 1)]$ eventually coalesce. Furthermore, again by Lemma 3.4 in \cite{sarkar2021brownian}, the jump times of $\pi[x\to (y, 1)]$ and $\pi[x\to (Y, 1)]$ eventually take distinct values and diverge to the left as the depth diverges. This means there exists some rational $T\le y$, $d(T)\ge \ell$ for which $(T, d(T))$ is on the zig-zags of both $\pi[x_k\to (y, 1)]$ and $\pi[x_k\to (Y, 1)]$ almost surely eventually in $k\ge 1$. Now, by geodesic uniqueness, one can obtain for all $k$ sufficiently large $\pi[x_k \to (y, \ell)]$ as a concatenation of $\pi[x_k\to (y, 1)]$ restricted to $[-\sqrt{k}/\sqrt{2x}, T]$ and the unique geodesic from $(T, d(T))$ to $(y, \ell)$. Thus, the limit of $\pi[x_k \to (y, \ell)]$ in the topology of jump times exists, which we call $\pi[x \to (y, \ell)]$ and is a semi-infinite geodesic with asymptotic direction $x> 0$. 

This establishes existence of semi-infinite geodesics ending at $(y, \ell)$, $y\in \R$, $\ell \ge 2$. We now show that the asymptotic shape in \eqref{def: semi-inf geo} and local geodesic property are sufficient to almost surely uniquely determine the above semi-infinite geodesics. Recall that item (1) with $\ell = 1$ has been established in Lemma 4.2 of \cite{dauvergne2022disjointoptimizersdirectedlandscape}. Now, suppose $\sigma$ is some semi-infinite geodesic with asymptotic direction $x> 0$ ending at $(y, \ell)$. Let $Y$ be a random integer as above. Then for all $0 < \varepsilon < x$ rational, by the asymptotic parabolic shape of $\sigma$ and geodesic uniqueness in the Airy line ensemble, one has that $\sigma$ will lie to the right of $\pi[x-\varepsilon \to (y, 1)]$ and to the left of $\pi[x+\varepsilon \to (Y, 1)]$. This ordering is then preserved taking the limit $\varepsilon \to 0$. By their aforementioned uniqueness, the semi infinite paths $\pi[x-\varepsilon \to (y, 1)]$ and $\pi[x+\varepsilon \to (Y, 1)]$ are monotone in $\varepsilon$. By the continuity of the Airy line ensemble and they thus converge to semi-infinite geodesics with asymptotic direction $x> 0$, being pointwise limits of $x$-directed semi-infinite geodesics. By Lemma 4.2 in \cite{dauvergne2022disjointoptimizersdirectedlandscape}, they converge almost surely to the semi-infinite geodesics $\pi[x \to (y, 1)]$ and $\pi[x \to (Y, 1)]$. Thus, due to their eventual coalescence, the path $\sigma$ also eventually agrees with $\pi[x \to (y, 1)]$ and $\pi[x \to (Y, 1)]$, which is then uniquely determined up to $y$ by geodesic uniqueness in the Airy line ensemble. This concludes the proof of item (1).

The same argument also proves the coalescence of semi-infinite geodesics with the same fixed asymptotic direction almost surely, item $(3)$ for $\ell \ge 2$. 

We now establish item (2), namely, the simultaneous existence for all $x\ge 0, y\in \R, \ell \ge 1$ of leftmost and rightmost geodesics with asymptotic directions $x$ ending at $(y, \ell)$ on an event of full probability. We will show that this is a deterministic fact on the event (of full probability) of the existence and uniqueness of semi-infinite geodesics with rational endpoints and asymptotic directions. We first consider $x> 0$. Indeed, we have from the previous discussion, we have almost surely for all $q\in \Q\cap (0, \infty), r\in\Q$ there exists a $q$-directed semi-infinite geodesic starting ending at $(r, \ell)$, $\pi[q\to (r, \ell)]$. Now to construction leftmost and rightmost geodesics for any $x> 0, y\in \R$, simply consider the monotone limits
 \[
     \pi^L[x\to (y, \ell)] = \sup_{\substack{n\in \N\,,\\\Q\ni z_n \nearrow x\,, \Q\ni y_n\nearrow y}}\pi[z_n, (y_n, \ell)]\text{ and } \pi^R[x\to (y_n, \ell)] = \inf_{\substack{n\in \N\,,\\\Q\ni w_n \searrow x\,, \Q\ni y_n\searrow y}}\pi[w_n, (y_n, \ell)]\,.
\]
The asymptotic parabolic shape can also be computed the limiting profiles of the approximating geodesics. Moreover, being limits of semi-infinite geodesics in the topology of jump times, the restrictions of limiting zig-zags on bounded intervals are geodesics by the continuity of the Airy line ensemble. The left-most/right-most properties are inherited by the uniqueness of geodesics with rational endpoints in the Airy line ensemble, which gives monotonicity of geodesics with rational endpoints. For a similar argument, see \cite[Lemma 3.5]{DOV} establishing the right left continuity of right/left most geodesics.

Now, to treat the case $x = 0$, note for any $y\in \R$ the path $\sigma \equiv \ell$ on $(-\infty, y]$ is locally a geodesic and has asymptotic shape $\lim_{t\to -\infty} \sigma(t) /(2t^2) = 0$. Thus, it is a $0$-directed semi-infinite geodesic. In particular, it is automatically the left-most such geodesic. Now, to construct the rightmost such geodesic, argue by approximation as above, taking the sequence of rational directions $w_n \searrow 0$.

To prove item $(4)$ when $\ell \ge 2$, it suffices to observe that for any fixed $c\in \R$, $\mathcal{A}(\cdot+c)\overset d = \mathcal{A}(\cdot) - 2ct - c^2$ on $\mathcal{C}^\N$, by the stationarity of $\mathcal{A}(\cdot) + (\cdot)^2$. This is because under this symmetry, translations of geodesics in the Airy line ensemble by a deterministic amount are also geodesics in the shifted Airy line ensemble.
\end{proof}

We focus on item $(2)$ from Lemma \ref{lemma: semi-inf geod properties} regarding leftmost and rightmost semi-infinite geodesics and record a very useful approximation lemma for them (which holds more generally for geodesics in last passage percolation over environments which give almost sure uniqueness thereof). 

\begin{lemma}[Approximation]\label{lemma: geodesic approx}
    Almost surely, for any $x\ge 0$, $y\in \R$, $\ell\ge 1$ the left-most and right most semi-infinite geodesics $\pi^L[x\to (y, \ell)], \pi^R[x\to (y, \ell)]$ from $x$ to $(y, \ell)$ can be expressed as the pointwise limits
    \[
     \pi^L[x\to (y, \ell)] = \sup_{\substack{n\in \N\,,\\\Q\ni z_n\nearrow x, \Q\ni y_n \nearrow y}}\pi[z_n, (y_n, \ell)]\,,\qquad x> 0\,,
     \]
     and $\pi^L[0\to (y, \ell)]\equiv 0$. Moreover,
     \[
     \pi^R[x\to (y, \ell)] = \inf_{\substack{n\in \N\,,\\\Q\ni w_n \searrow x, \Q\ni y_n \searrow y}}\pi[w_n, (y_n, \ell)]\,.
    \]
    Moreover, almost surely, for all $-\infty < t \le y$, with $\square \in \{L, R\}$, the restrictions $\pi^\square[z_n, (y, \ell)]|_{[t, y]}$ and $\pi^\square[w_n, (y, \ell)]|_{[t, y]}$ agree with $\pi^L[x\to (y, \ell)]|_{[t, y]}$ and $\pi^R[x\to (y, \ell)]|_{[t, y]}$ for all $n$ sufficiently large (depending on $t$).
\end{lemma}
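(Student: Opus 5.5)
The plan is to work on the full-probability event $\Omega_0$ of Lemma \ref{lemma: semi-inf geod properties}(2), intersected with the event that all geodesics with rational endpoints in $\mathcal{A}$ are unique and that $\pi[q\to(r,\ell)]$ exists and is unique for all rational $q>0$, $r\in\Q$, $\ell\ge1$. Throughout we treat only $\pi^L$ with $x>0$; the $\pi^R$ statement is symmetric. The case $x=0$ is immediate from the proof of Lemma \ref{lemma: semi-inf geod properties}, where the constant path at level $\ell$ was seen to be the leftmost $0$-directed geodesic.

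\textbf{Step 1: monotonicity.} We first show that $(q,r)\mapsto \pi[q\to(r,\ell)](t)$ is coordinatewise non-decreasing on rationals. Take $q\le q'$ and $r\le r'$. The two paths have asymptotic shapes $2qt^2\le 2q't^2$. If they crossed, the switching ("geodesic bubble") argument of Lemma \ref{lemma: geod tree} would give a second geodesic between rational-approximable points. Using Lemma \ref{lemma: geod tree} on bounded windows, we would get a contradiction with uniqueness for rational endpoints. So for $z_n\nearrow x$, $y_n\nearrow y$ the sequence $\sigma_n:=\pi[z_n\to(y_n,\ell)]$ is pointwise non-decreasing. Its supremum $\sigma$ is a non-increasing path, and it is the limit in the topology of jump times on every $[t,y]$.

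\textbf{Step 2: $\sigma$ is the leftmost geodesic.} We check three things.
\begin{enumerate}
\item[(a)] Restrictions of $\sigma$ to bounded intervals are geodesics. This holds because lengths are continuous in the jump-time topology and each $\sigma_n$ is a geodesic on $[t,y_n]$.
\item[(b)] $\sigma$ has direction $x$. It is sandwiched between $\sigma_n$ and $\pi[x'\to(y,\ell)]$ for a rational $x'>x$, so $\liminf\sigma/t^2\ge 2z_n$ and $\limsup\le 2x'$.
\item[(c)] $\sigma\le\pi$ for every $x$-directed geodesic $\pi$ ending at $(y,\ell)$. Each $\sigma_n$ has a strictly smaller direction and an earlier endpoint, so $\sigma_n$ lies weakly left of $\pi$. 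This uses the same crossing/switching argument, now with one path arbitrary: if they met, we could reroute $\sigma_n$ along $\pi$ and contradict uniqueness of rational-endpoint geodesics.
\end{enumerate}
Hence $\sigma=\pi^L[x\to(y,\ell)]$, and $\sigma$ does not depend on the chosen sequences.

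\textbf{Step 3: eventual agreement on $[t,y]$.} On a compact window, a non-decreasing sequence of integer-valued cadlag paths converging pointwise must, in principle, stabilise at every continuity point. The difficulty is that the limit could be approached by jump times strictly increasing towards those of $\sigma$ without ever equalling them. To rule this out, we use that $\sigma_n$ and $\sigma$ are geodesics whose zig-zags touch once $n$ is large. By Lemma \ref{lemma: geod tree} ("once they touch, they coalesce"), applied on $[t',y]$ for some $t'<t$, once $\sigma_n$ meets $\sigma$ at a point to the left of $t$, the two must agree from there on. Such a contact point exists: by (3) of Lemma \ref{lemma: semi-inf geod properties}, $\sigma_n$ coalesces with $\sigma_m$ for $m>n$ far to the left, and $\sigma$ is squeezed between them.

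\textbf{Main obstacle.} The main obstacle is Step 3 together with the use of uniqueness-type arguments in Steps 1 and 2(c), because $\sigma$ itself has non-rational data. The remedy I would try first is to run the switching argument only between points where at least one geodesic has rational endpoints. Where genuine uniqueness is unavailable, I would replace it with the deterministic inequality of Lemma \ref{lemma: crossing}.
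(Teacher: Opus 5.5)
Your Steps 1--2 are fine. They spell out the ordering argument the paper invokes for the first part, via the proof of Lemma \ref{lemma: semi-inf geod properties}. The gap is in Step 3.

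\emph{The contact point.} Its justification is wrong. Item (3) of Lemma \ref{lemma: semi-inf geod properties} gives coalescence only for a \emph{common} direction. Your $\sigma_n$ and $\sigma_m$ have directions $z_n<z_m$ and different parabolic shapes, so they are disjoint far to the left. A contact point at a fixed $s<t$ does exist, but only trivially: integer-valued paths converging pointwise satisfy $\sigma_n(s)=\sigma(s)$ eventually.

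\emph{From contact to agreement.} More seriously, touching does not give agreement on $[t,y]$. Lemma \ref{lemma: geod tree} needs a common endpoint at which geodesics are a.s.\ unique, and that is available only for countably many fixed (e.g.\ rational) endpoints. Your $\sigma$ ends at an arbitrary $(y,\ell)$. At exceptional $y$, two geodesics from a common point to $(y,\ell)$ do exist; they separate and rejoin exactly at $y$, as in the excursion picture in the proof of Theorem \ref{thm: fixed dir coal}. So ``once they touch, they coalesce'' fails precisely where it is needed.

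\emph{The wrong approximating family.} Your $\sigma_n$ end at $(y_n,\ell)$, not at $(y,\ell)$, whereas the statement concerns $\pi^\square[z_n,(y,\ell)]$. For your $\sigma_n$ the asserted agreement is actually false whenever $\sigma$ jumps at $y$, since $\sigma_n\equiv\ell$ on $(y_n,y]$.

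\emph{What the paper does instead.} Convergence of jump times on $[t,y]$ lets you place, for all large $n$, rational points on horizontal segments shared by the approximant and the limit. You need one such point between each pair of consecutive distinct jump times of the limit, and one to the left of $t$. Uniqueness of the finite geodesic between consecutive rational points then forces the paths to coincide. This reaches all the way to $y$ only if the limit's jump to level $\ell$ occurs strictly before $y$, so that a rational point in front of $y$ at level $\ell$ is shared.

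If the limit jumps exactly at $y$, a separate argument is required:
\begin{itemize}
\item For $\pi^R$: the approximants satisfy $\pi^\square[w_n,(y,\ell)]\ge\pi^R[x\to(y,\ell)]$. They are therefore forced to jump at $y$ as well and share a horizontal segment just before $y$, which reduces to the previous case.
\item For $\pi^L$: you must rule out approximant jump times $s_n<y$ increasing to $y$. The paper compares two approximants $\pi_n,\pi_{n'}$ with $s_n\le s_{n'}<y$. Both would be geodesics between a common rational point on their shared segment to the left and a rational $q\in(s_{n'},y)$ at level $\ell$. Uniqueness then forces $s_n=s_{n'}$, contradicting $s_n\to y$.
\end{itemize}
Your fallback to Lemma \ref{lemma: crossing} cannot replace this. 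It yields inequalities between passage values, not coincidence of paths.
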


\begin{proof}
The proof of the first part essentially follows from the ordering property of geodesics in the Airy line ensemble, see the proof of item $(1)$ in Lemma \ref{lemma: semi-inf geod properties}. 

 We now prove the second item. We show that on the event (of full probability, see the remark after Lemma \ref{lemma: geod location brown env lpp}, which we tacitly assume holds) of geodesic existence and uniqueness in the Airy line ensemble, this item is a deterministic fact.
 
In particular, we show that for all $x> 0$, $t\le y\in \R$, $\square \in \{L, R\}$ and rational approximating sequences $(z_n), (w_n)$ as in the statement, the restrictions $\pi^\square[z_n, (y, \ell)]|_{[t, y]}$ and $\pi^\square[w_n, (y, \ell)]|_{[t, y]}$ coincide with $\pi^L[x\to (y, \ell)]|_{[t, y]}$ and $\pi^R[x\to (y, \ell)]|_{[t, y]}$ for all $n$ sufficiently large (depending on $t$). Arguing as in the proof of item (2) of Lemma \ref{lemma: semi-inf geod properties}, the paths $\pi^\square[z_n, (y, \ell)]$ and $\pi^\square[w_n, (y, \ell)]$ converge in the topology of jump times to $\pi^L[x\to (y, \ell)]$ and $\pi^R[x\to (y, \ell)]$ respectively. The former paths will henceforth be referred to as \emph{approximating paths} and the latter the \emph{limiting paths}.
    
Indeed, as we had before, the pointwise convergence of geodesics implies convergence of jump times. Fix $x> 0$ and $y \in \R$, $\square\in \{L, R\}$. There are two cases to consider. 

Suppose the first jump of $\pi^\square[x\to (y, \ell)]$ is strictly less than the right endpoint $y$. Then as $n\to \infty$, the convergence of jump times and the uniqueness of geodesics in the Airy line ensemble will give that for any $t\le y$, for all $n$ sufficiently large, the approximating paths coincide with their limits on $[t, y]$. For otherwise, a `geodesic bubble' with rational endpoints between the approximating and limiting paths would form (see the illustration on the left of Figure \ref{fig: geod approx}).

Now, if the first or more jumps of the limiting paths occur at exactly $y$, we will still use a similar geodesic overlap argument, though this time on the approximating paths \emph{themselves}. If we are considering the rightmost path $\pi^R[x\to (y, \ell)]$, the argument is reduced to the previous case, as the jumps of the approximating paths are also forced to be $y$ if they are so for the limiting path. Now, in the left-most case, that is when the first or more jump times of the limiting left-most geodesic are exactly $y$, we claim eventually in $n$, the initial vertical segments of the approximating and limiting paths coincide. Indeed, if this was not the case, by the convergence of jump times of the approximating paths, a `geodesic bubble' between rational endpoints between two approximating paths would occur for sufficiently large $n$, depicted on the right of Figure \ref{fig: geod approx}, again violating geodesic uniqueness. Then, as for the rightmost case, one argues as before.

    \begin{figure}[ht]
        \centering
        \includegraphics[width=0.8\linewidth]{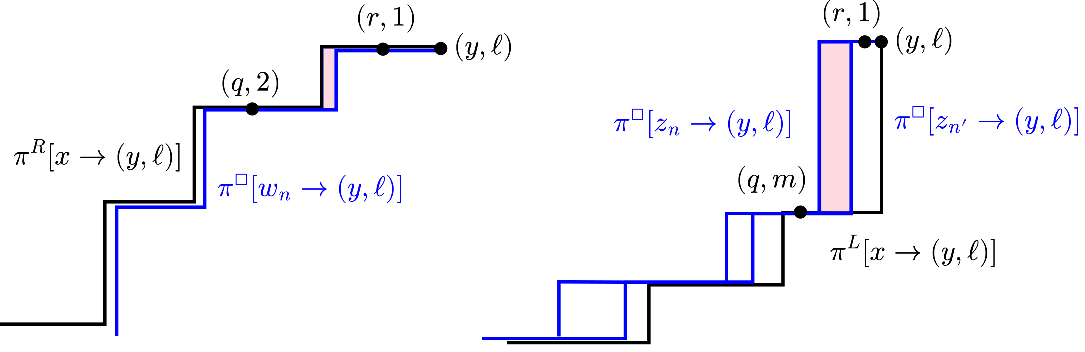}
        \caption{Illustration of possible obstructions to the second part of the statement in Lemma \ref{lemma: geodesic approx}, depending on whether there is at least one jump at $y$ or not. From the moment there is \textbf{left}: overlap between two horizontal segments of geodesics $\pi^R[x\to (y, \ell)]$ and $\pi^\square[w_n\to (y, \ell)]$ (alternatively $\pi^L[x\to (y, \ell)]$ and $\pi^\square[z_n\to (y, \ell)]$) if there is no jump at $y$, or \textbf{right}: $\pi^\square[z_n\to (y, \ell)]$ and $\pi^\square[z_{n'}\to (y, \ell)]$ (with $n'> n \ge 1$, $n, n'$ sufficiently large) on the event at least one jump of $\pi^L[x\to (y, \ell)]$ occurs at $y$, a `geodesic bubble' between rational endpoints forms (here in {\color{pink} pink}), violating geodesic uniqueness.}
        \label{fig: geod approx}
    \end{figure}
\end{proof}

For fixed $x> 0$, $y\in \R$, $\ell\ge 1$, the above pathwise constructions of semi-infinite geodesic $\pi[x\to (y, \ell)]$ as limits in the topology of jump times of the finite horizon geodesics $\pi[(-\sqrt{k}/\sqrt{2x}, k) \to (y, 1)]$, and their eventual coalescence for fixed asymptotic directions allow us to define their `relative lengths', of \emph{Busemann profiles} given by the limits
\begin{equation}\label{eq: Airy limit busemann}
    \Delta^{(t, m)}_{(t', m')}(x):= 
    \displaystyle\lim_{k\to\infty}(\mathcal{A}[(-\sqrt{k/2x},k)\to (t',m')]-\mathcal{A}[(-\sqrt{k/2x},k)\to (t,m)])\,,\quad (t,m) , (t', m')\in \R\times \N
\end{equation}
   In the definition of the Busemann function $\Delta^{(t, m)}_{(t', m')}$, the raised index $(t, m)$ should be thought of as `to the left' and `below' the lowered index $(t', m')$ should thus be thought of a `right' end point, or alternatively as $(t, m)\prec (t', m')$ where $\prec$ denotes the lexicographic ordering on $\R\times \N$ where $\R$ is equipped with its usual ordering and $\N$ with the reversed ordering $\ge$.  Define
   \begin{align}\label{eq: busemann cadlag/caglad}
       x\mapsto \Delta^{L, (t, m)}_{(t', m')}(x):= \lim_{y\searrow x}\Delta^{(t, m)}_{(t', m')}(y) \text{ and } x\mapsto \Delta^{R, (t, m)}_{(t', m')}(x):=\lim_{y\nearrow x}\Delta^{(t, m)}_{(t', m')}(y)
   \end{align}
   as the right and left continuous versions of $\Delta^{(t, m)}_{(t', m')}(x)$, respectively. When making statements that pertain to both left/right continuous versions of these Busemann processes, we will denote them by $\Delta^{\square, (t, m)}_{(t', m')}$, where $\square\in \{L, R\}$.

   It will turn out that these processes encode geometric information of the semi-infinite geodesics, as will become apparent from Section \ref{sec: geod geom} onwards.

\section{Disjointness estimates for semi-infinite geodesics}\label{sec: geod geom}
Having constructed and investigated some qualitative properties of semi-infinite geodesics in the Airy line ensemble, in this section, we will obtain more quantitative results on their geometric structure. In particular, we aim to obtain quantitative control on disjointness of geodesics in the Airy line ensemble. For this, we will need the coupling of the Airy line ensemble with the Airy sheet, \ref{coupling: Airy sheet}, to exploit the duality between geodesics in both models. Moreover, by Lemma \ref{lemma: crossing equiavlence}, we will analyse and obtain pathwise regularity of `Busemann functions' (see \eqref{eq: Airy limit busemann}) encoding the relative lengths of geodesics as they precisely encode this geometric information.

We start with some preliminary results for geodesics in the prelimiting Brownian melon environments in the statement of Theorem \ref{thm: melon scaling Airy}. 

We first recall the \emph{Minkowski} dimension of bounded subsets of Euclidean space.

\begin{definition}(Minkowski dimension)\label{def: Minkowski dimension}
Fix $n\in \N, r\ge 0$ and let $A$ be a bounded subset of $\mathbb{R}^n$, then $A$ is said to have \emph{(upper) Minkowski dimension $r$} if
\begin{align*}
 \displaystyle\limsup_{\varepsilon \rightarrow 0} \frac{\log N(\varepsilon)}{\log 1/\varepsilon} = r\,, 
\end{align*}
 where $N(\varepsilon)$ denotes the infimum of all cardinalities of $\varepsilon$-covers of $A$.
\end{definition}
\begin{figure}[ht]
\begin{center}
\begin{tikzpicture}[line cap=butt]
  % ---- tuning parameters ----
  \def\thick{8pt}            % thickness of bars
  \def\ysep{28pt}            % vertical separation
  \def\arrowoff{12pt}        % arrow distance above bars
  \def\timesscaleval{12}         % horizontal scale

  % smaller epsilon fonts
  \def\varepsilonsonefont{\large}
  \def\varepsilonstwofont{\normalsize}
  \def\varepsilonsthreefont{\small}

  % scope with horizontal scaling
  \begin{scope}[xscale=\timesscaleval]
    % draw Cantor levels (orders 0..5)
    \foreach \order in {0,...,5}{
      \pgfmathsetmacro{\step}{1/(3^\order)}
      \draw[line width=\thick, yshift={-\order*\ysep}]
        l-system[l-system={cantor set, axiom=F, order=\order, step=\step cm}];
    }

    % ---------- ARROWS & LABELS ----------
    % order 0 (top layer): [0,1]
    \begin{scope}[yshift=0pt]
      \draw[<->, >=Stealth, thick] (0,\arrowoff) -- (1,\arrowoff)
        node[midway, font=\varepsilonsonefont, fill=white, inner sep=1pt] {$\varepsilon_{1}$};
    \end{scope}

    % order 1: [0,1/3], [2/3,1]
    \begin{scope}[yshift={-\ysep}]
      \draw[<->, >=Stealth, thick] (0,\arrowoff) -- (1/3,\arrowoff)
        node[midway, font=\varepsilonstwofont, fill=white, inner sep=1pt] {$\varepsilon_{2}$};
      \draw[<->, >=Stealth, thick] (2/3,\arrowoff) -- (1,\arrowoff)
        node[midway, font=\varepsilonstwofont, fill=white, inner sep=1pt] {$\varepsilon_{2}$};
    \end{scope}

    % order 2: [0,1/9], [2/9,1/3], [2/3,7/9], [8/9,1]
    \begin{scope}[yshift={-2*\ysep}]
      \draw[<->, >=Stealth, thick] (0,\arrowoff) -- (1/9,\arrowoff)
        node[midway, font=\varepsilonsthreefont, fill=white, inner sep=1pt] {$\varepsilon_{3}$};
      \draw[<->, >=Stealth, thick] (2/9,\arrowoff) -- (1/3,\arrowoff)
        node[midway, font=\varepsilonsthreefont, fill=white, inner sep=1pt] {$\varepsilon_{3}$};
      \draw[<->, >=Stealth, thick] (2/3,\arrowoff) -- (7/9,\arrowoff)
        node[midway, font=\varepsilonsthreefont, fill=white, inner sep=1pt] {$\varepsilon_{3}$};
      \draw[<->, >=Stealth, thick] (8/9,\arrowoff) -- (1,\arrowoff)
        node[midway, font=\varepsilonsthreefont, fill=white, inner sep=1pt] {$\varepsilon_{3}$};
    \end{scope}
  \end{scope}
\end{tikzpicture}

\end{center}
\caption{Cartoon representation of construction of a Cantor set.}
\label{fig: cantor}
\end{figure}

Examples of sets with Minkowski dimension at most $r\in (0, 1)$ include \textbf{compact, perfect and nowhere dense} (hence uncountable) `Cantor-type' sets (see Figure \ref{fig: cantor}) where at each stage from each interval, an interior subinterval is removed. Thus, at stage $n\ge 1$, the set is contained in $2^n$ many intervals of some finite length $\varepsilon_n$ (decreasing in $n\ge 1$ and converging to zero) small enough so that 
\[
\limsup_{n\to \infty} \frac{n\log 2}{\log(1/\varepsilon_{n-1})}<r\,.
\]
One can then estimate by the monotonicity of $N(\varepsilon)$ and $N(\varepsilon_n) \le 2^n$ for $n\ge 1$,
\begin{align*}
 \displaystyle\limsup_{\varepsilon \rightarrow 0} \frac{\log N(\varepsilon)}{\log (1/\varepsilon)}\le  \displaystyle\limsup_{n\to \infty} \frac{\log N(\varepsilon_{n})}{\log (1/\varepsilon_{n-1})} \le \displaystyle\limsup_{n \rightarrow \infty} \frac{n\log 2}{\log (1/\varepsilon_{n-1})}<r\,.
\end{align*} 

In the following lemma, we obtain uniform control (on countable sets of sufficiently small Minkowski dimension) on non-disjointness of geodesics with closely separated left endpoints in the prelimiting melon environments of Theorem \ref{thm: melon scaling Airy}. Recall from Definition \ref{def: disjointness} that two paths $\pi$ and $\tau$ are disjoint if their graphs, when viewed as `zigzags' have a positive separation on compact subsets on the intersection of their domains (which we take to be closed intervals).

For $x> 0, y\in \R$ and $n\in \N$, in accordance with the notation in \cite{DOV} we set
\begin{equation*}\overline{x}=2xn^{-1/3}\,,\qquad \mbox{ and } \qquad \hat{y}=1+2yn^{-1/3}\,.
\end{equation*}
Now let $n\ge 1$ be sufficiently large so that $\overline{x}< \hat{y}$. Consider a family of mutually independent standard Brownian motions $B_1, \ldots, B_n$. We will denote the geodesic from $(\overline{x}, n)$ to $(\hat{y}, 1)$ in the environment $WB^n$ (see Section \ref{subsec: melons}) by $\pi\{\underline{x}\to \hat{y}\}_n$.

\begin{lemma}\label{lemma: close-not-dj}
	Fix $y_1< y_2$ and let $K\subseteq (0, \infty)$ be bounded, bounded away from zero and countable with Minkowski dimension less than $r< 1/2$. Then for all $\varepsilon>0$,
	\begin{equation}\label{eq: close-not-dj}
	 \limsup_{n \to \infty} \PP \bigg(\bigcup_{x\in K} \pi\{\overline{x-\varepsilon}\to \widehat{y_1}\}_n \;\;\; \mathrm{and} \;\;\; \pi\{\overline{x +\varepsilon}\to \widehat{y_2}\}_n \text{ are disjoint} \bigg) \le C\varepsilon^{\frac{1}{2}-r}\cdot \mathrm{e}^{d|\log\varepsilon|^\frac{5}{6}},
	\end{equation}
    for some constants $d, C>0$. 
\end{lemma}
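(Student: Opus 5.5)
We reduce disjointness of geodesics to a strict inequality between last passage values via Lemma \ref{lemma: crossing equiavlence}, and then control that inequality through the Airy sheet. In the melon environment $WB^n$, for a direction $x\in K$ the geodesics $\pi\{\overline{x-\varepsilon}\to\widehat{y_1}\}_n$ and $\pi\{\overline{x+\varepsilon}\to\widehat{y_2}\}_n$ are disjoint, up to null sets, if and only if
\[
WB^n[(\overline{x-\varepsilon},n)\to(\widehat{y_1},1)]+WB^n[(\overline{x+\varepsilon},n)\to(\widehat{y_2},1)]>WB^n[(\overline{x-\varepsilon},n)\to(\widehat{y_2},1)]+WB^n[(\overline{x+\varepsilon},n)\to(\widehat{y_1},1)].
\]
After the rescaling of Theorem \ref{thm: melon scaling Airy}, these last passage values converge jointly, as in \cite{DOV}, to the Airy sheet $\mathcal S$. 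Under this convergence the strict inequality becomes the strict quadrangle inequality
\[
\mathcal S(x-\varepsilon,y_1)+\mathcal S(x+\varepsilon,y_2)>\mathcal S(x-\varepsilon,y_2)+\mathcal S(x+\varepsilon,y_1)
\]
(compare \eqref{eq: quadrangle ineq airy sheet}). Equivalently, the difference profile $D(z):=\mathcal S(z,y_2)-\mathcal S(z,y_1)$ is non-constant on $[x-\varepsilon,x+\varepsilon]$.

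To pass to the limit we will use a portmanteau (closed-set) argument. The event ``$\ge$ holds but not $>$'' needs care. We will instead bound the limsup of the probability of the open-type event by the probability of the closed event $\{D \text{ not constant on } [x-\varepsilon,x+\varepsilon]\}$. Concretely, we first restrict to a finite subset of $K$ and use Skorokhod coupling of the four-parameter last passage values. We then note that strict inequality in the limit is what the prelimit event converges into, after enlarging $\varepsilon$ slightly to $2\varepsilon$ if needed, since constants are allowed to absorb this. Since the bound will be uniform over finite subsets, monotone convergence extends it to countable $K$.

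It then remains to bound
\[
\PP\bigl(\exists x\in K: D \text{ non-constant on } [x-\varepsilon,x+\varepsilon]\bigr).
\]
The set of points of increase of $D$ has Hausdorff dimension $1/2$ by \cite{Basu2021Fractal}, but we need a quantitative estimate. The plan is to cover $K$ by $N(\varepsilon)\le C\varepsilon^{-r}$ intervals of length $\varepsilon$ (valid for small $\varepsilon$ by the Minkowski dimension hypothesis). Each $[x-\varepsilon,x+\varepsilon]$ is then contained in some interval $J$ of length $3\varepsilon$ around a cover centre. A union bound then reduces the problem to the single-interval estimate
\[
\PP\bigl(D \text{ non-constant on an interval } J\subset[a,b] \text{ of length } 3\varepsilon\bigr)\le C\varepsilon^{1/2}e^{d|\log\varepsilon|^{5/6}},
\]
uniformly in $J$ within the compact range of $K$ (recall $K$ is bounded and bounded away from $0$). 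Combining the two gives $C\varepsilon^{1/2-r}e^{d|\log\varepsilon|^{5/6}}$.

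The single-interval estimate is the main obstacle. I would obtain it from the Brownian-local-time comparison for Airy difference profiles in \cite{ganguly2021local} and \cite{Basu2021Fractal}. There, $D$ on a compact interval is shown to be locally absolutely continuous with respect to a Brownian local time type process, with Radon--Nikodym derivative controlled in every $L^p$ up to factors $e^{d|\log\varepsilon|^{5/6}}$. This is exactly the source of the subpolynomial factor, which comes from the Brownian-bridge regularity estimates for the Airy line ensemble. For Brownian local time at level zero, the probability that the zero set meets an interval of length $\varepsilon$ at macroscopic position is of order $\varepsilon^{1/2}$. Transferring via Hölder's inequality with $p$ chosen depending on $\varepsilon$ gives the claimed bound. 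If a direct citation is unavailable, I would instead use the alternative route via the two-point estimate for disjoint geodesics in Brownian LPP (a ``$\varepsilon^{1/2}$ separation'' bound, as in \cite{Basu2021Fractal}) applied directly in $WB^n$ and uniform in $n$, which also avoids the limit-exchange subtleties above.
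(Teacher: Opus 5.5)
Your main route has a genuine gap: it takes the limit in the wrong direction. The lemma bounds $\limsup_n$ of the probability of a \emph{prelimit} event, namely the strict inequality $\{F^n>0\}$ for the rescaled crossing deficit $F^n$ of the four melon last passage values. Under a coupling in which these values converge to the Airy sheet, convergence alone gives only $\limsup_n \mathbf{1}\{F^n>0\}\le \mathbf{1}\{F\ge 0\}$. But $\{F\ge 0\}$ has probability one by \eqref{eq: quadrangle ineq airy sheet}, and nothing rules out $F^n>0$ for every $n$ with $F^n\to 0$.

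The event $\{D \text{ non-constant on } [x-\varepsilon,x+\varepsilon]\}$ is the same open condition $\{D(x+\varepsilon)>D(x-\varepsilon)\}$, not a closed one, so the portmanteau step gives nothing. Your claim that the prelimit event ``converges into'' $\{D(x+2\varepsilon)>D(x-2\varepsilon)\}$ is exactly the assertion that constancy of the limiting profile on a larger window forces the prelimit geodesics from $\overline{x\pm\varepsilon}$ to intersect for all large $n$. That is a genuine coalescence statement, of the kind supplied by item 4 of the \cite{DOV} coupling recalled after this lemma. Convergence of last passage values does not give it, and you give no argument for it.

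Two further steps fail. Passing from finite subsets of $K$ to $K$ by monotone convergence does not work, because $\limsup_n$ does not commute with an increasing union. The reduction to finitely many events must be done by geodesic monotonicity \emph{before} taking $\limsup_n$. The single-interval estimate is asserted rather than proved. The local-time comparison of \cite{ganguly2021local}, as cited, is an absolute-continuity (patchwork-quilt) statement. It does not supply $L^p$ control of a Radon--Nikodym derivative with only an $\mathrm{e}^{d|\log\varepsilon|^{5/6}}$ loss; that factor is the signature of Hammond's disjoint-polymer bound. Quantitative bounds for the limiting profile are naturally derived \emph{from} prelimit estimates like this lemma, as the paper does in Theorem \ref{thm: semi-inf tail disj proba}, so the detour through the limit is backwards.

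Your fallback sentence is the right idea, but it omits the two mechanisms that make it work.

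\emph{Endpoint splitting.} Stay in $WB^n$. By Lemma \ref{lemma: crossing}, the map $u\mapsto F^n_{x,\varepsilon}(u):=WB^n[(\overline{x+\varepsilon},n)\to(\widehat u,1)]-WB^n[(\overline{x-\varepsilon},n)\to(\widehat u,1)]$ is non-decreasing on $[y_1,y_2]$. For $u<v$, disjointness of the geodesics from $\overline{x-\varepsilon}$ to $\widehat u$ and from $\overline{x+\varepsilon}$ to $\widehat v$ is equivalent to $F^n_{x,\varepsilon}(v)>F^n_{x,\varepsilon}(u)$. Split $[y_1,y_2]$ into $\lceil (y_2-y_1)/\varepsilon\rceil$ pieces $[a_k,a_{k+1}]$ of length at most $\varepsilon$. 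By telescoping, the disjointness event is the union of the events for consecutive endpoints $a_k,a_{k+1}$, in which both starting and ending points are $O(\varepsilon)$-close.

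\emph{Transfer to Brownian LPP.} For each such pair, \cite[Lemma 4.5]{DOV} transfers melon disjointness to essential disjointness of the corresponding Brownian LPP geodesics. Then \cite[Theorem 1.1]{Hammond_2019} with two polymers (exponent $(2^2-1)/2=3/2$) gives the bound $\limsup_n \le C\varepsilon^{3/2}\mathrm{e}^{d|\log\varepsilon|^{5/6}}$. Summing over the roughly $\varepsilon^{-1}$ pieces gives $\varepsilon^{1/2}$. This splitting is where the exponent $1/2$ comes from, since a disjoint-polymer bound applies only to close endpoints.

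\emph{Covering $K$.} Cover $K$ by $N(\varepsilon)\lesssim\varepsilon^{-r}$ intervals $[s_i,t_i]$ of length at most $\varepsilon$. By monotonicity in the starting point, the union over $x\in K\cap[s_i,t_i]$ is contained in the single event that the geodesics from $\overline{s_i-\varepsilon}$ to $\widehat{y_1}$ and from $\overline{t_i+\varepsilon}$ to $\widehat{y_2}$ are disjoint. The resulting union is finite, so the union bound passes through $\limsup_n$ and yields $C\varepsilon^{1/2-r}\mathrm{e}^{d|\log\varepsilon|^{5/6}}$.
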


\begin{proof}
	By the remark in lemma \ref{lemma: geod location brown env lpp}, for all $n\in \N$, the geodesics $\pi\{\overline{x-\varepsilon}\to \widehat{y_1}\}$ and $\pi\{\overline{x+\varepsilon}\to \widehat{y_2}\}_n$ are up to a null event unique. Consider the function
    \[
    F^n_{x, \varepsilon}(u):= WB^n[(\overline{x+\varepsilon}, n)\to (\widehat{u}, 1)]- WB^n[(\overline{x-\varepsilon}, n)\to (\widehat{u}, 1)]\qquad u \in [y_1, y_2]\,.
    \]
    By the crossing lemma for geodesics in last passage percolation, Lemma \ref{lemma: crossing}, $F^n_{x, \varepsilon}$ is non-decreasing. Moreover, since the Brownian melons $WB^n$ are locally absolutely continuous with respect to independent Brownian motions (in fact the melons are equal in law to $n$-level Dyson Brownian motion and so satisfy the Brownian Gibbs property, see the remark after lemma \ref{lemma: geod location brown env lpp}), we have by Lemma \ref{lemma: crossing equiavlence}, almost surely for any fixed $u, v \in [y_1, y_2]$
    \begin{equation}\label{eq: melon monoto}
        F^n_{x, \varepsilon}(v) - F^n_{x, \varepsilon}(u) > 0\,, \text{ if and only if } \pi\{\overline{x-\varepsilon}\to \widehat{u}\}_n \;\;\; \mathrm{and} \;\;\; \pi\{\overline{x+\varepsilon}\to \widehat{v}\}_n \text{ are disjoint}\,.
    \end{equation}
    Now, consider the partition of $[y_1, y_2]$,
    \[
    a_0 = y_1, a_1 = \min\{y_1 + \varepsilon, y_2\}\,,\ldots a_k = \min\{y_1 + k\varepsilon, y_2\}\,,\ldots\,, a_N = \min\{ y_1 + N\varepsilon, y_2\}\,, N = \bigg\lceil \frac{y_2-y_1}{\varepsilon} \bigg\rceil\,.
    \]
    Define the events 
    \[
    E^{\varepsilon, x, u, v, n}:=\left\{\pi\{\overline{x-\varepsilon}\to \widehat{u}\}_n \;\;\; \mathrm{and} \;\;\; \pi\{\overline{x+\varepsilon}\to \widehat{v}\}_n \text{ are disjoint} \right\}\,, \quad x > 0\,, y_1 \le u < v \le y_1\,.
    \]
    Then by \eqref{eq: melon monoto} we have by the monotonicity of $F^n_{x, \varepsilon}$ (see Figure \ref{fig: geod config} for the case $N = 2$),
    \begin{align*}
        E^{\varepsilon, x, y_1, y_2, n} &= \{F^n_{x, \varepsilon}(y_2) - F^n_{x, \varepsilon}(y_1) > 0\} = \displaystyle\bigcup^{N-1}_{k = 0}\{F^n_{x, \varepsilon}(a_{k+1}) - F^n_{x, \varepsilon}(a_k) > 0\} = \displaystyle\bigcup^{N-1}_{k = 0}\{E^{\varepsilon, x, a_k, a_{k+1}, n}\}\,.
    \end{align*}
    By \cite[Lemma 4.5]{DOV}, we have up to null sets,for any fixed $y_1 \le u< v \le y_2$ the events $E^{\varepsilon, x, u, v, n}$ are included in the events 
    \begin{align*}
        E^{\varepsilon, x, u, u, n}_B :=\bigg\{\pi[(\overline{x-\varepsilon},n)\to (\widehat{u}, 1)]_n \;\mathrm{and} \; \pi[(\overline{x+\varepsilon},n)\to (\widehat{v}, 1)]_n \text{ are essentially disjoint}\bigg\}\,,
    \end{align*}
    where $B$ is a family of $n$-independent Brownian motions (suppressing the dependence on $n$) that is used in the pathwise construction of the $n$-melon $WB^n$, and $\pi[\cdot \to \cdot]_n$ the almost sure unique geodesics in this environment.
    
    By the invariance of Brownian motion under diffusive scaling and translation and \cite[Theorem 1.1]{Hammond_2019}, we have for all $1\le k \le N$,
\begin{align*}
    \limsup_{n\to \infty}\PP(E^{\varepsilon, x, u, u, n}_B)\le C\varepsilon^{\frac{3}{2}}\cdot\mathrm{e}^{d|\log \varepsilon|^{\frac{5}{6}}}
\end{align*}
and some universal constants $C, d > 0$. We will henceforth enlarge or reduce universal constants $C, d>0$ as needed. 

Informally, the above uniform bounds exist because these events capture the disjointness of rescaled geodesics in Brownian last passage percolation, which, under the (KPZ) $1:2:3$ scaling are tight and converge to geodesics in the Airy sheet. Moreover, the decay in $\varepsilon$ can be interpreted as being due to the rareness of disjoint geodesics in the Airy sheet (over a fixed time horizon) with starting and ending points within $\varepsilon$ of each other.

This gives 
\[
\PP(E^{\varepsilon, x, y_1, y_2, n})\le  C\lceil (y_2-y_1)/\varepsilon\rceil \cdot \varepsilon^{\frac{3}{2}}\cdot\mathrm{e}^{d|\log \varepsilon|^{\frac{5}{6}}}\le C\varepsilon^{\frac{1}{2}}\cdot\mathrm{e}^{d|\log \varepsilon|^{\frac{5}{6}}}\,.
\]

 Now, since $K$ is bounded, consider a family of $N(\varepsilon)$ neighbourhoods $[a_i, b_i]$ in $\R$ that cover $K$ where each neighbourhood has diameter at most $\varepsilon$. Then, we can estimate the above estimates and the monotonicity of geodesics
\begin{align*}
    &\limsup_{n\to \infty}\PP\left(\displaystyle\bigcup_{x\in K} E^{\varepsilon, x, y_1, y_2, n}\right)\\
    &\le \limsup_{n\to \infty}\sum^{N(\varepsilon)}_{i=1}\PP\left(\pi\{\overline{a_i-\varepsilon}\to \widehat{y_1}\}_n \;\;\; \mathrm{and} \;\;\; \pi\{\overline{b_i+\varepsilon}\to \widehat{y_2}\}_n \text{ are disjoint} \right)\\
    &\le C|N(\varepsilon)|\cdot (y_2-y_1) \cdot\varepsilon^{\frac{1}{2}}\mathrm{e}^{d|\log(\varepsilon)|^{\frac{5}{6}}}\,,
\end{align*}
concluding the proof, noting that $K$ having Minkowski dimension $r<1/2$, the covering numbers satisfy $N(\varepsilon)\lesssim \varepsilon^{-r}$.
\end{proof}

\begin{figure}[hb]
    \centering
    \includegraphics[width=\linewidth]{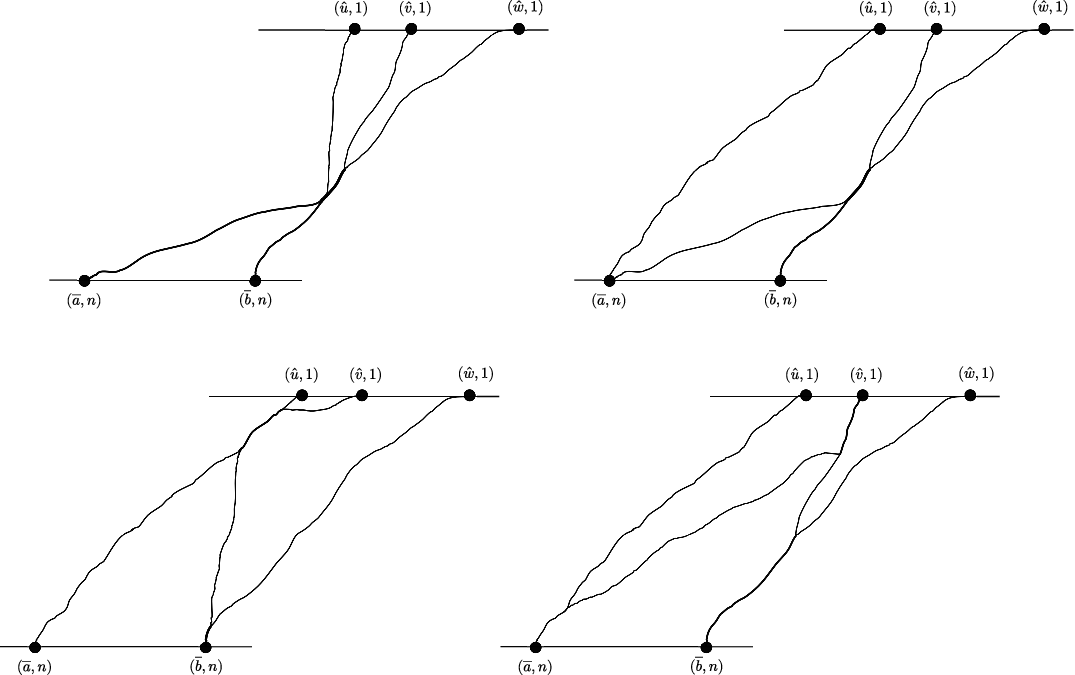}
    \caption{An illustration of the possible geometries of Brownian melon geodesics $\pi\{\overline{x}\to \hat{y}\}_n$, $x\in \{a, b\}$ and $y\in \{u, v, w\}$, owing to the tree-like structure and uniqueness of geodesics with fixed endpoints, Lemmas \ref{lemma: geod tree} and \ref{lemma: geod location brown env lpp}. Disjointness of the outermost geodesics $\pi\{\overline{a}\to \hat{u}\}_n$, $\pi\{\overline{b}\to\hat{w}\}_n$ is determined by checking the disjointness of geodesics $\pi\{\overline{a}\to \hat{u}\}_n$, $\pi\{\overline{b}\to \hat{v}\}_n$, and $\pi\{\overline{a}\to \hat{u}\}_n$, $\pi\{\overline{a}\to \hat{u}\}_n$. This information is precisely what is encoded by the monotonicity of $F^n_{x, \varepsilon}$. }
    \label{fig: geod config}
\end{figure}

\begin{remark}
    Such a rarity of disjointness of geodesics can also be extended in some sense to the case where $K$ is compact and has Hausdorff dimension $r < 1/2$. This need not be an event a priori, being an uncountable union of events. However, it is contained in a proper event by geodesic monotonicity (which can be obtained using the almost sure uniqueness of geodesics in the Brownian melon environments with rational endpoints, see the remark after Lemma \ref{lemma: geod location brown env lpp}) whose probability we can explicitly control.
    
    This is because for any fixed $\beta > r$ less than one half, for all $\varepsilon> 0$, one has a family of covers $(U^\varepsilon_i)$ with $\diam (U_i)\le \varepsilon$ satisfying
    \[
    \lim_{\varepsilon \to 0} \sum_i \mathrm{diam} (U^\varepsilon_i)^\beta = 0.
    \]
    By compactness (Lebesgue's number lemma), there exists some $\eta_\varepsilon > 0$ such that for $x\in K, y > 0$ with $|x-y|\le 2\eta_\varepsilon$, there exists $U^\varepsilon_i$ such that $x, y \in U^\varepsilon_i$. By monotonicity of geodesics, this means we have the inclusion
    \[
    \displaystyle\bigcup_{x\in K} E^{\eta_\varepsilon, x, y_1, y_2, n}\subseteq \displaystyle \bigcup_i \{\pi\{\overline{\inf U^\varepsilon_i}\to \widehat{y_1}\}_n \;\;\; \mathrm{and} \;\;\; \pi\{\overline{\sup U^\varepsilon_i}\to \widehat{y_2}\}_n \text{ are disjoint}\}\,.
    \]
    
    We can thus estimate arguing as in the proof of Lemma \ref{lemma: close-not-dj},
    \begin{align*}
        \limsup_{n\to \infty}\PP\left(\displaystyle\bigcup_{x\in K} E^{\eta_\varepsilon, x, y_1, y_2, n}\right) &\le C\sum_i \mathrm{diam} (U^\varepsilon_i)^{\frac{1}{2}}\mathrm{e}^{d|\log\mathrm{diam} (U^\varepsilon_i)|^{\frac{5}{6}}}\\
        &\le C \varepsilon^{\frac{1}{2}-\beta}\mathrm{e}^{d|\log \varepsilon|^{\frac{5}{6}}} \sup_{\varepsilon> 0}\sum_i \mathrm{diam} (U^\varepsilon_i)^\beta \\
        &\le C_\beta\varepsilon^{\frac{1}{2}-\beta}\mathrm{e}^{d|\log\varepsilon|^{\frac{5}{6}}}\,.
    \end{align*}

We will see later that this condition on the Hausdorff dimension is sharp for semi-infinite geodesics in the Airy line ensemble (which arise as scaling limits of these melon paths, see the coupling \pageref{Skorokhod coupling} below). This is due to exceptional sets of asymptotic directions and endpoints for which geodesic non-uniqueness occurs, see Theorem \ref{thm: airy sheet monotone haus dim}.
\end{remark}

For all $n \ge 1$, let $B^n = (B^n_1, \ldots, B^n_n)$ be families of independent, standard Brownian motions. Fix $n\in \N$ and $k\in \{2,\ldots, n\}$, $x > 0$, $y \in \R$. Let $\hat{Z}^n_k(x,y) = 1+2n^{-\frac{1}{3}}Z_k^n(x,y)$ be the jump time from level $k$ to level $k-1$ in the almost surely unique geodesic $\pi\{\overline{x}\to \hat{y}\}_n$ in the Brownian melon $WB^n$, constructed pathwise from $B^n$. 

We now state the coupling in \cite[p. 43]{DOV} between copies of Brownian melons rescaled $A^n$ in the statement of Theorem \ref{thm: melon scaling Airy}, the Airy line ensemble $\mathcal{A}$, prelimiting Airy sheets $\mathcal{S}^n$ defined on any compact set $K\subset\R^2$ for all $n$ sufficiently large in terms of Brownian last passage values
\[
\mathcal{S}^n(x,y) \overset d = n^{1/6}\bigg( B^n[(\overline{x}, n)\to (\hat{y}, 1)] + 2(x-y)n^{1/6}-2\sqrt{n}\bigg)\,,\qquad (x, y) \in K
\]
and the Airy sheet $\mathcal{S}$, along some deterministic subsequence of indices, which we do not relabel, where the following holds on an event of full probability.
\begin{itemize}\label{Skorokhod coupling}
\item[1.] The rescaled melon $A^n$ converge to the Airy line ensemble $\mathcal{A}$ uniformly on compact subsets of $\N\times \R$.
\item [2.] $\mathcal{S}^n$ converges locally uniformly to the Airy sheet $\mathcal{S}$.
\item[3.]We have for all $(x,y)\in (\Q\cap (0, \infty))\times \Q$,
\[
Z_k^n(x,y)\to Z_k(x,y) \qquad \mbox{ for all } k\ge 2\,.
\]
Moreover, as $k\to \infty$,
\begin{equation*}
Z_k(x,y)/\sqrt{k} \to -1/\sqrt{2x}.
\end{equation*}
\item[4.] For every rational triple $(x, y , z)\in (\Q\cap (0, \infty)) \times \Q^2$ with $y < z$, there exist random rational points $0< X_1 < x < X_2$ so that the geodesics $\pi\{\overline{X_1}, \hat{y}\}_n$ and  $\pi\{\overline{X_2}, \hat{z}\}_n$ are not disjoint for all $n$ large enough.
\end{itemize}
Observe that by the convergence of the melon environments to the Airy line ensemble and the jump times $Z^n_k$, the zig-zags associated to the monotone times $Z_k(x, y)$ for $x\in \Q\cap (0, \infty)$, $y\in \Q$ are locally geodesic. Thus, by item 4., they are semi-infinite geodesics in $\mathcal{A}$. Item (1) of Lemma \ref{lemma: semi-inf geod properties} gives that they must necessarily coincide almost surely with the jump time of the almost surely unique semi-infinite geodesic $\pi[x\to (y, 1)]$. Note for any countable dense subset of asymptotic directions and endpoints, $(0, \infty)\times \R$, one can construct an identical coupling in essentially the same way as is done in \cite{DOV}. 

Using this coupling, we can also pass to the limit as $n \to \infty$ to obtain the following sharp tail probabilities for semi-infinite geodesics in the Airy line ensemble. This is the content of the following theorem. 

\begin{theorem}\label{thm: semi-inf tail disj proba}
    Fix $\eta > 0, y_1 < y_2\in \R$ and $x> 0$. Then, for all $\eta>0$, there exists $\varepsilon_0(\eta)$ so that for all $0< \varepsilon < \min\{x, \varepsilon_0(\eta)\}$,
    \begin{equation*}
	C \cdot\varepsilon^{\frac{1}{2}+\eta}\le \PP \bigg(\pi[x-\varepsilon \to (y_1, 1)] \;\;\; \mathrm{and} \;\;\; \pi[x+\varepsilon \to (y_2, 1)] \text{ are disjoint} \bigg) \le C'\varepsilon^{\frac{1}{2}+o(1)}
	\end{equation*}
    for constants $C, C', d > 0$.
\end{theorem}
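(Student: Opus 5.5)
The upper bound will come from Lemma \ref{lemma: close-not-dj} applied with the singleton $K=\{x\}$, so that $r=0$, combined with the Skorokhod coupling on page \pageref{Skorokhod coupling}. By monotonicity of semi-infinite geodesics in the direction and endpoint (Lemma \ref{lemma: semi-inf geod properties}), it suffices to treat rational $x\pm\varepsilon$ and rational $y_1,y_2$; the general case follows by sandwiching between rational approximants and absorbing the perturbation into the $o(1)$. In the coupling, the jump times $Z^n_k(x\pm\varepsilon,y_i)$ converge to $Z_k(x\pm\varepsilon,y_i)$ for every $k$. Disjointness of the two limiting semi-infinite geodesics should force disjointness of the prelimiting melon geodesics for all large $n$, at least up to a null set. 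I would argue this through Lemma \ref{lemma: crossing equiavlence}, by expressing disjointness as a strict inequality between Busemann increments.

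Concretely, by Lemma \ref{lemma: crossing equiavlence} and the Airy sheet coupling (Definition \ref{coupling: Airy sheet}), the two semi-infinite geodesics are disjoint if and only if
\[
\mathcal S(x+\varepsilon,y_2)-\mathcal S(x+\varepsilon,y_1)>\mathcal S(x-\varepsilon,y_2)-\mathcal S(x-\varepsilon,y_1).
\]
This equivalence uses eventual agreement of the finite geodesics from $(-\sqrt{k/2x},k)$ with the semi-infinite ones, via Lemma \ref{lemma: geodesic approx}. The same equivalence for the melons is \eqref{eq: melon monoto}, with $\mathcal S^n$ in place of $\mathcal S$. Since $\mathcal S^n\to\mathcal S$ locally uniformly, the set $\{\text{strict inequality for }\mathcal S\}$ is contained in $\liminf_n\{\text{strict inequality for }\mathcal S^n\}$. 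Fatou's lemma then gives
\[
\PP(\text{disjoint})\le\limsup_n\PP(E^{\varepsilon,x,y_1,y_2,n})\le C\varepsilon^{1/2}e^{d|\log\varepsilon|^{5/6}},
\]
and this is $\varepsilon^{1/2+o(1)}$.

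For the lower bound, the disjointness event equals $\{D_\varepsilon>0\}$, where
\[
D_\varepsilon:=[\mathcal S(x+\varepsilon,y_2)-\mathcal S(x+\varepsilon,y_1)]-[\mathcal S(x-\varepsilon,y_2)-\mathcal S(x-\varepsilon,y_1)]\ge0
\]
is the rectangular increment from \eqref{eq: quadrangle ineq airy sheet}. By the symmetries \eqref{eq: Airy sheet symmetry}, I would transpose the picture to $y\mapsto \mathcal S(y_2,\cdot)-\mathcal S(y_1,\cdot)$, an Airy difference profile. This profile is nondecreasing, and its set of nonconstancy has Hausdorff dimension $1/2$ almost surely \cite{Basu2021Fractal}; it is also locally comparable to Brownian local time \cite{ganguly2021local}. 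The lower bound is then the statement that this profile increases on a window of length $2\varepsilon$ around $x$ with probability at least $c\varepsilon^{1/2+\eta}$.

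I would prove this by a first/second moment argument. Split $[x_0,x_0+1]$ into $\varepsilon^{-1}$ windows and let $M_\varepsilon$ count the windows on which the profile increases. Translation invariance (after the parabolic shear in \eqref{eq: Airy sheet symmetry}, whose linear term cancels in rectangular increments) makes every window equally likely, so
\[
\PP(\text{window increases})\asymp\varepsilon\,\mathbb E M_\varepsilon.
\]
The dimension-$1/2$ result gives $M_\varepsilon\ge\varepsilon^{-1/2+\eta}$ with probability bounded below for small $\varepsilon$. If the mass concentration cannot be ruled out directly, I would instead use the Hausdorff-dimension lower bound together with the fact that a set of positive $\mathscr H^{1/2-\eta}$-content meets at least $\varepsilon^{-1/2+\eta}$ windows. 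Hence $\mathbb E M_\varepsilon\ge c\,\varepsilon^{-1/2+\eta}$, which yields the claimed bound $c\varepsilon^{1/2+\eta}$ with $\varepsilon_0(\eta)$ chosen accordingly.

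I expect this lower bound to be the main obstacle. Two points need care. First, the dimension statement is almost sure, whereas I need a quantitative probability bounded below uniformly in $\varepsilon$; I would get this by restricting to the event where the profile restricted to the interval has nontrivial range, which has positive probability. Second, the stationarity used to equidistribute the windows must survive the shear. On the upper-bound side, the one delicate point is the null-set issue of ties in the limit, which the strict-inequality formulation handles.
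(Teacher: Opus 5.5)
Your proof is correct, but you organise both halves differently from the paper.

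\textbf{Upper bound.} The paper proves $\{\text{limit geodesics disjoint}\}\subseteq\liminf_n E^{\varepsilon,x,y_1,y_2,n}$ geometrically. It uses convergence of jump times, a random depth at which $\pi[x-\varepsilon\to(y_1,1)]$ branches off $\pi[x\to(y_1,1)]$, the tree structure of melon geodesics (Lemma \ref{lemma: geod tree}), and a compactness step. You instead pass disjointness through $D_\varepsilon>0$; the paper proves this equivalence only at the start of its lower-bound part. The inclusion then follows because a strict inequality is preserved under $\mathcal S^n\to\mathcal S$. Your route is shorter and needs only item 2 of the coupling. You should, however, state explicitly that $n^{1/6}\bigl(F^n_{x,\varepsilon}(y_2)-F^n_{x,\varepsilon}(y_1)\bigr)$ is exactly the rectangular increment of $\mathcal S^n$. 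This uses two facts: bottom-to-top last passage values are preserved under the melon map (\cite[Proposition 4.1]{DOV}), and the linear centring terms cancel in rectangular increments.

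\textbf{Lower bound.} The paper argues by contradiction. If $\PP\le C\varepsilon^{1/2+\eta}$ along a subsequence, then $\mathbb E\sum_i\diam(U^i_\varepsilon)^{1/2-\eta/2}\to0$, so the dimension is at most $1/2-\eta/2$, contradicting \cite{Basu2021Fractal}. Your argument is the direct contrapositive of this, with the same ingredients: positive Hausdorff content on a positive-probability event gives $\mathbb E M_\varepsilon\gtrsim\varepsilon^{-1/2+\eta}$. It also produces an explicit constant.

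Your \emph{mass concentration} variant is the cleanest version. It removes the need for \cite{Basu2021Fractal}, and also for localising that almost-sure statement to a fixed interval, which was your stated concern. Let $f$ be the profile and $C_\alpha$ its H\"older-$\alpha$ constant on $[x_0,x_0+1]$. Each increasing window contributes at most $C_\alpha\varepsilon^\alpha$, so $f(x_0+1)-f(x_0)\le M_\varepsilon C_\alpha\varepsilon^{\alpha}$. Hence $\mathbb E M_\varepsilon\ge\varepsilon^{-\alpha}\,\mathbb E[(f(x_0+1)-f(x_0))/C_\alpha]$. This expectation is positive: \eqref{eq: airy shape bnds} and stationarity of increments give $\PP(f(x_0+1)>f(x_0))>0$. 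Taking $\alpha=1/2-\eta$ yields the lower bound from H\"older $1/2^-$ regularity of $\mathcal S$ alone. Only a first moment is used anywhere.
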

\begin{remark}
    Leveraging the coupling between the Airy line ensemble and the Airy sheet in \ref{coupling: Airy sheet}, using similar arguments one can show there exist some universal constants $c, d> 0$ such that for any $0< x_1< x_2$, $y_1, y_2\in \R$,
    \[
     \PP \bigg(\pi[x_1 \to (y_1, 1)] \;\;\; \mathrm{and} \;\;\; \pi[x_2 \to (y_2, 1)] \text{ are not disjoint} \bigg) \le c\mathrm{e}^{-d(|x_1-x_2|\land |y_1-y_2|)^3}.
    \]
\end{remark}

\begin{proof}
    Fix $\eta > 0, y_1<y_2$, $x> 0$ and $0 < \varepsilon < x$. We work with the coupling on page \pageref{Skorokhod coupling} and restrict ourselves to the event of full probability of semi-infinite geodesic uniqueness (with rational endpoints and asymptotic directions), guaranteed by Lemma \ref{lemma: semi-inf geod properties}. On this event, we have the monotonicity of disjointness events with respect to the geodesic endpoints and asymptotic directions. Thus, by the approximation lemma, \ref{lemma: geodesic approx}, we can without loss of generality take all $x, y_1, y_2, \varepsilon$ to be rational. 
    
    Let 
     \[
    \mathsf{Disj}_{x,y_1, y_2, \varepsilon} := \{\pi[x-\varepsilon \to (y_1, 1)] \;\;\; \mathrm{and} \;\;\; \pi[x+\varepsilon \to (y_2, 1)] \text{ are disjoint}\}.
    \]

\underline{Upper bound}:
    We first show using the coupling in page \pageref{Skorokhod coupling} between Brownian melons and the Airy line ensemble, that if (recall the notation for geodesics in Brownian melon environments from the discussion before Lemma \ref{lemma: close-not-dj})
    \[
    \pi\{\overline{x-\varepsilon}\to \widehat{y_1}\}_n \;\;\; \mathrm{and} \;\;\; \pi\{\overline{x +\varepsilon}\to \widehat{y_2}\}_n  \text{ are not disjoint infinitely often in }n\,,
    \]
    the geodesics $\pi[x-\varepsilon \to (y_1, 1)]$ and $\pi[x+\varepsilon \to (y_2, 1)]$ are not disjoint. This will then allow us to use the estimates from Lemma \ref{lemma: close-not-dj}. A defect of the coupling is that the statements holds for some subsequence of Brownian melons. We can harmlessly identify this with $\N$ and not do any further relabelling.

    By Property 2. of the coupling on page \pageref{Skorokhod coupling}, there exists a random $N\in \N$ such that
\begin{equation}\label{eq: geod parabol monoton 2}
    Z_{k}(x-\varepsilon, y_1) < Z_k(x, y_1)\,,
\end{equation}
for all $k\ge N$. This means the semi-infinite geodesics $\pi[x-\varepsilon \to (y_1, 1)]$ and $\pi[x \to (y_1, 1)]$ on $\mathcal{A}$ branch on or before level $N$, consistent with their asymptotic parabolic shapes. By Property 2. of the coupling \pageref{Skorokhod coupling}, \eqref{eq: geod parabol monoton 2}, we have the almost sure convergence of jump times
\[
Z^n_{\ell}(x-\varepsilon, y_1)\stackrel{n\to \infty}{\longrightarrow}Z_{\ell}(x-\varepsilon, y_1)\,,\quad Z^n_{\ell}(x+\varepsilon, y_2)\stackrel{n\to \infty}{\longrightarrow}Z_{\ell}(x+\varepsilon, y_2)\,,\qquad 1\le \ell \le k\,.
\]
Thus, on the event $\{N = k\}$, \eqref{eq: geod parabol monoton 2} means that for all $n$ sufficiently large, $Z^n_{k}(x-\varepsilon, y_1) < Z^n_{k}(x, y)$. Consequently, by the tree-like nature of geodesics, Lemma \ref{lemma: geod tree}, the geodesics $\pi\{\overline{x-\varepsilon}\to \widehat{y_1}\}_n$ and $\pi\{\overline{x}\to \widehat{y_1}\}_n$ branch before or at depth $k$ (since otherwise this would violate geodesic uniqueness on the melons $WB^n$, which we tacitly assume throughout).

Therefore, on the event 
\[
    \pi\{\overline{x-\varepsilon}\to \widehat{y_1}\}_n \;\;\; \mathrm{and} \;\;\; \pi\{\overline{x +\varepsilon}\to \widehat{y_2}\}_n  \text{ are not disjoint infinitely often in }n\,,
\]
we have by geodesic monotonicity that for all $n$ sufficiently large, the zig-zags of geodesics $\pi\{\overline{x-\varepsilon}\to \widehat{y_1}\}_n$ and $\pi\{\overline{x+\varepsilon}\to \widehat{y_2}\}_n$ cannot meet past level $k$, for this would create a `geodesic bubble', violating the tree-structure of the geodesics  $\pi\{\overline{x-\varepsilon}\to \widehat{y_1}\}_n$ and $\pi\{\overline{x}\to \widehat{y_1}\}_n$. Since this $k$ is uniform in $n\ge 1$, taking $n\to \infty$ and using the convergence of jump times, we obtain by compactness that the zig-zags of $\pi[x-\varepsilon \to (y_1, 1)]$ and $\pi[x+\varepsilon \to (y_2, 1)]$ share a common point. Since $k\ge 1$ was arbitrary, we conclude the event 
\[
\pi[x-\varepsilon \to (y_1, 1)] \;\;\; \mathrm{and} \;\;\; \pi[x+\varepsilon \to (y_2, 1)] \text{ are disjoint} 
\]
is contained in the event
\[
    \pi\{\overline{x-\varepsilon}\to \widehat{y_1}\}_n \;\;\; \mathrm{and} \;\;\; \pi\{\overline{x +\varepsilon}\to \widehat{y_2}\}_n  \text{ are disjoint eventually in }n\,.
\]
Thus, Fatou's lemma and the bounds in Lemma \ref{lemma: close-not-dj} give the desired result.
    
\underline{Lower bound}:
We proceed by way of contradiction. Suppose that there exists some $\eta > 0$ such that
\[
\liminf_{\varepsilon > 0}\frac{\PP \bigg(\pi[x-\varepsilon \to (y_1, 1)] \;\;\; \mathrm{and} \;\;\; \pi[x+\varepsilon \to (y_2, 1)] \text{ are disjoint} \bigg)}{\varepsilon^{\frac{1}{2}+\eta}} < \infty\,.
\]

We claim that by the planarity of geodesics, Lemma \ref{lemma: crossing equiavlence} and the eventual coalescence of semi-infinite geodesics in the Airy line ensemble, items $(1), (2), (3)$ in Lemma \ref{lemma: semi-inf geod properties}, we obtain (up to a null set)
\[
\pi[x-\varepsilon \to (y_1, 1)] \;\;\; \mathrm{and} \;\;\; \pi[x+\varepsilon \to (y_2, 1)] \text{ are disjoint if and only if } \Delta^{(y_1, 1)}_{(y_2, 1)}(x+\varepsilon) > \Delta^{(y_1, 1)}_{(y_2, 1)}(x-\varepsilon)
\]
eventually in $\varepsilon \to 0$ (rational).

Indeed, using the uniqueness and coalescing properties of semi-infinite geodesics with the same \emph{fixed} asymptotic direction $x> 0$, items $(1), (3)$ in Lemma \ref{lemma: semi-inf geod properties} and \cite[Lemma 3.4]{sarkar2021brownian} we have for any rational $0<\varepsilon < x$, $\pi[x-\varepsilon \to (y_1, 1)]$ and $\pi[x+\varepsilon\to (y_2, 1)]$ are pointwise limits of the geodesics $\pi[(x-\varepsilon)_k \to (y_1, 1)]$ and $\pi[(x+\varepsilon)_k \to (y_2, 1)]$ (recall the notation from Section \ref{sec: semi-inf geod}, $x_k := -\sqrt{k}/\sqrt{2x}$, $x> 0$, $k\ge 1$). Moreover, the convergence of jump times implies for a fixed, possible random rational $t\le y_1$, the paths $\pi[(x-\varepsilon)_k \to (y_1, 1)]$ and $\pi[(x+\varepsilon)_k \to (y_2, 1)]$ almost surely eventually stabilise to their limits on $[t, y_1]$ (see the illustration in Figure \ref{fig: geod approx}). Moreover, by the distinct asymptotic shapes of $\pi[x-\varepsilon \to (y_1, 1)]$ and $\pi[x+\varepsilon \to (y_2, 1)]$ (see \eqref{def: semi-inf geo}), there exists some rational $T < 0$ such that on $(-\infty, T]$, the zig-zags of $\pi[x-\varepsilon \to (y_1, 1)]$ and $\pi[x+\varepsilon \to (y_2, 1)]$ are disjoint. We thus obtain for all $k\ge 1$ by a direct application of Lemma \ref{lemma: crossing equiavlence},
 \[
\pi[(x-\varepsilon)_k \to (y_1, 1)] \;\;\; \mathrm{and} \;\;\; \pi[(x+\varepsilon)_k\to (y_2, 1)]\]
are disjoint if and only if
\[
\mathcal{A}[(x+\varepsilon)_k \to (y_2, 1)]-\mathcal{A}[(x+\varepsilon)_k \to (y_1, 1)] > \mathcal{A}[(x-\varepsilon)_k \to (y_2, 1)]-\mathcal{A}[(x-\varepsilon)_k \to (y_1, 1)].
\]
Thus, taking $k\to \infty$, we obtain the promised equivalence. 

By the coupling between the Airy sheet and the Airy line ensemble, see \ref{coupling: Airy sheet}, we have almost surely,
\[
\mathcal{S}(x, y_2)-\mathcal{S}(x, y_1) = \Delta^{(y_1, 1)}_{(y_2, 1)}(x)\,,\quad x\in (0, \infty)\cap \Q.
\] 
The quadrangle inequality satisfied by the Airy sheet, \eqref{eq: quadrangle ineq airy sheet} (entirely analogous to the crossing lemma, Lemma \ref{lemma: crossing} for geodesics in last passage percolation), yields that the disjointness of the geodesics $\pi[x-\varepsilon \to (y_1, 1)]$ and $\pi[x+\varepsilon \to (y_2, 1)]$ is equivalent to the `Airy difference profile' $x\mapsto \mathcal{S}(x, y_2)-\mathcal{S}(x, y_1)$ being non-constant on $[x-\varepsilon, x+ \varepsilon]$. Hence, we have for the $x$ above,
\begin{equation}\label{eq: lv airy diff proba bound}
    \liminf_{\varepsilon > 0}\frac{\PP \bigg(\mathsf{LV}(\mathcal{S}(x, y_2)-\mathcal{S}(x, y_1), [x-\varepsilon, x+\varepsilon])\neq \emptyset \bigg)}{\varepsilon^{\frac{1}{2}+\eta}} < \infty\,,
\end{equation}
where for $a< b$ and a function $f:[a, b] \to \R$, we denote the \emph{set of local variation}, 
\[
\mathsf{LV}(f, [a, b]) = \left\{\begin{aligned}
    &c\in [a, b]: f \text{ is non-constant there on every}\\
    &\text{neighbourhood of }c \text{ in } [a, b]
\end{aligned}\right\}\,.
\]
By translation symmetries \eqref{eq: Airy sheet symmetry} of the Airy sheet $\mathcal{S}$, for any $c \in \R$, the process $x\mapsto \mathcal{S}(x, y_2)-\mathcal{S}(x, y_1)$ has stationary increments. This means the above estimates hold for all $x\in \R$.

Now, for $a < b$ finite, consider the interval $[a, b]$. For any $\varepsilon > 0$ (here we will take the deterministic subsequence from \eqref{eq: lv airy diff proba bound} which we will not relabel), starting from $a$, partition $[a, b]$ into a family of intervals of diameter $2\varepsilon$, possibly overshooting $b$ in the last step. Let $N_\varepsilon\le \lceil(b-a)/(2\varepsilon)\rceil$ be the random number of intervals above for which the (non-decreasing by \eqref{eq: quadrangle ineq airy sheet}) difference profile $x\mapsto \mathcal{S}(x, y_2)-\mathcal{S}(x, y_1)$ experiences an increase. Using linearity and taking expectations, we have the estimate
\[
\mathbb{E}[N_\varepsilon] \le \bigg\lceil\frac{b-a}{2\varepsilon}\bigg\rceil\cdot\PP \bigg(\mathsf{LV}(\mathcal{S}(a, y_2)-\mathcal{S}(a, y_1), [a, a+2\varepsilon])\neq \emptyset \bigg)\le C_{a, b}\cdot\varepsilon^{-\frac{1}{2}+\eta}
\]
for some constant $C_{a, b} > 0$.

We can thus cover the set of local variation $\mathsf{LV}(\mathcal{S}(\cdot, y_2)-\mathcal{S}(\cdot, y_1), [a, b])$ by $N_\varepsilon$ many intervals $(U^i_\varepsilon)_i$ of diameter $2\varepsilon$ such that
\[
\sum_i \diam(U^i_\varepsilon)^{\frac{1}{2}-\frac{\eta}{2}}\le C_{a, b, \eta}  \cdot N_\varepsilon\cdot \varepsilon^{\frac{1}{2}-\frac{\eta}{2}}
\]
for some constant $C_{a, b, \eta} > 0$. Taking expectations, we have
\[
 \mathbb{E}[\sum_i \diam(U^i_\varepsilon)^{\frac{1}{2}-\frac{\eta}{2}}]\le C_{a, b, \eta}\cdot \varepsilon^{-\frac{1}{2}+\eta} \cdot \varepsilon^{\frac{1}{2}-\frac{\eta}{2}} \le C_{a, b, \eta} \cdot \varepsilon^{\frac{\eta}{2}}\,, 
\]
which converges to zero as $\varepsilon \to 0$. Thus, (after possibly passing to a further subsequence of $\varepsilon \searrow 0$) almost surely, the Hausdorff dimension of $\mathsf{LV}(\mathcal{S}(\cdot, y_2)-\mathcal{S}(\cdot, y_1), [a, b])$ is at most $1/2 - \eta /2 < 1/2$. Since $a< b$ were arbitrary, we obtain that the Hausdorff dimension of  $\mathsf{LV}(\mathcal{S}(\cdot, y_2)-\mathcal{S}(\cdot, y_1), \R)$ is at most $1/2 - \eta /2 < 1/2$. This is a contradiction as it is almost surely one half \cite[Theorem 1.1]{Basu2021Fractal}.
\end{proof}

\section{Busemann functions}\label{sec: Busemann}
As we saw in Theorem \ref{thm: semi-inf tail disj proba}, the pathwise regularity of the Airy difference profiles, given by the coupling \ref{coupling: Airy sheet} as `top-level' Busemann functions in the Airy line ensemble (recall the notation from \eqref{eq: Airy limit busemann}), 
\[
\mathcal{S}(x, y_2)-\mathcal{S}(x, y_1) = \Delta^{(y_1, 1)}_{(y_2, 1)}(x)\,,\quad x\in (0, \infty)\cap \Q\,,
\] 
plays a crucial role in controlling the likelihood of disjointness for semi-infinite geodesics in the Airy line ensemble ending at the top line thereof. In this light, we now investigate some structural properties of Busemann functions associated to semi-infinite geodesics with arbitrary endpoints $(t, m)$, $(t', m')$, $\R\times \N$, $x\mapsto \Delta^{(t, m)}_{(t', m')}(x)$ formally defined as
\[
\Delta^{(t, m)}_{(t', m')}(x):= 
    \displaystyle\lim_{k\to\infty}(\mathcal{A}[x_k\to (t',m')]-\mathcal{A}[x_k\to (t,m)])\,,\quad x \in \Q\cap (0, \infty), (t,m), (t', m')\in \Q\times \N\,.
\]
By the coalescence of semi-infinite geodesics in the Airy line ensemble with the same fixed asymptotic direction, item (3) of Lemma \ref{lemma: semi-inf geod properties}, we have that the limits above for all $x\in \Q\cap (0, \infty)$, $t, t; \in \Q, m ,m '$ almost surely eventually stabilise and so the Busemann profiles are well defined. In fact, again by Lemma \ref{lemma: semi-inf geod properties}, these limits exist and are well defined for any fixed countable dense subset of asymptotic directions $x>0$ and endpoints $(t, m), (t',m')$, $t, t'\in \R$.

This investigation will culminate in a proof of the non-degeneracy and H\"{o}lder $1/2^{-}-$continuity of these profiles, Theorems \ref{thm: busemann cont} and \ref{thm: Busemann equality in law} which will have implications on exceptional behaviour of semi-infinite paths; this will be explored in Section \ref{sec: exceptional geod geom}. The nature of this eventual argument will be geometric and will proceed by induction on the depths $m', m$, using the pathwise regularity of the Airy sheet as the base case. We will also give a separate proof using different ideas. In particular, we can also prove continuity of the Busemann profiles by leveraging certain melon identities in the prelimiting Airy line ensembles. This is in spirit similar to the newly found `down-the-middle' coupling between the Airy sheet and the Airy line ensemble in \cite{dauvergne2026downthemiddle}.

We start with a preliminary result which essentially states that the absolute increments of the functions $\Q\cap(0, \infty)\ni x\mapsto \Delta^{(t, m)}_{(t', m')}(x)$, are locally uniform in $t, t', m, m'$ and that in fact for fixed $m, m'\in \N, x\in \Q\cap(0, \infty)$ one obtains the profile $(t, t')\mapsto \Delta^{(t, m)}_{(t', m')}(x)$ has a jointly continuous modification.

First we record the observation that the functions $\Q\cap(0, \infty)\ni x\mapsto \Delta^{(t, m)}_{(t', m')}(x)$, are non-decreasing for $t\le t'$ and $m\le m'$. This follows from the limiting pointwise `relative-length' construction in \eqref{eq: Airy limit busemann} and the crossing lemma, Lemma \ref{lemma: crossing} applied to the finite depth approximating geodesics that appear in Lemma \ref{lemma: semi-inf geod properties}. 

\begin{proposition}\label{prop: Busemann regularity}
  Almost surely, for any fixed $x\in \Q\cap(0, \infty)$, $m, m' \in \N$, the map\\ \noindent $(t, t')\mapsto \Delta^{(t, m)}_{(t', m')}(x)$ has a modification in $\mathcal{C}(\R^2)$. Moreover, the process $\Q\cap(0, \infty)\ni x\mapsto \big(t, m, t', m'\mapsto \Delta^{(t, m)}_{(t', m')}(x)\big)$ has right and left limits, with convergence uniform in $(t, t', m, m')$ on bounded subsets of $\Q^2\times \N$.
\end{proposition}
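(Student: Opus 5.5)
For fixed rational $x>0$, I will first observe the cocycle identity
\[
\Delta^{(t,m)}_{(t',m')}(x)=\Delta^{(t,m)}_{(s,1)}(x)+\Delta^{(s,1)}_{(t',m')}(x),
\]
which is immediate from the definition \eqref{eq: Airy limit busemann}, since all limits stabilise by coalescence, item (3) of Lemma \ref{lemma: semi-inf geod properties}. This reduces the question to the maps $t\mapsto \Delta^{(s,1)}_{(t,m)}(x)$ for a fixed reference point $(s,1)$. For $m=1$ this is $\mathcal{S}(x,t)-\mathcal{S}(x,s)$ by the coupling \ref{coupling: Airy sheet}, and it is continuous because the Airy sheet is continuous. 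For $m\ge 2$ I plan to use the metric composition law, Lemma \ref{lemma: Metric Composition}, on the approximating finite geodesics from $x_k$. For $k$ large, a semi-infinite geodesic ending at $(t,m)$ coalesces with the top-line ones to the left of a random rational time $T$, at a common point $(T,d(T))$; this is exactly the construction in the proof of Lemma \ref{lemma: semi-inf geod properties}. Hence, locally uniformly for $t$ in a compact interval,
\[
\Delta^{(s,1)}_{(t,m)}(x)=\max_{j\ge m,\; j\le J}\Big(\Delta^{(s,1)}_{(T,j)}(x)+\mathcal{A}[(T,j)\to(t,m)]\Big),
\]
with a finite random $J$ chosen by comparing with the geodesics $\pi[x\to(y,1)]$ and $\pi[x\to(Y,1)]$ that sandwich the paths. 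Each finite last passage value $t\mapsto\mathcal{A}[(T,j)\to(t,m)]$ is continuous in $t$ because $\mathcal{A}$ is continuous. A finite maximum of continuous functions is continuous, which gives a continuous version on rational $t,t'$. It then extends to a modification in $\mathcal{C}(\R^2)$, for which I will check that it agrees almost surely with the pointwise limit at each fixed real pair. The step needing care is that $T$ and $J$ can be chosen uniformly for $t$ in a bounded interval. I would get this from the geodesic ordering: every geodesic ending at $(t,m)$ with $t\in[a,b]$ lies between $\pi[x\to(a,1)]$ and $\pi[x\to(Y,1)]$, and these coalesce at a finite depth.

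For the second claim, fix $(t,m)\prec(t',m')$. The map $x\mapsto\Delta^{(t,m)}_{(t',m')}(x)$ on $\Q\cap(0,\infty)$ is monotone (non-decreasing) by the crossing lemma, as recorded just before the proposition. Monotone functions have one-sided limits at every point, and for general orderings of the endpoints I will write the profile as a difference of monotone ones using the cocycle identity.

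The main obstacle is the uniformity of these limits in $(t,t',m,m')$ over bounded sets. I would combine monotonicity in $x$ with monotonicity in the endpoints. By the crossing lemma, for $t\le u\le t'$ the increment $\Delta^{(t,m)}_{(t',m')}(x_2)-\Delta^{(t,m)}_{(t',m')}(x_1)$, with $x_1<x_2$, dominates the corresponding increment for the interior pair $(u,\cdot)$. So the oscillation over $x\in(x_0-\delta,x_0)$, for all parameters in a box $[a,b]^2\times\{1,\dots,M\}$, is bounded by the oscillation of finitely many extremal profiles, such as $\Delta^{(a,M)}_{(b,1)}$, composed via the cocycle identity. These extremal profiles converge as $\delta\to0$, so the Cauchy criterion holds uniformly over the box, and this yields uniform convergence of the right and left limits.
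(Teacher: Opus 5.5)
Your proposal is correct apart from one slip in the uniformity step. Part one takes a slightly different route from the paper; part two is the paper's argument.

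\textbf{One-sided limits.} Your treatment is the paper's argument: bound the $x$-increments of every profile in a bounded parameter box by those of a single monotone extremal profile, which has one-sided limits. The extremal profile you name is the wrong one. The monotonicity in $x$ recorded before the proposition (from the crossing lemma) requires $t\le t'$ \emph{and} $m\le m'$. For $M\ge 2$ you have $\Delta^{(a,M)}_{(b,1)}=\Delta^{(a,1)}_{(b,1)}-\Delta^{(a,1)}_{(a,M)}$, a difference of two non-decreasing profiles whose increments can cancel. So it need not be monotone, and its oscillation does not control the interior profiles.

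Use $E:=\Delta^{(a,1)}_{(b,M)}$ instead, as the paper does with $\Delta^{(s,n)}_{(r,N)}$. Every $(t,m)\in[a,b]\times\{1,\dots,M\}$ satisfies $a\le t\le b$ and $1\le m\le M$. Hence $\Delta^{(a,1)}_{(t,m)}$ and $\Delta^{(t,m)}_{(b,M)}$ are both non-decreasing with sum $E$, so both have increments in $[0,E(x_2)-E(x_1)]$. Writing $\Delta^{(t,m)}_{(t',m')}=\Delta^{(a,1)}_{(t',m')}-\Delta^{(a,1)}_{(t,m)}$ then gives $|\Delta^{(t,m)}_{(t',m')}(x_2)-\Delta^{(t,m)}_{(t',m')}(x_1)|\le E(x_2)-E(x_1)$ uniformly over the box. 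The uniform Cauchy property follows from the monotonicity of $E$ alone.

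\textbf{Continuity in $(t,t')$.} The paper observes that all geodesics $\pi[x\to(t,m)]$ with $(t,m)$ in a bounded set pass through one common point $(T,d)$ and coalesce after it. Locally this gives $\Delta^{(t,m)}_{(t',m')}(x)=\mathcal{A}[(T,d)\to(t',m')]-\mathcal{A}[(T,d)\to(t,m)]$, a difference of two finite last passage values.

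You instead fix a reference point via the cocycle identity and apply the metric composition law at a fixed time $T$. This needs only a uniform upper bound $J$ on the levels at which the approximating geodesics cross time $T$, and yields a finite maximum of continuous functions. Your version uses sandwiching rather than pinching through a single point. If the two sandwiching geodesics do share a point at time $T$, your maximum collapses to the paper's single term.

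Because $T$ and $J$ work for all large $k$ uniformly in $t\in[a,b]$, your argument shows more than a modification: the Busemann limit exists and is continuous simultaneously for all real $t$. The separate appeal to the Airy sheet for $m=1$ is unnecessary, since your max formula covers that case too.
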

\begin{proof}
    For fixed $x > 0$ rational, the local uniform continuity of the map $(t, m, t', m')\mapsto \Delta^{(t, m)}_{(t', m')}(x)$ on $(\Q\times \N)^2$ follows from the uniform coalescence of geodesics with fixed asymptotic direction and bounded right endpoints, item $(3)$ of Lemma \ref{lemma: semi-inf geod properties}. In particular, for any bounded subset $K\subset (\Q\times \N)^2$, all geodesics $\pi[x\to (t, m)]$ and $\pi[x\to (t', m')]$, for $(t, m , t', m ')\in K$ pass through a common point and coalesce thereafter. This means the Busemann limit in \eqref{eq: Airy limit busemann} almost surely eventually stabilises uniformly in $k\ge1$ for $(t, m , t', m ')\in K$ in any fixed bounded set, giving $\Delta^{(t, m)}_{(t', m')}(x)$ as a difference of two finite depth last passage values. 
    
    Now to establish the second part, we show that for any monotone sequence $(x_n)\subseteq \Q\cap(0, \infty)$ converging to some $x > 0$, on any bounded subset of $(\Q\times \N)^2$, the uniform norms $\norm{\Delta^{\cdot}_{\cdot}(x_n)-\Delta^{\cdot}_{\cdot}(x_m)}$ converge to zero as $m, n \to \infty$ by the monotonicity of the Busemann profiles and the planarity of geodesics, Lemma \ref{lemma: crossing}. Indeed, two application of Lemma \ref{lemma: crossing} gives for all $s\le t, t'\le r$ rational, $n\le m, m'\le N$ and $0< x< x'$ rational,
    \begin{align*}
        0&\le \sup_{\substack{s\le t, t'\le r\\n\le m ,m'\le N}}|\Delta^{(t, m)}_{(t', m')}(x')- \Delta^{(t, m)}_{(t', m')}(x)| \le \Delta^{(s, n)}_{(r, N)}(x')- \Delta^{(s, n)}_{(r, N)}(x)\to 0\,, \quad x'\to x\,,
    \end{align*}
    where the first inequality follows from Lemma \ref{lemma: crossing} applied to the approximating paths in Lemma \ref{lemma: semi-inf geod properties}, and the convergence to zero comes from the monotonicity of the Busemann processes $\Delta^{(s, n)}_{(r, N)}(\cdot)$.
\end{proof}

Equipped with this regularity for Busemann functions, we investigate the asymptotic non-degeneracy of these difference profiles, with estimates on their likelihood of non-constancy on a neighbourhood of a point. 

We recall the notation for the set of non-constancy, or local variation of some measurable function. Particularly, for $a< b$ and a function $f:[a, b] \to \R$, we denote the \emph{set of local variation}, 
    \[
\mathsf{LV}(f, [a, b]) = \left\{\begin{aligned}
    &c\in [a, b]: f \text{ is non-constant there on every}\\
    &\text{neighbourhood of }c \text{ in } [a, b]
\end{aligned}\right\}\,.
\]

Note for for $\ell\ge 2$ and fixed rational $0< a< b$, the set of local variation of the Busemann profiles (recall from \eqref{eq: busemann cadlag/caglad}),
$\mathsf{LV}\big(\Delta^{\square, (0,1)}_{(0,\ell)}(u)\,, u\in [a, b])$ only depends on the non-constancy of the values of the Busemann profiles $\Delta^{\square, (0,1)}_{(0,\ell)}$ at rational points in $[a,b]$, which equal the Busemann limit $\Delta^{(0,1)}_{(0,\ell)}$ in \eqref{eq: Airy limit busemann}. Thus, in the remainder, when referring to sets of local variation of $\Delta^{\square, (0,1)}_{(0,\ell)}$ in $[a, b]$ (by the same reasoning, we will also do so for $(0, \infty)$), we will simply drop the $\square$.

\begin{proposition}\label{prop: non-disj-close-endpts-inf-geo}
    Fix $\ell \ge 2$. Then, for any fixed rational $0<a< b$,
    \[
    \mathsf{LV}\big(\Delta^{(0,1)}_{(0,\ell)}(u)\,, u\in [a, b]\big)\neq\emptyset \text{ if and only if } \pi[a \to (0, 1)] \;\;\; \mathrm{and} \;\;\; \pi[b \to  (0, \ell)] \text{ are disjoint}\,.
    \]
    
    Moreover, for all rational $0< \varepsilon < x$,
    \begin{equation}\label{eq: LV estimate}
	\PP\bigg(\mathsf{LV}\big(\Delta^{(0,1)}_{(0,\ell)}(u)\,, u\in [x-\varepsilon, x+\varepsilon]\big)\neq\emptyset\bigg)\le C_x \cdot \varepsilon^{\frac{1}{2}+o_\varepsilon(1)}
	\end{equation}
    for some constant $C_x > 0$ that is uniformly bounded when $x$ is on a bounded subset of $(0, \infty)$, bounded away from zero. 
\end{proposition}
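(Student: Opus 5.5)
The plan has two parts: first an equivalence via the crossing characterisation, then the estimate by a monotonicity comparison with the top-line disjointness bound of Theorem \ref{thm: semi-inf tail disj proba}.

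\emph{The equivalence.} The profile $u\mapsto \Delta^{(0,1)}_{(0,\ell)}(u)$ is non-decreasing on rationals, as observed before Proposition \ref{prop: Busemann regularity}. Hence its local variation set on $[a,b]$ is non-empty if and only if $\Delta^{(0,1)}_{(0,\ell)}(b) > \Delta^{(0,1)}_{(0,\ell)}(a)$. I would fix rationals $a<b$ and work on the full-measure event of uniqueness and coalescence from Lemma \ref{lemma: semi-inf geod properties}, with finite approximants $\pi[a_k\to(0,1)]$ and $\pi[b_k\to(0,\ell)]$ ($a_k=(-\sqrt{k/2a},k)$). Since the asymptotic directions differ, there is a random rational $T$ such that the two limiting zig-zags are disjoint on $(-\infty,T]$. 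They also stabilise on $[T,0]$ for $k$ large, by the approximation argument of Lemma \ref{lemma: geodesic approx}. Lemma \ref{lemma: crossing equiavlence} then applies to the finite geodesics from $a_k,b_k$ to $(0,\ell)$ and $(0,1)$, with levels ordered $1\le \ell\le k\le k'$. It says these geodesics are disjoint iff
\[
\mathcal A[b_k\to(0,\ell)]-\mathcal A[b_k\to(0,1)] > \mathcal A[a_k\to(0,\ell)]-\mathcal A[a_k\to(0,1)].
\]
Here the endpoints $(0,1)$ and $(0,\ell)$ are treated as ordered lexicographically, which I would check fits the hypotheses, possibly by shifting $0$ to $0^\pm$ and using continuity. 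Letting $k\to\infty$ and using eventual stabilisation of both sides (coalescence gives exact equality for large $k$), the inequality becomes $\Delta^{(0,1)}_{(0,\ell)}(b)>\Delta^{(0,1)}_{(0,\ell)}(a)$. Disjointness of the finite paths is also eventually equivalent to disjointness of the semi-infinite ones.

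\emph{The estimate.} For $\varepsilon<x$ rational, the first part reduces the event to disjointness of $\pi[x-\varepsilon\to(0,1)]$ and $\pi[x+\varepsilon\to(0,\ell)]$. I would compare with top-line paths by monotonicity. Geodesic ordering (Lemma \ref{lemma: geod tree} and the remark after it) gives
\[
\pi[x+\varepsilon\to(0,\ell)]\le \pi[x+\varepsilon\to(y,1)]
\]
whenever $\pi[x+\varepsilon\to(y,1)](0)\ge \ell$. So if $\pi[x+\varepsilon\to(0,\ell)]$ is disjoint from $\pi[x-\varepsilon\to(0,1)]$, then so is $\pi[x+\varepsilon\to(y,1)]$, provided it passes through or above level $\ell$ at time $0$ on the correct side. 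By the ordering, disjointness is preserved when one path moves further away.

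This leads to a union bound:
\[
\PP(\cdot)\le \PP\bigl(\pi[x+\varepsilon\to(\delta,1)](0)<\ell\bigr)+\PP\bigl(\pi[x-\varepsilon\to(0,1)],\ \pi[x+\varepsilon\to(\delta,1)]\text{ disjoint}\bigr)
\]
for a deterministic $\delta>0$. The second term is at most $C\varepsilon^{1/2+o(1)}$ by Theorem \ref{thm: semi-inf tail disj proba} (the upper bound, via Lemma \ref{lemma: close-not-dj}). Its constants depend on $y_2-y_1=\delta$ only through $\lceil\delta/\varepsilon\rceil\varepsilon^{3/2}$, so $\delta$ may even grow mildly. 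The first term is the probability that the geodesic to $(\delta,1)$ has not yet descended $\ell-1$ levels by time $0$. By shift invariance (item (4)) this equals $\PP(Z_\ell(x+\varepsilon,0)\le -\delta)$ with the appropriate sign convention, which tends to $0$ as $\delta\to\infty$, but not polynomially in $\varepsilon$ at fixed $\delta$.

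The main obstacle is this step. I would take $\delta=\delta(\varepsilon)=|\log\varepsilon|^{2}$ and use the $\delta$-dependence $C(\delta+1)\varepsilon^{1/2+o(1)}$ of the second term, which is still $\varepsilon^{1/2+o(1)}$. For the first term I would need a tail for $-Z_\ell(x,0)$ decaying faster than any power of $\delta$. My first attempt would use the Airy sheet coupling and the parabolic shape bounds \eqref{eq: airy shape bnds} together with Lemma \ref{lemma: airy counting 1}. If that fails, I would instead use a random $\delta$: choose a rational $Y$ such that the path to $(Y,1)$ passes level $\ell$ at time $0$, as in the proof of Lemma \ref{lemma: semi-inf geod properties}, and union bound over $Y\in\{1,\dots,M\}$ with $M=\varepsilon^{-o(1)}$. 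Uniformity of $C_x$ over compact subsets of $(0,\infty)$ follows from the uniformity of constants in Lemma \ref{lemma: close-not-dj}.
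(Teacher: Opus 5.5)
Your architecture is the paper's. Part 1 goes through Lemma~\ref{lemma: crossing equiavlence} on the finite approximants and then passes to the stabilised Busemann limits. No shift to $0^\pm$ is needed there, since that lemma already allows $x_1=x_2=0$ with $m_1=1\le m_2=\ell$. Part 2 uses exactly the paper's union bound
\[
\PP(\cdot)\le \PP\bigl(Z_\ell(x+\varepsilon,0)\le-\delta\bigr)+\PP\bigl(\pi[x-\varepsilon\to(0,1)]\text{ and }\pi[x+\varepsilon\to(\delta,1)]\text{ disjoint}\bigr),
\]
with the second term handled by Theorem~\ref{thm: semi-inf tail disj proba} at a cost linear in $\delta$.

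The gap is the step you yourself flag: you never bound $\PP(Z_\ell(x+\varepsilon,0)\le-\delta)$, so \eqref{eq: LV estimate} is not established.

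Your stated requirement for that tail is also wrong. With $\delta=|\log\varepsilon|^2$, a tail that is merely faster than any power of $\delta$ does not suffice. You need it to be at most $\varepsilon^{1/2-o(1)}=\exp(-(\tfrac12-o(1))\sqrt{\delta})$.

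The fallback does not avoid the issue either. By monotonicity in the endpoint, the event that no $Y\in\{1,\dots,M\}$ has $\pi[x+\varepsilon\to(Y,1)](0)\ge\ell$ is precisely $\{Z_\ell(x+\varepsilon,M)\le0\}$. That is the same lower-tail event with $\delta=M$.

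The missing idea is a one-point comparison on the bad event. Set $z=x+\varepsilon$. On $\{Z_\ell(z,0)\le-\delta\}$, the geodesic $\pi[z\to(0,1)]$ sits at some level $j\le\ell-1$ at time $-\delta$. The finite approximants eventually pass through $(-\delta,j)$, so the metric composition law and the Airy sheet coupling give
\[
\mathcal S(z,0)-\mathcal S(z,-\delta)\le\mathcal A[(-\delta,j)\to(0,1)]\le\mathcal A[(-\delta,\ell)\to(0,1)]\le\mathcal A_1(0)-\mathcal A_\ell(-\delta),
\]
where the last step uses the ordering of the lines. After parabolic recentring this reads
\[
2z\delta\le\mathcal A_1(0)-\bigl(\mathcal A_\ell(-\delta)+\delta^2\bigr)-\bigl(\mathcal S(z,0)+z^2\bigr)+\bigl(\mathcal S(z,-\delta)+(z+\delta)^2\bigr).
\]
Three of these terms are GUE Tracy--Widom, and $\mathcal A_\ell(-\delta)+\delta^2\overset{d}{=}\mathcal A_\ell(0)$. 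One of the four terms must exceed $z\delta/2$. Tracy--Widom tails and Lemma~\ref{lemma: airy counting 1} then give
\[
\PP(Z_\ell(z,0)\le-\delta)\le Ce^{\ell-c(z\delta)^{3/2}},
\]
uniformly for $z$ bounded away from zero.

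Choosing $\delta\asymp|\log\varepsilon|^{2/3}$ makes this $O(\varepsilon^{1/2})$, while the disjointness term stays $\varepsilon^{1/2+o(1)}$. This is how the paper closes the argument. It also supplies part of the uniformity of $C_x$, which you attributed only to Lemma~\ref{lemma: close-not-dj}.
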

\begin{remark}
    Observe that by the stationarity of semi-infinite geodesics with respect to horizontal shifts, item $(3)$ of Lemma \ref{lemma: semi-inf geod properties}, the same results hold verbatim for all Busemann function $\Delta^{(y,1)}_{(y,\ell)}$ with $y\in \R$.
\end{remark}

\begin{proof}
Fix rational $0 < x< z$, and $\ell \ge 2$. We claim that
\[
    \pi[x \to (0, 1)] \;\;\; \mathrm{and} \;\;\; \pi[z \to  (0, \ell)] \text{ are disjoint if and only if } \Delta_{(0, \ell)}^{(0, 1)}(z) >  \Delta_{(0, \ell)}^{(0, 1)}(x)\,.
    \]
    We will proceed as in the proof of Theorem \ref{thm: semi-inf tail disj proba}, where this was established for $\ell = 1$.
    
    More precisely, using the uniqueness and coalescing properties of semi-infinite geodesics with the same \emph{fixed} asymptotic direction $x> 0$, items $(1), (3)$ in Lemma \ref{lemma: semi-inf geod properties} and \cite[Lemma 3.4]{sarkar2021brownian} we have $\pi[x \to (0, 1)]$ and $\pi[z\to (0, \ell)]$ are pointwise limits of the geodesics $\pi[x_k \to (0, 1)]$ and $\pi[z_k\to (0, \ell)]$ (recall the definition from Section \ref{sec: semi-inf geod}). By the distinct asymptotic shapes of $\pi[x \to (0, 1)]$ and $\pi[z\to (0, \ell)]$ (see \eqref{def: semi-inf geo}), there exists some rational $T < 0$ such that on $(-\infty, T]$, the zig-zags of $\pi[x \to (0, 1)]$ and $\pi[z\to (0, \ell)]$ are disjoint. Arguing as in the approximation lemma, Lemma \ref{lemma: geodesic approx}, the convergence of jump times implies for a fixed, possible random rational $t\le 0$, the paths $\pi[x_k \to (0, 1)]$ and $\pi[z_k \to (0, \ell)]$ almost surely eventually stabilise to their limits on $[t, 0]$ (see the illustration in Figure \ref{fig: geod approx}) Thus, the pointwise convergence of jump times and the approximation lemma, \ref{lemma: geodesic approx} implies the disjointness of $\pi[x \to (0, 1)]$ and $\pi[z \to (0, \ell)]$ is equivalent to the almost sure eventual disjointness of $\pi[x_k \to (0, 1)]$ and $\pi[z_k \to (0, \ell)]$ on $[T, 0]$ as $k\to \infty$.
    
    We also obtain by a direct application of Lemma \ref{lemma: crossing equiavlence}, that almost surely for all $k \ge \ell$
    \[
    \pi[x_k \to (0, 1)] \;\;\; \mathrm{and} \;\;\; \pi[z_k\to (0, \ell)]
    \]
    are disjoint (recall from Definition \ref{def: disjointness}) if and only if
     \[
    \mathcal{A}[z_k \to (0, \ell)]-\mathcal{A}[z_k \to (0, 1)] > \mathcal{A}[x_k \to (0, \ell)]-\mathcal{A}[x_k \to (0, 1)]\,.
    \]
    By item (3) in Lemma \ref{lemma: semi-inf geod properties} the above differences of last passage values eventually stabilise in $k\ge \ell$. 
    
    Hence, taking $k\to \infty$, using the definition of the Busemann processes in \eqref{eq: Airy limit busemann}, we conclude the proof of the claim. 
 
Now, fix $0 < \varepsilon < \min\{x, 1\}$, $y> 0$. On the event the jump time $Z_\ell(x+\varepsilon, y)$ of geodesic $\pi[x+\varepsilon, (y, 1)]$ from level $\ell$ to $\ell-1$ is greater than zero, if the geodesics $\pi[x-\varepsilon\to (0, 1)]$ and $\pi[x+\varepsilon \to (y, 1)]$ are not disjoint, then by monotonicity geodesics $\pi[x-\varepsilon\to (0, 1)]$ and $\pi[x+\varepsilon \to (0, \ell)]$ are also not disjoint. Thus, a union bound on the complement of the intersection of the above events gives
\begin{align*}
    &\PP\bigg(\mathsf{LV}\big(\Delta^{(0,1)}_{(0,\ell)}(u)\,, u\in (x-\varepsilon, x+\varepsilon)\big)\neq\emptyset\bigg)\\
    &= \PP\bigg(\pi[x-\varepsilon \to (0, 1)] \;\;\; \mathrm{and} \;\;\; \pi[x+\varepsilon \to  (0, \ell)] \text{ are disjoint} \bigg)\\
    &\le \PP\left( \pi[x-\varepsilon \to (0, 1)] \;\;\; \mathrm{and} \;\;\; \pi[x+\varepsilon  \to  (y, 1)] \text{ are disjoint} \right) + \PP\bigg(Z_{\ell}(x+\varepsilon, y)\le 0\bigg)\\
    &\le  \PP\left( \pi[x-\varepsilon \to (0, 1)] \;\;\; \mathrm{and} \;\;\; \pi[x+\varepsilon  \to  (y, 1)] \text{ are disjoint} \right)\\
    &+ \PP\bigg(Z_{\ell}(x+\varepsilon, 0)+y\le 0\bigg)\,,
\end{align*}
where the last line follows by item$(4)$ in Lemma \ref{lemma: semi-inf geod properties}, which gives the equality in law $Z_\ell(x+\varepsilon, y) \overset d = Z_\ell(x+\varepsilon , 0) + y$. We now estimate the lower tails of the geodesic jump times $Z_\ell (x+\varepsilon, 0)$. 

Fix $z> 0$, by geodesic coalescence, \cite[Lemma 3.4]{sarkar2021brownian}, we have there exists some random $T <0$ rational and depth $d(T) \in \N$ such that all the geodesics $\pi[z_k \to (-y, 1)]$ and $\pi[z_k\to (0, 1)]$ almost surely eventually coalesce as $k\to \infty$.

By the coupling between the Airy sheet and the Airy line ensemble, \ref{coupling: Airy sheet}, we have 
\[
\mathcal{S}(z,0)-\mathcal{S}(z,-y) = \mathcal{A}[(T, d(T))\to (0, 1)]- \mathcal{A}[(T, d(T))\to (-y, 1)]\,.
\]
Since last passage values become additive on the graphs of their geodesics (by the metric composition law, see \ref{lemma: Metric Composition}), on the event $\{Z_\ell(x, 0) \le -y\}$, we have the upper bounds
\begin{equation}\label{eq:comparison}
 \mathcal{S}(z,0)-\mathcal{S}(z,-y)
 \le \mathcal{A}[(-y, \ell)\to (0,1)] \le \mathcal{A}_1(0)-\mathcal{A}_\ell(-y)
 \quad\text{on }\{Z_\ell(x,0)\le-y\}\,,
\end{equation}
where the latter inequality comes from the monotonicity of the Airy lines, and the definition of the length functional, see Definition \ref{def: length}.
Rearranging \eqref{eq:comparison} shows that with $r = zy$,
\begin{equation}\label{eq:cost}
 \{Z_\ell(x,0)\le-y\}
 \subseteq
 \{2r\le\mathcal{A}_1(0)-(\mathcal{A}_\ell(-y)+y^2)-(\mathcal{S}(z,0)+z^2)+ \mathcal{S}(z,-y)+(z+y)^2\}\,.
\end{equation}
The random variables $\mathcal{S}(z,0)+z^2, \mathcal{S}(z,-y)+(z+y)^2$, and $\mathcal{A}_1(0)$ each have the GUE
Tracy--Widom distribution. Indeed, this follows from coupling in \ref{coupling: Airy sheet}, which gives $\mathcal{S}(0,\cdot)=\mathcal{A}_1$.
Also $\mathcal{A}_\ell(-y)+y^2\overset{d}=\mathcal{A}_\ell(0)$ by the stationarity of the
parabolic shifted line ensemble. 

A union bound and the above distributional identities give,
\begin{align*}
 \PP\bigl(Z_\ell(x,0)\le-y\bigr)
 &\le \PP\bigl(\mathcal{A}_1(0)\ge r/2\bigr)
      +\PP\bigl(\mathcal{A}_\ell(0)\le-r/2\bigr)\\
 &\quad+\PP\bigl(\mathcal{S}(x,0)+x^2\le-r/2\bigr)
      +\PP\bigl(\mathcal{S}(z,-y)+(z+y)^2\ge r/2\bigr).
\end{align*}
The Tracy--Widom upper and lower tails in
\cite[Theorem~2.3]{DOV}, and Lemma~\ref{lemma: airy counting 1}, imply
\[
 \PP\bigl(Z_\ell(x,0)\le-y\bigr)
 \le C \mathrm{e}^{-cr^{3/2}}+\PP\bigl(\mathcal{A}_\ell(0)\le-r/2\bigr)\,,
\]
for constants $c, C > 0$. Lemma \ref{lemma: airy counting 1} gives $\PP\bigl(\mathcal{A}_\ell(0)\le-r/2\bigr)\le C\mathrm{e}^{\ell-cr^{3/2}}$ for constants $c, C> 0$. Combining the above, we arrive at 
\[
\PP\bigl(Z_\ell(x,0)\le-y\bigr) \le C\cdot \exp (\ell - ca^{\frac{3}{2}})\,,
\]
for constants $c, C> 0$.

To estimate the first disjointness probability, we simply use Theorem \ref{thm: semi-inf tail disj proba}. Combining the above with $z = x+\varepsilon$ (noting $\varepsilon \in (0, 1)$) gives
\[
 \PP\bigg(\mathsf{LV}\big(\Delta^{(0,1)}_{(0,\ell)}(u)\,, u\in (x-\varepsilon, x+\varepsilon)\big)\neq\emptyset\bigg)\le C\cdot y \cdot \varepsilon^{\frac{1}{2}}\cdot \mathrm{e}^{|\log\varepsilon|^{\frac{5}{6}}} + C\cdot \exp\left(\ell-c y ^\frac{3}{2}\right)\,.
\]
Taking $y = O((\log(1/\varepsilon))^{\frac{2}{3}})$ gives the result.
\end{proof}

In fact, we can obtain a simultaneous description of the set of points where $\Delta^{(0, 1)}_{(0, \ell)}$ increases in all neighbourhood thereof. We will do so in terms of disjointness of semi-infinite geodesics. This is the content of the following proposition.

\begin{proposition}\label{prop: local variation geod disj}
    Almost surely, for all $\ell \ge 1$,
    \begin{align*}
       \bigg\{x>0:  \pi^L[x\to (0, 1)]\text{ and } \pi^R[x\to (0, \ell)] \text{ are disjoint} \bigg\} = \mathsf{LV}((0, \infty), \Delta^{(0, 1)}_{(0,  \ell)})\,.
    \end{align*}
    In particular, this set of geodesic disjointness is closed and nowhere dense.
\end{proposition}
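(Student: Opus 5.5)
Write $D:=\{x>0: \pi^L[x\to(0,1)] \text{ and } \pi^R[x\to(0,\ell)] \text{ are disjoint}\}$. The plan is to upgrade the fixed-rational equivalence of Proposition~\ref{prop: non-disj-close-endpts-inf-geo} to a simultaneous statement. We work throughout on a single full-probability event: semi-infinite geodesics with rational directions and rational endpoints exist, are unique and coalesce (Lemma~\ref{lemma: semi-inf geod properties}); the approximation Lemma~\ref{lemma: geodesic approx} holds; and the equivalence of Proposition~\ref{prop: non-disj-close-endpts-inf-geo} holds for all rational pairs $0<a<b$ and all $\ell$ at once. The last is a countable intersection. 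On this event we show $D=\mathsf{LV}((0,\infty),\Delta^{(0,1)}_{(0,\ell)})$ deterministically. The case $\ell=1$ is trivial: both sides are empty, since $\pi^L\le\pi^R$ share the endpoint $(0,1)$ and $\Delta^{(0,1)}_{(0,1)}\equiv 0$. So assume $\ell\ge2$.

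\emph{Step 1 ($\mathsf{LV}\subseteq D$).} Let $x\in\mathsf{LV}$. For every pair of rationals $a<x<b$, the profile $\Delta^{(0,1)}_{(0,\ell)}$ is non-decreasing and non-constant on $[a,b]$. Hence $\Delta(b)>\Delta(a)$, and by Proposition~\ref{prop: non-disj-close-endpts-inf-geo} the geodesics $\pi[a\to(0,1)]$ and $\pi[b\to(0,\ell)]$ are disjoint. Now let $a\nearrow x$ and $b\searrow x$. By Lemma~\ref{lemma: geodesic approx}, on any window $[t,0]$ these paths eventually coincide with $\pi^L[x\to(0,1)]$ and $\pi^R[x\to(0,\ell)]$. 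Disjointness on each window follows, so the limiting zig-zags do not meet on $[t,0]$ for any $t$, and thus $x\in D$.

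\emph{Step 2 ($D\subseteq\mathsf{LV}$).} Let $x\in D$. The parabolic asymptotics give a $T$ such that the two limiting paths are separated on $(-\infty,T]$; this separation persists for all approximating rational directions close to $x$. By Lemma~\ref{lemma: geodesic approx}, on $[T,0]$ the paths $\pi[a\to(0,1)]$ and $\pi[b\to(0,\ell)]$ agree with the limits for rational $a<x<b$ close enough to $x$. Hence these paths are disjoint, and $\Delta(b)>\Delta(a)$. Since this holds for all such rational $a,b$ arbitrarily close to $x$, $\Delta$ is non-constant on every neighbourhood of $x$, i.e.\ $x\in\mathsf{LV}$.

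\emph{Step 3 (closed and nowhere dense).} The set $\mathsf{LV}$ of a monotone function is closed by definition, since its complement is the union of open intervals of constancy. For nowhere density, Proposition~\ref{prop: non-disj-close-endpts-inf-geo}, equation \eqref{eq: LV estimate}, gives $\PP(\mathsf{LV}\cap[q-\varepsilon,q+\varepsilon]\neq\emptyset)\to0$ as $\varepsilon\to0$ for each rational $q$. Hence almost surely every rational $q$ has a neighbourhood on which $\Delta$ is constant, so the dense set $\Q\cap(0,\infty)$ lies in the complement of $\mathsf{LV}$. Therefore $\mathsf{LV}$ contains no interval, and being closed it is nowhere dense. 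Alternatively, the covering argument from the proof of Theorem~\ref{thm: semi-inf tail disj proba} shows $\mathsf{LV}$ has Lebesgue measure zero.

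The main obstacle is Step 2. The difficulty is making rigorous that disjointness of the limiting extremal paths is inherited by nearby rational approximants, given that disjointness is an open condition only on compact windows. The plan handles this by splitting into the far region $(-\infty,T]$, controlled via the asymptotic shape and ordering, and the compact window $[T,0]$, where eventual stabilisation from Lemma~\ref{lemma: geodesic approx} applies. Care is needed if $\pi^L$ or $\pi^R$ has a jump exactly at $0$; this is exactly the second case treated in the proof of Lemma~\ref{lemma: geodesic approx}.
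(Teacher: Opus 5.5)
Your proof is correct and is essentially the paper's. Step~1 is the direct form of the paper's contrapositive argument: if the extremal limits touched at some $(z,m)$, Lemma~\ref{lemma: geodesic approx} would make nearby rational geodesics touch there, forcing $\Delta^{(0,1)}_{(0,\ell)}$ to be constant near $x$. Step~2 is where you overcomplicate things. The paper gets that inclusion from ordering alone: since $\pi[a\to(0,1)]\le\pi^L[x\to(0,1)]$ and $\pi^R[x\to(0,\ell)]\le\pi[b\to(0,\ell)]$, the rational pair is disjoint on all of $(-\infty,0]$ for every rational $a<x<b$. So the split at $T$ is unnecessary, and parabolic asymptotics could not separate two $x$-directed paths anyway; Step~2 is the easy inclusion, not the main obstacle.

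Your Step~3 correctly supplies the ``closed and nowhere dense'' clause, which the paper does not spell out. A quicker route is that no rational $q$ lies in the set, because $\pi[q\to(0,1)]$ and $\pi[q\to(0,\ell)]$ are unique and coalesce by Lemma~\ref{lemma: semi-inf geod properties}.
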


\begin{proof}
We proceed by establishing the two inclusions separately.

\underline{$\subseteq$}: Suppose $x>0$ is such that $\pi^L[x\to (0, 1)]$ an $\pi^R[x\to (0, 1)]$ are disjoint (recall Definition \ref{def: disjointness}). Then by monotonicity of geodesics, item $(1)$ of Lemma \ref{lemma: semi-inf geod properties}, for all rational $q< x< r$, we have the \emph{unique} (by item $(2)$ of Lemma \ref{lemma: semi-inf geod properties}) geodesics $\pi[q\to (0, 1)]$ an $\pi[r\to (0, 1)]$ are disjoint.

Thus, we have arguing in exactly the same way as in the proof of \ref{prop: non-disj-close-endpts-inf-geo},
\[
\Delta^{(0, 1)}_{(0,  \ell)}(r) > \Delta^{(0, 1)}_{(0,  \ell)}(q)\,.
\]
Since $q, r\in \Q$ were arbitrary, we conclude that $x\in \mathsf{LV}(\Delta^{(0, 1)}_{(0, \ell)}, (0, \infty))$, concluding the proof of the first part.

\underline{$\supseteq$}: For this part suppose $x\in \mathsf{LV}(\Delta^{(0, 1)}_{(0, \ell)}, (0, \infty))$. We proceed by contradiction. 

If $\pi^L[x\to (0, 1)]$ an $\pi^R[x\to (0, \ell)]$ are \emph{not} disjoint, then their zigzags (recall \eqref{eq: zig-zag}) must share some common point $(z, m)$ with $m\ge \ell$, $z \le 0$. By approximation, Lemma \ref{lemma: geodesic approx}, for any fixed $t < 0$, there exist $q< x< r$ rational such that the restrictions of the zig-zags of $\pi[q\to (0, 1)]$ and $\pi^L[x\to (0, 1)]$ to $[t, 0]$ coincide and similarly for $\pi[r\to (0, \ell)]$ and $\pi^R[x\to (0, \ell)]$. By monotonicity this is true for all $q< q'< x< r'< r$ replacing $q$ with $q'$ and likewise $r$ with $r'$. Taking $t< z$ we see that $\pi[q\to (0, 1)]$ and $\pi[r\to (0, \ell)]$ share the point $(z, m)$. Now arguing as in the previous part and using Lemma \ref{lemma: crossing equiavlence}, instead of a strict inequality the non-disjointness forces an equality, whence we conclude 
\[
\Delta^{(0, 1)}_{(0,  \ell)}(r) = \Delta^{(0, 1)}_{(0,  \ell)}(q)\,.
\]
By monotonicity, this forces $\Delta^{(0, 1)}_{(0,  \ell)}$ to be constant on the interval $(q, r)$, a contradiction to $x\in \mathsf{LV}(\Delta^{(0, 1)}_{(0, \ell)}, (0, \infty))$ concluding the proof.
\end{proof}

We now prove that for any fixed $0< a < b$, and $y\in \R$, almost surely eventually in $\ell$, the sets of local variation $\mathsf{LV}\big(\Delta^{(y,1)}_{(y,\ell)}(u)\,, u\in [a, b]\big)$ are non-empty. By Proposition \ref{prop: local variation geod disj}, this gives in particular the existence of exceptional directions $x> 0$ for which geodesic non-uniqueness fails.

\begin{lemma}\label{lemma: asymptotic non degeneracy ell}
   Fix $\ell \ge 2$. Then for any fixed $\varepsilon > 0$ sufficiently small, 
   \[
     \PP \bigg(\pi[x-\varepsilon \to (0, 1)] \;\;\; \mathrm{and} \;\;\; \pi[x+\varepsilon \to (0, \ell)] \text{ are disjoint} \bigg) \to 1\,, \text{ as } \ell\to \infty\,.
    \]

    This means by Proposition \ref{prop: non-disj-close-endpts-inf-geo}, for any fixed $0< \varepsilon < x$, almost surely eventually in $\ell$, the sets of local variation $\mathsf{LV}\big(\Delta^{(0,1)}_{(0,\ell)}(u)\,, u\in (x-\varepsilon, x+\varepsilon)\big)$ are non-empty.
\end{lemma}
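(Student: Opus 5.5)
The plan is to show that the complementary event, non-disjointness of $\pi[x-\varepsilon\to(0,1)]$ and $\pi[x+\varepsilon\to(0,\ell)]$, has probability tending to zero as $\ell\to\infty$. The idea is that the geodesic $\pi[x+\varepsilon\to(0,\ell)]$ starts at depth $\ell$ at time $0$. The geodesic $\pi[x-\varepsilon\to(0,1)]$, by contrast, reaches depth $\ell$ only at time $Z_\ell(x-\varepsilon,0)$, and this time is roughly $-\sqrt{\ell/(2(x-\varepsilon))}$ by the asymptotic parabolic shape. The path of direction $x+\varepsilon$ from $(0,\ell)$ is to the right of it and sits at depth $\ell$ at time $0$.

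Step 1 (geometric reduction). By monotonicity of geodesics and the tree structure (Lemma \ref{lemma: geod tree}), the two paths can only meet at some point $(z,m)$ with $m\ge \ell$ and $z\le 0$. Choose $\delta>0$ small, depending on $x,\varepsilon$. I would bound the non-disjointness event by the union of two events. The first is $\{Z_\ell(x-\varepsilon,0)> -(1-\delta)\sqrt{\ell/(2(x-\varepsilon))}\}$. The second is that $\pi[x+\varepsilon\to(0,\ell)]$ and $\pi[x-\varepsilon\to(0,1)]$ meet below time $-(1-\delta)\sqrt{\ell/(2(x-\varepsilon))}$.

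Step 2 (first event). The first event has probability tending to zero. To see this, write $\sqrt{\ell}\,(Z_\ell(x-\varepsilon,0)/\sqrt{\ell})$ and use the almost sure limit $Z_k/\sqrt k\to-1/\sqrt{2(x-\varepsilon)}$ from property 3 of the coupling on page \pageref{Skorokhod coupling}. Almost sure convergence gives convergence in probability.

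Step 3 (second event). For the second event I would use a comparison path. Fix $Y>0$ deterministic. By monotonicity in the endpoint, with $\ell\ge 1$ below top level, $\pi[x+\varepsilon\to(0,\ell)]$ lies to the right of $\pi[x+\varepsilon\to(-Y,1)]$ wherever the latter is at depth $\ge\ell$. It is cleaner, though, to use shift invariance (Lemma \ref{lemma: semi-inf geod properties}(4)) together with the coupling. The jump times of $\pi[x+\varepsilon\to(0,\ell)]$ at depth $k\ge\ell$ satisfy $Z_k\approx -\sqrt{k/(2(x+\varepsilon))}$. Since $\varepsilon>0$, for $k\ge\ell$ these are strictly to the right of $-\sqrt{k/(2(x-\varepsilon))}$ by an amount of order $c_{x,\varepsilon}\sqrt k$. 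So meeting requires one of the two jump-time sequences to deviate from its parabola by order $\sqrt k$ at some depth $k\ge \ell$. The probability of this tends to zero because, almost surely, the deviations are eventually $o(\sqrt k)$ for both rational directions. Here I control $\pi[x+\varepsilon\to(0,\ell)]$ by sandwiching it between $\pi[x+\varepsilon\to(0,1)]$ and $\pi[x+\varepsilon\to(Y,1)]$ for deep levels, as in the proof of Lemma \ref{lemma: semi-inf geod properties}. Both have the parabolic asymptotics almost surely.

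Main obstacle. The main obstacle is uniformity in $\ell$: the endpoint $(0,\ell)$ moves with $\ell$. The sandwich bound has to give $Z_k(\pi[x+\varepsilon\to(0,\ell)])\ge Z_k(x+\varepsilon,0)$ for all $k\ge \ell$, which holds by ordering since a path starting lower at time $0$ lies weakly right. That lower bound together with $Z_k(x+\varepsilon,0)/\sqrt k\to -1/\sqrt{2(x+\varepsilon)}$ suffices, because only a lower bound on the right path is needed. The final claim then follows from Proposition \ref{prop: non-disj-close-endpts-inf-geo}. Almost sure eventual non-emptiness follows by monotonicity in $\ell$ of the disjointness event: deeper endpoints stay right, so the events increase in $\ell$, and an increasing sequence of events with probabilities tending to $1$ occurs eventually almost surely.
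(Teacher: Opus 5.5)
Your argument is correct and is essentially the paper's. Both use two facts: the ordering $\pi[x+\varepsilon\to(0,\ell)]\ge\pi[x+\varepsilon\to(0,1)]$, and the eventual separation of the two top-line geodesics $\pi[x\pm\varepsilon\to(0,1)]$, which follows from their distinct parabolic asymptotics; together these force disjointness once $\ell$ exceeds the depth at which the separation occurs. The paper argues pathwise, with a random separation time $T^\varepsilon$, and so gets almost sure eventual disjointness directly. Consequently your Step 1--2 split and the upper sandwich by $\pi[x+\varepsilon\to(Y,1)]$ are not needed. Also, your heuristic $Z_k\approx-\sqrt{k/(2(x+\varepsilon))}$ for the depth-$\ell$ path does not hold uniformly in $\ell$. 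This does no harm, since, as you note, only the lower bound $Z_k(x+\varepsilon,(0,\ell))\ge Z_k(x+\varepsilon,0)$ is actually used.
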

\begin{remark}
    By the stationarity of semi-infinite geodesics with respect to horizontal shifts, item $(3)$ of Lemma \ref{lemma: semi-inf geod properties}, for any fixed $y\in \R$, the same results hold verbatim for the Busemann profile $\Delta^{(y,1)}_{(y,\ell)}$.
\end{remark}

\begin{proof}
    Fix $0< \varepsilon < x$. Observe there is some random rational $T^{\varepsilon}<0$ such that the geodesics $\pi[x-\varepsilon \to (0, 1)],\pi[x+\varepsilon \to (0, 1)]$ are disjoint on $(-\infty, T^{\varepsilon}]$. This is due to their distinct asymptotic parabolic shapes, \eqref{def: semi-inf geo}. Then, for all $\ell \ge \lim_{t\nearrow T^\varepsilon}\pi[x+\varepsilon \to (0, 1)](t)+1$, by geodesic ordering, the zig-zag of the geodesic $\pi[x+\varepsilon \to (0, \ell)]$ is disjoint from $\pi[x+\varepsilon \to (0, 1)]$. Thus, by bounded convergence, we obtain the convergence statement for the disjointness probabilities. By Proposition \ref{prop: non-disj-close-endpts-inf-geo}, we have for all $\ell$ sufficiently large, the sets of local variation $\mathsf{LV}\big(\Delta^{(0,1)}_{(0,\ell)}(u)\,, u\in (x-\varepsilon, x+\varepsilon)\big)$ are non-empty, concluding the proof of the lemma.
\end{proof}

We now use the construction of semi-infinite geodesics $\pi[x\to (y, 1)]$ as the pointwise limit $\lim_{k\to \infty} \pi[x_k \to (y, 1)]$, see the proof of item $(1)$ of Lemma \ref{lemma: semi-inf geod properties}. This gives the jump times $Z_k(x, y)$ as the maximisers of a random continuous function involving Busemann profiles and finite depth last passage values over the Airy line ensemble. 

\begin{proposition}\label{prop: semi-inf geod jump}
    Fix $x> 0, y\in \R$ and $\ell \ge 2$. Then, with $Z_\ell(x, y)$ the jump time of the semi-infinite geodesic $\pi[x\to (y, 1)]$ from level $\ell$ to $\ell-1$, we have
    \[
    Z_\ell(x, y) \in  \displaystyle\operatorname*{argmax}_{s\in (-\infty, y]}\bigg(\Delta_{(s, \ell)}^{(0, 1)}(x)+\mathcal{A}[(s, \ell-1)\to (y,1)]\bigg)\,,\quad \ell \ge 2.
    \]
\end{proposition}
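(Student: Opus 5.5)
The plan is to prove the identity at finite depth via Lemma~\ref{lemma: geod location brown env lpp} and then pass to the limit $k\to\infty$, using exact (not merely approximate) stabilisation of the relevant last passage differences on compact windows. By the approximation lemma and item (1) of Lemma~\ref{lemma: semi-inf geod properties} I would first reduce to rational $x>0$ and $y$, or at least to a fixed $x$ for which all the objects below are almost surely unique; this is enough since $x,y$ are fixed. Write $x_k=(-\sqrt{k/2x},k)$ and let $Z^{(k)}_\ell$ be the jump time from level $\ell$ to $\ell-1$ of the finite geodesic $\pi[x_k\to(y,1)]$.

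\emph{Step 1 (finite depth).} For $k\ge \ell$, Lemma~\ref{lemma: geod location brown env lpp} applied to the environment $\mathcal{A}$ on $[-\sqrt{k/2x},y]$ gives
\[
Z^{(k)}_\ell\in\operatorname*{argmax}_{s\in[-\sqrt{k/2x},y]}\Big(\mathcal{A}[x_k\to(s,\ell)]-\mathcal{A}[x_k\to(0,1)]+\mathcal{A}[(s,\ell-1)\to(y,1)]\Big).
\]
Here I have subtracted the $s$-independent constant $\mathcal{A}[x_k\to(0,1)]$, which does not change the argmax. Call the maximised function $G_k(s)$.

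\emph{Step 2 (exact stabilisation on windows).} Fix $M>0$ with $-M<\min(y,0)$, and set $y'=\max(y,0)$. By item (3) of Lemma~\ref{lemma: semi-inf geod properties}, the semi-infinite geodesics $\pi[x\to(-M,\ell)]$ and $\pi[x\to(y',1)]$ coalesce. By the construction in the proof of item (1), the finite geodesics $\pi[x_k\to(-M,\ell)]$ and $\pi[x_k\to(y',1)]$ therefore pass, for all $k$ large, through a common point $(T,d(T))$ with $T$ rational and $T<-M$. Geodesic ordering (Lemma~\ref{lemma: geod tree} and the remark after it) then sandwiches every geodesic $\pi[x_k\to(s,m)]$ with $(s,m)$ between these two endpoints. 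Such a geodesic must pass through $(T,d(T))$, since otherwise a geodesic bubble would form. Combining this with the metric composition law (Lemma~\ref{lemma: Metric Composition}), for all large $k$ and all $s\in[-M,y]$ I expect the exact identity
\[
\mathcal{A}[x_k\to(s,\ell)]-\mathcal{A}[x_k\to(0,1)]=\mathcal{A}[(T,d(T))\to(s,\ell)]-\mathcal{A}[(T,d(T))\to(0,1)]=\Delta^{(0,1)}_{(s,\ell)}(x).
\]
Here the right-hand side is the continuous modification from Proposition~\ref{prop: Busemann regularity}. Hence $G_k=G:=\Delta^{(0,1)}_{(\cdot,\ell)}(x)+\mathcal{A}[(\cdot,\ell-1)\to(y,1)]$ on $[-M,y]$ for all large $k$.

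\emph{Step 3 (passing to the limit).} By Lemma~\ref{lemma: semi-inf geod properties} the jump times of $\pi[x_k\to(y,1)]$ stabilise, so $Z^{(k)}_\ell=Z_\ell(x,y)$ for all large $k$. Now take any $s\le y$ and choose $M$ with $-M<\min(s,Z_\ell(x,y))$. For $k$ large, Steps 1--2 give $G(Z_\ell(x,y))=G_k(Z^{(k)}_\ell)\ge G_k(s)=G(s)$. Since $s$ was arbitrary, $Z_\ell(x,y)$ maximises $G$ over $(-\infty,y]$; in particular the supremum is attained.

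The main obstacle is Step 2, namely justifying that one common coalescence point serves \emph{all} real $s$ in the window simultaneously, uniformly in $k$. This needs the sandwiching and ordering argument, including endpoints at level $\ell$ versus level $1$. It also needs care with ties in the ordering on the countable null-set-free event, so that the Busemann values at irrational $s$ agree with the continuous modification. Everything else is bookkeeping.
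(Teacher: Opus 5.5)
Your Steps 1 and 3 follow the paper's route: the finite-depth argmax from Lemma~\ref{lemma: geod location brown env lpp} with $\mathcal{A}[x_k\to(0,1)]$ subtracted, eventual coincidence of $\pi[x_k\to(y,1)]$ with $\pi[x\to(y,1)]$ on windows, $k\to\infty$, and then exhausting $(-\infty,y]$. At the $k\to\infty$ step the paper only cites the Busemann limit. Your Step 2 tries to supply the uniformity that is really needed at the random point $Z_\ell(x,y)$ and at irrational $s$. That is the right concern, but the sandwich you set up is false as stated.

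For a common starting point, geodesics to endpoints on different levels are not ordered by the lexicographic order of the endpoints. What matters is on which side of the bounding geodesic the endpoint lies. Take $y\ge 0$, so $y'=y$, and take any $s\in[Z_\ell(x,y),y]$; this set contains $s=y$ and has positive length almost surely. For large $k$, $\pi[x_k\to(y',1)]$ is already at level $\le \ell-1$ at time $s$, while $\pi[x_k\to(s,\ell)]$ ends at level $\ell$. The no-bubble argument against the unique geodesic $\pi[x_k\to(y',1)]$ then forces $\pi[x_k\to(s,\ell)](t)\ge \pi[x_k\to(y',1)](t)$ for all $t\le s$. So both of your bounding geodesics lie on the same side of $\pi[x_k\to(s,\ell)]$, and nothing pins it at $(T,d(T))$. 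The identity $G_k=G$ is therefore unjustified on exactly the stretch next to $y$ where the maximiser may sit. The same problem affects $(0,1)$: if $Z_\ell(x,0)\le -M$, which is compatible with $-M<Z_\ell(x,y)$ when $y\ge0$, then $\pi[x_k\to(0,1)]\le\pi[x_k\to(-M,\ell)]$ on $(-\infty,-M]$. In that case $(-M,\ell)$ does not bound it from the correct side.

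The repair is the device already used in the proof of item (1) of Lemma~\ref{lemma: semi-inf geod properties}.
\begin{itemize}
\item Pick a random integer $Y>y$ with $\pi[x\to(Y,1)](y)\ge\ell$; it exists by shift invariance and the parabolic shape.
\item Take a rational $T<-M$ in the interior of a horizontal segment, at level $d(T)$, shared by $\pi[x\to(-M,\ell)]$, $\pi[x\to(0,1)]$ and $\pi[x\to(Y,1)]$. This is possible by item (3); do it for all integers $M$ simultaneously.
\item For large $k$ the finite geodesics share that segment, and $\pi[x_k\to(Y,1)]$ stays at level $\ge\ell$ on $[-M,y]$.
\item Every geodesic from $x_k$ to $(s,\ell)$, $s\in[-M,y]$, is then squeezed between $\pi[x_k\to(-M,\ell)]$ and $\pi[x_k\to(Y,1)]$. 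Run the bubble argument against these two unique geodesics, so possible non-uniqueness at irrational $s$ does no harm.
\item The endpoint $(0,1)$ passes through $(T,d(T))$ directly.
\end{itemize}
With this correction Step 2 holds, and Steps 1--3 give a complete and somewhat more careful version of the paper's proof.
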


\begin{proof}
    Fix $-\infty< u < y$, $\ell\ge 2$ and consider the almost surely unique (by the remark after Lemma \ref{lemma: geod location brown env lpp}) maximiser
    \[
    Z^{*}_{\ell, u}(x,y):=\displaystyle\operatorname*{argmax}_{s\in [u, y]}\bigg(\mathcal{A}[x_k\to (s,\ell)]-\mathcal{A}[x_k\to (0,1)]+\mathcal{A}[(s, \ell-1)\to (y,1)]\bigg)\,,
    \]
    where we will use the notation $x_k := -\sqrt{k}/\sqrt{2x}$, $x> 0$, $k\ge 1$. It is precisely the jump time from level $\ell$ to level $\ell-1$ of the geodesic $\pi[x_k \to (y, \ell)]$ on the event that jump time is at least $u$. Now, by Lemma \ref{lemma: semi-inf geod properties}, the semi-infinite geodesic $\pi[x\to (y, \ell)]$ can be written as the limit of the geodesics $\pi[x_k \to (y, \ell)]$ in the topology of jump times. Moreover, the convergence of jump times and the almost sure uniqueness of finite horizon geodesics in the Airy line ensemble give that for any fixed possibly random $t \le y$, the geodesics $\pi[x_k\to (y, \ell)]$ and $\pi[x\to (y, \ell)]$ eventually coincide on $[t, y]$, for all $k$ sufficiently large (see Figure \ref{fig: geod approx}). This means for all $u \le Z_\ell(x, y)$ rational, we have almost surely eventually in $k \ge 1$,
    \[
    Z_\ell(x, y) \in \displaystyle\operatorname*{argmax}_{s\in [u, y]}\bigg(\mathcal{A}[x_k\to (s,\ell)]-\mathcal{A}[x_k\to (0,1)]+\mathcal{A}[(s, \ell-1)\to (y,1)]\bigg)\,,\quad \ell \ge 2.
    \]
    Equivalently, 
    \begin{align*}
         &\mathcal{A}[x_k\to (Z_\ell(x, y),\ell)]-\mathcal{A}[x_k\to (0,1)]+\mathcal{A}[(Z_\ell(x, y), \ell-1)\to (y,1)]\\
         &\ge \mathcal{A}[x_k\to (s,\ell)]-\mathcal{A}[x_k\to (0,1)]+\mathcal{A}[(s, \ell-1)\to (y,1)]\,,\quad s\in [u, y]\,.
    \end{align*}
    Taking $k\to \infty$, using the Busemann limit definition \eqref{eq: Airy limit busemann} and then sending $u\to - \infty$, concludes the proof.
\end{proof}

The above variational characterisation of geodesic jump times (an infinite horizon analogue of the metric composition law, see Lemma \ref{lemma: Metric Composition}) allows us to prove a simultaneous characterisation for semi-infinite geodesic jump times of geodesics with arbitrary asymptotic directions and endpoints.

\begin{proposition}\label{prop: semi-inf geod jump simultaneous}
    Almost surely, for all $x> 0, y\in \R$ and $\ell \ge 2$, $\square \in \{L, R\}$, the semi-infinite geodesic jump times $Z^\square_\ell(x, y)$ of the geodesics $\pi^\square [x\to (y, \ell)]$ from level $\ell$ to $\ell-1$ satisfy
    \[
    Z^\square_\ell(x, y) \in  \displaystyle\operatorname*{argmax}_{s\in (-\infty, y]}\bigg(\Delta^{\square, (0,1)}_{(s, \ell-1)}(x)+\mathcal{A}[(s, \ell-1)\to (y,1)]-\mathcal{A}_1(y)\bigg)\,,\quad \ell \ge 2.
    \]
    Moreover, almost surely for all $x> 0$, $A_x := \{y\in \R: Z^L_2(x, y) = Z^R_2(x, y)\}$ has Hausdorff dimension at least one half and is unbounded.
\end{proposition}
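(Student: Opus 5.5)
The plan is to prove the variational formula first for rational data, where it reduces to Proposition \ref{prop: semi-inf geod jump}, and then extend it to all $(x,y)$ by the approximation Lemma \ref{lemma: geodesic approx}. The dimension statement will then follow deterministically from the case $\ell=2$ and the H\"older regularity of the Airy sheet.

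\emph{Rational case.} Work on the full-probability event where semi-infinite geodesics with rational directions and endpoints exist and are unique, and where the Busemann limits \eqref{eq: Airy limit busemann} exist at rational data. For rational $x,y$, Proposition \ref{prop: semi-inf geod jump} gives the argmax characterisation. The functional there differs from the one in the statement only by additive terms that do not depend on $s$. To see this, use the additivity $\Delta^{(0,1)}_{(s,\cdot)}=\Delta^{(0,1)}_{(y,1)}+\Delta^{(y,1)}_{(s,\cdot)}$ of Busemann limits and re-index the level $\ell$ versus $\ell-1$ as in the statement. So the rational case is immediate.

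\emph{General $(x,y)$.} Fix $\square=L$; the case $R$ is symmetric. Take rational $z_n\nearrow x$ and $y_n\nearrow y$. By Lemma \ref{lemma: geodesic approx}, $\pi[z_n\to(y_n,\cdot)]$ agrees with $\pi^L[x\to(y,\cdot)]$ on any fixed $[t,y]$ for $n$ large, so $Z_\ell(z_n,y_n)=Z^L_\ell(x,y)$ eventually. For each fixed $s<y$ the rational case gives the inequality
\[
F_n(Z_\ell(z_n,y_n))\ge F_n(s),
\]
where $F_n$ denotes the functional with data $(z_n,y_n)$. I then pass to the limit termwise:
\begin{itemize}
\item $\Delta^{(0,1)}_{(s,\ell-1)}(z_n)\to\Delta^{L,(0,1)}_{(s,\ell-1)}(x)$, locally uniformly in $s$, by Proposition \ref{prop: Busemann regularity}; this uses its continuous modification in $t$ and the uniform one-sided limits in $x$.
\item $\mathcal{A}[(s,\ell-1)\to(y_n,1)]-\mathcal{A}_1(y_n)$ converges by continuity of last passage values in the endpoints.
\end{itemize}
This yields the inequality with $(x,y)$ for every $s\le y$, which is the claimed argmax membership. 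The step needing most care is the matching of the $L/R$ conventions: the leftmost geodesic should pair with the appropriate one-sided Busemann limit. Since for $\ell=2$ the Busemann profile is an Airy sheet increment and the sheet is continuous, this ambiguity disappears in the second part anyway.

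\emph{Dimension of $A_x$.} For $\ell=2$ we have $\mathcal{A}[(s,1)\to(y,1)]-\mathcal{A}_1(y)=-\mathcal{A}_1(s)$. By the coupling \ref{coupling: Airy sheet} and continuity of $\mathcal{S}$, $\Delta^{\square,(0,1)}_{(s,1)}(x)=\mathcal{S}(x,s)-\mathcal{S}(x,0)$ for every $x>0$. Hence $Z^\square_2(x,y)$ are the left- and rightmost maximisers over $s\le y$ of
\[
f_x(s):=\mathcal{S}(x,s)-\mathcal{S}(0,s).
\]
By \eqref{eq: airy shape bnds}, $f_x(s)=2xs-x^2+O(\log^{2/3}|s|)$ almost surely, simultaneously in $x$. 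Thus $f_x\to-\infty$ as $s\to-\infty$, so the maxima exist, and $f_x\to+\infty$ as $s\to+\infty$. Let $M_x(y)=\sup_{s\le y}f_x(s)$, which is continuous, non-decreasing and unbounded. Let $E_x=\{y: f_x(y)=M_x(y)\}$ be its record set; then $M_x(E_x\cap[a,b])=[M_x(a),M_x(b)]$. If $y\in E_x$ but $Z^L_2\neq Z^R_2$, then $M_x$ is constant on a nondegenerate interval ending at $y$. Such $y$ are right endpoints of disjoint flat intervals, hence countable. Therefore $A_x\supseteq E_x$ minus a countable set, which is unbounded because $M_x\to\infty$.

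For the dimension, the Airy sheet is jointly locally $\alpha$-H\"older for every $\alpha<1/2$ (Proposition 10.5 in \cite{DOV}), so $f_x$ is locally $\alpha$-H\"older on compacts, for all $x$ at once. Choose $[a,b]$ with $M_x(b)>M_x(a)$. Then
\[
1=\dim_{\mathscr H}f_x(E_x\cap[a,b])\le \alpha^{-1}\dim_{\mathscr H}(E_x\cap[a,b]),
\]
so $\dim_{\mathscr H}E_x\ge\alpha$. Letting $\alpha\uparrow 1/2$ and discarding the countable set gives $\dim_{\mathscr H}A_x\ge 1/2$ simultaneously for all $x>0$. The only genuinely probabilistic inputs are the simultaneous H\"older and shape bounds, which is why the argument works for all $x$ on a single full-probability event.
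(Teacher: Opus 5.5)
Your overall route matches the paper's. You get the rational case from Proposition~\ref{prop: semi-inf geod jump}, extend it by Lemma~\ref{lemma: geodesic approx} and Proposition~\ref{prop: Busemann regularity}, reduce at $\ell=2$ to $f_x=\mathcal{S}(x,\cdot)-\mathcal{S}(0,\cdot)$, and finish with H\"older and shape bounds; your H\"older-image bound is a tidier form of the paper's covering argument. However, the countability step in the last part fails.

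By the quadrangle inequality \eqref{eq: quadrangle ineq airy sheet}, $f_x$ is non-decreasing for $x>0$. Hence $M_x=f_x$, your record set is $E_x=\R$, and $\operatorname*{argmax}_{s\le y}f_x=[\lambda_x(y),y]$ with $\lambda_x(y):=\min\{s:f_x(s)=f_x(y)\}$. Membership in this interval does not make $Z^L_2(x,y)$ and $Z^R_2(x,y)$ its endpoints: that would force $Z^R_2(x,y)=y$ for all $y$, whereas $Z^R_2(x,0)<0$. Moreover, ``$M_x$ is constant on a nondegenerate interval ending at $y$'' holds for every $y\in(\lambda,\rho]$ of every constancy interval $[\lambda,\rho]$ of $f_x$, and such intervals exist (e.g.\ $[Z^L_2(x,0),0]$). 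The intervals $[\lambda_x(y),y]$ attached to different $y$ in one constancy interval are nested, not disjoint. So ``right endpoints of disjoint flat intervals, hence countable'' does not follow. Indeed, $A_x\supseteq E_x\setminus(\text{countable})$ would assert uniqueness of the level-one arrival time for all but countably many $y$, simultaneously for every $x>0$, which the argmax characterisation cannot deliver.

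The repair is what the paper does with $\mathsf{LV}(f_x)$ minus the countably many endpoints of constancy intervals. Take the strict left records $E^*_x:=\{y: f_x(u)<f_x(y)\text{ for all }u<y\}$. For $y\in E^*_x$ the argmax over $(-\infty,y]$ is $\{y\}$, so $Z^L_2(x,y)=Z^R_2(x,y)=y$ and $E^*_x\subseteq A_x$ with no exceptional set. Your H\"older-image bound still applies: for $v\in(f_x(a),f_x(b)]$ the first hitting time of $v$ in $[a,b]$ lies in $E^*_x$. Hence $f_x(E^*_x\cap[a,b])\supseteq(f_x(a),f_x(b)]$ and $\dim_{\mathscr H}E^*_x\ge\alpha$, and unboundedness follows from $f_x\to\infty$.

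There are also two smaller points. First, in the rational case the two functionals do not differ by $s$-independent terms. Proposition~\ref{prop: semi-inf geod jump} involves $\Delta^{(0,1)}_{(s,\ell)}(x)$, the statement involves $\Delta^{(0,1)}_{(s,\ell-1)}(x)$, and their difference $\Delta^{(s,\ell)}_{(s,\ell-1)}(x)\ge 0$ depends on $s$. The correct reduction is as follows. The $(\ell-1)$-functional dominates the $\ell$-functional pointwise, and by metric composition it is bounded above by $\Delta^{(0,1)}_{(y,1)}(x)$ (up to the constant $-\mathcal{A}_1(y)$). That is the value the $\ell$-functional attains at $Z_\ell$, so $Z_\ell$ is a maximiser. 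But the argmax is in general the whole stretch the geodesic spends on level $\ell-1$, which is exactly why membership alone cannot single out $Z^L_2,Z^R_2$ above.

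Second, $Z_\ell(z_n,y_n)=Z^L_\ell(x,y)$ eventually fails when $Z^L_\ell(x,y)=y$, since $y_n<y$. Use convergence of the jump times together with the locally uniform convergence in $s$ that you already invoke.
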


\begin{proof}
    By Proposition \ref{prop: semi-inf geod jump}, we have almost surely for all rational $q > 0, r$ the jump times satisfy
    \[
    Z_\ell(q, r) \in \displaystyle\operatorname*{argmax}_{s\in (-\infty, r]}\bigg(\Delta_{(s, \ell-1)}^{(0, 1)}(q)+\mathcal{A}[(s, \ell-1)\to (r,1)]\bigg)\,.
    \]
    In other words, for all $s\le r$,
    \[
    \Delta_{(Z_\ell(q, r) , \ell-1)}^{(0, 1)}(q)+\mathcal{A}[(Z_\ell(q, r) , \ell-1)\to (r,1)]\ge \Delta_{(s, \ell-1)}^{(0, 1)}(q)+\mathcal{A}[(s, \ell-1)\to (r,1)]\,.
    \]
    Now, we have using the approximation lemma, Lemma \ref{lemma: geodesic approx} that almost surely simultaneously for all $x > 0, y\in \R$, the convergence of jump times $Z^\square_\ell(x, y) = \lim_n Z_\ell(x_n, y_n)$ where $x_n>0, y_n$ are rational and increase/decrease respectively to $x, y$, depending on whether $\square = L, R$. 

    Thus, for fixed $s\le y$, we have using the limit definition \ref{eq: busemann cadlag/caglad} and the regularity of the Busemann processes, Proposition \ref{prop: Busemann regularity} for all $s\le y$,
    \[
    \Delta_{(Z^\square_\ell(x, y) , \ell-1)}^{\square, (0, 1)}(x)+\mathcal{A}[(Z^\square_\ell(x, y) , \ell-1)\to (y,1)]\ge \Delta_{(s, \ell-1)}^{\square, (0, 1)}(x)+\mathcal{A}[(s, \ell-1)\to (y,1)]\,.
    \]
    
    To bound the Hausdorff dimensions of $A_x$ from below simultaneously for all $x> 0$, observe that with the coupling between the Airy line ensemble and the Airy sheet \ref{coupling: Airy sheet}, we have
    \[
     F^\square_\ell(x, y): y\mapsto \displaystyle\operatorname*{max}_{s\in (-\infty, y]}\bigg(\Delta_{(s, \ell-1)}^{\square, (0, 1)}(x)+\mathcal{A}[(s, \ell-1)\to (y,1)]-\mathcal{A}_1(y)\bigg) = \mathcal{S}(x, y)-\mathcal{S}(x, 0)-\mathcal{S}(0, y)\,.
    \]
    In particular, this follows from an application of the approximation lemma, Lemma \ref{lemma: geodesic approx} and the continuity of the Airy sheet $\mathcal{S}$, using the fact that this equality is achieved for all rational $x, y$. We thus have almost surely for all $x> 0 , y\in \R$
    \begin{align*}
        &\Delta_{(Z^L_\ell(x, y), \ell-1)}^{L, (0, 1)}(x)+\mathcal{A}[(Z^L_\ell(x, y), \ell-1)\to (y,1)]-\mathcal{A}_1(y)\\
        = &\Delta_{(Z^R_\ell(x, y), \ell-1)}^{R, (0, 1)}(x)+\mathcal{A}[(Z^R_\ell(x, y), \ell-1)\to (y,1)]-\mathcal{A}_1(y)\\
        = &\mathcal{S}(x, y)-\mathcal{S}(x, 0)-\mathcal{S}(0, y)\,.
    \end{align*}

    Observe that by the quadrangle inequality for the Airy sheet \eqref{eq: quadrangle ineq airy sheet}, $F^\square(x, \cdot)$ is almost surely non-decreasing almost surely for all $x > 0$. Now, by the global shape estimates for the Airy sheet, \eqref{eq: airy shape bnds} we have almost surely, for all $x> 0$, $F^\square_\ell(x, y)\to \infty$ as $y\to \infty$. We now show that almost surely, simultaneously for all $x > 0$, the set of local variation of $F^\square_\ell(x, \cdot)$ has Hausdorff dimension at least $1/2$ and is unbounded. 

    Indeed, we proceed by a deterministic argument on the event of full probability of the H\"{o}lder $1/2^{-}$-continuity of the Airy sheet and the aforementioned divergence of $F^\square_\ell(x, \cdot)$ simultaneously for all $x > 0$. For $0< a< b$, suppose $\mathsf{LV}(\mathcal{S}(x, \cdot) - \mathcal{S}(0, \cdot), [a, b])$ is non-empty and has Hausdorff dimension $\delta < 1/2$. By the sub-additivity of Hausdorff measures, there exists some interval $[a, b]\subset \R$ so that $\mathsf{LV}(\mathcal{S}(x, \cdot) - \mathcal{S}(0, \cdot), [a, b])$ has Hausdorff dimension $\delta$. This means that for all $\varepsilon > 0$, there exists some cover of $\mathsf{LV}(\mathcal{S}(x, \cdot) - \mathcal{S}(0, \cdot), [a, b])$ by $N_\varepsilon$ many intervals $(I^\varepsilon_i)_i$ of diameter $\varepsilon$ such that
    \[
    \sum^{N_\varepsilon}_{i=1} \diam(I^\varepsilon_i)^{\delta}\to 0\,,\qquad \varepsilon \to 0\,.
    \]
    We thus estimate by the monotonicity of $\mathcal{S}(x, \cdot) - \mathcal{S}(0, \cdot)$, which follows from the quadrangle identity in \eqref{eq: quadrangle ineq airy sheet}, and the H\"{o}lder $1/2^-$-continuity of $\mathcal{S}$ (Section \ref{sec: preliminaries}),
    \begin{align*}
        0 &< \mathcal{S}(x, b) - \mathcal{S}(0, b) -  \mathcal{S}(x, a) - \mathcal{S}(0, a)\\
        &\le \sum^{N_\varepsilon}_{i=1}\mathcal{S}(x, \sup I^\varepsilon_i) - \mathcal{S}(0, \sup I^\varepsilon_i) -  \mathcal{S}(x, \inf I^\varepsilon_i) - \mathcal{S}(0, \inf I^\varepsilon_i)\\
        &\le C_\delta\cdot \sum^{N_\varepsilon}_{i=1} \diam(I^\varepsilon_i)^{\delta + \frac{1/2 - \delta}{2}}\le C_\delta\cdot \varepsilon^{\frac{1/2 - \delta}{2}}\cdot \sum^{N_\varepsilon}_{i=1} \diam(I^\varepsilon_i)^{\delta}\,,
    \end{align*}
    for some constant $C_\delta > 0$. By assumption the right hand side converges to zero as $\varepsilon \to 0$, a contradiction to the non-constancy of $\mathcal{S}(x, \cdot) - \mathcal{S}(0, \cdot)$ on $[a, b]$. To establish the lower bound on the Hausdorff dimension of $\mathsf{LV}(\mathcal{S}(x, \cdot) - \mathcal{S}(0, \cdot), (0, \infty))$ and show it is unbounded, repeat this argument on intervals $[a, b]$ of non-constancy of $F^\square_\ell(x, \cdot)$ for arbitrarily large $a, b$, guaranteed to exist by the above divergence of the values of $F^\square_\ell(x, y)$ as $y\to \infty$.
    
    For $\ell = 2$, we obtain using the coupling \ref{coupling: Airy sheet},
    \[
    Z^\square_2(x, y) \in \displaystyle\operatorname*{argmax}_{s\in (-\infty, y]}\bigg(\mathcal{S}(x, s)-\mathcal{S}(0, s)\bigg)\,, \qquad x \ge 0, y \in \R\,.
    \]
    Note due to the asymptotic parabolic shape of the Airy sheet, \eqref{eq: airy shape bnds} the expression above is well defined for all $y\in \R, x> 0$. As before, the set of local variation of 
    \[
    y\mapsto \max_{s\in (-\infty, y]}(\mathcal{S}(x, s)-\mathcal{S}(0, s)) = \mathcal{S}(x, y) -\mathcal{S}(0, y)
    \]
    has Hausdorff dimension at least one half and is unbounded. Now there is an at most countable set $N$ of boundary points of intervals of constancy of the above Airy difference profile. Thus, on an event of full probability, on $\mathsf{LV}(\mathcal{S}(x, \cdot) - \mathcal{S}(0, \cdot), (0, \infty))\setminus N$, which has Hausdorff dimension a least one half, for all $y\in K$, $\mathcal{S}(x, y) -\mathcal{S}(0, y) > \mathcal{S}(x, u) -\mathcal{S}(0, u)$ for all $u < y$, and so $Z^L_2(x, y) = Z^R_2(x, y)$. We thus obtain the inclusion $\mathsf{LV}(\mathcal{S}(x, \cdot) - \mathcal{S}(0, \cdot), (0, \infty))\setminus N \subseteq A_x$, concluding the proof.
\end{proof}

We prove that the relative lengths of semi-infinite geodesics, $\Delta^{(0,1)}_{(0,\ell)}(x)$ are almost surely non-degenerate on $(0, \infty)$. This is the content of the following proposition.

\begin{proposition}\label{prop: non-degen asymptotic}
    For all $\ell\ge 2$,
    \begin{equation*}
	\PP\bigg(\mathsf{LV}\big(\Delta^{(0,1)}_{(0,\ell)}(u)\,, u\in (0, \infty)\big)\neq\emptyset\bigg)=1\,.
	\end{equation*}
\end{proposition}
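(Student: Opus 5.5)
The plan is to reduce the statement to geodesic disjointness via Proposition \ref{prop: non-disj-close-endpts-inf-geo}, and then find a single pair of directions for which disjointness happens almost surely. By that proposition, for rational $0<a<b$ the set $\mathsf{LV}\big(\Delta^{(0,1)}_{(0,\ell)}(u), u\in[a,b]\big)$ is non-empty if and only if $\pi[a\to(0,1)]$ and $\pi[b\to(0,\ell)]$ are disjoint. The event in the statement therefore contains
\[
\bigcup_{a<b\in\Q\cap(0,\infty)}\big\{\pi[a\to(0,1)]\text{ and }\pi[b\to(0,\ell)]\text{ are disjoint}\big\},
\]
and it suffices to show that almost surely some rational pair $a<b$ gives disjoint geodesics.

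Fix $\ell\ge2$. The natural route is to let the directions separate, rather than to send $\ell\to\infty$ as in Lemma \ref{lemma: asymptotic non degeneracy ell}. Fix rational $a$ and take $b\to\infty$ along the rationals. By geodesic monotonicity in the direction (item (2) of Lemma \ref{lemma: semi-inf geod properties}, together with uniqueness for rational data), the path $\pi[b\to(0,\ell)]$ moves monotonically, and the events $D_b=\{\pi[a\to(0,1)]\text{ and }\pi[b\to(0,\ell)]\text{ disjoint}\}$ are increasing in $b$. It is then enough to show $\PP(D_b)\to1$ as $b\to\infty$. The same works symmetrically with $a\searrow0$ and $b$ fixed, and I expect this version to be easier, since $\pi[a\to(0,1)]$ then stays at level $1$ for a long stretch.

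To show $\PP(D_b)\to 1$, use that the paths have distinct asymptotic shapes. Disjointness fails only if the two zig-zags meet. A meeting at level $m\ge\ell$ and position $z\le0$ forces $\pi[a\to(0,1)]$ to reach depth $\ell$ near the origin, while $\pi[b\to(0,\ell)]$ (for large $b$) leaves level $\ell$ and drops quickly, since its jump times behave like $Z_k\approx-\sqrt{k/(2b)}$. Concretely, I would work on the event that the jump time $Z_{\ell+1}(a,0)$ of $\pi[a\to(0,1)]$ is at most $-1$. Its complement has small probability by a lower-tail argument as in the proof of Proposition \ref{prop: non-disj-close-endpts-inf-geo}, uniformly after a shift. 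On the other hand, $\pi[b\to(0,\ell)]$ already sits below depth $\ell$ at time $-1$, up to probability tending to $0$ as $b\to\infty$. This follows from the scaling and shear symmetries of the Airy sheet and line ensemble and the coupling \ref{coupling: Airy sheet}. Alternatively one can compare with $\pi[b\to(Y,1)]$ and use the parabolic shape $\pi(t)\sim 2bt^2$.

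Since $0<\ell<\infty$ is fixed, the whole issue is a quantitative separation statement. The main obstacle is the claim that $\pi[b\to(0,\ell)]$ descends deep before time $-1$ with probability tending to one as $b\to\infty$, because the paper has not stated any tail bound for jump times in the direction parameter. If that direct estimate is awkward, my fallback is a 0-1 argument. The event in the statement is invariant under the horizontal shifts of item (4) of Lemma \ref{lemma: semi-inf geod properties}, applied to the family $\Delta^{(y,1)}_{(y,\ell)}$, so ergodicity of $\mathcal A+(\cdot)^2$ under shifts gives probability $0$ or $1$. It would then suffice to show positive probability, which the lower bound of Theorem \ref{thm: semi-inf tail disj proba}, combined with ordering against $\pi[x+\varepsilon\to(y,1)]$ for $y>0$, should provide.
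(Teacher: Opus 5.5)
Your reduction is sound and is the paper's strategy. By Proposition \ref{prop: non-disj-close-endpts-inf-geo} it suffices to produce, almost surely, rationals $a<b$ with $\pi[a\to(0,1)]$ and $\pi[b\to(0,\ell)]$ disjoint, and the paper also obtains this by sending the second direction to infinity. The gap is that the step you call the main obstacle is the whole content of the proof, and neither route you suggest delivers it. The parabolic shape $\pi(t)\sim 2bt^2$ is a limit as $t\to-\infty$ for each fixed $b$, so it gives no control of jump times near the endpoint uniformly in $b$. The symmetries in \eqref{eq: Airy sheet symmetry} act on $\mathcal{S}$ alone and do not move the direction while preserving $\mathcal{A}$.

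The missing idea is to combine the variational characterisation of Proposition \ref{prop: semi-inf geod jump} with the shape bounds \eqref{eq: airy shape bnds}. Fix a level $m\ge2$ and $a'<0$. The metric composition law bounds the objective at every $s\le a'$ by
\[
\Delta^{(0,1)}_{(a',1)}(b)+\mathcal{A}[(a',m)\to(0,1)]=\mathcal{S}(b,a')-\mathcal{S}(b,0)+\mathcal{A}[(a',m)\to(0,1)]\,,
\]
and here $\mathcal{S}(b,a')-\mathcal{S}(b,0)\le 2a'b+O(\log^{2/3}b)\to-\infty$. Meanwhile the objective at $s=0$ equals $\Delta^{(0,1)}_{(0,m)}(b)$, which is at least $\Delta^{(0,1)}_{(0,m)}(1)$ for $b\ge1$ by monotonicity in the direction. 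Hence $Z_m(b,0)>a'$ for all large $b$, so almost surely $Z_m(b,0)\to0$ as $b\to\infty$ for every fixed $m$. This is pathwise, and no tail bound in the direction parameter is needed.

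Your concrete event also does not give disjointness. The event $\{Z_{\ell+1}(a,0)\le-1\}\cap\{\pi[b\to(0,\ell)](-1)\ge\ell+1\}$ only separates the two paths at the single time $-1$. They can still touch at level $\ell$ inside $(-1,0]$, whenever $Z_\ell(a,0)\ge Z_{\ell+1}(b,(0,\ell))$. They can also touch at a deep level left of $-1$, which nothing in your sketch excludes.

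Since $\pi[a\to(0,1)]\le\pi[b\to(0,\ell)]$, what you need is $Z_m(a,0)<Z_{m+1}(b,(0,\ell))$ for all $m\ge\ell$. Given the convergence above, this holds pathwise:
\begin{itemize}
\item Almost surely $Z_\ell(a,0)\le Z_2(a,0)<0$.
\item Fix a rational $b_0>a$. The distinct asymptotic shapes and coalescence give a random $K$ with $Z_m(a,0)<Z_{m+1}(b_0,(0,\ell))\le Z_{m+1}(b,(0,\ell))$ for all $m\ge K$ and $b\ge b_0$.
\item For $\ell\le m<K$ one has $Z_{m+1}(b,(0,\ell))\ge Z_K(b,0)\to0>Z_\ell(a,0)\ge Z_m(a,0)$.
\end{itemize}
So $D_b$ occurs almost surely for all large $b$, and you do not need $\PP(D_b)\to1$ at all.

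Your 0-1 fallback fails as stated. The event $\{\mathsf{LV}(\Delta^{(0,1)}_{(0,\ell)},(0,\infty))\neq\emptyset\}$ is not shift-invariant. A horizontal shift by $c$ maps it to the corresponding event for $\Delta^{(c,1)}_{(c,\ell)}$, which has the same probability but is a different event. Hence Theorem \ref{thm: erg airy line ensemble} does not apply. The positivity step is also not carried out. Comparing with $\pi[x+\varepsilon\to(y,1)]$ transfers disjointness to $\pi[x+\varepsilon\to(0,\ell)]$ only on an additional event, and you do not bound its intersection with the disjointness event from below.
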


\begin{proof}
    First observe that almost surely, for all $x>0$ we have $Z^\square_2(x, 0)< 0$. By monotonicity of jump times it suffices to consider $x>0$ rational. To see this, by the asymptotic parabolic shape of $\pi[x\to (0,1)]$ there exists $q< 0$ rational and $k\ge 2$ such that $(q, k)$ lies on a horizontal segment of the zigzag (recall from \eqref{eq: zig-zag}) of $\pi[x\to (0,1)]$. Now, by the restriction property of semi-infinite geodesics, we have the path $\pi[x\to (0, 1)]|_{[q, 0]}$ is the unique (by the remark after Lemma \ref{lemma: geod location brown env lpp}) geodesic in the Airy line ensemble from $(q, k)$ to $(0, 1)$. Since the latter has an almost surely positive first jump time by Lemma 6.1 in \cite{tassopoulos2026kpzfixedpointbrownian}), the claim follows.
    
    \noindent \underline{Claim}:
    Fix $y\in \R$ and $\ell\ge 2$. As $x\to \infty$, we have almost surely that the geodesic jump times $Z^\square_\ell(x, y)\to 0$.

    By monotonicity of jump times, it suffices to consider $x> 0$ rational. By Proposition \ref{prop: semi-inf geod jump}, we have that 
     \[
    Z_\ell(x, y) \in  \displaystyle\operatorname*{argmax}_{s\in (-\infty, y]}\bigg(\Delta_{(s, \ell)}^{(y, 1)}(x)+\mathcal{A}[(s, \ell-1)\to (y,1)]\bigg)\,,\quad \ell \ge 2.
    \]
    Now for any fixed $\ell\ge 2$, $a< y$, we have by the metric composition law, Lemma \ref{lemma: Metric Composition}, 
    \[
    \max_{s\in (-\infty, a]}\bigg(\Delta_{(s, \ell)}^{(y, 1)}(x)+\mathcal{A}[(s, \ell-1)\to (y,1)]\bigg)\le \mathcal{S}(x, a)-\mathcal{S}(x,y)+ \mathcal{A}[(a, \ell)\to (y, 1)]\,,
    \]
    while evaluating at the right boundary, we have (for $x> 1$)
    \[
     \max_{s\in (-\infty, y]}\bigg(\Delta_{(s, \ell)}^{(y, 1)}(x)+\mathcal{A}[(s, \ell-1)\to (y,1)]\bigg)\ge \Delta_{(y, \ell)}^{(y, 1)}(x)\ge \Delta_{(y, \ell)}^{(y, 1)}(1) > -\infty
    \]
    where the last inequality comes from monotonicity of Busemann functions, Lemma \ref{lemma: crossing}. Thus, taking $x\to \infty$, using the stationarity of $\mathcal{S}(x, y) +(x-y)^2$, $x, y\in \R$ and the global bounds of the Airy sheet \eqref{eq: airy shape bnds}, we see as $x\to \infty$, $\mathcal{S}(x, a)-\mathcal{S}(x,y)\to -\infty$ and so eventually in $x > 0$, $Z_\ell(x, y) > a$. Taking $a\nearrow y$ concludes the proof of the claim.
    
    For any $x> 0$, taking any rational $x'> x$, by the asymptotic parabolic shape of semi-infinite geodesics, we have there exists some $k\ge 1$ such that for all $k'\ge k$, the jump times satisfy $Z_{k'}(x, 0) < Z_{k'}(x', 0)$. By the above claim, we can take some rational $x''> x'$ sufficiently large so that $Z_{k}(x'',0)>Z_2(x',0)$. Thus, the geodesics $\pi[x\to 0]$ and $\pi[x''\to (0, \ell)]$ are disjoint (as zigzags). Finally, by an application of Lemma \ref{lemma: crossing} we have that $\Delta^{(0,1)}_{(0,\ell)}(x'')>\Delta^{(0,1)}_{(0,\ell)}(x)$, completing the proof.
\end{proof}

Having established the connection between the non-uniqueness of semi-infinite geodesics and the sets of points of increase of Busemann profiles in Propositions \ref{prop: non-disj-close-endpts-inf-geo} and \ref{prop: local variation geod disj}, we obtain some control on the size of the former in terms of the latter.

\begin{theorem}\label{thm: haus dim ell}
    For all $\ell \ge 2$ and $\square \in \{L, R\}$, we have almost surely,
    \[
    0 \le \mathrm{dim}_\mathscr{H} (\mathsf{LV}(\Delta^{\square, (0,1)}_{(0, \ell)}, (0, \infty))) \le \frac{1}{2}\,.
    \] 
\end{theorem}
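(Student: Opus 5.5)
The lower bound $0\le \dim_{\mathscr H}$ is trivial, so only the upper bound $1/2$ needs proof. I will copy the covering argument from the lower-bound part of the proof of Theorem \ref{thm: semi-inf tail disj proba}. There it was run by contradiction; here it can be run directly, feeding in the one-point estimate \eqref{eq: LV estimate} of Proposition \ref{prop: non-disj-close-endpts-inf-geo}.

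First I reduce to a single profile on compact intervals. As remarked before Proposition \ref{prop: non-disj-close-endpts-inf-geo}, the left- and right-continuous versions $\Delta^{L,(0,1)}_{(0,\ell)}$ and $\Delta^{R,(0,1)}_{(0,\ell)}$ agree with $\Delta^{(0,1)}_{(0,\ell)}$ at rational points. Both are monotone, since the profile is non-decreasing by Lemma \ref{lemma: crossing}. Hence their sets of local variation coincide with $\mathsf{LV}(\Delta^{(0,1)}_{(0,\ell)},\cdot)$ evaluated through rationals, and it suffices to treat one $\square$. Moreover $(0,\infty)=\bigcup_n [a_n,b_n]$ with rational $0<a_n<b_n$. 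By countable stability of Hausdorff dimension, it suffices to show that for fixed rational $0<a<b$, almost surely $\dim_{\mathscr H}\mathsf{LV}(\Delta^{(0,1)}_{(0,\ell)},[a,b])\le 1/2$.

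Next comes the covering. Fix $\eta>0$. For rational $\varepsilon>0$, partition $[a,b]$ into $\lceil (b-a)/(2\varepsilon)\rceil$ consecutive intervals $J_i=[x_i-\varepsilon,x_i+\varepsilon]$ with rational centres. Let $N_\varepsilon$ be the number of indices $i$ for which $\Delta^{(0,1)}_{(0,\ell)}$ is non-constant on $J_i$, i.e. $\Delta(x_i+\varepsilon)>\Delta(x_i-\varepsilon)$. By monotonicity, every point of the local variation set lies in such a $J_i$ or is a shared endpoint of two adjacent $J_i$'s, at least one of which is then non-constant; enlarging each selected interval by a factor $3$ handles this. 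By \eqref{eq: LV estimate}, with $C_x$ uniformly bounded for $x\in[a,b]$, we get
\[
\mathbb E[N_\varepsilon]\le C_{a,b}\,\varepsilon^{-1}\cdot \varepsilon^{1/2+o(1)}\le C_{a,b,\eta}\,\varepsilon^{-1/2-\eta/2}
\]
for small $\varepsilon$. Hence
\[
\mathbb E\Big[\sum_i \diam(3J_i)^{1/2+\eta}\mathbf 1_{\{\text{non-const on }J_i\}}\Big]\le C\,\varepsilon^{\eta/2}\to 0.
\]
Along a sequence $\varepsilon_j\to0$ with $\sum_j\varepsilon_j^{\eta/2}<\infty$, Borel--Cantelli (or Fatou) gives almost surely $\liminf_j \sum_i\diam^{1/2+\eta}=0$. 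Thus $\mathscr H^{1/2+\eta}$ of the set is zero, and its dimension is at most $1/2+\eta$. Letting $\eta\downarrow0$ along rationals concludes the proof.

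The main obstacle is making sure \eqref{eq: LV estimate} genuinely applies uniformly over the centres $x_i\in[a,b]$, with the $o_\varepsilon(1)$ in the exponent uniform in $x$. Inspecting the proof of Proposition \ref{prop: non-disj-close-endpts-inf-geo}, the error comes from Theorem \ref{thm: semi-inf tail disj proba}, whose upper bound derives from Lemma \ref{lemma: close-not-dj} with constants $C,d$ independent of $x$, plus the tail term $C\exp(\ell-cy^{3/2})$, which is also $x$-independent. So the bound $C\varepsilon^{1/2}e^{d|\log\varepsilon|^{5/6}}(\log 1/\varepsilon)^{2/3}$ holds uniformly for $x\in[a,b]$, and I would state this uniformity explicitly. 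A minor point is the endpoint convention: intervals $(x-\varepsilon,x+\varepsilon)$ versus closed ones. This is immaterial, since the selection criterion only uses values at the rational endpoints.
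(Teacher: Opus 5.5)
Your proposal is correct and is essentially the paper's own proof. The paper also partitions a compact $[a,b]\subset(0,\infty)$ into intervals of length $2\varepsilon$ and bounds the expected number of intervals on which $\Delta^{(0,1)}_{(0,\ell)}$ increases by $\varepsilon^{-1/2+o(1)}$ via \eqref{eq: LV estimate}. It then concludes $\dim_{\mathscr H}\le 1/2+\delta$ almost surely, lets $\delta\downarrow 0$, and exhausts $(0,\infty)$. Your extra care with the uniformity in $x$ of the $o_\varepsilon(1)$ exponent and the explicit Fatou/Borel--Cantelli step just spells out what the paper leaves implicit. The factor-$3$ enlargement is harmless but unnecessary, since the closed intervals already contain their shared endpoints.
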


\begin{proof}
By Proposition \ref{prop: non-disj-close-endpts-inf-geo}, we have for all rational $0 < \varepsilon < x$ the estimates
    \begin{equation*}
	\PP\bigg(\mathsf{LV}\big(\Delta_{(0, \ell)}^{(0, 1)}\,, [x-\varepsilon, x+\varepsilon]\big)\neq\emptyset\bigg)\le C_x \cdot \varepsilon^{\frac{1}{2}+o_\varepsilon(1)}
	\end{equation*}
    for some constant $C_x > 0$ that is uniformly bounded when $x$ is on a bounded subset of $(0, \infty)$, bounded away from zero. 
    
    Now, for any $a < b$ finite, consider the interval $[a, b]$. For any $\varepsilon > 0$, starting from $a$, partition $[a, b]$ into a family of intervals of diameter $2\varepsilon$, possibly overshooting $b$ in the last step. Let $N_\varepsilon\le \lceil(b-a)/(2\varepsilon)\rceil$ be the random number of intervals above for which $\Delta_{(0, \ell)}^{(0, 1)}$ experiences an increase. Using linearity and taking expectations, we have using Markov's inequality the estimate
    \[
    \mathbb{E}[N_\varepsilon] \le \bigg\lceil\frac{b-a}{2\varepsilon}\bigg\rceil\cdot\PP \bigg(\mathsf{LV}(\Delta_{(0, \ell)}^{(0, 1)}, [x-\varepsilon, x+\varepsilon])\neq \emptyset \bigg)\le C_{a, b}\cdot\varepsilon^{-\frac{1}{2}+o_\varepsilon(1)}
    \]
    for some constant $C_{a, b} > 0$.

    We can thus cover the set of local variation $\mathsf{LV}(\Delta_{(0, \ell)}^{(0, 1)}, [a, b])$ by $N_\varepsilon$ many intervals $(U^i_\varepsilon)_i$ of diameter $2\varepsilon$ such that for any fixed $\delta > 0$,
    \[
    \sum_i \diam(U^i_\varepsilon)^{\frac{1}{2}+\delta}\le C_\delta  \cdot N_\varepsilon\cdot \varepsilon^{\frac{1}{2}+\delta}\,.
    \]
    for some constant $C_\delta > 0$. Taking expectations, we have
    \[
     \mathbb{E}[\sum_i \diam(U^i_\varepsilon)^{\frac{1}{2}+\delta}]\le C_{a, b, \delta}\cdot \varepsilon^{-\frac{1}{2}+o_\varepsilon(1)} \cdot \varepsilon^{\frac{1}{2}+\delta} \le C_{a, b, \delta} \cdot \varepsilon^{\delta+ o_\varepsilon(1)}\,, 
    \]
    which converges to zero as $\varepsilon \to 0$. Thus, almost surely, the Hausdorff dimension of $\Delta_{(0, \ell)}^{(0, 1)}$ is at most $1/2 +\delta$. Since $\delta$, $a< b$ were arbitrary, we obtain that the Hausdorff dimension of  $\mathsf{LV}(\Delta_{(0, \ell)}^{(0, 1)}, (0, \infty))$ is at most $1/2$, concluding the proof of the upper bound.
\end{proof}

We now prove the Busemann process comparing length of semi-infinite paths ending on the first and second lines of the Airy line ensemble, has \emph{no discontinuities.}

\begin{proposition}\label{prop: gap 2 cont}
    We in fact show that almost surely for all $s\in \R$ the Busemann profiles $(0, \infty) \ni x\mapsto \Delta^{\square, (s,1)}_{(s, 2)}(x)$ on $(0, \infty)$ are H\"{o}lder $1/2^-$-continuous on $(0, \infty)$.

    This means almost surely for all $s\in \R$ the Busemann profiles $\Delta^{L, (s,1)}_{(s, 2)}$ and $\Delta^{R, (s,1)}_{(s, 2)}$ almost surely agree on $(0, \infty)$.
\end{proposition}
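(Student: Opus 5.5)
We will use the variational formula of Proposition \ref{prop: semi-inf geod jump simultaneous} with $\ell=2$, which rewrites the level-two Busemann profile through the Airy sheet. For rational $x>0$ and $s$, the semi-infinite geodesics $\pi[x\to(s,1)]$ and $\pi[x\to(s,2)]$ coalesce (item (3) of Lemma \ref{lemma: semi-inf geod properties}). Applying the metric composition law (Lemma \ref{lemma: Metric Composition}) at level $2$ to the finite-depth approximants gives
\[
\mathcal{A}[x_k\to(s,1)]=\max_{u\le s}\Big(\mathcal{A}[x_k\to(u,2)]+\mathcal{A}_1(s)-\mathcal{A}_1(u)\Big).
\]
Subtracting $\mathcal{A}[x_k\to(s,2)]$, letting $k\to\infty$, and using the Airy sheet coupling (Definition \ref{coupling: Airy sheet}) with $\mathcal{S}(0,\cdot)=\mathcal{A}_1$, we expect the representation
\[
-\Delta^{(s,1)}_{(s,2)}(x)=\max_{u\le s}\Big(\Delta^{(s,2)}_{(u,2)}(x)+\mathcal{A}_1(s)-\mathcal{A}_1(u)\Big).
\]
Alternatively, as in the proof of Proposition \ref{prop: semi-inf geod jump simultaneous}, we expect
\[
\Delta^{(s,1)}_{(s,2)}(x)=\mathcal{A}_1(s)-\mathcal{A}_2(s)-\max_{u\le s}\Big(\big(\mathcal{S}(x,u)-\mathcal{S}(0,u)\big)-\big(\mathcal{S}(x,s)-\mathcal{S}(0,s)\big)\Big)+\big(\text{terms in }\mathcal{A}_1,\mathcal{A}_2\text{ only}\big).
\]
This is obtained by splitting a geodesic to $(s,1)$ at its last jump from level $2$. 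The point is that the only $x$-dependence sits in the function
\[
G(x,s):=\sup_{u\le s}\big(\mathcal{S}(x,u)-\mathcal{S}(0,u)\big)-\big(\mathcal{S}(x,s)-\mathcal{S}(0,s)\big),
\]
plus an $x$-independent term. This identity will first be established for rational $x,s$. It will then be extended to all $(x,s)$ through the left/right versions \eqref{eq: busemann cadlag/caglad}, Lemma \ref{lemma: geodesic approx}, and the uniform one-sided limits of Proposition \ref{prop: Busemann regularity}.

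Next we prove regularity of $G$. The Airy sheet has a jointly locally H\"older $1/2^-$ modification (Proposition 10.5 of \cite{DOV}). The difference of two suprema is bounded by the supremum of the differences, so on any compact set $[a,b]\times[-M,M]$ we get
\[
|G(x,s)-G(x',s)|\le 2\sup_{u\le s}|\mathcal{S}(x,u)-\mathcal{S}(x',u)|.
\]
To make the right-hand side local we must restrict the supremum to $u\in[-R,s]$, with $R$ random but uniform over $x\in[a,b]$. This follows from the parabolic shape bounds \eqref{eq: airy shape bnds}. Indeed, $\mathcal{S}(x,u)-\mathcal{S}(0,u)\approx -(x-u)^2+u^2=2xu-x^2\to-\infty$ as $u\to-\infty$, uniformly for $x\in[a,b]$ with $a>0$, up to logarithmic corrections. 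Hence the supremum is attained in a random compact window. On that window the local H\"older bound gives $|G(x,s)-G(x',s)|\le C_{\alpha,a,b,M,\omega}|x-x'|^{\alpha}$ for every $\alpha<1/2$, simultaneously for all $s\in[-M,M]$.

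Finally we transfer this to the Busemann profile. The H\"older bound passes to $\Delta^{\square,(s,1)}_{(s,2)}$ on rationals and then to all $x>0$, $s\in\R$ by the monotone limits. A monotone function that is H\"older continuous on a dense set has no jumps, so the left and right versions $\Delta^{L}$ and $\Delta^{R}$ coincide. Taking $a\downarrow0$, $b,M\uparrow\infty$ along countable sequences gives the statement on a single full-probability event.

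The main obstacle is the first step. We must rigorously establish that the Busemann profile equals the Airy-sheet functional above, exactly and simultaneously for all $x$ and $s$, rather than only in law or at rational points. This needs the coalescence and stabilisation of the finite-depth last passage values to be controlled uniformly. I would first try to deduce it from the identity $F^\square_\ell=\mathcal{S}(x,y)-\mathcal{S}(x,0)-\mathcal{S}(0,y)$ already derived in Proposition \ref{prop: semi-inf geod jump simultaneous}, specialised to $\ell=2$, and then isolate $\Delta^{(s,1)}_{(s,2)}$ by evaluating at $y=s$.
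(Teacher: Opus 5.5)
Your first step fails, and not for technical reasons: the function $G$ you isolate is identically zero. By the quadrangle inequality \eqref{eq: quadrangle ineq airy sheet}, for $x>0$ and $u\le s$ we have $\mathcal{S}(x,u)-\mathcal{S}(0,u)\le\mathcal{S}(x,s)-\mathcal{S}(0,s)$. So $u\mapsto\mathcal{S}(x,u)-\mathcal{S}(0,u)$ is non-decreasing and its supremum over $u\le s$ is attained at $u=s$; the paper uses exactly this in the proof of Proposition \ref{prop: semi-inf geod jump simultaneous}.

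Your second representation would therefore make $x\mapsto\Delta^{(s,1)}_{(s,2)}(x)$ constant. That contradicts Proposition \ref{prop: non-degen asymptotic}, which holds for $\ell=2$. The underlying issue is that the coupling of Definition \ref{coupling: Airy sheet} only identifies Busemann increments between two \emph{top-line} endpoints with Airy sheet increments.

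Splitting at the last jump from level $2$ correctly gives your first formula. There, however, the $x$-dependence is carried by the level-two increments $\Delta^{(s,2)}_{(u,2)}(x)$. Nothing converts these into $\mathcal{S}$, and their regularity in $x$ is exactly as unknown as the target's, so that route is circular. Your fallback via Proposition \ref{prop: semi-inf geod jump simultaneous} does not isolate the target either. Read with the level-two endpoint inside the maximum, it says
\[
\max_{u\le y}\big(\Delta^{(0,1)}_{(u,2)}(x)-\mathcal{A}_1(u)\big)=\mathcal{S}(x,y)-\mathcal{S}(x,0)-\mathcal{A}_1(y).
\]
The maximum sits at the jump time $Z_2(x,y)$, which is in general strictly left of $y$. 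So setting $y=s$ only gives an inequality for $\Delta^{(0,1)}_{(s,2)}(x)$.

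The missing idea is to replace the identity by a one-sided domination, which is what the paper does. Fix $x<x'$ and $q\ge s$. The crossing lemma (Lemma \ref{lemma: crossing}), applied to the finite-depth approximants, makes $\Delta^{(s,2)}_{(q,2)}$ non-decreasing, hence
\[
0\le\Delta^{(s,1)}_{(s,2)}(x')-\Delta^{(s,1)}_{(s,2)}(x)\le\Delta^{(s,1)}_{(q,2)}(x')-\Delta^{(s,1)}_{(q,2)}(x).
\]
Next take $Y\ge q$. Going to $(q,2)$, jumping, and running along the top line gives, at every direction,
\[
\Delta^{(s,1)}_{(q,2)}\le\Delta^{(s,1)}_{(Y,1)}-\mathcal{A}_1(Y)+\mathcal{A}_1(q),
\]
with equality at $x$ when $q=Z_2(x,Y)$.

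Now fix $a>0$ and $M>0$, and choose $Y$ with $Z_2(a,Y)>M$. This is possible by shift invariance (Lemma \ref{lemma: semi-inf geod properties}(4)) and monotonicity in the endpoint. Monotonicity of jump times in the direction then gives $q=Z_2(x,Y)\ge s$ for all $x\ge a$ and $|s|\le M$. Using $\Delta^{(s,1)}_{(Y,1)}=\mathcal{S}(\cdot,Y)-\mathcal{S}(\cdot,s)$, you get
\[
0\le\Delta^{(s,1)}_{(s,2)}(x')-\Delta^{(s,1)}_{(s,2)}(x)\le\mathcal{S}(x',Y)-\mathcal{S}(x',s)-\mathcal{S}(x,Y)+\mathcal{S}(x,s).
\]
The H\"older $1/2^-$ bound, uniform in $|s|\le M$, then follows from that of $\mathcal{S}$. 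Your later steps go through once this domination replaces the false identity: passing from rationals to all $x$, and $\Delta^{L}=\Delta^{R}$ for a monotone profile that is H\"older on a dense set. No localisation of a supremum is needed.
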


\begin{proof}
The following argument is deterministic on the event of semi-infinite  geodesic existence and uniqueness (see Lemma \ref{lemma: semi-inf geod properties}). In the remainder, we set $s= 0$. 

      In the argument that follows, we will be considering general $\ell\ge 2$ and only use this fact in the end. Fix any $x_0 > 0$. By shift invariance of geodesics, item $(4)$ of Lemma \ref{lemma: semi-inf geod properties}, there exists some random $Y_0\in \N$ such that for all $x> x_0$, geodesics $\pi^\square[x\to (Y_0, 1)]$ satisfy $\pi^\square[x\to (Y_0, 1)](0)\ge \ell$ and so by monotonicity of geodesics, have for all $x> x_0$, $\pi^\square[x\to (Y_0, 1)]\ge \pi^\square[x\to (0, \ell)]$ on $(-\infty, 0]$. Moreover, by Proposition \ref{prop: semi-inf geod jump simultaneous} we can deterministically take $Y\ge Y_0$ (possibly depending on $x$) such that $Z^L_\ell(x, Y) = Z^R_\ell(x, Y)$, when $\ell = 2$ (we will later see this will always be the case for general $\ell\ge 2$).

     Now, take any $x>x_0$ and sequences of rational numbers $x_n < x <  y_n$ with $\inf_n x_n > x_0$ and $x_n - y_n \to 0$ as $n\to \infty$. The proof of Proposition \ref{prop: semi-inf geod jump} gives that almost surely for all $x\in  (0,\infty)\cap \Q , y\in \Q$ we have for the geodesic jump times $Z_k(x, y)$, $k\ge \ell$ (recall notation from Section \ref{sec: semi-inf geod})
     \begin{align*}
         &\lim_{m\to \infty}(\mathcal{A}[x_m\to (Z_k(x, y), k-1)]-\mathcal{A}[x_m\to (0, \ell-1)])+\mathcal{A}[x\to (Z_k(x, y), k-1)]\to (y, \ell-1)]\\
         = &\lim_{m\to \infty}(\mathcal{A}[x_m\to (y, \ell-1)]-\mathcal{A}[x_m\to (0, \ell-1)])\,.
     \end{align*}
     In terms of Busemann processes this reads for $y>0$ rational, 
     \[
     \Delta^{(0, 1)}_{(Z_k(x, y), k-1)}(x)+\mathcal{A}[x\to (Z_k(x, y), k-1)] = \Delta^{(0, 1)}_{(y, \ell-1)}(x)\,.
     \]
     Thus, by the approximation Lemma \ref{lemma: geodesic approx}, we have for any $u> 0 , v\in \R$ (noting the convergence of jump times and regularity of Busemann processes, Proposition \ref{prop: Busemann regularity}), 
      \[
     \Delta^{\square, (0, 1)}_{(Z^\square_k(u, v), k-1)}(x)+\mathcal{A}[(Z^\square_k(u, v), k-1)] = \Delta^{\square, (0, 1)}_{(v, \ell-1)}(x)\,.
     \]
     Thus, with $u = x_n, y_n, v = Y$, using that $Z^L_\ell(x, Y) = Z^R_\ell(x, Y)$ and for $\ell = 2$ $\Delta^{L, (0, 1)}_{(Y, \ell-1)} = \Delta^{R, (0, 1)}_{(Y, \ell-1)}$ $= \mathcal{S}(\cdot, Y)-\mathcal{S}(\cdot, 0)$, we have with $k = \ell+1$ (by Lemma \ref{lemma: geodesic approx} for all $n$ sufficiently large the jump times $Z^\square_\ell(x_n, Y) = Z^\square_\ell(y_n, Y) = Z_\ell(x, Y)$)    
    \[
     \Delta^{\square, (0, 1)}_{(Z_\ell(x, Y), \ell)}(y_n)-\Delta^{\square, (0, 1)}_{(Z_\ell(x, Y), \ell)}(x_n) = \mathcal{S}(y_n, Y)-\mathcal{S}(y_n, 0)-(\mathcal{S}(x_n, Y)-\mathcal{S}(x_n, 0))\,.
     \]
     For the displayed equality we used again that $\ell = 2$.
     
     Then, we estimate using Lemma \ref{lemma: crossing} and that $Z_\ell(x, Y)$ can be taken to be strictly greater than zero
     \[
     0\le \Delta^{(0, 1)}_{(0, \ell)}(y_n)-\Delta^{(0, 1)}_{(0, \ell)}(x_n)\le \Delta^{(0, 1)}_{(q, \ell)}(y_n)-\Delta^{(0, 1)}_{(q, \ell)}(x_n)\to 0 
     \]
     for all $0< q< Z_\ell(x, y)$ and so by continuity in the second variable (Proposition \ref{prop: Busemann regularity}), we have
     \[
     0\le \Delta^{(0, 1)}_{(0, \ell)}(y_n)-\Delta^{(0, 1)}_{(0, \ell)}(x_n)\le \Delta^{(0, 1)}_{(Z_\ell(x, Y), \ell)}(y_n)-\Delta^{(0, 1)}_{(Z_\ell(x, Y), \ell)}(x_n) = \Delta^{(0, 1)}_{(Y, \ell-1)}(y_n)-\Delta^{(0, 1)}_{(Y, \ell-1)}(x_n)\to 0
     \]
     as $n\to \infty$.
     
    Since $x_0> 0$ was arbitrary, we conclude the proof for $s = 0$. To obtain the result almost surely simultaneously for all other $s\in \R$, observe that by Lemma \ref{lemma: crossing} for all $0<x< y$ rational (two applications of the crossing lemma, Lemma \ref{lemma: crossing}, see also Proposition \ref{prop: Busemann regularity})
\[
\Delta^{\square, (s,1)}_{(s, 2)}(y)-\Delta^{\square, (s,1)}_{(s, 2)}(x) \le  \Delta^{\square, (\lfloor s\rfloor,1)}_{(\lceil s\rceil, 2)}(y)-\Delta^{\square, (\lfloor s\rfloor,1)}_{(\lceil s\rceil, 2)}(x)\,.
\]
Then the remaining argument with $\lfloor s\rfloor$, $\lceil s \rceil$ integers is essentially identical to that with setting $s = 0$ (see Figure \ref{fig: cont busemann}).
\end{proof}

\begin{figure}[ht]
    \centering
    \includegraphics[width=0.65\linewidth]{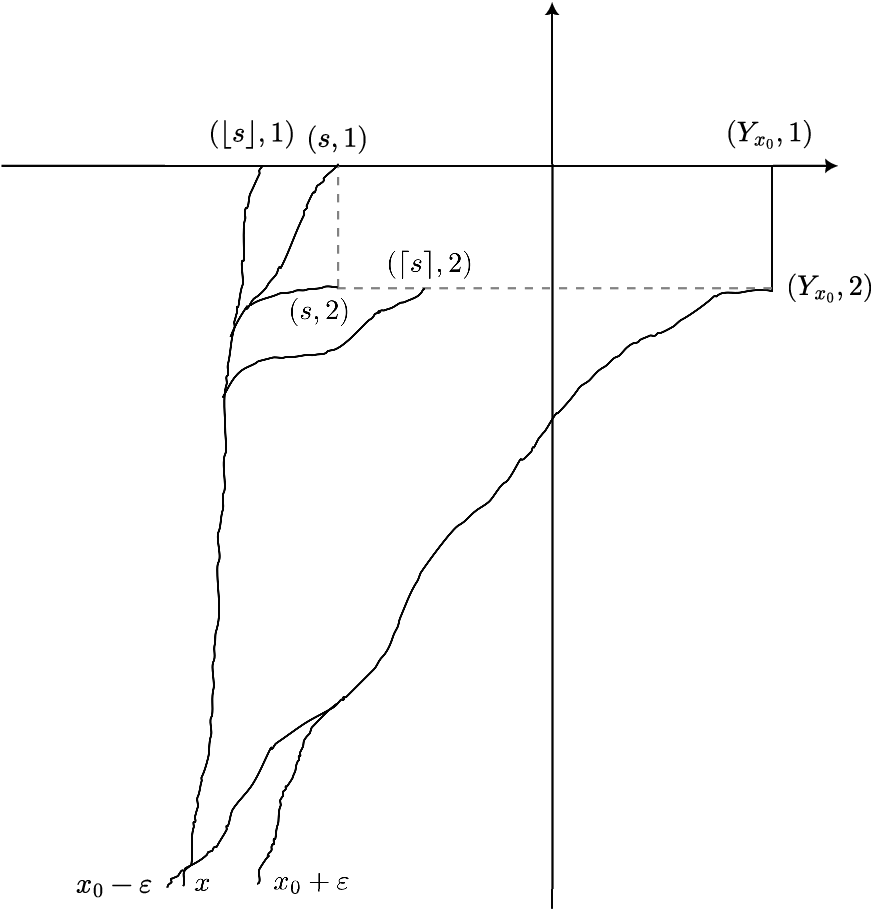}
    \caption{Geometric (cartoon) illustration of the argument in the proof of Proposition \ref{prop: gap 2 cont} for general $s\in \R$. The following construction if deterministic on the event of existence and uniqueness of geodesics (both finite and semi-infinite) on the Airy line ensemble on any fixed countable dense set of endpoints). For any fixed $x_0> 0$, by Proposition \ref{prop: semi-inf geod jump simultaneous} and Lemma \ref{lemma: geodesic approx}, there exists some $Y_{x_{0}}> \lceil s\rceil$ and some $\varepsilon > 0$ such that for all $x\in (x_0-\varepsilon, x_-+\varepsilon)$, all semi-infinite geodesics with asymptotic direction $x$ ending at $(Y_{x_0}, 1)$ necessarily reach $(Y_{x_0}, 2)$ first. Now, by the monotonicity of relative lengths of geodesics, Lemma \ref{lemma: crossing}, we have that the increments of Busemann functions  $\Delta^{\square, (\lfloor s\rfloor,1)}_{(\lceil s\rceil, 2)}$ in $(x_0-\varepsilon, x_0+\varepsilon)$ are at most the relative lengths of geodesics ending at  $(Y_{x_0}, 2)$ and $(\lfloor s \rfloor, 1)$, which can be expressed by the above observations in terms of differences of the Airy sheet.}
    \label{fig: cont busemann}
\end{figure}

As a result, we the following continuity of the Busemann processes `cascades' down levels of the Airy line ensemble, which allows us to conclude the proof of the following theorem.

\begin{theorem}\label{thm: busemann cont}
     Almost surely, for all $\ell \ge 2, s\in \R$, the Busemann processes $\Delta^{(s,1)}_{(s, \ell)}(x)$ on $(0, \infty)$ are H\"{o}lder $1/2^-$-continuous on $(0, \infty)$.
\end{theorem}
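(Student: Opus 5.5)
We argue by induction on $\ell\ge 2$. The base case $\ell=2$ is Proposition \ref{prop: gap 2 cont}. The inductive step repeats that geometric argument one level lower: increments of $\Delta^{(0,1)}_{(0,\ell)}$ in the direction variable are dominated by increments of a Busemann profile ending one level up, $\Delta^{(0,1)}_{(Y,\ell-1)}$, which is H\"older $1/2^-$ by the induction hypothesis. As in Proposition \ref{prop: gap 2 cont}, we work deterministically on the full-probability event where finite and semi-infinite geodesics with rational data exist and are unique. We first take $s=0$, then pass to all $s$ by the floor/ceiling comparison from Lemma \ref{lemma: crossing} used at the end of that proof.

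Fix $\ell\ge 3$ and assume that, almost surely, for every $s$ the profiles $\Delta^{\square,(s,1)}_{(s,\ell-1)}$ are H\"older $1/2^-$ and $L=R$. Fix $x_0>0$ and a compact window $J=[x_0,x_1]\subset(0,\infty)$. By shift invariance (item (4) of Lemma \ref{lemma: semi-inf geod properties}) and monotonicity, there is a random $Y_0$ such that $\pi^\square[x\to(Y_0,1)](0)\ge \ell$ for all $x\in J$. Next we need $Y\ge Y_0$ such that the jump time $Z^\square_\ell(x,Y)$ from level $\ell$ to $\ell-1$ is the same point $z_*>0$ for all $x$ in a small neighbourhood of a given $x\in J$. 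For this we use the variational characterisation of Proposition \ref{prop: semi-inf geod jump simultaneous}. Its objective is $s\mapsto \Delta^{\square,(0,1)}_{(s,\ell-1)}(x)+\mathcal{A}[(s,\ell-1)\to(Y,1)]$, which by the induction hypothesis is continuous in $x$ with no $L/R$ ambiguity. The argument proving $A_x$ has dimension $\ge 1/2$ then adapts to give a $Y$ at which the argmax is unique and locally constant in $x$. Concretely, we choose $Y$ outside the countable set of boundary points of constancy intervals of $y\mapsto F_\ell(x,y)$. Lemma \ref{lemma: geodesic approx} then ensures that for rational $x_n<x<y_n$ close to $x$ the jump times $Z_\ell(x_n,Y)$ and $Z_\ell(y_n,Y)$ equal $z_*$.

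With this $Y$, the additivity identity from the proof of Proposition \ref{prop: gap 2 cont} is
\[
\Delta^{(0,1)}_{(z_*,\ell)}(u)+\mathcal{A}[(z_*,\ell-1)\to(Y,1)]-\mathcal{A}_1(Y)=\Delta^{(0,1)}_{(Y,1)}(u)\cdot 0+\Delta^{(0,1)}_{(z_*,\ell-1)}(u)+\mathcal{A}[(z_*,\ell-1)\to(Y,1)]-\mathcal{A}_1(Y),
\]
in the form $\Delta_{(z_*,\ell)}(u)=\Delta_{(Y,\ell-1)}(u)-\mathcal{A}[(z_*,\ell-1)\to(Y,\ell-1)]$ for $u\in\{x_n,y_n\}$. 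Its last term does not depend on $u$. Combining this with two applications of Lemma \ref{lemma: crossing}, using $0<z_*$, gives
\[
0\le \Delta^{(0,1)}_{(0,\ell)}(y_n)-\Delta^{(0,1)}_{(0,\ell)}(x_n)\le \Delta^{(0,1)}_{(z_*,\ell)}(y_n)-\Delta^{(0,1)}_{(z_*,\ell)}(x_n)=\Delta^{(0,1)}_{(Y,\ell-1)}(y_n)-\Delta^{(0,1)}_{(Y,\ell-1)}(x_n).
\]
A further crossing-lemma comparison bounds the right-hand side by increments of $\Delta^{(\lfloor Y\rfloor,1)}_{(\lceil Y\rceil,\ell-1)}$, and likewise of $\Delta^{(\lfloor Y\rfloor ,1)}_{(\lfloor Y\rfloor ,\ell-1)}$ plus a top-line Airy sheet increment. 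By the induction hypothesis these are at most $C|y_n-x_n|^{\alpha}$ for every $\alpha<1/2$, and Proposition \ref{prop: Busemann regularity} lets us pass from rational $u$ to the left/right versions. Covering $J$ by finitely many such neighbourhoods, using compactness and the fact that $\ell$ ranges over countably many values, gives a local H\"older $1/2^-$ bound on $J$. This also shows $\Delta^L=\Delta^R$ at level $\ell$, which closes the induction.

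The main obstacle is the choice of $Y$. We need the jump time $Z_\ell(\cdot,Y)$ to be constant on a neighbourhood in the direction variable, with one $Y$ serving a whole neighbourhood of directions. For $\ell=2$ this came from the explicit Airy sheet formula for $F_2$. For general $\ell$ it must come from the inductively established continuity of $\Delta_{(\cdot,\ell-1)}$ together with uniqueness of the argmax. We would first try to prove it by showing that $Y\mapsto F^\square_\ell(x,Y)=\mathcal{S}(x,Y)-\mathcal{S}(x,0)-\mathcal{S}(0,Y)$ has strict points of increase, as in Proposition \ref{prop: semi-inf geod jump simultaneous}, and that a strict increase at $Y$ is stable under small perturbations of $x$ by joint continuity of $\mathcal{S}$. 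Local constancy of the argmax would then follow from the stabilisation statement in Lemma \ref{lemma: geodesic approx}.
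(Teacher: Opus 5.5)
You have the paper's overall scheme: induction on $\ell$, and domination via Lemma~\ref{lemma: crossing} of the increments of $\Delta^{(0,1)}_{(0,\ell)}$ by those of a profile attached to some $Y>0$ at which the level-$\ell$ jump is locally constant in the direction. But the step you flag as the main obstacle is not resolved by what you propose, and it is exactly where the induction hypothesis has to enter.

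\textbf{Why your choice of $Y$ fails.} The function $y\mapsto F_\ell(x,y)=\mathcal{S}(x,y)-\mathcal{S}(x,0)-\mathcal{S}(0,y)$ does not depend on $\ell$. It is the maximal value of the level-$\ell$ variational problem for every $\ell$, since that maximum is just $\Delta^{(0,1)}_{(y,1)}(x)-\mathcal{A}_1(y)$. A strict increase point $Y$ of it forces $Z^L_2(x,Y)=Z^R_2(x,Y)=Y$ and nothing more. The argument of Proposition~\ref{prop: semi-inf geod jump simultaneous} gives uniqueness only for $\ell=2$. There the last leg $\mathcal{A}[(s,1)\to(y,1)]=\mathcal{A}_1(y)-\mathcal{A}_1(s)$ is additive, so the objective becomes the $y$-independent non-decreasing function $s\mapsto\mathcal{S}(x,s)-\mathcal{S}(0,s)$, whose maximiser over $(-\infty,Y]$ at a strict increase point is $Y$ itself. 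For $\ell\ge 3$ the last leg $\mathcal{A}[(s,\ell-1)\to(Y,1)]$ is not additive. Strict growth of the maximal value in $Y$ is then compatible with two maximisers, i.e.\ with $Z^L_\ell(x,Y)<Z^R_\ell(x,Y)$.

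Strict increase at $Y$ is also not stable under perturbing $x$ by joint continuity of $\mathcal{S}$. The increments $\mathcal{S}(x',Y)-\mathcal{S}(0,Y)-\mathcal{S}(x',s)+\mathcal{S}(0,s)$ are non-decreasing in $x'$, so positivity persists only for $x'>x$, and continuity gives no uniformity as $s\uparrow Y$. The other side must come from Lemma~\ref{lemma: geodesic approx}.

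\textbf{Your identity does not match your $z_*$.} The relation $\Delta^{(0,1)}_{(z_*,\ell)}(u)=\Delta^{(0,1)}_{(Y,\ell-1)}(u)-\mathcal{A}[(z_*,\ell-1)\to(Y,\ell-1)]$ needs the geodesic in direction $u$ ending at $(Y,\ell-1)$ to jump at $z_*$ and then run along level $\ell-1$. But $z_*$ was defined through geodesics ending at $(Y,1)$. For those the correct identity is $\Delta^{(0,1)}_{(z_*,\ell)}(u)=\Delta^{(0,1)}_{(Y,1)}(u)-\mathcal{A}[(z_*,\ell-1)\to(Y,1)]$. That would bound everything by Airy sheet increments and make the induction superfluous, which shows that producing such a $Y$ is the whole difficulty.

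\textbf{The missing idea.} This is how the paper closes the induction: end the geodesics on level $\ell-1$ instead of level $1$, so the final leg is horizontal.
\begin{itemize}
\item The last jump $Z^\square_\ell(x,y)$ of $\pi^\square[x\to(y,\ell-1)]$ maximises $s\mapsto\Delta^{\square,(0,1)}_{(s,\ell)}(x)-\mathcal{A}_{\ell-1}(s)$ over $s\le y$.
\item Its running maximum is $M_x(y)=\Delta^{(0,1)}_{(y,\ell-1)}(x)-\mathcal{A}_{\ell-1}(y)$. Up to a $y$-independent term this is the paper's $\Delta^{(0,\ell)}_{(y,\ell-1)}(x)-\mathcal{A}_{\ell-1}(y)$, the level-$\ell$ analogue of $\mathcal{S}(x,y)-\mathcal{S}(0,y)$.
\item By the induction hypothesis, $\Delta^{(0,1)}_{(y,\ell-1)}=\Delta^{(0,1)}_{(y,1)}+\Delta^{(y,1)}_{(y,\ell-1)}$ is continuous in $x$ with $L=R$. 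Also $M_x$ is non-decreasing and not eventually constant, since $Z_\ell(x,y)\to\infty$ as $y\to\infty$.
\item Take $Y>0$ a point of strict increase of $M_x$, outside the countably many right endpoints of its constancy intervals. Any maximiser $s^*$ satisfies $M_x(s^*)=M_x(Y)$, hence $s^*=Y$, so $Z^L_\ell(x,Y)=Z^R_\ell(x,Y)=Y$.
\item By Lemma~\ref{lemma: geodesic approx}, geodesics ending at $(Y,\ell-1)$ pass through $(Y,\ell)$ for all directions near $x$. Hence $\Delta^{(0,1)}_{(Y,\ell)}=\Delta^{(0,1)}_{(Y,\ell-1)}$ there, and for $u<v$ near $x$
\[
0\le \Delta^{(0,1)}_{(0,\ell)}(v)-\Delta^{(0,1)}_{(0,\ell)}(u)\le \Delta^{(0,1)}_{(Y,\ell)}(v)-\Delta^{(0,1)}_{(Y,\ell)}(u)=\Delta^{(0,1)}_{(Y,\ell-1)}(v)-\Delta^{(0,1)}_{(Y,\ell-1)}(u).
\]
The right side is H\"older $1/2^-$ by the Airy sheet and the induction hypothesis.
\end{itemize}
With this replacement for your $Y$ and $z_*$, the rest of your argument (compactness on $J$, passage to general $s$) goes through.
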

\begin{proof}
    The proof proceeds by induction. The base step $\ell=2$ was proved in Proposition \ref{prop: gap 2 cont}. To prove the inductive step for some $\ell$, repeat the argument in Proposition \ref{prop: semi-inf geod jump simultaneous} using the fact that by the induction hypothesis, $(x, s)\mapsto \Delta^{\square, (0, 1)}_{(s, \ell-1)}(x)$ are jointly continuous and so by planarity, Lemma \ref{lemma: crossing} (see also Proposition \ref{prop: Busemann regularity}) $\Delta^{\square, (0, \ell)}_{(s, \ell-1)}$ are identical on $(0, \infty)$ for all $s\in \R$. This follows since we can write $\Delta^{\square, (0, 1)}_{(s, \ell-1)} = \Delta^{\square, (s,1)}_{(s, \ell-1)}+ \Delta^{\square, (0, 1)}_{(s, 1)}$, the former by induction and the latter continuous using the coupling in \ref{coupling: Airy sheet}.
    Then argue as in Propositions \ref{prop: semi-inf geod jump} and \ref{prop: semi-inf geod jump simultaneous} to show almost surely for all $x > 0, y\in \R$, the jump times $Z^\square_\ell(x, y)$ of $\pi^\square[x\to (y, \ell-1)]$ from level $\ell$ to level $\ell-1$ satisfy
    \[
    Z^\square_\ell(x, y) \in  \displaystyle\operatorname*{argmax}_{s\in (-\infty, y]}\bigg(\Delta_{(s, \ell-1)}^{(0, \ell)}(x)-\mathcal{A}_{\ell-1}(s)\bigg)\,.
    \]
    The shift invariance of geodesics, item $(4)$ of Lemma \ref{lemma: semi-inf geod properties} gives that almost surely for all $x> 0 , y\in \R$ $Z^\square_\ell(x, y) \to \infty$ as $y\to \infty$. This means as $y\to \infty$, 
    \[
     \displaystyle\operatorname*{infargmax}_{s\in (-\infty, y]}\bigg(\Delta_{(s, \ell-1)}^{(0, \ell)}(x)-\mathcal{A}_{\ell-1}(s)\bigg)\to \infty
    \]
    as $y\to \infty$. This means
    \[
     y\mapsto \displaystyle\operatorname*{max}_{s\in (-\infty, y]}\bigg(\Delta_{(s, \ell-1)}^{(0, \ell)}(x)-\mathcal{A}_{\ell-1}(s)\bigg) = \Delta_{(y, \ell-1)}^{(0, \ell)}(x)-\mathcal{A}_{\ell-1}(y)
    \]
    cannot be eventually constant (the last equality follows from the metric composition law, Lemma \ref{lemma: Metric Composition}). Since then
    \[
     \sup_{y> 0}\displaystyle\operatorname*{infargmax}_{s\in (-\infty, y]}\bigg(\Delta_{(s, \ell-1)}^{(0, \ell)}(x)-\mathcal{A}_{\ell-1}(s)\bigg)< \infty\,.
    \]
    a contradiction.

    Now, arguing as in the last part of the proof of Proposition \ref{prop: semi-inf geod jump simultaneous}, we obtain that almost surely, for all $x> 0$ the sets $A_x = \{Z^L_\ell(x, y) = Z^R_\ell(x,y)\}$ has Hausdorff dimension at least $1/2$. We can thus finally proceed as in the proof of Proposition \ref{prop: gap 2 cont} (essentially as in Figure \ref{fig: cont busemann} replacing the depths $1, 2$ with $\ell-1, \ell$ respectively, and the Airy sheet with the Busemann functions $\Delta^{\square, (0, \ell)}_{ (s, \ell-1)}$, assumed continuous by the induction hypothesis), which concludes the proof of the inductive step.
\end{proof}

We end this section with a description of the law of the Busemann processes $\Delta^{(0, 1)}_{(0, \ell)}$ in terms of processes depending on \emph{finitely many} levels of the Airy line ensemble. This will depend on certain identities satisfied by the pathwise melon constructions of $WB^n$.

In particular, we have the following equality in law on paths.
\begin{theorem}\label{thm: Busemann equality in law}
    Fix $\ell\ge 1$. Then there exists a coupling of an Airy line ensemble $\mathcal{A}$ and a standard Airy sheet $\mathcal{S}$ where the Busemann functions $ \Delta^{(0, 1)}_{(0, \ell)}(x)$ can be characterised as follows on paths $\mathcal{C}(0, \infty)$,
    \begin{align*}
        \Delta^{(0, 1)}_{(0, \ell)}(x) = U_\ell(x) - \mathcal{S}(x, 0)+ \mathcal{A}_\ell(0) \,, \quad x > 0\,,
    \end{align*}
    where $U_\ell$ is a continuous stochastic process equal in law on $\mathcal{C}(0, \infty)$ to $\mathcal{A}[(0, \ell) \to (x, 1)]$.
    Moreover, for any fixed $z\in \R$ and $0< u< v$, we have the following characterisation of the increments of the Busemann functions $\Delta^{(z, 1)}_{(z, \ell)}$,
    \[
    \Delta^{(z, 1)}_{(z, \ell)}(v)-\Delta^{(z, 1)}_{(z, \ell)}(u) = U^{u, v}_\ell -\mathcal{S}(v, z)+\mathcal{S}(u, z)+2z(v-u)\,,
    \]
    with 
    \[
    U^{u, v}_\ell\overset d = \mathcal{A}[(0, \ell) \to (v, 1)] - \mathcal{A}[(0, \ell) \to (u, 1)]\,.
    \]
\end{theorem}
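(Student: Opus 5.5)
The plan is to prove the identity first in the prelimiting Brownian melon environments, where it becomes a deterministic last passage identity, and then pass to the limit through the coupling on page \pageref{Skorokhod coupling}.

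\textbf{Prelimiting identity.} Fix $\ell\ge 1$ and $n$ large, and write $\overline{x}=2xn^{-1/3}$, $\hat 0 = 1$ as before. The melon is built from independent Brownian motions $B^n$ via the RSK/Pitman construction of \cite{o2002representation}. By \cite[Section 4]{DOV}, last passage values in $B^n$ from $(\overline{x},n)$ to $(\hat y,1)$ are recovered from the melon $WB^n$ as a last passage value from a far-left starting point on level $n$ that is ``tilted'' by $\overline{x}$. Applying the metric composition law, Lemma~\ref{lemma: Metric Composition}, at time $\hat 0$ splits the melon last passage value to $(\hat 0,\ell)$ into a part ending at $(\hat 0,\ell)$ and a part from $(\hat 0,\ell)$ onward. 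The key deterministic input is the melon identity (the finite-$n$ analogue of the ``down the middle'' coupling in \cite{dauvergne2026downthemiddle}, cf.\ \cite[Lemma 4.5 and Proposition 4.1]{DOV}). For the environment restricted to times $\ge 1$, the difference of melon last passage values from the deep point to $(1,\ell)$ and to $(1,1)$, plus the Brownian value $B^n[(\overline{x},n)\to(1,1)]$, equals a last passage value from $(1,\ell)$ to $(1+\text{shift}(x),1)$ in a \emph{new} melon-type environment $\widetilde W^n$. This new environment is built from the increments of $B^n$ after time $1$, and is hence equal in law to a Brownian $n$-melon. After the edge rescaling of Theorem~\ref{thm: melon scaling Airy}, this should read
\[
\Delta^{n}_{\ell}(x)+\mathcal S^n(x,0)-A^n_\ell(0)=U^n_\ell(x),\qquad U^n_\ell\overset d= A^n[(0,\ell)\to(x,1)]\ \text{as processes in } x .
\]
Here $\Delta^n_\ell$ is the prelimiting Busemann difference, and the roles of $x$ as direction on the left and as terminal time on the right are exchanged exactly as in the Airy sheet symmetry $\mathcal S(x,y)\overset d=\mathcal S(-y,-x)$.

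\textbf{Passage to the limit.} Using the Skorokhod coupling, $A^n\to\mathcal A$ and $\mathcal S^n\to\mathcal S$ locally uniformly. Item 3 of the coupling and the stabilisation of Busemann limits (Lemma~\ref{lemma: semi-inf geod properties}(3), Proposition~\ref{prop: Busemann regularity}) give $\Delta^n_\ell(x)\to\Delta^{(0,1)}_{(0,\ell)}(x)$ for rational $x$. Last passage values are continuous in the environment on compacts, so $U^n_\ell$ converges in law to $x\mapsto\mathcal A[(0,\ell)\to(x,1)]$ in $\mathcal C(0,\infty)$. I would then enlarge the probability space, using tightness and a further Skorokhod embedding, so that $U^n_\ell\to U_\ell$ almost surely jointly with everything else. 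This yields the identity on rationals. The extension to all $x>0$ uses the H\"older continuity from Theorem~\ref{thm: busemann cont} together with the continuity of $\mathcal S$ and $U_\ell$.

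\textbf{Increments and main obstacle.} For the increment statement, shift invariance (Lemma~\ref{lemma: semi-inf geod properties}(4) and the stationarity $\mathcal A(\cdot+z)\overset d=\mathcal A(\cdot)-2z\cdot-z^2$) transports $\Delta^{(z,1)}_{(z,\ell)}$ to $\Delta^{(0,1)}_{(0,\ell)}$. Correspondingly, $\mathcal S(\cdot,z)$ transports to $\mathcal S(\cdot,0)$ with the linear correction $2z(v-u)$ coming from \eqref{eq: Airy sheet symmetry}. Taking the difference at $v$ and $u$ cancels $\mathcal A_\ell(0)$, which gives the stated form with $U^{u,v}_\ell=U_\ell(v)-U_\ell(u)$.

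The main obstacle is the prelimiting melon identity itself. One needs a precise statement that the Busemann-type difference on the melon, together with the Brownian LPP value, is realised as an LPP value in an independent melon, and one needs to track the scaling exactly. I would try to derive it by iterating the Pitman/RSK intertwining (the Greene--Kleitman type identities in \cite{DOV}) at the time $1$. A fallback is to argue at the level of finite-dimensional distributions via the Brownian Gibbs property.
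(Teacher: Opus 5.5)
Your architecture matches the paper's: a finite-$n$ melon identity, passage to the limit in the coupling on page~\pageref{Skorokhod coupling}, Theorem~\ref{thm: busemann cont} to upgrade from rationals to paths, and shift invariance for the increments (that last step is fine). The gap is the step you yourself call the main obstacle. It is the entire content of the theorem, and the form you guess for it cannot hold.

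Every term on your left-hand side is a function of $B^n|_{[0,1]}$ alone. The RSK/Pitman construction of $WB^n$ is adapted, so the melon last passage values to $(1,\ell)$ and $(1,1)$, and $A^n_\ell(0)$, only see $B^n$ up to time $1$. The Brownian value $B^n[(\overline x,n)\to(1,1)]$ only sees $[\overline x,1]$. A last passage value in an environment built from the increments of $B^n$ after time $1$ is independent of $B^n|_{[0,1]}$. An almost sure equality between the two would therefore force both sides to be constant.

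The identity the paper actually uses is \cite[Lemma~6.2]{DOV}. With $D_{n,\ell}(x)=n^{1/6}\bigl(WB^n[(\overline x,n)\to(1,\ell)]-WB^n[(\overline x,n)\to(1,1)]\bigr)$, it reads
\[
D_{n,\ell}(x)-A^n_\ell(0)+\mathcal S^n(x,0)-\mathcal S^n(0,0)=2xn^{1/3}-n^{1/6}\,W^*_1B^n\bigl[(1-2xn^{-1/3},1)\to_{\mathfrak f}(1,\ell)\bigr].
\]
Here $W^*_1B^n$ is the reversed melon opened up at time $1$, a time reversal at $1$ of the environment on $[0,1]$; see equations (26)--(27) of \cite{DOV}. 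The symbol $\to_{\mathfrak f}$ denotes a backwards first passage value over the window $[1-2xn^{-1/3},1]$ just \emph{before} time $1$. Its law equals that of $WB^n$ by time-reversal invariance of Brownian motion, not by independence of future increments. Continuity of the first passage functional together with Theorem~\ref{thm: melon scaling Airy} then gives the limit $\mathcal A[(0,\ell)\to(x,1)]$. This is the mechanism that turns the direction $x$ into a terminal time. Neither \cite[Lemma~4.5]{DOV} (an essential-disjointness inclusion) nor \cite[Proposition~4.1]{DOV} (preservation of last passage values) supplies it, and your Brownian Gibbs fallback offers no route to it.

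There is also a secondary gap. Item 3 of the coupling and stabilisation of the limiting Busemann function do not by themselves give $D_{n,\ell}(x)\to\Delta^{(0,1)}_{(0,\ell)}(x)$. The reason is that $D_{n,\ell}$ is a difference of last passage values from depth $n$, which local uniform convergence of $A^n$ does not control. You have to localise in the prelimit:
\begin{itemize}
\item Take rational $y>0$ with $Z_\ell(x,y)>0$.
\item By item 4 of the coupling and geodesic ordering, eventually in $n$ the melon geodesics from $(\overline x,n)$ to $(1,1)$ and to $(1,\ell)$ pass through a common point whose rescaled location stays bounded. The geodesic to $(1,\ell)$ is sandwiched by the one to $(1+2yn^{-1/3},1)$.
\item Hence $D_{n,\ell}(x)$ becomes a difference of last passage values from that point, and this difference does converge.
\end{itemize}
Combined with the displayed identity, the same prelimiting quantity converges almost surely to $\Delta^{(0,1)}_{(0,\ell)}(x)+\mathcal S(x,0)-\mathcal A_\ell(0)$ and in law to $U_\ell(x)$. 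This yields the equality in law directly, so your additional Skorokhod embedding for $U^n_\ell$ is not needed.
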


\begin{proof}
We work with the coupling of \pageref{Skorokhod coupling} prelimiting Brownian melons $WB^n$, Airy sheets $\mathcal{S}^n$ and copies of the Airy line ensemble $\mathcal{A}$ and the Airy sheet $\mathcal{S}$. For completeness, we restate the items in the coupling below:
\begin{itemize}
\item[1.] The rescaled melons $A^n$ converge to the Airy line ensemble $\mathcal{A}$ uniformly on compact subsets of $\N\times \R$.
\item [2.] $\mathcal{S}^n$ converges locally uniformly to the Airy sheet $\mathcal{S}$.
\item[3.]We have for all $(x,y)\in (\Q\cap (0, \infty))\times \Q$,
\[
Z_k^n(x,y)\to Z_k(x,y) \qquad \mbox{ for all } k\ge 2\,.
\]
Moreover, as $k\to \infty$,
\begin{equation*}
Z_k(x,y)/\sqrt{k} \to -1/\sqrt{2x}.
\end{equation*}
\item[4.] For every rational triple $(x, y , z)\in (\Q\cap (0, \infty)) \times \Q^2$ with $y < z$, there exist random rational points $0< X_1 < x < X_2$ so that the melon geodesics $\pi\{\overline{X_1}, \hat{y}\}_n$ and  $\pi\{\overline{X_2}, \hat{z}\}_n$ are not disjoint for all $n$ large enough.
\end{itemize}

Fix $n, m\ge 0$, $\ell \le n$ and a sequence $x_1, \ldots , x_m \in (0, \infty)$.
Consider the random variables
\[
\begin{aligned}
D_{n,\ell}(x_i)
&:= n^{1/6}\Bigl(
 WB^n[(2x_i n^{-1/3},n)\to(1,\ell)]
 -WB^n[(2x_i n^{-1/3},n)\to(1,1)]
 \Bigr)\,,\\
L_{n, k} &:= n^{1/6}(WB^n_k(1)-2\sqrt n) = A^n_k(0)\,,\quad 1\le k \le \ell\,,\\
\varepsilon_n(x_i) &:= \mathcal{S}^n(x, 0) - \mathcal{S}^n(0, 0) \,,\\
U_\ell(x_i)&:= \mathcal A[(0,\ell)\to(x_i,1)]\,.
\end{aligned}
\]
We aim to establish the following convergences in law,
\[
\begin{tikzcd}[
  column sep=11em,
  row sep=8em,
  every arrow/.append style={shorten <=4pt, shorten >=4pt}
]
\displaystyle
  \bigl(D_{n,\ell}(x_i)-L_{n, \ell}+\varepsilon_n(x_i)\bigr)_{i=1}^{m}
  \arrow[r, " d",
    "{\substack{\text{\cite[Lemma~6.2]{DOV}}\\ \text{ and Theorem \ref{thm: melon scaling Airy}}}}"']
  \arrow[d, " d"',
    "\text{Busemann limit}"]
&
\displaystyle
  \bigl(U_\ell(x_i)\bigr)_{i=1}^{m}
\\
\displaystyle
  \bigl(\Delta^{(0, 1)}_{(0, \ell)}(x_i)+\mathcal{S}(x_i, 0)-\mathcal{A}_\ell(0)\bigr)_{i=1}^{m}
  \arrow[ur, phantom,
    "\overset{ d}{=}" description, sloped]
&
\end{tikzcd}
\]
By the pathwise construction of melons, see (17) in \cite{DOV} and their preservation of last passage values, Proposition 4.1 in \cite{DOV}, we have $A^n_1(1) = \mathcal{S}^n(0, 0)$. By \cite[Lemma 6.2]{DOV}, one can simultaneously express for all $1\le i \le m$,
\[
D_{n,\ell}(x_i)-L_{n, \ell}+\varepsilon_n(x_i) =  2x_in^{1/3}-n^{1/6}W^*_1B^n[(1-2x_i n^{-1/3},1)\to_\mathfrak{f}(1,\ell)]\,,
\]
and $W^*_1B^n$ denotes the \emph{reversed melon opened up at 1} (see equation ($26$) in \cite{DOV} and $[\cdot \to_{\mathfrak{f}}\cdot]$ denotes a \emph{backwards first} passage value (see equation ($27$) in \cite{DOV}). Importantly, by the time reversal symmetry of Brownian motion, $W^*_1B^n$ has the same law as the Brownian melon $WB^n$. The joint convergence of the rescaled backwards melons (up to the edge scaling terms) to $U_\ell(x_i)$ as $n\to \infty$ is thus established by an application of Theorem \ref{thm: melon scaling Airy} to the melons $W^*_1B^n$, noting that the backwards first passage values (which are defined analogously to the usual last passage values) are continuous functionals of their environment with respect to the topology of local uniform convergence, see (35), (36) in \cite{DOV}.

Combining the above with Theorem \ref{thm: melon scaling Airy}, we have the convergence in distribution
\[
D_{n,\ell}(x_i)-L_{n, \ell}+\varepsilon_n(x_i) \overset{d}{\longrightarrow}\mathcal{A}[(0, \ell) \to (x, 1)]\,,\qquad n \to \infty\,.
\]

We now identify limits for individual terms on the left hand side of the above expression. Moreover, by the above coupling, $\varepsilon_n(x)$ converges to $\mathcal{A}(0)-\mathcal{S}(x, 0)$ in the topology of local uniform convergence on $\mathcal{C}$. By item 1. of the same coupling, the terms $L_{n, \ell}$ converge almost surely uniformly to $\mathcal{A}_\ell(0)$.

We now show that jointly in $1\le i \le m$, $D_{n, \ell}$ converges to $\Delta^{(0, 1)}_{(0, \ell)}(x_i)$ as $n\to \infty$. 

To achieve this, we use a strengthened version of the coupling between the melons $WB^n$ and the Airy line ensemble provided by Theorem \ref{thm: melon scaling Airy}. In particular, by another application of the Skorokhod representation theorem, we obtain the coupling on page \pageref{Skorokhod coupling}. In particular, from item $2.$ therein, we have the convergence of rescaled geodesic jump times on the melon to those of semi-infinite geodesics on the Airy line ensemble. 

Now, fix $1\le i\le m$. By the stationarity of geodesics under horizontal shifts, item $(4)$ in Lemma \ref{lemma: semi-inf geod properties}, there exists some random (possibly $i$-dependent) $y\in (\Q\cap(0, \infty))$ such that $Z_\ell(x, y) > 0$. This means almost surely eventually in $n$, the geodesics in $WB^n$ from $(2x_in^{-\frac{1}{3}}, n) \to (1+2yn^{-\frac{1}{3}}, 1)$ reach level $\ell$ before time $1$. 

By item 4. of the above coupling, we have the geodesics in $WB^n$ from $(2x_in^{-\frac{1}{3}}, n) \to (1, 1)$ and $(2x_in^{-\frac{1}{3}}, n) \to (1+2yn^{-\frac{1}{3}}, 1)$ meet at some point $(2Tn^{-\frac{1}{3}}, 1+2d(T)n^{-\frac{1}{3}})$ almost surely eventually in $n$. By the choice of $y\in \Q_{> 0}$ and the monotonicity properties of geodesics, we have the geodesics in $WB^n$ from $(2x_in^{-\frac{1}{3}}, n) \to (1, 1)$ and $(2x_in^{-\frac{1}{3}}, n) \to (1, \ell)$ meet at the same point $(2Tn^{-\frac{1}{3}}, 1+2d(T)n^{-\frac{1}{3}})$ almost surely eventually in $n$. Taking $n\to \infty$ and using the local uniform convergence of the melons the rescaled $A^n$ to the Airy line ensemble completes the proof, noting last passage values are continuous with respect to the environment in this local uniform sense.

Now, by the continuity of $\Delta^{(0, 1)}_{(0, \ell)}$, Theorem \ref{thm: busemann cont}, we obtain the desired joint convergence in law on paths on $\R_>0$ as the Borel sigma algebra of $\mathcal{C}((0, \infty))$ is generated by the evaluation functionals $\pi_x : \mathcal{C}((0, \infty))\to \R: f\mapsto f(x)$, $x> 0$. 

To prove the second part, one proceeds in an entirely analogous way, adjusting the definitions of $D_{n, \ell}$, $L_{n, \ell}$, $U_\ell(x_i)$ (using the edge scaling of terms in Theorems \ref{thm: melon scaling Airy} and Theorem 8.3 in \cite{DOV}) to accommodate the new endpoint $z$.
\end{proof}

\section{Quantitative estimates for semi-infinite geodesics: concentration and coalescence}\label{sec: quantitative geodesics}

In this section, we use tools from Section \ref{sec: preliminaries} to obtain concentration for semi-infinite geodesic jump times, Theorem \ref{thm: exponential intercept tails}, as well as quantitative control in the coalescence depths of semi-infinite geodesics, Theorem \ref{thm: coal depth}.

\subsection{An upper-tail estimate for geodesic intercepts}\label{sec: intercept concentration}

As applications of the framework developed in Sections \ref{sec: geod geom}, we obtain exponential tails for semi-infinite geodesic intercepts. This is the content of the following theorem. 
\begin{theorem}[Exponential tails for geodesic intercepts]
\label{thm: exponential intercept tails}
There exist universal constants $c,C>0$ such that, for every
$x,r>0$, $y\in\mathbb R$, and $k\ge2$,
\[
    \mathbb P\bigl(Z_k(x,y)>y-r\bigr)
    \le
    \min\left\{
        1,\,
        C\exp\bigl(Cx^3+Cr^3-ck\bigr)
    \right\}.
\]
In particular, for $x,y>0$,
\[
    \mathbb P\bigl(Z_k(x,y)>0\bigr)
    \le C\exp\bigl(Cx^3+Cy^3-ck\bigr).
\]
\end{theorem}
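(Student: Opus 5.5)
Our plan is to mimic the lower-tail argument for $Z_\ell$ in the proof of Proposition \ref{prop: non-disj-close-endpts-inf-geo}, but now for the upper tail, where the cost of a late jump is paid by a deep line of the Airy line ensemble. By item $(4)$ of Lemma \ref{lemma: semi-inf geod properties} we first reduce to $y=0$, since $Z_k(x,y)\overset d= Z_k(x,0)+y$. So we must bound $\PP(Z_k(x,0)>-r)$.

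\emph{Step 1: a deterministic comparison.} We work on the event $\{Z_k(x,0)>-r\}$. Here the geodesic $\pi[x\to(0,1)]$ is still at level $\ge k$ at time $-r$, that is, it passes through $(-r,j)$ for some $j\ge k$. By the coupling \ref{coupling: Airy sheet} and additivity of last passage values along geodesics (Lemma \ref{lemma: Metric Composition}), the Busemann increment satisfies
\[
\mathcal S(x,0)-\mathcal S(x,-r)=\Delta^{(-r,1)}_{(0,1)}(x)\le \mathcal A[(-r,j)\to(0,1)]-\mathcal A[(-r,j)\to(-r,1)]\,.
\]
We plan to use instead the cleaner bound obtained from the geodesic to $(-r,1)$ coalescing with $\pi[x\to(0,1)]$ before depth $j$. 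This gives
\[
\mathcal S(x,0)-\mathcal S(x,-r)\le \mathcal A[(-r,k)\to(0,1)]-\mathcal A_1(-r)+\mathcal A_k(-r)\,.
\]
We expect that ordering $j\ge k$ together with monotonicity of last passage values in the starting level reduces the depth from $j$ to $k$ up to an upward shift. The key point is that on the right side a deep term $\mathcal A_k(-r)\approx -(3\pi k/2)^{2/3}$ appears, and this is very negative. The left side is of order $x r$ plus Tracy--Widom fluctuations, since $\mathcal S(x,0)+x^2$ and $\mathcal S(x,-r)+(x+r)^2$ are GUE Tracy--Widom.

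\emph{Step 2: union bound.} We then split as in \eqref{eq:cost}. We need Tracy--Widom tails for $\mathcal S(x,0)+x^2$, for $\mathcal S(x,-r)+(x+r)^2$ and for $\mathcal A_1(-r)+r^2$. We need Lemma \ref{lemma: airy counting 2} for the event $\mathcal A_k(0)>-4ak^{2/3}$, which has probability $e^{-ck\log k}$, using stationarity. For the finite-depth term we use Lemma \ref{lemma: finite-depth-tail} with $T=r$: the event $\mathcal A[(-r,k)\to(0,1)]+r^2>u$ has probability at most $\exp(Ck-cu^2/r)$. Putting these together, the event forces one of the following: a Tracy--Widom variable exceeds a level of order $k^{2/3}-Cx^2-Cr^2$; or $\mathcal A_k$ is atypically high; or the last passage value exceeds $\asymp k^{2/3}$. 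Note that the quadratic terms $x^2$ and $(x+r)^2$ almost cancel to $-2xr-r^2$.

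\emph{Main obstacle.} The hardest part is the finite-depth term. Its mean is $2\sqrt{2kr}$, which is comparable to $k^{1/2}$ and not small against $a k^{2/3}$ only when $r\gtrsim k^{1/3}$. Also, Lemma \ref{lemma: finite-depth-tail} has a prefactor $e^{Ck}$ that competes with the gain. We would handle this by splitting into cases. In the case $x^3+r^3\ge \delta k$, the claimed bound is trivial after enlarging $C$, because the right side is then at least $1$. In the complementary case $x,r\le (\delta k)^{1/3}$, all the parabolic terms are $O(\delta^{2/3}k^{2/3})$. We would then use the Lemma \ref{lemma: finite-depth-tail} bound at $u=ak^{2/3}$ with $T=r$ small, which gives decay $\exp(Ck-c a^2k^{4/3}/r)\le e^{-ck}$ once $r\le(\delta k)^{1/3}$ and $\delta$ is small. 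The Tracy--Widom tails at level $\asymp k^{2/3}$ give $e^{-ck}$. The counting lemma gives even better decay. Collecting these terms yields $C\exp(Cx^3+Cr^3-ck)$. The second displayed statement follows by taking $r=y$.
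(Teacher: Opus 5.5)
Your Step~1 comparison is false, and not merely lossy: it fails for every realisation. Take any path $\pi$ from $(-r,k)$ to $(0,1)$ with jump times $-r\le t_k\le\dots\le t_2\le 0$, where $t_i$ is the jump from level $i$ to level $i-1$. Telescoping Definition~\ref{def: length} gives
\[
\ell(\pi)+\mathcal A_k(-r)-\mathcal A_1(-r)=\mathcal A_1(0)-\mathcal A_1(-r)+\sum_{i=2}^{k}\bigl(\mathcal A_i(t_i)-\mathcal A_{i-1}(t_i)\bigr)<\mathcal A_1(0)-\mathcal A_1(-r).
\]
The strict inequality holds because the lines are strictly ordered. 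So your right-hand side is strictly smaller than $\mathcal A_1(0)-\mathcal A_1(-r)$.

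On the other hand, concatenate the geodesic to $(-r,1)$ with line $1$ on $[-r,0]$ (equivalently, use \eqref{eq: quadrangle ineq airy sheet} with $\mathcal S(0,\cdot)=\mathcal A_1$). This gives $\mathcal S(x,0)-\mathcal S(x,-r)\ge \mathcal A_1(0)-\mathcal A_1(-r)$. Your inclusion would therefore force $\PP(Z_k(x,0)>-r)=0$.

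The error is the direction of monotonicity in the starting level. For $j\ge k$, a path from $(-r,j)$ can jump to level $k$ at time $-r$ at no cost, so $\mathcal A[(-r,j)\to(0,1)]\ge \mathcal A[(-r,k)\to(0,1)]$. Hence from your (correct) first display you cannot pass from the uncontrolled depth $j$ down to $k$. This is exactly why the lower-tail argument of Proposition~\ref{prop: non-disj-close-endpts-inf-geo} does not transfer: there the depth at time $-y$ was bounded \emph{above} by $\ell$. More fundamentally, the top-line increment $\mathcal S(x,0)-\mathcal S(x,-r)\approx 2xr+r^2$ cannot detect that the geodesic is deep at time $-r$. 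The only valid bound, $\mathcal A[(-r,j)\to(0,1)]\approx 2\sqrt{2jr}+r^2$, grows with $j$.

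The missing ingredient is a comparison with the cost of ending at depth $k$ at the endpoint, i.e.\ with the Busemann function $\Delta^{(0,1)}_{(0,k)}(x)$, together with control of its law. In the paper, with $Z_k=Z_k(x,0)$, the argument runs as follows.
\begin{enumerate}
\item Proposition~\ref{prop: semi-inf geod jump} (the geodesic passes through $(Z_k,k)$) gives $0=\Delta^{(0,1)}_{(Z_k,k)}(x)+\mathcal A[(Z_k,k)\to(0,1)]$.
\item Extend along line $k$ from $Z_k$ to $0$ on the Busemann side, and from $-r$ to $Z_k$ on the finite side (the latter uses $Z_k>-r$). This yields
\[
0\le \Delta^{(0,1)}_{(0,k)}(x)-\mathcal A_k(0)+\mathcal A_k(-r)+\mathcal A[(-r,k)\to(0,1)].
\]
\item Theorem~\ref{thm: Busemann equality in law} replaces $\Delta^{(0,1)}_{(0,k)}(x)-\mathcal A_k(0)$ by $U_k(x)-\mathcal S(x,0)$, where $U_k(x)\overset{d}{=}\mathcal A[(0,k)\to(x,1)]$.
\end{enumerate}
This gives
\[
\{Z_k(x,0)>-r\}\subseteq\{\mathcal S(x,0)+x^2\le U+V+\mathcal R_k(-r)\},
\]
where $\mathcal R_k(-r)=\mathcal A_k(-r)+r^2$ and $U,V$ are parabola-corrected finite-depth last passage values over horizons $x$ and $r$. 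The deep-line term now has to beat only two last passage values of order $\sqrt{k}(\sqrt x+\sqrt r)$.

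Once you have this inclusion, your Step~2 essentially goes through:
\begin{itemize}
\item Lemma~\ref{lemma: airy counting 2} for $\mathcal R_k(-r)$;
\item the Tracy--Widom lower tail for $\mathcal S(x,0)+x^2$;
\item Lemma~\ref{lemma: finite-depth-tail} applied to $U$ and $V$ separately.
\end{itemize}
Your case split $x^3+r^3\gtrless\delta k$ absorbs the $e^{Ck}$ prefactor. The paper instead uses the moment generating function bound \eqref{eq: moment gen func airy lpp} with Cauchy--Schwarz.
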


\begin{proof}
Fix $k\ge 2$ and $x > 0, y \in \R$. For the stationary Airy line ensemble, use $t\mapsto \mathcal R(t):=\mathcal A(t)+t^2$. As usual, constants denoted by $c,C$ below are positive and universal and may change from line to line.

To prove the desired concentration bounds for the semi-infinite geodesic jump times $Z_k(x, y)$, we will use the structure present in the prelimiting Brownian melon environments converging to the Airy line ensemble from Theorem \ref{thm: melon scaling Airy}.

First, we recall some notation from the discussion preceding the coupling on page \pageref{Skorokhod coupling}. 

Fix compact $I\subset (0, \infty), k \ge 3$ and $y\in \R$. By the stationarity of geodesics in the Airy line ensemble, Lemma \ref{lemma: semi-inf geod properties}, we have $Z_k(u, y) \overset d= Z_k(u, 0) + y$. We can thus without loss of generality set $y=0$. We will henceforth assume that there exist unique semi-infinite geodesics with rational endpoints (or on any fixed set of endpoints we use in the remainder of the argument), guaranteed to occur with probability one by Lemma \ref{lemma: semi-inf geod properties}. We will also assume the Airy line ensemble is coupled to a version of the Airy sheet $\mathcal{S}$, see \ref{coupling: Airy sheet}. 

Fix $q\in I$ rational. Then, by Proposition \ref{prop: semi-inf geod jump} and Lemma \ref{lemma: semi-inf geod properties}, with $Z_k(q, 0)$ the jump time of the semi-infinite geodesic $\pi[q\to (0, 1)]$ from level $k$ to $k-1$ satisfies
\[
Z_k(q, 0) = \displaystyle\operatorname*{argmax}_{s\in (-\infty, 0]}\bigg(\Delta_{(s, k)}^{(0, 1)}(q)+\mathcal{A}[(s, k)\to (0,1)]\bigg)\,,
\]
Now, for fixed $r>0$, by the metric composition law and the coupling between $\mathcal{A}, \mathcal{S}$, this gives the almost sure equality,
\[
0 = \Delta_{(Z_k(q, 0), k)}^{(0, 1)}(q)-\mathcal{A}_k(Z_k(q,0))+\mathcal{A}_k(Z_k(q,0)) + \mathcal{A}[(Z_k(q, 0), k) \to (0, 1)]\,.
\]
Since the Busemann functions above are evaluated at rational endpoints, we can express the above as the almost sure limit
\begin{align*}
    \Delta_{(Z_k(q, 0), k)}^{(0, 1)}(q) &= \lim_{m\to \infty} \mathcal{A}[(-\sqrt{m}/\sqrt{2q}, m)\to (Z_k(q, 0), k)]+\mathcal{A}_k(Z_k(q,0))\\
    &-\mathcal{A}[(-\sqrt{m}/\sqrt{2q}, m)\to (0, 1)]\,.
\end{align*}
On the event $\{Z_k(q, 0) > -r\}$, the crossing lemma, Lemma \ref{lemma: crossing} applied to the endpoints $(-r, k), (Z_k(q, 0), k)$ and $(0, 1), (0, k)$, we have
\[
\mathcal{A}_k(Z_k(q,0)) + \mathcal{A}[(Z_k(q, 0), k) \to (0, 1)]\le \mathcal{A}_k(-r) + \mathcal{A}[(-r, k) \to (0, 1)]\,.
\]
Similarly on $\{Z_k(q, 0) > -r\}$, Lemma \ref{lemma: crossing} applied to the points $(-\sqrt{m}/\sqrt{2q}, k), (-\sqrt{m}/\sqrt{2q}, m)$ and $(Z_k(q, 0) , k), (0,k)$ and taking $m\to \infty$ gives
\[
\Delta_{(Z_k(q, 0), k)}^{(0, 1)}(q)-\mathcal{A}_k(Z_k(q,0))\le \Delta_{(0, k)}^{(0, 1)}(q)-\mathcal{A}_k(0)\,.
\]
Combining the above gives on $\{Z_k(q, 0) > -r\}$,
\[
0\le \Delta_{(0, k)}^{(0, 1)}(q)-\mathcal{A}_k(0) + \mathcal{A}_k(-r) + \mathcal{A}[(-r, k) \to (0, 1)]\,.
\]
By Theorem \ref{thm: Busemann equality in law}, we have
\[
 \Delta^{(0, 1)}_{(0, k)}(q) = F^q_k -\mathcal{S}(q, 0)-\mathcal{A}_k(0)\,,
 \]
 where
 \[
 F^q_k \overset d = \mathcal{A}[(0, k) \to (q, 1)]\,,\qquad q \in (\Q\cap(0, \infty))\,.
 \]
We thus have the inclusion (see figure \ref{fig: geod length bound} for an illustration)
\[
\{Z_k(q, 0)> -r\} \subseteq \{\mathcal{A}_1(0)\le U^q_k + \mathcal{R}_k(-r) + \mathcal{R}[(-r, k) \to (0, 1)]\}\,,
\]
where
\[
\begin{aligned}
    \mathcal{R}(\cdot) := \mathcal{A}(\cdot) + (\cdot)^2\,, \quad U^q_k &\overset d = \mathcal{A}[(0, k) \to (x, 1)]+x^2\,.
\end{aligned}
\]
Now, for fixed $x> 0$, taking some rational sequence $q> 0$ approximating $x$, using the continuity of the Airy sheet and the almost sure continuity of the Busemann functions $\Delta^{(0, 1)}_{(0, k)}$ at $x$ (in fact, it is almost surely locally constant in a neighbourhood of $x$), we have almost surely the inclusion
\begin{equation}
\label{eq: intercept-inclusion}
    \{Z_k(x, 0)>-r\}
    \subseteq
    \{\chi\le
        U^x_k+V_k+\mathcal R_k(-r)\}\,,
\end{equation}
where $\mathcal R_k(-r)\overset d=\mathcal A_k(0)$ by stationarity and we set $V_k:=\mathcal R[(-r,k)\to(0,1)]$, $\chi:=\mathcal{S}(x,0)+x^2$.
By the proof of Lemma \ref{lemma: finite-depth-tail} (\eqref{eq: moment gen func airy lpp} specifically) and Cauchy--Schwarz we have for any $\lambda > 0$,
\begin{align*}
    \mathbb E \mathrm{e}^{\lambda(U^x_k+V_k)}
    &\le
    \left(
        \mathbb E \mathrm{e}^{2\lambda U^x_k}
        \mathbb E \mathrm{e}^{2\lambda V_k}
    \right)^{1/2}\le
    \exp\left[
        C\lambda\sqrt{k}(\sqrt{x}+\sqrt{r})
        +C\lambda^2 (x+r)
    \right].
\end{align*}
An application of Markov's inequality to the random variable $\mathrm{e}^{\lambda(U^x_k+V_k)}$, with $\lambda > 0$ chosen appropriately gives
\[
    \mathbb P(U^x_k+V_k\ge u)
    \le
    \exp\left[
        -\frac{
            c\bigl(u-C\sqrt{k}(\sqrt{x}+\sqrt{r})\bigr)_+^2
        }{x+r}
    \right]\,,\qquad u > 0\,.
\]

Fix $a>0$ sufficiently small (as required by Lemma \ref{lemma: airy counting 2}) and set $h=ak^{2/3}$. If $\mathcal R_k(-r)\le-4h$ and $\chi\ge-h$, then
\eqref{eq: intercept-inclusion} forces
$U^x_k+V_k\ge3h$. Hence, by a union bound we obtain the estimate
\begin{align*}
    \mathbb P(Z_k(x,0)>-r)
    &\le
    \mathbb P(\mathcal R_k(-r)>-4h)
    +\mathbb P(\chi<-h)+\mathbb P(U^x_k+V_k\ge3h)\\
    &\le
    \mathbb P(\mathcal A_k(0)>-4h)
    +\mathbb P(\chi<-h)+\mathbb P(U^x_k+V_k\ge3h)\,.
\end{align*}
The Airy counting estimate, Lemma \ref{lemma: airy counting 2} and the GUE Tracy--Widom lower
tail of $\chi$ give
\begin{equation}
\label{eq: explicit-intermediate-tail}
\begin{split}
    \mathbb P(Z_k(x,0)>-r)
    \le{}&
    Ce^{-ck\log(k+1)}\\
    &+
    C\exp\left[
        -\frac{
            c\bigl(ak^{2/3}
               -C\sqrt{k}(\sqrt{x}+\sqrt{r})\bigr)_+^2
        }{x+r}
    \right],
\end{split}
\end{equation}
where the Tracy--Widom term $Ce^{-ck^2}$ has been absorbed
into the first term. Since $\sqrt{x}+\sqrt{r}\le\sqrt{2(x+r)}$, there is a universal $C_0$ such that, whenever $k\ge C_0(1+(x+r)^3)$ the positive part in \eqref{eq: explicit-intermediate-tail}
is at least a fixed positive multiple of $k^{2/3}$.
Consequently,
\begin{equation}
\label{eq: four-thirds-intercept}
    \mathbb P(Z_k(x,0)>-r)
    \le
    Ce^{-ck\log(k+1)}
    +Ce^{-ck^{4/3}/(x+r)},
    \qquad
    k\ge C_0(1+(x+r)^3)\,,
\end{equation}
Under the same restriction on $k$, namely $k\ge C_0(1+(x+r)^3)$ we have $k^\frac{4}{3}/(x+r)\ge c_0 k$ for some constant $c_0$. Thus \eqref{eq: four-thirds-intercept} implies
\[
    \mathbb P(Z_k(x,0)>-r)\le Ce^{-ck}
    \qquad\text{for }k\ge C_0(1+(x+r)^3).
\]
For $k<C_0(1+(x+r)^3)$, the trivial bound by $1$ implies,
after enlarging the universal constants,
\[
    \mathbb P(Z_k(x,0)>-r)
    \le C\exp(Cs^3-ck).
\]
Combining the two regimes gives, for every $k\ge2$,
\[
    \mathbb P(Z_k(x,0)>-r)
    \le C\exp\bigl(C(x+r)^3-ck\bigr).
\]
Finally, the algebraic inequality $(x+r)^3\le4(x^3+r^3)$ and the endpoint translation identity yields
\[
    \mathbb P\bigl(Z_k(x,y)>y-r\bigr)
    \le C\exp(Cx^3+Cr^3-ck).
\]
Taking $r=y>0$ gives the claimed bound for
$\mathbb P(Z_k(x,y)>0)$. 
\end{proof}

\begin{figure}[ht]
    \centering
    \includegraphics[width=0.45\linewidth]{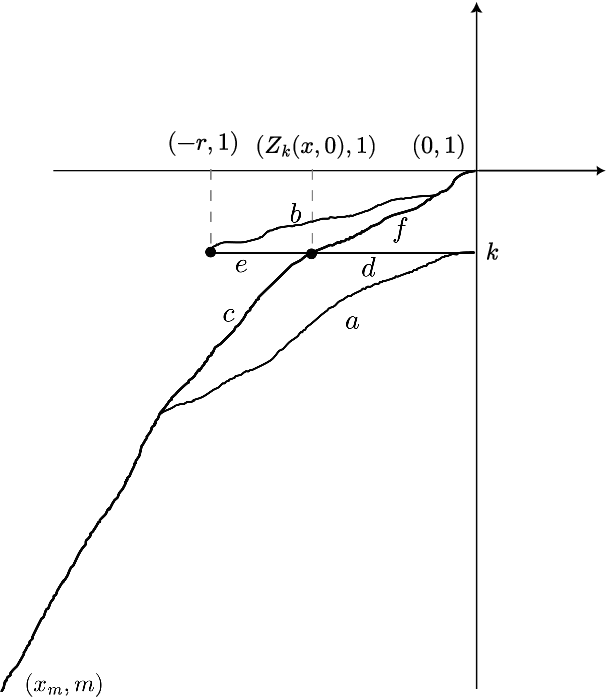}
    \caption{Illustration of a geometric argument for the inclusion \eqref{eq: intercept-inclusion} on the event $\{Z_k(x, 0) > -r\}$. Here $x_m = -\sqrt{m}/\sqrt{2x}$. By the metric composition law, we have the inequalities (the letters denoting last passage values of their respective geodesic segments) $c+d\le a$ and $f+e\le b$, giving an upper bound on the last passage value $e+f$ in terms of $a, b$ and the horizontal segment $e+d$.}
    \label{fig: geod length bound}
\end{figure}

\subsection{Geodesic coalescence depth concentration}\label{sec: coal depth}

Motivated by the tree-like nature of finite geodesics in last passage percolation, Lemma \ref{lemma: geod tree} and the qualitative coalescence results of Lemma \ref{lemma: semi-inf geod properties}, we will now obtain some quantitative bounds on the coalescence depth of semi-infinite geodesics in the Airy line ensemble.

Recall from the discussion after Lemma \ref{lemma: close-not-dj}, $Z_k(u,y)$ denotes the jump time from level $k$ to level
$k-1$ of the semi-infinite geodesic $\pi[u\to(y,1)]$, for $k\ge2$. More generally, for a path ending on level $\ell$, write $Z_j(u,(0, \ell))$ for its jump time from $j$ to $j-1$, $j\ge\ell+1$. For the $x$-directed semi-infinite geodesics $\pi[x\to (0, 1)]$ and $\pi[x\to (0, \ell)]$, we define the coalescence depth by
\[
 N_\ell(x):=\inf\left\{n\ge\ell:
 Z_j(x,0)=Z_j(x,(0, \ell))\text{ for every }j\ge n+1\right\}.
\]
Thus, after level $N_\ell(x)$, the two geodesics (also viewed as zig-zags) coincide.
Constants denoted by $c,C$ may change from line to line.

We prove polynomial concentration for the coalescence depths $N_\ell(x)$, which are almost surely finite for fixed $x, \ell$ by geodesic ordering considerations in the proof of Lemma \ref{lemma: semi-inf geod properties}. 
\begin{theorem}\label{thm: coal depth}
Fix $x>0$ and $\ell\ge2$. There exist constants $c_x,C_x>0$ such that,
for every integer $n\ge\max\{3,\ell\}$,
\begin{equation}\label{eq:main}
 \PP(N_\ell(x)>n)
 \le \min\left\{1,
 C_x(1+\ell+\log n)^{2/3}n^{-1/56}
 \exp\bigl(C_x(\log n)^{5/6}\bigr)\right\}.
\end{equation}
In particular, for fixed $x,\ell$ and every $\eta>0$,
\[
 \PP(N_\ell(x)>n)\le C_{x,\ell,\eta}n^{-1/56+\eta}\,.
\]
For $\ell=1$ the paths coincide and one can take $N_1(x)=1$.
\end{theorem}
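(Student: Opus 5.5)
The plan is to convert non-coalescence at depth $n$ into a disjointness event for semi-infinite geodesics with slightly perturbed directions. That event is controlled by Theorem \ref{thm: semi-inf tail disj proba}. The remaining error terms come from two sources: the deviation of the jump times at depth $n$ from their parabolic location, handled by Lemma \ref{lemma: shape}, and the tail of the jump time at level $\ell$, handled as in the proof of Proposition \ref{prop: non-disj-close-endpts-inf-geo}. We then optimise over the perturbation $\varepsilon$.

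\textbf{Step 1: reduction to a disjointness event.} Fix $\varepsilon=\varepsilon(n)$ small and a shift $y=y(n)>0$. By geodesic ordering, $\pi[x\to(0,\ell)]$ lies between $\pi[x\to(0,1)]$ and $\pi[x\to(y,1)]$ on the event $\{Z_\ell(x,y)>0\}$. Likewise $\pi[x\to(0,1)]$ lies between $\pi[x-\varepsilon\to(0,1)]$ and $\pi[x+\varepsilon\to(y,1)]$.

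Suppose the two perturbed geodesics $\pi[x-\varepsilon\to(0,1)]$ and $\pi[x+\varepsilon\to(y,1)]$ share a point at some depth $m\le n$. Then, by the tree property (Lemma \ref{lemma: geod tree}, passed to semi-infinite geodesics via the approximation Lemma \ref{lemma: geodesic approx}), every $x$-directed geodesic sandwiched between them passes through that point. In particular $\pi[x\to(0,1)]$ and $\pi[x\to(0,\ell)]$ both pass through it, so they coalesce by depth $m$ and $N_\ell(x)\le n$.

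It would be cleaner to have the perturbed geodesics meet at a depth $\le n$ rather than merely meet somewhere. For this, note that beyond depth $n$ the perturbed geodesics are separated with high probability. Their jump times at level $n$ concentrate near $-\sqrt{n/(2(x\mp\varepsilon))}$, and these two locations differ by order $\varepsilon\sqrt n$. Meanwhile Lemma \ref{lemma: shape} (used through the variational formula of Proposition \ref{prop: semi-inf geod jump}) gives fluctuations of order $n^{3/7}s^{3/2}/\sqrt n$ in time, up to errors $Cn^2e^{-cs}+Cne^{-cn^{1/63}}$. Hence, if the geodesics are non-disjoint, they must in fact meet before depth $n$ once $\varepsilon\gg n^{-1/14}s^{3/2}$ or so.

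\textbf{Step 2: assembling the bound.} A union bound then gives
\[
\PP(N_\ell(x)>n)\le \PP(\mathsf{Disj})+\PP(Z_\ell(x,y)\le 0)+\PP(\text{shape deviation at depth } n),
\]
where $\mathsf{Disj}$ is the disjointness event of Step 1. The three terms are bounded as follows.
\begin{itemize}
\item The first term is at most $C y\,\varepsilon^{1/2}e^{d|\log\varepsilon|^{5/6}}$ by Theorem \ref{thm: semi-inf tail disj proba}, combined with the partition-in-$y$ argument of Lemma \ref{lemma: close-not-dj}, which produces the linear factor in $y$.
\item The second term is at most $Ce^{\ell-cy^{3/2}}$ by the argument in the proof of Proposition \ref{prop: non-disj-close-endpts-inf-geo}.
\item The third term comes from Lemma \ref{lemma: shape}, with $s\asymp\log n$.
\end{itemize}

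\textbf{Step 3: choice of parameters.} Choose $y\asymp(\ell+\log n)^{2/3}$ to make the second term at most $n^{-1}$. Choose $\varepsilon$ a small power of $n$, about $n^{-1/28}$ up to logarithms, so that the first term is $n^{-1/56}$ times the stated $\exp(C(\log n)^{5/6})$ and $(1+\ell+\log n)^{2/3}$ factors. The exponent $1/56$ is half of $1/28$, which reflects the loss in converting the shape fluctuation $k^{3/7}$ into a separation of jump times.

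\textbf{Main obstacle.} The hardest step is the quantitative separation in Step 1. We must turn the last-passage-value concentration of Lemma \ref{lemma: shape} into localisation of the jump times $Z_n(x\pm\varepsilon,\cdot)$ sharp enough to separate directions differing by $\varepsilon$.

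I would attempt this via the argmax characterisation in Proposition \ref{prop: semi-inf geod jump} together with Theorem \ref{thm: Busemann equality in law}. The Busemann term is compared with finite-depth passage values $\mathcal{A}[(0,n)\to(T,1)]$. The parabola $2\sqrt{2nT}$ has curvature of order $\sqrt n$ in $T$, so an error of size $n^{3/7}$ in passage values forces $|T-T_*|\lesssim n^{-1/28}$ times logarithms. This matches the choice of $\varepsilon$ above.
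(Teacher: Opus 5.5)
Your architecture is the paper's. You use a union bound over three events: disjointness of $\pi[x-\varepsilon\to(0,1)]$ and $\pi[x+\varepsilon\to(y,1)]$, the event $\{Z_\ell\le 0\}$, and a jump-time localisation event. Your parameter choices $\varepsilon\asymp n^{-1/28}(\log n)^{3/4}$, $y\asymp(1+\ell+\log n)^{2/3}$ and $s\asymp\log n$ are also the paper's, as is proving localisation via Theorem~\ref{thm: Busemann equality in law} and Lemma~\ref{lemma: shape}.

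However, the step that forces the meeting point to lie at depth $\le n$ fails as you set it up, and this step is necessary, not merely ``cleaner''. Write $\alpha=\pi[x-\varepsilon\to(0,1)]$, $\beta=\pi[x\to(0,1)]$ and $\delta=\pi[x+\varepsilon\to(y,1)]$. Knowing that $\alpha$ and $\delta$ have separated jump times near their parabolas at level $n$ does not stop them from touching at some depth $m>n$. Their endpoints differ, so they can merge at depth $m$, share a segment, and split again before reaching level $n$; this is consistent with geodesic uniqueness. In that configuration $\beta$ and $\pi[x\to(0,\ell)]$ are only forced to coalesce by depth $m>n$. So $\{N_\ell(x)>n\}$ is not contained in the union of your three events. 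Separation of $\alpha$ and $\delta$ at \emph{every} depth $\ge n$ would suffice. That, however, needs a union bound over all $k\ge n$ and a comparison of $Z_k(x-\varepsilon,0)$ with $Z_{k+1}(x+\varepsilon,y)$, which your single-depth event does not provide.

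The missing idea is to test separation between two geodesics with a \emph{common endpoint}.
If $\alpha$ and $\delta$ share a point $(z,j)$, then $\beta$ passes through it by sandwiching. Uniqueness then forces $\alpha$ and $\beta$ to agree from $(z,j)$ up to $(0,1)$, so $Z_k(x-\varepsilon,0)=Z_k(x,0)$ for all $2\le k\le j$. Let $K_\varepsilon(x,0)$ be the first level at which $Z_k(x-\varepsilon,0)<Z_k(x,0)$. On $\{K_\varepsilon(x,0)\le n\}$, any meeting point therefore has depth $j<K_\varepsilon(x,0)\le n$. This is Lemma~\ref{lemma:geometry}.

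The complement of that event is bounded by localising $Z_n(x-\varepsilon,0)$ and $Z_n(x,0)$ at scale $d\sqrt n$ with $d\asymp\varepsilon$ (Lemma~\ref{lemma: jump parab} and Proposition~\ref{prop: diff dir separation}). This is your own estimate applied to $\beta$ instead of $\delta$, so the fix costs nothing quantitatively.

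Separately, your Step~1 threshold is off by a square. Localisation comes from $\varepsilon^2\sqrt n\gtrsim n^{3/7}s^{3/2}$, that is $\varepsilon\gtrsim n^{-1/28}s^{3/4}$, with time fluctuations of order $n^{13/28}s^{3/4}$. It is not $\varepsilon\gg n^{-1/14}s^{3/2}$. Your final paragraph has the correct scale, and it is the one consistent with the $n^{-1/56}$ exponent.
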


Before we move to the proof of Theorem \ref{thm: coal depth}, we need some supporting results. First we establish upper tail concentration bounds for the depth two semi-infinite geodesics with asymptotic directions $x-\varepsilon, x$ eventually separate, again this is almost surely finite by asymptotic parabolic shape of geodesics, see the coupling in page \pageref{Skorokhod coupling} also.

\begin{proposition}[First strict separation]\label{prop: diff dir separation}
Fix a compact interval $I\subset(0,\infty)$. For $x\in I$, $y\in\R$
and $0<\varepsilon<\varepsilon_I$, set
\[
 K_\varepsilon(x,y):=
 \inf\left\{k\ge2:Z_k(x-\varepsilon,y)<Z_k(x,y)\right\}.
\]
There exist $c_I,C_I>0$ such that, for all $k\ge3$,
\begin{equation}\label{eq:Ktail}
 \PP(K_\varepsilon(x,y)>k)
 \le \min\left\{1,
 C_Ik^2\exp\left(-c_I\varepsilon^{4/3}k^{1/21}\right)
 +C_Ik\exp\left(-c_Ik^{1/63}\right)\right\}.
\end{equation}
In particular, for every $M>0$ there exist $B_{I,M},C_{I,M}>0$ such that $ \varepsilon\ge B_{I,M}k^{-1/28}(\log k)^{3/4}$ gives the bounds
\begin{equation}\label{eq:Kpoly}
 \PP(K_\varepsilon(x,y)>k)\le C_{I,M}k^{-M}.
\end{equation}
The constants do not depend on $y$.
\end{proposition}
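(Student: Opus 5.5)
The plan is to prove \eqref{eq:Ktail} by showing that, on $\{K_\varepsilon(x,y)>k\}$, the last passage value across the top $k$ lines is forced into a deviation that Lemma~\ref{lemma: shape} rules out. By item (4) of Lemma~\ref{lemma: semi-inf geod properties} (shift invariance) we may take $y=0$; this is why the constants do not depend on $y$. By monotonicity of geodesics in the direction, $Z_j(x-\varepsilon,0)\le Z_j(x,0)$ for every $j$. Hence on $\{K_\varepsilon>k\}$ the two geodesics $\pi[x-\varepsilon\to(0,1)]$ and $\pi[x\to(0,1)]$ have identical jump times on levels $2,\dots,k$. By Lemma~\ref{lemma: geod tree} they then coincide on $[Z_k(x,0),0]$.

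The next step converts this coincidence into an identity between Busemann values. Write $Z:=Z_k(x,0)=Z_k(x-\varepsilon,0)$. Proposition~\ref{prop: semi-inf geod jump} applied to both directions shows that $Z$ maximises both
\[
s\mapsto\Delta^{(0,1)}_{(s,k)}(x)+\mathcal A[(s,k-1)\to(0,1)]\quad\text{and}\quad s\mapsto\Delta^{(0,1)}_{(s,k)}(x-\varepsilon)+\mathcal A[(s,k-1)\to(0,1)].
\]
Both maxima equal $0$ by the metric composition law, as in the proof of Theorem~\ref{thm: exponential intercept tails}. Subtracting the two at the common maximiser gives
\[
\Delta^{(0,1)}_{(Z,k)}(x)=\Delta^{(0,1)}_{(Z,k)}(x-\varepsilon).
\]
Therefore the increment $\Delta^{(0,1)}_{(0,k)}(x)-\Delta^{(0,1)}_{(0,k)}(x-\varepsilon)$ is bounded above by the corresponding increment for the horizontal segment from $Z$ to $0$ on level $k$. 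That second increment vanishes, because the two paths share the segment. So the direction increment of the Busemann function $\Delta^{(0,1)}_{(0,k)}$ is $0$ on this event.

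Finally I would compare this with Theorem~\ref{thm: Busemann equality in law}. Its second part gives
\[
\Delta^{(0,1)}_{(0,k)}(x)-\Delta^{(0,1)}_{(0,k)}(x-\varepsilon)=U^{x-\varepsilon,x}_k-\mathcal S(x,0)+\mathcal S(x-\varepsilon,0),
\]
where $U^{x-\varepsilon,x}_k$ has the law of $\mathcal A[(0,k)\to(x,1)]-\mathcal A[(0,k)\to(x-\varepsilon,1)]$. Lemma~\ref{lemma: shape} (with $T\in\{x-\varepsilon,x\}\subset[a,b]$ determined by $I$) places each finite-depth term within $Ck^{3/7}s^{3/2}$ of $2\sqrt{2kT}-T^2$. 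The deterministic gap is then
\[
2\sqrt{2k}\bigl(\sqrt x-\sqrt{x-\varepsilon}\bigr)\ge c_I\varepsilon\sqrt k .
\]
The Airy sheet increment $\mathcal S(x,0)-\mathcal S(x-\varepsilon,0)$ is $O(1)$ with Tracy--Widom tails. So vanishing of the Busemann increment requires $c_I\varepsilon k^{1/2}\le Ck^{3/7}s^{3/2}+O(1)$, i.e. $s\gtrsim \varepsilon^{2/3}k^{1/21}$.

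The main obstacle is the choice of $s$. The admissible range $s\le k^{1/14}$ is fine, since $\varepsilon$ is bounded. But matching the stated exponent $\varepsilon^{4/3}$ may require taking $s\asymp(\varepsilon k^{1/14})^{4/3}$, or a squared Gaussian-type gain, rather than $s\asymp\varepsilon^{2/3}k^{1/21}$. I would first try $s=c\,\varepsilon^{4/3}k^{1/21}$, which is the smaller choice and so still implies the required inequality. This yields $C_Ik^2e^{-c_I\varepsilon^{4/3}k^{1/21}}$ plus the term $C_Ike^{-c_Ik^{1/63}}$ from Lemma~\ref{lemma: shape}; the Tracy--Widom tails are absorbed. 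A technical point is that Theorem~\ref{thm: Busemann equality in law} gives only equality in law. This is harmless, since we bound the probability of a single event for the increment. The bound \eqref{eq:Kpoly} then follows by substituting $\varepsilon\ge B k^{-1/28}(\log k)^{3/4}$, which makes $\varepsilon^{4/3}k^{1/21}\ge B^{4/3}\log k$; choosing $B$ large relative to $M$ gives $C_{I,M}k^{-M}$.
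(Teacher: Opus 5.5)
There is a genuine gap where you pass from the random point $(Z,k)$ to the deterministic point $(0,k)$.

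Your identity $\Delta^{(0,1)}_{(Z,k)}(x)=\Delta^{(0,1)}_{(Z,k)}(x-\varepsilon)$ is fine. By the cocycle property it gives, as an exact equality,
\[
\Delta^{(0,1)}_{(0,k)}(x)-\Delta^{(0,1)}_{(0,k)}(x-\varepsilon)=\Delta^{(Z,k)}_{(0,k)}(x)-\Delta^{(Z,k)}_{(0,k)}(x-\varepsilon).
\]
The right-hand side need not vanish. $\Delta^{(Z,k)}_{(0,k)}(u)$ equals $\mathcal{A}_k(0)-\mathcal{A}_k(Z)$ only if the $u$-directed geodesic ending at $(0,k)$ runs along level $k$ through $(Z,k)$. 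The event $\{K_\varepsilon>k\}$ concerns only the geodesics ending at $(0,1)$, and these \emph{leave} level $k$ at $Z$; it says nothing about geodesics ending at $(0,k)$.

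In fact the inclusion runs the other way. By Proposition~\ref{prop: non-disj-close-endpts-inf-geo}, the increment vanishes iff $\pi[x-\varepsilon\to(0,1)]$ touches $\pi[x\to(0,k)]$. Since $\pi[x\to(0,1)]$ is sandwiched between these two, it passes through the contact point, which lies at level $\ge k$. So the two top-ending geodesics agree from level $\ge k$ onward, i.e.\ essentially $K_\varepsilon\ge k$.

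Nothing prevents $\pi[x\to(0,k)]$ from staying at levels $\ge k+1$ until after time $Z$. Heuristically it enters level $k$ near time $0$, at distance of order $\sqrt{k}$ from $Z\approx-\sqrt{k/2x}$. It then merges with $\pi[x\to(0,1)]$ only below the level where $\pi[x-\varepsilon\to(0,1)]$ has split off. On such configurations $K_\varepsilon>k$ while the increment is strictly positive. The rate $e^{-c\varepsilon^{2/3}k^{1/21}}$ your argument would give, better than the stated $\varepsilon^{4/3}$, is a symptom: you are bounding the probability of a smaller event.

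The missing idea is to localise the random jump time itself. The paper does this in Lemma~\ref{lemma: jump parab}, in four steps.
\begin{enumerate}
\item The metric composition law gives a comparison at the random point $(Z_k(u,0),k)$, between the direction $u$ and a nearby direction $v=u\pm\theta d$.
\item On $\{Z_k(u,0)>-r\}$ (resp.\ $<-r$), Lemma~\ref{lemma: crossing} transfers this to the deterministic point $(-r,k)$, which lies on the same level, so the crossing lemma genuinely applies.
\item There Theorem~\ref{thm: Busemann equality in law} and Lemma~\ref{lemma: shape} control the increment.
\item Choosing $r=\sqrt{k/2u}\mp d\sqrt{k}$ cancels the first-order terms and leaves a gap $c_Id^2\sqrt{k}$. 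This quadratic gap is what forces $s\asymp d^{4/3}k^{1/21}$, and hence the exponent $\varepsilon^{4/3}$.
\end{enumerate}

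The proposition then follows quickly. On $\{K_\varepsilon>k\}$ one has $Z_k(x-\varepsilon,y)=Z_k(x,y)$. The parabolic targets $y-\sqrt{k/2(x-\varepsilon)}$ and $y-\sqrt{k/2x}$ are at least $c_I\varepsilon\sqrt{k}$ apart. So at least one of the two jump times deviates from its target by at least $c_I\varepsilon\sqrt{k}/2$. Lemma~\ref{lemma: jump parab} with $d\asymp\varepsilon$ and a union bound give \eqref{eq:Ktail}.

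Your reduction to $y=0$ by shift invariance and your derivation of \eqref{eq:Kpoly} from \eqref{eq:Ktail} are fine.
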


\begin{lemma}[Quantitative jump-time localization]\label{lemma: jump parab}
Fix a compact $I\subset(0,\infty)$. For some $c_I,C_I,d_I>0$,
all $u\in I$, $y\in\R$, $k\ge3$ and $0<d<d_I$ satisfy
\begin{align}\label{eq:jump}
 &\PP\left(\left|Z_k(u,y)-y+\sqrt{\frac{k}{2u}}\right|
                 >d\sqrt{k}\right)\notag\\
 &\hspace{15mm}\le
 \min\left\{1,
 C_Ik^2e^{-c_Id^{4/3}k^{1/21}}
 +C_Ike^{-c_Ik^{1/63}}\right\}.
\end{align}
\end{lemma}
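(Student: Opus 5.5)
By item (4) of Lemma \ref{lemma: semi-inf geod properties} we have $Z_k(u,y)\overset d= Z_k(u,0)+y$, so it suffices to take $y=0$. The plan is a variational comparison argument. We describe $Z_k(u,0)$ as the maximiser of a random function and show that a deterministic concave approximant of that function has a strict maximum at $-\sqrt{k/2u}$. A displacement of $d\sqrt k$ then costs order $d^2\sqrt k$, and this cost must be paid by fluctuations of last passage values, which Lemma~\ref{lemma: shape} controls.

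\textbf{Step 1 (variational description).} By Proposition~\ref{prop: semi-inf geod jump} and the metric composition law, $Z_k(u,0)$ maximises $G(s):=\Delta^{(0,1)}_{(s,k)}(u)+\mathcal A[(s,k-1)\to(0,1)]$ over $s\le 0$. We split the Busemann term, using additivity and Theorem~\ref{thm: Busemann equality in law} together with the coupling~\ref{coupling: Airy sheet}. This rewrites $G(s)$, up to terms that do not depend on $s$, as a combination of
\begin{itemize}
\item an Airy sheet increment, of the form $\mathcal S(u,s)$ or an increment in the endpoint, whose parabolic centring gives the drift $-2us$ up to Tracy--Widom fluctuations by \eqref{eq: airy shape bnds};
\item finite-depth last passage values $\mathcal A[(s,k)\to(0,1)]$ (and the law-equivalent $U_k$ term), whose centring is $2\sqrt{2k|s|}$ up to parabolic corrections.
\end{itemize}
The deterministic centring $g(s)$ is smooth and strictly concave in $|s|$ on the scale $|s|\asymp\sqrt k$. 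Its maximiser is $s^*=-\sqrt{k/2u}$, and $g(s^*)-g(s)\ge c_I d^2\sqrt k$ whenever $|s-s^*|\ge d\sqrt k$ with $d<d_I$. The constant $c_I$ is uniform over $u\in I$.

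\textbf{Step 2 (fluctuation control on a mesh).} Let $E_d$ be the event in \eqref{eq:jump}. On $E_d$ there is some $s$ with $|s-s^*|>d\sqrt k$ and $G(s)\ge G(s^*)$. We discretise $s$ on a mesh of polynomially many points (at most $Ck$ of them) in a window $[-C_I\sqrt k,0]$. For each mesh point we apply Lemma~\ref{lemma: shape}, after Brownian/KPZ rescaling so that the horizontal distance falls in a fixed compact range $[a,b]$ depending only on $I$. For $1\le s'\le k^{1/14}$, each finite-depth last passage value then lies within $Ck^{3/7}s'^{3/2}$ of its centring, except on an event of probability $Ck^2e^{-cs'}+Cke^{-ck^{1/63}}$. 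Between mesh points we use monotonicity of the relevant last passage values and of the Busemann function in $s$ (Lemma~\ref{lemma: crossing}), together with the modulus-of-continuity bound already used in the proof of Lemma~\ref{lemma: shape}. The Airy sheet terms are handled by Tracy--Widom tails, which are much smaller.

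\textbf{Step 3 (choice of the fluctuation parameter and tails).} On $E_d$ we need $Ck^{3/7}s'^{3/2}\gtrsim c_Id^2\sqrt k$. This forces $s'\gtrsim d^{4/3}k^{1/21}$, which is admissible, i.e.\ at most $k^{1/14}$, because $d<d_I$. Substituting this value of $s'$ gives the bound \eqref{eq:jump}; the union bound over the mesh and the far region accounts for the polynomial prefactors. Outside the window, $|s|>C_I\sqrt k$ is excluded by a cruder estimate: for $s$ far to the left, the shape-mismatch cost exceeds $k^{1/2+\delta}$, and Lemma~\ref{lemma: finite-depth-tail} and Theorem~\ref{thm: exponential intercept tails} give much stronger tails there. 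Uniformity in $u\in I$ follows from monotonicity of jump times in $u$, applied on a finite grid of directions.

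The main obstacle is Step 1. Lemma~\ref{lemma: shape} is stated only for $T$ in a fixed compact set, while the relevant horizontal scale here is $\sqrt k$. One must also make sure that the infinite-depth Busemann term really contributes the linear drift $-2us$ with only Tracy--Widom-size fluctuations, uniformly in $s$ on the mesh. I would first try to obtain the drift from the inclusion argument of \eqref{eq:comparison}, which compares $\mathcal S(u,0)-\mathcal S(u,s)$ with $\mathcal A[(s,k)\to(0,1)]$. For the change of scale, I would try to adapt the bridge representation used in the proof of Lemma~\ref{lemma: shape} to intervals of length of order $\sqrt k$.
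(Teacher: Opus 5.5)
\textbf{There is a genuine gap, and it is more than the change-of-scale technicality you flag: the centring in Step 1 is wrong at $|s|\asymp\sqrt k$.}

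By the cocycle property, $G(s)=[\mathcal S(u,s)-\mathcal S(u,0)]+\Delta^{(s,1)}_{(s,k)}(u)+\mathcal R[(s,k-1)\to(0,1)]+s^2$, where $\mathcal R=\mathcal A+(\cdot)^2$. The middle term is stationary in $s$ by shift invariance. The sheet increment is centred at $2us-s^2$. Hence $g(s)=2us+\phi_k(|s|)+\mathrm{const}$, where $\phi_k(T)$ is the centring of the stationary LPP over horizontal distance $T$.

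With your choice $\phi_k(T)=2\sqrt{2kT}$, the maximiser sits at $|s|=k/(2u^2)$, not at $\sqrt{k/2u}$. The Brownian shape genuinely fails at this scale. Telescoping, every path from $(s,k-1)$ to $(0,1)$ has $\mathcal R$-length at most $\mathcal R_1(0)-\mathcal R_{k-1}(s)$, because each jump contributes $\mathcal R_{i+1}(t)-\mathcal R_i(t)<0$. This bound is typically of order $k^{2/3}$, well below $2\sqrt{2k|s|}\asymp k^{3/4}$.

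Lemma~\ref{lemma: shape} is a fixed-$T$ statement, and its centring cannot extend past $T\asymp k^{1/3}$. Nor does $\mathcal A$ have a scaling symmetry that maps distance $\sqrt k$ to $O(1)$. For the maximiser to sit at $\sqrt{k/2u}$ for every $u$, you would need $\phi_k'(T)\approx k/T^2$ for $T\asymp\sqrt k$. That is a near-saturated, non-Brownian shape, and you would have to know it to precision $o(d^2\sqrt k)$. Adapting the bridge representation to windows of length $\sqrt k$ will not produce it. Theorem~\ref{thm: exponential intercept tails} does not control the far region either: at $r\asymp\sqrt k$ its bound $C\exp(Cr^3-ck)$ is trivial.

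\textbf{The missing idea is to trade the spatial scale $\sqrt k$ for the direction scale $O(1)$.}
\begin{itemize}
\item Compare $u$ with a nearby direction $v=u+\theta d>u$. Since $(Z_k(u,0),k)$ lies on $\pi[u\to(0,1)]$, metric composition and the sheet coupling give $\mathcal S(v,0)-\mathcal S(v,-r)-\mathcal S(u,0)+\mathcal S(u,-r)\ge\Delta^{(-r,1)}_{(Z_k(u,0),k)}(v)-\Delta^{(-r,1)}_{(Z_k(u,0),k)}(u)$.
\item On $\{Z_k(u,0)>-r\}$, Lemma~\ref{lemma: crossing} lowers the right side to the same difference at the deterministic point $(-r,k)$.
\item Theorem~\ref{thm: Busemann equality in law} identifies that direction-increment with $U^{u,v}_k-\mathcal S(v,-r)+\mathcal S(u,-r)-2r(v-u)$, where $U^{u,v}_k\overset{d}{=}\mathcal A[(0,k)\to(v,1)]-\mathcal A[(0,k)\to(u,1)]$. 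Hence $\{Z_k(u,0)>-r\}\subseteq\{\mathcal S(v,0)-\mathcal S(u,0)\ge U^{u,v}_k-2r(v-u)\}$.
\end{itemize}

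The LPP now runs over horizontal distances $u,v\in I$, which is exactly where Lemma~\ref{lemma: shape} applies. The location appears as $r=\tfrac12\partial_u\bigl(2\sqrt{2ku}\bigr)=\sqrt{k/2u}$, the value at which the first-order terms cancel. Take $r=\sqrt{k/2u}-d\sqrt k$; a Taylor expansion then leaves a margin $D\ge c_Id^2\sqrt k$. Lemma~\ref{lemma: shape} with $s=c_Id^{4/3}k^{1/21}$, together with Tracy--Widom tails for $\mathcal S(v,0)-\mathcal S(u,0)$, yields \eqref{eq:jump}. The other tail is symmetric, with $v=u-\theta d$ and $r=\sqrt{k/2u}+d\sqrt k$. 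No mesh or interpolation in $s$ is needed.
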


\begin{proof}
Fix compact $I\subset (0, \infty), k \ge 3$ and $y\in \R$. By the stationarity of geodesics in the Airy line ensemble, Lemma \ref{lemma: semi-inf geod properties}, we have $Z_k(u, y) \overset d= Z_k(u, 0) + y$. We can thus without loss of generality set $y=0$. We will henceforth assume that there exist unique semi-infinite geodesics with rational endpoints (or on any fixed set of endpoints we use in the remainder of the argument), guaranteed to occur with probability one by Lemma \ref{lemma: semi-inf geod properties}. We will also assume the Airy line ensemble is coupled to a version of the Airy sheet $\mathcal{S}$, see \ref{coupling: Airy sheet}. 

Fix $q, q'\in I$ rational Then, by Proposition \ref{prop: semi-inf geod jump} and Lemma \ref{lemma: semi-inf geod properties}, with $Z_k(q, 0)$ the jump time of the semi-infinite geodesic $\pi[q\to (0, 1)]$ from level $k$ to $k-1$ satisfies
\[
Z_k(q, 0) = \displaystyle\operatorname*{argmax}_{s\in (-\infty, 0]}\bigg(\Delta_{(s, k)}^{(-r, 1)}(q')+\mathcal{A}[(s, k)\to (y,1)]\bigg)\,,
\]
and
\[
Z_k(q', 0) = \displaystyle\operatorname*{argmax}_{s\in (-\infty, 0]}\bigg(\Delta_{(s, k)}^{(-r, 1)}(q')+\mathcal{A}[(s, k)\to (y,1)]\bigg)\,.
\]
Now, for fixed $r>0$, by the metric composition law and the coupling between $\mathcal{A}, \mathcal{S}$, this gives the almost sure inequality,
\[
\mathcal{S}(q', 0)-\mathcal{S}(q', -r) - \mathcal{S}(q, 0)+\mathcal{S}(q, -r)\ge \Delta_{(Z_k(q, 0), k)}^{(-r, 1)}(q') - \Delta_{(Z_k(q, 0), k)}^{(-r, 1)}(q)\,.
\]
Since the Busemann functions above are evaluated at rational endpoints, we can express the above as the almost sure limit
\begin{align*}
    \Delta_{(Z_k(q, 0), k)}^{(-r, 1)}(q') - \Delta_{(Z_k(q, 0), k)}^{(-r, 1)}(q) &= \lim_{m\to \infty} \mathcal{A}[(-\sqrt{m}/\sqrt{2q'}, m)\to (Z_k(q, 0), k)]\\
    &-\mathcal{A}[(-\sqrt{m}/\sqrt{2q}, m)\to (Z_k(q, 0), k)]\\
    &-\mathcal{A}[(-\sqrt{m}/\sqrt{2q'}, m)\to (-r, 1)]+\mathcal{A}[(-\sqrt{m}/\sqrt{2q}, m)\to (-r, 1)]\,.
\end{align*}
On the event $\{Z_k(q, 0) > -r\}$, we have by the crossing lemma for geodesics, Lemma \ref{lemma: crossing} the lower bound
\begin{align*}
    \Delta_{(Z_k(q, 0), k)}^{(-r, 1)}(q') - \Delta_{(Z_k(q, 0), k)}^{(-r, 1)}(q) &\ge \lim_{m\to \infty} \mathcal{A}[(-\sqrt{m}/\sqrt{2q'}, m)\to (-r, k)]\\
    &-\mathcal{A}[(-\sqrt{m}/\sqrt{2q}, m)\to (-r, k)]\\
    &-\mathcal{A}[(-\sqrt{m}/\sqrt{2q'}, m)\to (-r, 1)]+\mathcal{A}[(-\sqrt{m}/\sqrt{2q}, m)\to (-r, 1)]\\
    &=  \Delta_{(-r, k)}^{(-r, 1)}(q') - \Delta_{(-r, k)}^{(-r, 1)}(q)\,.
\end{align*}
By Theorem \ref{thm: Busemann equality in law}, we have
\[
 \Delta^{(-r, 1)}_{(-r, k)}(q')-\Delta^{(-r, 1)}_{(-r, k)}(q)+\mathcal{S}(q', -r)-\mathcal{S}(q, -r)+2z(u-v) \overset d = \mathcal{A}[(0, k) \to (q', 1)] - \mathcal{A}[(0, k) \to (q, 1)]\,.
\]
We thus have the inclusion (for a geometric proof, see Figure \ref{fig: geod inc parab loc})
\[
\{Z_k(q, 0)> -r\} \subseteq \{H^{q, q'} \ge 2r(q-q') + U^{q, q'}_k\}\,,
\]
where
\[
\begin{aligned}
    H^{q, q'} &:= \mathcal{S}(q', 0) - \mathcal{S}(q, 0)\\
    U^{q, q'}_k & \overset d = \mathcal{A}[(0, k) \to (q', 1)] - \mathcal{A}[(0, k) \to (q, 1)] \,.
\end{aligned}
\]
Arguing in an entirely analogous way, we obtain for $0< q'< q$, on the event $\{Z_k(q, 0)< -r\}$ (flipping $q, q'$ in the upper tail event inclusion),
\[
\{Z_k(q, 0) < -r\} \subseteq \{H^{q', q} \ge 2r(q'-q) + U^{q', q}_k\}\,.
\]
Now, for fixed $u, v\in I$, taking rational $q, q'$ approximating $u, v$ respectively and using the continuity of the Airy sheet and the almost sure continuity of the Buseman functions $\Delta^{(-r, 1)}_{(-r, k)}$ at $u, v$ (in fact, it is locally constant in a neighbourhood of $\{u, v\}$), we have almost surely,
\[
\begin{aligned}
    &\{Z_k(u,0)>-r\}\subseteq \{ H^{u, v}  \ge  2r(u-v) + U^{u, v}_k\}\,,\quad 0< u < v\\
    & \{Z_k(u,0)<-r\}\subseteq \{ H^{v, u} \ge 2r(v-u) + U^{u, v}_k\}\,,\quad 0< v < u
\end{aligned}
\]

Fix $d> 0$ and $u \in I$. By a union bound, we can thus estimate 
\begin{align*}
    &\PP\left(\left|Z_k(u,0)+\sqrt{\frac{k}{2u}}\right|
                 >d\sqrt{k}\right)\le \PP\left(Z_k(u, 0) < -\sqrt{\frac{k}{2u}}+d\sqrt{k}\right) + \PP\left(Z_k(u, 0) > -\sqrt{\frac{k}{2u}}-d\sqrt{k}\right)\,.
\end{align*}
To estimate the first term, taking in the above couplings $r = \sqrt{k}/\sqrt{2u}-d\sqrt{k}$ and $v = u + \theta d$, for some $\theta>0$ sufficiently small to be chosen later, we estimate,
\begin{align*}
    &\PP\left(Z_k(u, 0) < -\sqrt{\frac{k}{2u}}+d\sqrt{k}\right)\le \PP\bigg( H^{u, v} -U^{u, v}_k\ge -2d(\sqrt{k}/\sqrt{2u}-d\sqrt{k})\theta\bigg)\\
    &\le \PP\bigg( |H^{u, v}+v^2-u^2| + |U^{u, v}_k+v^2-u^2-2\sqrt{2k}(\sqrt{v}-\sqrt{u})|\ge D\bigg)\,,
\end{align*}
where for $u\in I$, $\theta$ sufficiently small (possibly depending on $I$), possibly depending on $I$,
\[
D = 2\sqrt{k}\left(\sqrt{2}(\sqrt{v}-\sqrt{u}) - d(1/\sqrt{2u}-d)\theta\right) =  2\sqrt{k}\left(\sqrt{2}(\sqrt{u+\theta d}-\sqrt{u}) - \theta d/\sqrt{2u}+ d^2\theta\right)\,.
\]
for some $I$ dependent constant $c_I> 0$.
By Taylor's theorem, we have $\sqrt{u+\theta d}- \sqrt{u}$ $= \theta d /(2\sqrt{u}) + O(d^2\theta^2)$ for all $\theta, d > 0$ sufficiently small. This gives the lower bound
\[
D = 2\sqrt{k}\left(d^2\theta + O(d^2\theta^2)\right) \ge c_I d^2\sqrt{k}\,,
\]
again for $\theta > 0$ sufficiently small.

A union bound, standard GUE Tracy-Widom bounds for the stationary Airy$_2$ process, see Lemma 9.5 in \cite{DOV} give 
\[
\PP(|H^{u, v}+v^2-u^2|\ge c_I d^2\sqrt{k})\le Ce^{-c(d^2\sqrt{k})^{3/2}}\,.
\]
There exists some $d_I$ so that for all $0< d< d_I$, Lemma~\ref{lemma: shape} applied to the `last passage' term $U^{u, v}_k$, with
\[
 1\le s:=c_Id^{4/3}k^{1/21}\le k^{1/14},\qquad
 k^{3/7}s^{3/2}=c_Id^2\sqrt{k}\,,
\]
gives for all $k$ sufficiently large
\[
\PP(|U^{u, v}_k+v^2-u^2-2\sqrt{2k}(\sqrt{v}-\sqrt{u})|\ge c_Id^2\sqrt{k})\le C_Ik^2e^{-c_Id^{4/3}k^{1/21}}
 +C_Ike^{-c_Ik^{1/63}}\,.
\]
By enlarging $C_I$, we can trivially extend the above bounds to all $k\ge 3$.

For the lower tail of $Z_k(u, 0)$, use the above coupling with $H^{v, u}$ and $U^{u, v}_k$, taking $r=-\sqrt{k}/\sqrt{2u}-d\sqrt{k}$ and $v=u-\theta d$ and proceed in an entirely analogous way, concluding the proof.
\end{proof}

\begin{figure}[hb]
    \centering
    \includegraphics[width=0.95\linewidth]{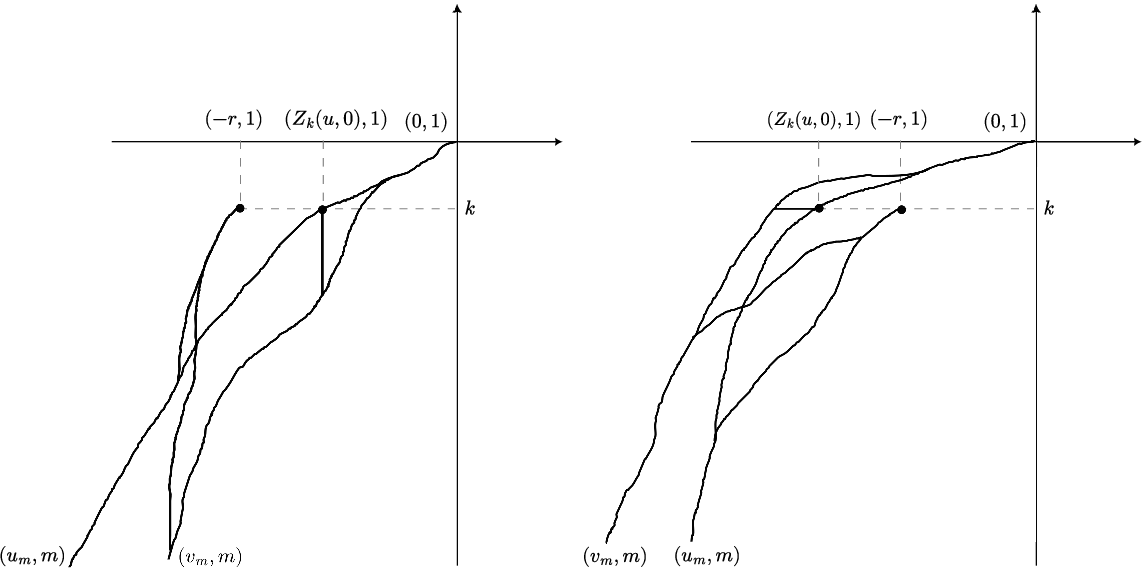}
    \caption{Geometric representation of the estimates in the proof of Lemma \ref{lemma: jump parab}, on the events $\{Z_k(u, 0) > -r\}$ (left) and $\{Z_k(u, 0) < -r\}$ (right). In both cases, the metric composition law is used to estimate from below the relative lengths of the paths starting at $(u_m, m)$, $(v_m ,m)$ anding at $(0, 1)$ in terms of the relative lengths of paths from  $(u_m, m)$, $(v_m ,m)$ to $(Z_k(u,0), k)$ (here  $x_m = -\sqrt{m}/\sqrt{2x}$, $x> 0$). Then, the monotonicity of the aforementioned relative lengths, due to Lemma \ref{lemma: crossing} compares them to the relative lengths of geodesics from  $(u_m, m)$, $(v_m ,m)$ to $(-r, k)$.}
    \label{fig: geod inc parab loc}
\end{figure}

\begin{proof}[Proof of Proposition~\ref{prop: diff dir separation}]
By monotonicity of geodesics, $Z_k(x-\varepsilon,y)\le Z_k(x,y)$.
Moreover, we have the uniform lower bound on some neighbourhood $U_I$ of $I$ on the positive reals separated away from zero
neighbourhood of $I$,
\[
 \sqrt{\frac{k}{2(x-\varepsilon)}}-
 \sqrt{\frac{k}{2x}}= \sqrt{\frac{k}{2x(x-\varepsilon)}}\cdot\frac{\varepsilon}{\sqrt{x}+\sqrt{x-\varepsilon}}\ge c_I\varepsilon\sqrt{k}\,,\qquad x\in I, x-\varepsilon \in U_I\,.
\]
Now, if both jump times $Z_k(x-\varepsilon,y), Z_k(x,y)$ lie within $c_I\varepsilon\sqrt{k}/3$ of $-\sqrt{k}/\sqrt{2(x-\varepsilon)}$ and $-\sqrt{k}/\sqrt{2x}$ respectively, then they must be positively separated.
Thus on event $\{K_\varepsilon>k\}$, at least one jump time must have separation at least $c_I\varepsilon\sqrt{k}/3$ from its `parabolic' counterpart. Lemma~\ref{lemma: jump parab} with $d=c_I\varepsilon/3$ gives the estimate in \eqref{eq:Ktail}. 

Fix $M>0$. Observe that for positive $B>0$,
\[
 \left(B k^{-1/28}(\log k)^{3/4}\right)^{4/3}k^{1/21}
   =B^{4/3}\log k.
\]
Choosing $B$ large enough gives the first term in \eqref{eq:Ktail}
is at most $Ck^{-M}$; the second has the same bound after adjusting
$C$.
\end{proof}

\begin{lemma}[The deterministic coalescence comparison]\label{lemma:geometry}
Fix $x>0$, $0<\varepsilon<x$, $Y>0$ and $n\ge\ell$.
Then, outside a null event depending on these fixed parameters, with
\[
E_\varepsilon(0, Y): = \{\mathsf{ZigZag}(\pi[x-\varepsilon\to(0,1)])\text{ and }\mathsf{ZigZag}(\pi[x+\varepsilon\to(Y,1)]) \text{ touch}\}\,,
\]
the following inclusion holds,
\begin{equation}\label{eq:inclusion}
 \{N_\ell(x)>n\}\subseteq
 E_\varepsilon(0,Y)^c
 \cup\{Z_\ell(x+\varepsilon,Y)\le0\}
 \cup\{K_\varepsilon(x,0)>n\}.
\end{equation}
\end{lemma}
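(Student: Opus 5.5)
We prove the contrapositive. Assume we are on the complement of the right-hand side of \eqref{eq:inclusion}:
\begin{itemize}
\item the zig-zags of $\pi[x-\varepsilon\to(0,1)]$ and $\pi[x+\varepsilon\to(Y,1)]$ touch;
\item $Z_\ell(x+\varepsilon,Y)>0$;
\item $K_\varepsilon(x,0)\le n$.
\end{itemize}
We show that then $N_\ell(x)\le n$. We work on the full-probability event where all semi-infinite and finite geodesics with the (countably many) fixed endpoints and directions involved are unique. On this event the monotonicity/ordering of geodesics in direction and endpoint holds, as does the tree property (Lemma \ref{lemma: geod tree} and its semi-infinite analogue from Lemma \ref{lemma: semi-inf geod properties}). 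After this reduction the argument is deterministic.

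\textbf{Step 1: sandwich.} Since $Z_\ell(x+\varepsilon,Y)>0$, the geodesic $\pi[x+\varepsilon\to(Y,1)]$ is at level $\ge \ell$ at time $0$. Ordering in direction and endpoint then gives, pointwise on $(-\infty,0]$,
\[
\pi[x-\varepsilon\to(0,1)]\le \pi[x\to(0,1)]\le \pi[x\to(0,\ell)]\le \pi[x+\varepsilon\to(Y,1)].
\]
The zig-zags of the outer two share a point $(z,m)$ with $z\le 0$ and $m\ge \ell$; here $m\ge\ell$ because the outer-right path sits at level $\ge\ell$ on $[\,\cdot,0]$. Every zig-zag trapped between them must then pass through $(z,m)$. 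In particular, $\pi[x\to(0,1)]$ and $\pi[x\to(0,\ell)]$ both contain $(z,m)$.

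\textbf{Step 2: coalescence from the common point.} Both $x$-directed paths pass through $(z,m)$. By uniqueness (no geodesic bubbles, as in Lemma \ref{lemma: geod tree}), their restrictions to $(-\infty,z]$ both equal the unique $x$-directed semi-infinite geodesic to $(z,m)$, so they coincide before $(z,m)$. Hence $Z_j(x,0)=Z_j(x,(0,\ell))$ for all $j>m$, which gives $N_\ell(x)\le m$.

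\textbf{Step 3: bound the meeting depth by $n$.} It remains to see that the meeting can be taken at depth $m\le n$. On $\{K_\varepsilon(x,0)\le n\}$ there is $k\le n$ with $Z_k(x-\varepsilon,0)<Z_k(x,0)$, so $\pi[x-\varepsilon\to(0,1)]$ and $\pi[x\to(0,1)]$ have branched by level $k$. By the tree property, once branched they cannot touch again deeper. The outer-left path $\pi[x-\varepsilon\to(0,1)]$ and $\pi[x\to(0,1)]$ both pass through $(z,m)$, and if $m>k$ this point lies past their branching, giving a bubble. So the common point must satisfy $m\le k\le n$.

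\textbf{Main obstacle.} The delicate step is Step 3, specifically pinning down that the shared point can be taken at depth at most $K_\varepsilon(x,0)$, together with the degenerate cases. These include contacts occurring along vertical segments of the zig-zags, or exactly at time $0$, and possible ties $m=k$ between levels. I would handle these by choosing $(z,m)$ to be the contact point of maximal time (the last contact) and checking the zig-zag conventions of \eqref{eq: zig-zag} carefully.
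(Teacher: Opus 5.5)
Your argument follows the paper's proof. It uses the same sandwich $\pi[x-\varepsilon\to(0,1)]\le\pi[x\to(0,1)]\le\pi[x\to(0,\ell)]\le\pi[x+\varepsilon\to(Y,1)]$ on $(-\infty,0]$, the same transfer of the contact point $(z,m)$ to the two middle paths, and the same tree-property bound on the contact depth via $K_\varepsilon(x,0)\le n$. Step~3 is sound: $\pi[x-\varepsilon\to(0,1)]$ and $\pi[x\to(0,1)]$ share the \emph{deterministic} endpoint $(0,1)$, so a switch on $[z,0]$ does contradict uniqueness of a fixed-endpoint semi-infinite geodesic.

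The step that fails as written is Step~2. You claim both middle paths, restricted to $(-\infty,z]$, equal ``the unique $x$-directed semi-infinite geodesic to $(z,m)$''. But $(z,m)$ is a random point, and item~$(1)$ of Lemma~\ref{lemma: semi-inf geod properties} gives uniqueness only for deterministic endpoints. Theorem~\ref{thm: fixed dir coal} controls the exceptional endpoints but does not rule them out. The bubble argument of Lemma~\ref{lemma: geod tree} also does not apply as it stands. The paths $\pi[x\to(0,1)]$ and $\pi[x\to(0,\ell)]$ end at different points, so sharing the single point $(z,m)$ gives no bubble: a priori they could separate before $z$ and never meet again.

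The missing ingredient is eventual coalescence, item~$(3)$ of Lemma~\ref{lemma: semi-inf geod properties}, which the paper invokes at exactly this point. It gives some $w<z$ with the two paths equal on $(-\infty,w]$. Their restrictions to $[w,z]$ (ending at level $m$) are then geodesics between the same two points, so they have equal length. Replacing $\pi[x\to(0,1)]$ on $(w,z)$ by $\pi[x\to(0,\ell)]$ therefore produces another locally geodesic, $x$-directed semi-infinite path ending at the fixed point $(0,1)$. Uniqueness of $\pi[x\to(0,1)]$ forces the two paths to agree on $(-\infty,z)$. Since both zig-zags contain $(z,m)$, they then jump at the same times from every level $j\ge m+1$. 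Together with $m\ge\ell$ and your Step~3 bound $m\le n$, this gives $N_\ell(x)\le n$.
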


\begin{proof}
Suppose the three events on the right do not occur. Write
\[
 \alpha=\pi[x-\varepsilon\to(0,1)],\quad
 \beta=\pi[x\to(0,1)],\quad
 \gamma=\pi[x\to(0,\ell)],\quad
 \delta=\pi[x+\varepsilon\to(Y,1)]\,.
\]
Since $Z_\ell(x+\varepsilon,Y)>0$, $\delta(0)\ge \ell+1$. Geodesic monotonicity gives the pointwise ordering
\[
 \alpha\le\beta\le\gamma\le\delta\qquad\text{on }(-\infty,0].
\]
Moreover, there exists some point $(z, j)$ for $z < 0$, $j\ge \ell+1$ at which the zig-zags of $\alpha$ and $\delta$ touch (see Figure \ref{fig: geod coal depth}). By monotonicity this means all of the zig-zags of $\beta, \gamma$ also meet at $(z, j)$. By Lemma \ref{lemma: geod tree}, $j\le K_\varepsilon(x, 0) \le n$ since otherwise this would produce two distinct semi-infinite geodesics with asymptotic direction $x$ and right endpoint $(0, 1)$, violating the uniqueness of semi-infinite geodesics in the Airy line ensemble, Lemma \ref{lemma: semi-inf geod properties}. Now, since by Lemma \ref{lemma: semi-inf geod properties}, the geodesics $\pi[x\to (0, \ell)]$ and $\pi[x\to (0, 1)]$ eventually coalesce, the same geodesic uniqueness argument gives that they must have already coalesced by level $j \le n$, giving $N_\ell(x)\le n$, concluding the proof.
\begin{figure}[ht]
    \centering
    \includegraphics[width=0.6\linewidth]{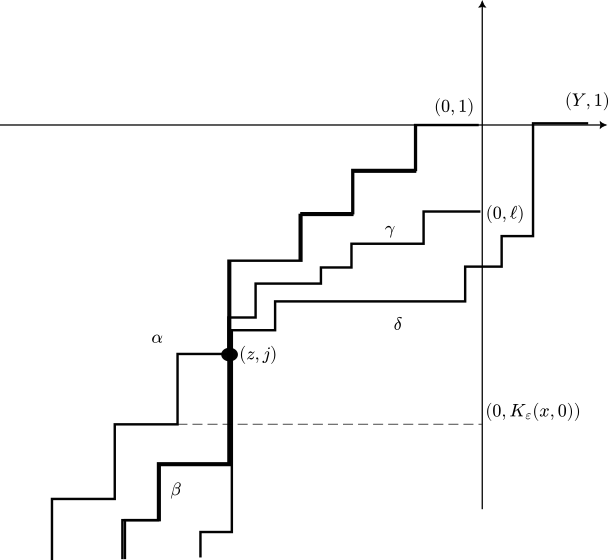}
    \caption{Illustration of a typical realisation of geodesics $\alpha, \beta, \gamma, \delta$ on the event $E_\varepsilon(0, Y)$ where $\alpha$ and $\delta$ touch and the path $\delta$ descends below level $\ell + 1$ on the vertical axis.}
    \label{fig: geod coal depth}
\end{figure}
\end{proof}

\begin{proof}[Proof of Theorem~\ref{thm: coal depth}]

Arguing as in the proof of Theorem \ref{prop: non-disj-close-endpts-inf-geo}, we have for any $x> 0, y > 0, \ell \ge 2$,
\[
\PP\bigl(Z_\ell(x,0)\le-y\bigr) \le C\cdot \exp (\ell - ca^{\frac{3}{2}})\,,
\]
for constants $c, C> 0$. This gives the uniform bounds,
\begin{equation}\label{eq:shield}
 \PP(Z_\ell(u,Y)\le0)
 =\PP(Z_\ell(u,0)\le-Y)
 \le C_x\exp\bigl(\ell-c_xY^{3/2}\bigr)\,,\qquad u\in[x/2,3x/2]\,.
\end{equation}

Combining Lemmas~\ref{lemma: close-not-dj} and \ref{lemma:geometry} with
Proposition~\ref{prop: diff dir separation} and a union bound give the estimates
\begin{align}\label{eq: coal bound 1}
 \PP(N_\ell(x)>n)
 &\le C_x(1+Y)\varepsilon^{1/2}
             \mathrm{e}^{C_x|\log\varepsilon|^{5/6}}
       +C_xe^{\ell-c_xY^{3/2}}\notag\\
 &\quad+C_xn^2e^{-c_x\varepsilon^{4/3}n^{1/21}}
       +C_xn \mathrm{e}^{-c_xn^{1/63}}\,.
\end{align}
Fix $M=3$. For all sufficiently large $n$, choose
\[
 \varepsilon_n=B_xn^{-1/28}(\log n)^{3/4},\qquad
 Y_n=D_x(1+\ell+\log n)^{2/3},
\]
where $B_x,D_x$ are sufficiently large. Then the last three terms
of \eqref{eq: coal bound 1} are at most $C_xn^{-3}$. The first is at most
\[
 C_x(1+\ell+\log n)^{2/3}n^{-1/56}(\log n)^{3/8}
           \mathrm{e}^{C_x(\log n)^{5/6}} =  C_x(1+\ell+\log n)^{2/3}n^{-1/56}
           \mathrm{e}^{C_x(\log n)^{5/6}}
\]
after enlarging $C_x$. After increasing $C_x$ to cover the finite range where
$\varepsilon_n$ is not sufficiently small, this proves
\eqref{eq:main}. The second expression follows from the fact
$\exp(C(\log n)^{5/6})=n^{o(1)}$. 
\end{proof}

\section{Exceptional behaviour of semi-infinite geodesics}\label{sec: exceptional geod geom}

In this section, we investigate many exceptional sets of geodesic non-uniqueness or coalescence, in Corollaries \ref{corollary: geod two end points disjoint}, \ref{cor: geod disj same endpt}, Theorem \ref{thm: fixed dir coal} and Theorem \ref{thm: geod no coal h dim}, Proposition \ref{prop: multiple geod fixed endpt}, respectively.

As a direct application of Theorems \ref{thm: haus dim ell} and \ref{thm: busemann cont} and Proposition \ref{prop: local variation geod disj} thus obtain the following corollary (see Theorems 8.2, 2.11 in \href{https://arxiv.org/pdf/2203.13242}{this paper} for an analogous statement for Busemann functions in the directed landscape).

\begin{corollary}\label{corollary: geod two end points disjoint}
 For any  $\ell \ge 1$ and $y_1< y_2$, almost surely
    \[
    \mathrm{dim}_{\mathscr{H}}\bigg(\bigg\{x>0:  \pi^L[x\to (y_1, 1)]\text{ and } \pi^R[x\to (y_2, \ell)] \text{ are disjoint} \bigg\}\bigg) = \frac{1}{2}.
    \]
\end{corollary}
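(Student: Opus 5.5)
The plan is to identify the disjointness set with a set of local variation of a Busemann profile, and then obtain the two dimension bounds separately.

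\emph{Step 1 (identification).} Write
\[
D:=\{x>0: \pi^L[x\to (y_1,1)] \text{ and } \pi^R[x\to (y_2,\ell)] \text{ are disjoint}\}.
\]
I would extend Proposition \ref{prop: local variation geod disj} from the endpoint pair $(0,1),(0,\ell)$ to the pair $(y_1,1),(y_2,\ell)$, and show that almost surely
\[
D=\mathsf{LV}\big(\Delta^{(y_1,1)}_{(y_2,\ell)},(0,\infty)\big).
\]
The proof should be verbatim. For rational $q<r$, Lemma \ref{lemma: crossing equiavlence} applied to the finite-depth approximants, together with the stabilisation from Lemma \ref{lemma: semi-inf geod properties}(3), gives that $\pi[q\to(y_1,1)]$ and $\pi[r\to(y_2,\ell)]$ are disjoint if and only if $\Delta^{(y_1,1)}_{(y_2,\ell)}(r)>\Delta^{(y_1,1)}_{(y_2,\ell)}(q)$. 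The approximation Lemma \ref{lemma: geodesic approx} then passes this equivalence to $\pi^L,\pi^R$ at an arbitrary $x$, in both inclusion directions.

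\emph{Step 2 (upper bound).} By the additivity $\Delta^{(y_1,1)}_{(y_2,\ell)}=\Delta^{(y_1,1)}_{(y_2,1)}+\Delta^{(y_2,1)}_{(y_2,\ell)}$, and since both summands are non-decreasing, the $\mathsf{LV}$ set of the sum is the union of the two $\mathsf{LV}$ sets. The first summand equals $\mathcal S(\cdot,y_2)-\mathcal S(\cdot,y_1)$ by the coupling \ref{coupling: Airy sheet}, and its $\mathsf{LV}$ set has dimension $1/2$ by \cite[Theorem 1.1]{Basu2021Fractal}. For the second summand, the shift invariance in Lemma \ref{lemma: semi-inf geod properties}(4) together with Theorem \ref{thm: haus dim ell} gives dimension at most $1/2$ when $\ell\ge2$. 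Finite stability of Hausdorff dimension then yields $\dim_{\mathscr H} D\le 1/2$.

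\emph{Step 3 (lower bound).} Since $D$ contains $\mathsf{LV}(\mathcal S(\cdot,y_2)-\mathcal S(\cdot,y_1))$, and that set has dimension exactly $1/2$ almost surely by \cite{Basu2021Fractal}, we get $\dim_{\mathscr H} D\ge 1/2$.

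The main obstacle is Step 1 at non-rational $x$. One must check that disjointness of the extremal geodesics is equivalent to strict increase of the profile across every rational neighbourhood $q<x<r$. This needs the approximation lemma to stabilise $\pi[q\to(y_1,1)]$ on $[t,y_1]$ and $\pi[r\to(y_2,\ell)]$ on $[t,y_2]$, for a $t$ below any putative touching point. Disjointness far to the left is automatic from the distinct parabolic shapes. The continuity from Theorem \ref{thm: busemann cont} removes any ambiguity between $\Delta^L$ and $\Delta^R$ in the statement.
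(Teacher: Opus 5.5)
Your argument is correct, but it is organised differently from the paper. The paper gives no separate proof and simply cites Proposition~\ref{prop: local variation geod disj}, Theorem~\ref{thm: haus dim ell} and Theorem~\ref{thm: busemann cont}. In that route, the disjointness set is first identified with the set of local variation of the Busemann profile. The upper bound then comes from the first-moment covering argument of Theorem~\ref{thm: haus dim ell}. The lower bound comes from the H\"older $1/2^-$ continuity in Theorem~\ref{thm: busemann cont}, via a mass-distribution argument: a non-decreasing, non-constant, H\"older-$\alpha$ function cannot increase only on a set of dimension below $\alpha$. Non-constancy is supplied, e.g.\ by Proposition~\ref{prop: non-degen asymptotic}.

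Your telescoping $\Delta^{(y_1,1)}_{(y_2,\ell)}=\Delta^{(y_1,1)}_{(y_2,1)}+\Delta^{(y_2,1)}_{(y_2,\ell)}$, together with the fact that $\mathsf{LV}$ of a sum of non-decreasing functions is the union of the $\mathsf{LV}$ sets, buys two things:
\begin{itemize}
\item It makes explicit how results stated only for the endpoints $(0,1),(0,\ell)$ cover distinct $y_1<y_2$, which the paper leaves implicit.
\item It lets you take the lower bound, and the first summand's half of the upper bound, from \cite{Basu2021Fractal}, so Theorem~\ref{thm: busemann cont} is never needed.
\end{itemize}
The price is that your lower bound genuinely uses $y_1<y_2$. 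The H\"older route also handles the vertical profile $\Delta^{(y,1)}_{(y,\ell)}$ on its own.

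Two small points to tidy.
\begin{itemize}
\item \cite[Theorem 1.1]{Basu2021Fractal} computes the dimension of the set of increase on all of $\R$, not on $(0,\infty)$. To restrict to $x>0$, add the mass-distribution step, using the H\"older $1/2^-$ continuity of $\mathcal S$ and the divergence of $\mathcal S(x,y_2)-\mathcal S(x,y_1)$ as $x\to\infty$ from \eqref{eq: airy shape bnds}, as in the proof of Proposition~\ref{prop: semi-inf geod jump simultaneous}.
\item Transferring Theorem~\ref{thm: haus dim ell} from endpoint $0$ to $y_2$ needs equality in law of the whole process $x\mapsto\Delta^{(y_2,1)}_{(y_2,\ell)}(x)$. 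This follows from stationarity of $\mathcal A(\cdot)+(\cdot)^2$ (the common linear shift cancels in the differences), not from item (4) of Lemma~\ref{lemma: semi-inf geod properties} as stated, which concerns a single fixed direction. Alternatively, you can rerun that proof directly at endpoint $y_2$.
\end{itemize}
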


Additional geometric information about semi-infinite geodesics in the Airy line ensemble can be revealed by considering the Airy sheet with its `additive part' subtracted. Consider the random measure $\nu$ on $(0, \infty)^2$ defined by first setting
\[
\nu((0, x]\times (0, y]) := \mathcal{S}(x, y) - \mathcal{S}(x,0) - \mathcal{S}(0, y) + \mathcal{S}(0, 0)
\]
for all $x> 0, y> 0$. By the quadrangle inequality for the Airy sheet, \eqref{eq: quadrangle ineq airy sheet} , $\nu((0, x]\times (0, y])$ is monotone increasing in $x, y$ and also jointly continuous (by the continuity of the Airy sheet). Thus, it is the cumulative distribution function of a Borel measure, which we also denote by $\nu$. This can be extended to a Borel measure on $\R^2$ by translation symmetry of the Airy sheet, \eqref{eq: Airy sheet symmetry}. 

The topological support of $\nu$ is related to the disjointness of geodesics in the directed landscape, see \cite{Gangulyetal2022}. For an alternative proof of this result (and characterisation of the support of $\nu$) in terms of geodesics in the directed landscape, see the discussion preceding \cite[Theorem 1.10]{BatesGangulyHammond2022}. In the following theorem, we characterise the support of $\nu$ in terms of disjointness of semi-infinite geodesics in the Airy line ensemble and compute its Hausdorff dimension, in parallel to \cite{BatesGangulyHammond2022}.

\begin{theorem}\label{thm: airy sheet monotone haus dim}
    The Hausdorff dimension of the topological support of the random measure $\nu$ defined above satisfies almost surely
    \[
    \mathrm{dim}_{\mathscr{H}}(\mathrm{supp}(\nu)) = \frac{1}{2}.
    \]
    Moreover, $\mathrm{supp}(\nu)$ is a closed nowhere dense subset of $\R^2$.
\end{theorem}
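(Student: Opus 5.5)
Throughout, write $\Delta^{(y_1,1)}_{(y_2,1)}(x)=\mathcal{S}(x,y_2)-\mathcal{S}(x,y_1)$ for the top-level Busemann profile given by the coupling \ref{coupling: Airy sheet}. The plan is to identify $\mathrm{supp}(\nu)$ with a set of exceptional direction--endpoint pairs, and then to bound its dimension from both sides using the one-dimensional results already proved.

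\emph{Step 1: characterise the support.} A point $(x,y)$ lies in $\mathrm{supp}(\nu)$ if and only if $\nu(R)>0$ for every rational rectangle $R=(q,q']\times(r,r']$ containing $(x,y)$ in its interior. Since
\[
\nu\bigl((q,q']\times(r,r']\bigr)=\Delta^{(r,1)}_{(r',1)}(q')-\Delta^{(r,1)}_{(r',1)}(q),
\]
Lemma \ref{lemma: crossing equiavlence} together with the Busemann limit argument in the proof of Theorem \ref{thm: semi-inf tail disj proba} shows that $\nu(R)>0$ exactly when $\pi[q\to(r,1)]$ and $\pi[q'\to(r',1)]$ are disjoint. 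Passing to the limit with Lemma \ref{lemma: geodesic approx}, as in the proof of Proposition \ref{prop: local variation geod disj}, should give
\[
\mathrm{supp}(\nu)=\bigl\{(x,y): \pi^L[x\to(y,1)] \text{ and } \pi^R[x\to(y,1)] \text{ are disjoint}\bigr\}.
\]
The support is closed by definition. It is nowhere dense because, for each fixed rational $y_1<y_2$, the horizontal slice $\mathsf{LV}(\Delta^{(y_1,1)}_{(y_2,1)},\cdot)$ has dimension $1/2$ by Theorem \ref{thm: haus dim ell}, so $\nu$ puts zero mass on a dense family of open rectangles.

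\emph{Step 2: upper bound.} I would cover $[a,b]\times[c,d]$ by squares of side $\varepsilon$. A square $[x-\varepsilon,x+\varepsilon]\times[y,y+\varepsilon]$ meets the support only if $\pi[x-\varepsilon\to(y,1)]$ and $\pi[x+\varepsilon\to(y+\varepsilon,1)]$ are disjoint. Lemma \ref{lemma: close-not-dj}, passed to the limit as in the upper bound of Theorem \ref{thm: semi-inf tail disj proba}, with $y_2-y_1=\varepsilon$ so that there is only one summand $N=1$, bounds this probability by $C\varepsilon^{3/2}e^{d|\log\varepsilon|^{5/6}}$. There are $O(\varepsilon^{-2})$ squares, so the expected number meeting the support is $\varepsilon^{-1/2+o(1)}$. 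The expected $(1/2+\delta)$-Hausdorff sum therefore tends to $0$, and Fatou's lemma gives $\dim_{\mathscr H}\le 1/2$.

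\emph{Step 3: lower bound.} Fix rational $y_1<y_2$. The projection of $\mathrm{supp}(\nu)\cap((0,\infty)\times[y_1,y_2])$ onto the $x$-axis is $\mathsf{LV}(\mathcal{S}(\cdot,y_2)-\mathcal{S}(\cdot,y_1),(0,\infty))$, which has dimension $1/2$ almost surely by \cite[Theorem 1.1]{Basu2021Fractal}. Projection is Lipschitz and cannot increase Hausdorff dimension, so this gives $\dim_{\mathscr H}\mathrm{supp}(\nu)\ge 1/2$.

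The main obstacle is Step 2. One has to justify the uniform-in-location $\varepsilon^{3/2}$ bound for the Airy line ensemble geodesics themselves, rather than for the prelimiting melons. One also has to check that a single square meeting the support indeed forces disjointness of those two specific geodesics; this requires handling boundary points via the leftmost and rightmost approximations.
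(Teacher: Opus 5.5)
Your upper bound (Step 2) is the paper's argument. Rectangle masses of $\nu$ are top-line Busemann increments, and $\nu(R)>0$ is equivalent to disjointness of two semi-infinite geodesics with distinct endpoints. The single-pair bound $\varepsilon^{3/2+o(1)}$ comes from Hammond's theorem transferred through the melon coupling, and a first-moment count over $O(\varepsilon^{-2})$ boxes finishes.

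The lower bound is where you genuinely differ. The paper argues by mass distribution. H\"older $1/2^-$ continuity of $\mathcal{S}$ gives $\nu(I)\le C\,\mathrm{diam}(I)^{1/2-o(1)}$ for small squares, and the shape bounds \eqref{eq: airy shape bnds} give $\nu\neq 0$. Nowhere density then comes for free, because a closed set of dimension $1/2$ is Lebesgue-null. You instead project onto the direction axis and import the one-dimensional theorem of \cite{Basu2021Fractal}.

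This works, but state it as an inclusion. Only $\mathsf{LV}(\mathcal{S}(\cdot,y_2)-\mathcal{S}(\cdot,y_1),\R)\subseteq \pi_x\bigl(\mathrm{supp}(\nu)\cap(\R\times[y_1,y_2])\bigr)$ is needed. Equality need not hold: a support point on the line $y=y_1$ may have all nearby mass strictly below that line. The inclusion follows from $\nu(\mathrm{supp}(\nu)^c)=0$. Each rectangle $(a,b]\times(y_1,y_2]$ of positive mass around $x$ contains a support point. Letting $b-a\to 0$, compactness of $[y_1,y_2]$ and closedness of the support give a support point above $x$. Take the local variation set over all of $\R$; nothing in the lower bound needs $x>0$.

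The trade-off is as follows. Your route rests on an external theorem whose lower half is exactly the one-dimensional version of the paper's mass-distribution argument, while the paper's route is self-contained. In exchange, yours ties the support directly to the exceptional direction sets of Corollary~\ref{corollary: geod two end points disjoint}: over every strip, its projection contains them. Your nowhere-density argument also exhibits explicit $\nu$-null open rectangles rather than appealing to Lebesgue measure.

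A few statements should be corrected, though none is load-bearing.
\begin{itemize}
\item In Step 1, $\pi^L[x\to(y,1)]$ and $\pi^R[x\to(y,1)]$ both contain $(y,1)$. They are therefore never disjoint, and your characterisation of the support describes the empty set. Corollary~\ref{cor: geod disj same endpt} uses disjointness strictly before $y$.
\item Theorem~\ref{thm: haus dim ell} concerns $\Delta^{(0,1)}_{(0,\ell)}$, not the top-line profile $\Delta^{(y_1,1)}_{(y_2,1)}$. Cite \cite{Basu2021Fractal} there, or simply deduce nowhere density from $\dim_{\mathscr H}=1/2<2$.
\item In Step 2, use overlapping boxes so that every support point is interior to a box of positive mass.
\item Also in Step 2, use the diagonal translation invariance of $\mathcal{S}$ to move boxes into $x>0$, where the geodesic interpretation is available.
\end{itemize}
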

\begin{proof}

By translation invariance of the Airy sheet $\mathcal{S}(\cdot+ t, \cdot + t) \stackrel{d}{=}\mathcal{S}(\cdot+ t, \cdot + t)$ for any fixed $t\in \R$, \eqref{eq: Airy sheet symmetry} we can without loss of generality take
 \[
    \nu((0, x]\times (0, y]) := \mathcal{S}(x+1, y+1) - \mathcal{S}(x+1,1) - \mathcal{S}(1, y+1) + \mathcal{S}(1, 1)\,.
\]

   \underline{Upper bound}: By Theorem 1.1 of \cite{Hammond_2019}, arguing as in the proof of Lemma \ref{lemma: close-not-dj} using the construction of semi-infinite geodesics with rational endpoints as limits of last passage paths on the prelimiting Brownian melon, that is the coupling on page \pageref{Skorokhod coupling} we obtain for all $\varepsilon>0$, $y\in \R$
    \begin{equation}\label{eq: close-not-dj-near-endpts}
	\PP \bigg(\pi[x-\varepsilon \to (y-\varepsilon, 1)] \;\;\; \mathrm{and} \;\;\; \pi[x+\varepsilon \to (y+\varepsilon, 1)] \text{ are disjoint} \bigg) \le C\cdot\varepsilon^{\frac{3}{2}+o_\varepsilon(1)}\,,
	\end{equation}
    for some universal constant $C> 0$.

    By the coupling in \ref{coupling: Airy sheet} between the Airy sheet and the Airy line ensemble, we can express for all $(x, y) \in (0, \infty)\times (0, \infty)$,
    \[
    \nu((0, x]\times (0, y]) = \Delta_{(y+1, 1)}^{(1, 1)}(x+1) - \Delta_{(y, 1)}^{(1, 1)}(1)\,.
    \]
    Notice a quick computation shows for any rectangle $(a, b] \times (a', b']\subset (0, \infty)\times(0, \infty)$, 
     \[
    \nu((a, b]\times (a', b']) = \Delta_{(b'+1, 1)}^{(a'+1, 1)}(b+1) - \Delta_{(b'+1, 1)}^{(a'+1, 1)}(a+1)\,.
    \]
    
    Now, for any $a < b$ finite, consider the rectangle $[a, b]\times [a, b]\subset\R^2$. For any $\varepsilon > 0$, starting from $a$, partition $[a, b]\times [a, b]$ into a family of intervals of diameter $2\varepsilon$, possibly overshooting $(b, b)$ in the last step. Let $N_\varepsilon\le \lceil(b-a)/(2\varepsilon)\rceil^2$ be the random number of intervals $(I^\varepsilon_i)_i$ above for which $\nu(I^\varepsilon_i)>0$. Arguing as in Proposition \ref{prop: local variation geod disj} we have the equivalence
    \[
    \nu((a, b]\times (a', b'])> 0 \text{ if and only if } \pi[a , a'] \;\;\; \mathrm{and} \;\;\; \pi[b , b'] \text{ are disjoint}\,.
    \]
    Thus, we have by \eqref{eq: close-not-dj-near-endpts} for all $i\le N_\varepsilon$
    \[
    \PP(\nu(I^\varepsilon_i)> 0)\le C\cdot\varepsilon^{\frac{3}{2}+o_\varepsilon(1)}\,.
    \]
    Using linearity and taking expectations, we have using Markov's inequality the estimate
    \[
    \mathbb{E}[N_\varepsilon] \le \bigg\lceil\frac{b-a}{2\varepsilon}\bigg\rceil^2\cdot\PP \le C_{a, b}\cdot\varepsilon^{-\frac{1}{2}+o_\varepsilon(1)}
    \]
    for some constant $C_{a, b} > 0$.

    We can thus cover the support of $\nu$ in $[a, b]\times [a, b]$, $\mathrm{supp}(\nu, [a, b]\times [a, b])$ by $N_\varepsilon$ many intervals $(I^\varepsilon_i)_i$ of diameter $2\varepsilon$ such that for any fixed $\delta > 0$,
    \[
    \sum_i \diam(I^\varepsilon_i)^{\frac{1}{2}+\delta}\le C_\delta  \cdot N_\varepsilon\cdot \varepsilon^{\frac{1}{2}+\delta}\,.
    \]
    for some constant $C_\delta > 0$. Taking expectations, we have
    \[
     \mathbb{E}[\sum_i \diam(U^i_\varepsilon)^{\frac{1}{2}+\delta}]\le C_{a, b, \delta}\cdot \varepsilon^{-\frac{1}{2}+o_\varepsilon(1)} \cdot \varepsilon^{\frac{1}{2}+\delta} \le C_{a, b, \delta} \cdot \varepsilon^{\delta+ o_\varepsilon(1)}\,, 
    \]
    which converges to zero as $\varepsilon \to 0$. Thus, almost surely, the Hausdorff dimension of $\mathrm{supp}(\nu, [a, b]\times [a, b])$ is at most $1/2 +\delta$. Since $\delta$, $a< b$ were arbitrary, we obtain that the Hausdorff dimension of  $\mathrm{supp}(\nu, (0, \infty) \times (0, \infty))$ is at most $1/2$. To extend the upper bound to $\mathrm{supp}(\nu, \R^2)$, use the stationarity of the Airy sheet under uniform shifts in both coordinates, \eqref{eq: Airy sheet symmetry}.
    
\underline{Lower bound}: The lower bound follows from the pathwise regularity of the Airy sheet, in particular that it is H\"{o}lder $1/2^-$ continuous on $\R^2$ and that the differences $\mathcal{S}(x+1, y+1) - \mathcal{S}(x+1,1) - \mathcal{S}(1, y+1) + \mathcal{S}(1, 1)$ are asymptotically non-degenerate. In fact, by the global shape bounds of the Airy sheet, \eqref{eq: airy shape bnds}, almost surely as $x, y \to \infty$, $\nu((0, x]\times(0, y]) \to \infty$.

Suppose for a contradiction the Hausdorff dimension of $\mathrm{supp}(\nu, \R^2)$ is less than $1/2$. Then, we claim on any interval $[a, b]$ $\nu((0, x]\times(0, y])$ must be constant; in particular, it must vanish. Indeed, we estimate for all $\varepsilon > 0$ by the Holder continuity of $\mathcal{S}$
    \begin{equation}\label{eq: mass distribution busemann-two-dim}
         0 < \nu((a, b] \times (a, b]) \le \sum_{i}\nu(I^\varepsilon_i)\le C_{a, b}\cdot\sum_{i}\mathrm{diam}(I^\varepsilon_i)^{\frac{1}{2}+o_\varepsilon(1)}\,,
    \end{equation}
    for some possibly random $C_{a, b}>0$ where $(I^\varepsilon_i)$ cover $(a, b]\times (a, b]$ and have diameter $c\varepsilon$ for some universal $c> 0$. Since we assumed $\mathrm{supp}(\nu, [a, b]\times [a, b])$ has Hausdorff dimension less than $1/2$, say $\delta < 1/2$ we could have chosen the $(I^\varepsilon_i)_i$ so that
    \[
    \lim_{\varepsilon \to 0} \sum_{i}\mathrm{diam}(I^\varepsilon_i)^{\delta + (1/2-\delta)/2} = 0\,.
    \]
    This however, is a contradiction to \eqref{eq: mass distribution busemann-two-dim} since $\frac{1}{2}+o_\varepsilon(1) > \delta + (1/2-\delta)/2$ eventually as $\varepsilon \to 0$. 
    
    Since $a< b$ was arbitrary, we see that either the Hausdorff dimension of the support of $\nu$ on $(0, \infty)\times (0, \infty)$ is $1/2$, or $\nu((0, \infty) \times (0, \infty))=0$, a contradiction to its asymptotic non-degeneracy as discussed above. This concludes the proof of the lower bound.

    Since the Lebesgue measure of $\supp(\nu)$ necessarily vanishes by the above computation of the Hausdorff dimension, $\supp(\nu)$ must have empty interior, and is so nowhere dense.
\end{proof}

We thus obtain the following Corollary. 

\begin{corollary}\label{cor: geod disj same endpt}
    Almost surely, the set of directions
    \[
    \mathsf{Disj}_{\verticalbond{}}: =  \{(x, y)\in (0, \infty)\times \R: \pi^L[x\to (y, 1)] \text{ and } \pi^R[x\to (y, 1)] \text{ are disjoint strictly before }y\}\subseteq (0, \infty) \times \R\,.
    \]
    Moreover, its closure, $\overline{\mathsf{Disj}_{\verticalbond{}}}$ is a closed, nowhere dense set of Hausdorff dimension equal to $\frac{1}{2}$.
\end{corollary}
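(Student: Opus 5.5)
The plan is to identify $\overline{\mathsf{Disj}_{\verticalbond{}}}$ with $\mathrm{supp}(\nu)$, transported to the coordinates $(x,y)$ via the coupling \ref{coupling: Airy sheet}. Theorem \ref{thm: airy sheet monotone haus dim} then gives closedness, nowhere density and Hausdorff dimension $1/2$ directly. The set $\mathsf{Disj}_{\verticalbond{}}$ itself is just the set of pairs where the leftmost and rightmost $x$-directed geodesics to $(y,1)$ split off before time $y$; so the content of the statement is the description of its closure.

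\emph{First inclusion, $\mathsf{Disj}_{\verticalbond{}}\subseteq \mathrm{supp}(\nu)$.} Fix $(x,y)\in\mathsf{Disj}_{\verticalbond{}}$ and a small rational box $(q,r]\times(q',r']$ containing $(x,y)$ in its interior.
\begin{itemize}
\item By monotonicity of geodesics (Lemma \ref{lemma: semi-inf geod properties}) and the approximation Lemma \ref{lemma: geodesic approx}, $\pi[q\to(q',1)]$ lies weakly left of $\pi^L[x\to(y,1)]$, and $\pi[r\to(r',1)]$ lies weakly right of $\pi^R[x\to(y,1)]$.
\item Since $\pi^L$ and $\pi^R$ are disjoint strictly before $y$, I intend to choose $q'<y<r'$ close enough to $y$ that the two rational geodesics are disjoint as zig-zags. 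This needs some care: the disjointness of $\pi^L,\pi^R$ near the common endpoint $y$ is only on $(-\infty,y)$. I would handle this by restricting to the part of the paths before the first jump of $\pi^R$ at $y$, using the stabilisation statement of Lemma \ref{lemma: geodesic approx}.
\item By the equivalence used in the proof of Theorem \ref{thm: airy sheet monotone haus dim} (an application of Lemma \ref{lemma: crossing equiavlence} in the Busemann limit, as in Proposition \ref{prop: local variation geod disj}), disjointness of these two geodesics gives $\nu((q,r]\times(q',r'])>0$.
\end{itemize}
Since the box was arbitrary, $(x,y)\in\mathrm{supp}(\nu)$. As $\mathrm{supp}(\nu)$ is closed, this gives $\overline{\mathsf{Disj}_{\verticalbond{}}}\subseteq\mathrm{supp}(\nu)$. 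Hence the closure is nowhere dense and has dimension at most $1/2$.

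\emph{Reverse inclusion, $\mathrm{supp}(\nu)\subseteq\overline{\mathsf{Disj}_{\verticalbond{}}}$.} This is where I expect the main obstacle. Take $(x,y)\in\mathrm{supp}(\nu)$ and any small rational box $R$ around it, so that $\nu(R)>0$.
\begin{itemize}
\item I would bisect $R$ repeatedly, keeping at each step a sub-box of positive mass. Positive mass on a box means the geodesics from its lower-left and upper-right corners are disjoint.
\item The nested boxes shrink to a point $(x',y')\in R$. Passing to the limit with Lemma \ref{lemma: geodesic approx}, the corner geodesics converge to $\pi^L[x'\to(y',1)]$ and $\pi^R[x'\to(y',1)]$.
\item The difficulty is that disjointness is not closed under limits: the limiting paths could touch, or could split only at $y'$ itself.
\end{itemize}
To rule this out, I would use that $\nu$ has no atoms and no mass on horizontal or vertical lines; this follows from joint continuity of $\mathcal{S}$ and the one-dimensional continuity of Theorem \ref{thm: busemann cont}. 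I would then try to argue, via the tree structure (Lemma \ref{lemma: geod tree}) and geodesic uniqueness at rational points, that a point where the limiting paths coalesce after touching would force $\nu$ to vanish on a neighbourhood. If the bisection limit still fails to land in $\mathsf{Disj}_{\verticalbond{}}$, the fallback is to show density of $\mathsf{Disj}_{\verticalbond{}}$ in $\mathrm{supp}(\nu)$ directly. For this I would pick $(x',y')$ as a point of increase from the left for $y\mapsto \nu((q,x']\times(q',y])$, mirroring the argument with the countable set of boundary points $N$ in the proof of Proposition \ref{prop: semi-inf geod jump simultaneous}. At such points $\pi^L$ and $\pi^R$ genuinely separate before $y'$.

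With both inclusions, $\overline{\mathsf{Disj}_{\verticalbond{}}}=\mathrm{supp}(\nu)$, and Theorem \ref{thm: airy sheet monotone haus dim} gives the conclusion. At minimum, the reverse inclusion yields the lower bound $1/2$ on the dimension of the closure, which together with the first inclusion gives equality.
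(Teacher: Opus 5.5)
Your first inclusion, $\mathsf{Disj}_{\verticalbond{}}\subseteq\mathrm{supp}(\nu)$, is correct and is the paper's argument. The extra care you anticipate is not needed: for any $q'<y$, the ordering $\pi[q\to(q',1)]\le\pi^L[x\to(y,1)]\le\pi^R[x\to(y,1)]\le\pi[r\to(r',1)]$ on $(-\infty,q']$ transfers any common zig-zag point of the outer paths to $\pi^L$ and $\pi^R$ at a time strictly before $y$.

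The genuine gap is the reverse inclusion. It is the only source of the lower bound $1/2$, and you leave it at ``I would try to argue''. The bisection is a detour. If $(x,y)\in\mathrm{supp}(\nu)$, every rational box $(u,v]\times(w,z]$ with $u<x<v$, $w<y<z$ already has positive mass, so there is no reason to move to another point $(x',y')$. Neither non-atomicity of $\nu$ nor Theorem~\ref{thm: busemann cont} bears on what must be shown. Your fallback is wrong. A point of increase of $y\mapsto\nu((q,x']\times(q',y])$ only gives disjointness of geodesics with two \emph{different} directions $q<x'$; it says nothing about two $x'$-directed geodesics. For a fixed direction $x'$, such points form a set of dimension $1/2$. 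By Theorem~\ref{thm: fixed dir coal}, only countably many endpoints carry two distinct $x'$-directed geodesics, so these points are typically \emph{not} in $\mathsf{Disj}_{\verticalbond{}}$. Also, the set $N$ in Proposition~\ref{prop: semi-inf geod jump simultaneous} is used there to obtain $Z^L_2=Z^R_2$, i.e.\ uniqueness, which is the opposite of separation.

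What is missing is the stabilisation half of Lemma~\ref{lemma: geodesic approx}, applied at $(x,y)$ itself. It is exactly what neutralises ``disjointness is not closed under limits''. Suppose the zig-zags of $\pi^L[x\to(y,1)]$ and $\pi^R[x\to(y,1)]$ share a point $(\tau,\ell)$ with $\tau<y$. Choose rationals $a<\tau<b<y$ at which neither path jumps.
\begin{itemize}
\item The approximants $\pi[u\to(w,1)]$ ($u\nearrow x$, $w\nearrow y$) and $\pi[v\to(z,1)]$ ($v\searrow x$, $z\searrow y$) are integer valued and converge pointwise. So they eventually agree with $\pi^L$, respectively $\pi^R$, on neighbourhoods of $a$ and of $b$.
\item Their restrictions to $[a,b]$ are then geodesics with the same rational endpoints as the restrictions of the limits. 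Almost sure uniqueness of such geodesics forces equality on $[a,b]$; this is the geodesic-bubble argument of Lemma~\ref{lemma: geodesic approx}, which is unaffected by the moving endpoints because $b<y$.
\item Hence the zig-zags of $\pi[u\to(w,1)]$ and $\pi[v\to(z,1)]$ both contain $(\tau,\ell)$. So $\nu((u,v]\times(w,z])=0$ and $(x,y)\notin\mathrm{supp}(\nu)$.
\end{itemize}
This gives $\mathrm{supp}(\nu)\cap((0,\infty)\times\R)\subseteq\mathsf{Disj}_{\verticalbond{}}$ directly. Hence $\mathsf{Disj}_{\verticalbond{}}$ equals the support, as the paper asserts, and so is already closed. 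Theorem~\ref{thm: airy sheet monotone haus dim} then supplies both dimension bounds and nowhere density.
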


\begin{proof}
To compute the Hausdorff dimension of $\mathsf{Disj}_{\verticalbond{}}$, it suffices to observe that almost surely, 
\begin{align*}
    \mathsf{Disj}_{\verticalbond{}} = \mathrm{supp}(\nu, (0,\infty)\times \R)
\end{align*}
with $\nu$ as in Theorem \ref{thm: airy sheet monotone haus dim}.

\underline{$\subseteq$}: Indeed, by the monotonicity of left most/rightmost geodesics with respect to their endpoints and asymptotic directions, we have that if $(x, y)\in \mathsf{Disj}_{\verticalbond{}}$, then for any rational $0<u< x< v$ and $w< y< z$, we have the unique (item $(2)$ in Lemma \ref{lemma: semi-inf geod properties}) geodesics $\pi[u\to (w, 1)]$ and $\pi[v\to (z, 1)]$ are disjoint before $y$ (any possibly common vertical interval is taken care of by taking $w< y$). Then arguing exactly as in Proposition \ref{prop: local variation geod disj}, we obtain that $x\in \mathrm{supp}(\nu, (0, \infty)\times \R)$. This gives the upper bound of $1/2$ by Theorem \ref{thm: airy sheet monotone haus dim}.

\underline{$\supseteq$}: Now if $x\in  \mathrm{supp}(\nu, (0, \infty)\times \R)$ then for any rational $0<u< x< v$ and $w< y< z$ we have the unique (item $(2)$ in Lemma \ref{lemma: semi-inf geod properties}) $\pi[u\to (w, 1)]$ and $\pi[v\to (z, 1)]$ are disjoint.

Suppose $(x, y) \notin \mathsf{Disj}_{\verticalbond{}}$. Then by geodesic uniqueness in the Airy line ensemble, see the remark after Lemma \ref{lemma: geod location brown env lpp} and the approximation Lemma \ref{lemma: geodesic approx} (see Figure \ref{fig: geod approx}), it cannot be the case that the point of intersection $(z, \ell)$ of the zigzags (recall \eqref{eq: zig-zag}) of $\pi^L[x\to (y, 1)]$ and $\pi^R[x\to (y, 1)]$ occurs after a horizontal segment of both paths. Since if that were the case, there would exist $0<u< x< v$ and $w< y< z$ such that the unique geodesics (item $(2)$ in Lemma \ref{lemma: semi-inf geod properties}) $\pi[u\to (w, 1)]$ and $\pi[v\to (z, 1)]$ are not disjoint; in particular, they would share $(z, \ell)$. Arguing as in the proof of Proposition \ref{prop: local variation geod disj} would thus contradict $x\in \mathrm{supp}(\nu, (0, \infty)\times \R)$. This means the geodesics must be disjoint before $y$, which concludes the proof of this part.

To prove $ \mathsf{Disj}_{\verticalbond{}}$ is nowhere dense, observe that by the approximation lemma, Lemma \ref{lemma: geodesic approx}, for all rational pairs $(q, r) \in (0, \infty)\times \R$ the uniqueness of geodesics with such endpoints (item $(2)$ in Lemma \ref{lemma: semi-inf geod properties}) guarantees that there exists a an open set $(q,r)\ni U_{q, r} \not \in \mathrm{supp}(\nu, (0,\infty)\times \R)$. Closedness follows immediately by the equality previously established, which means $ \mathsf{Disj}_{\verticalbond{}}$ is the topological support of a random measure.
\end{proof}
\begin{figure}[ht]
    \centering
    \includegraphics[width=0.65\linewidth]{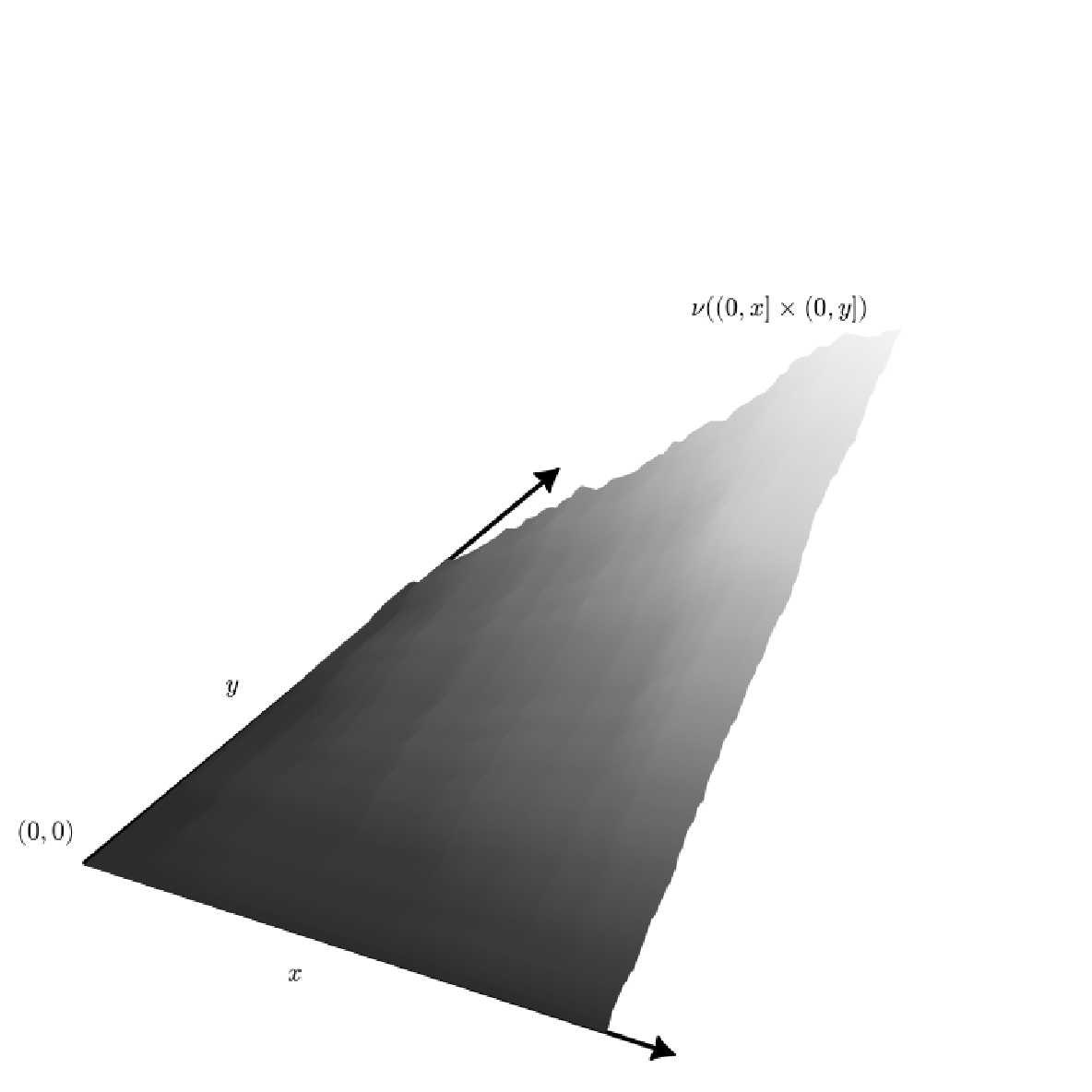}
    \caption{Illustration of the map $(x, y)\mapsto  \nu((0, x]\times (0, y])$ on some bounded rectangle of $[0, \infty)\times [0, \infty)$.}
    \label{fig: airy diff cdf}
\end{figure}

We now consider for a fixed asymptotic direction, exceptional right endpoints where semi-infinite geodesic non-uniqueness occurs. This is the content of the following theorem.

\begin{theorem}\label{thm: fixed dir coal}
    Fix $x>0$. Almost surely, the set
    \[
        \mathsf{NonUnique}^x_{\uparrow}
        :=
        \left\{y\in\R:
        \begin{array}{l}
            \text{there are at least two semi-infinite geodesics}\\
            \text{with direction $x$ ending at $(y,\ell)$ for some $\ell\ge1$}
        \end{array}
        \right\}
    \]
    is at most countably infinite.
\end{theorem}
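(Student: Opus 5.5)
The plan is to reduce non-uniqueness to the existence of distinct leftmost and rightmost geodesics, and then inject the exceptional set into a countable family of intervals. Fix $x>0$. By item (2) of Lemma \ref{lemma: semi-inf geod properties}, on $\Omega_0$ every geodesic with direction $x$ ending at $(y,\ell)$ lies between $\pi^L[x\to(y,\ell)]$ and $\pi^R[x\to(y,\ell)]$. Hence $y\in\mathsf{NonUnique}^x_{\uparrow}$ if and only if $\pi^L[x\to(y,\ell)]\neq\pi^R[x\to(y,\ell)]$ for some $\ell\ge1$. It therefore suffices to show that, for each fixed $\ell$, the set $N_\ell:=\{y:\pi^L[x\to(y,\ell)]\ne\pi^R[x\to(y,\ell)]\}$ is countable, and then to take a countable union over $\ell$.

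The key step is a disjoint-intervals argument. We work on the full-measure event where, for the fixed direction $x$ and all rational $y$, the geodesics $\pi[x\to(y,m)]$ exist, are unique and pairwise coalesce (items (1), (3) of Lemma \ref{lemma: semi-inf geod properties}). We also assume finite geodesics between rational endpoints are unique (the remark after Lemma \ref{lemma: geod location brown env lpp}).

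Take $y\in N_\ell$. Since $\pi^L\le\pi^R$ and they differ, there is some time $t<y$ and an open time interval around it on which $\pi^L[x\to(y,\ell)]<\pi^R[x\to(y,\ell)]$ strictly. Both paths are $x$-directed, so each coalesces (by the approximation Lemma \ref{lemma: geodesic approx} and monotonicity in the endpoint) with the unique geodesics $\pi[x\to(q,\ell)]$ for rational $q$ near $y$. Concretely, approximating from the left and from the right by rationals, $\pi^L$ agrees on compacts with $\pi[x\to(q_n,\ell)]$ for $q_n\nearrow y$, and $\pi^R$ with $\pi[x\to(r_n,\ell)]$ for $r_n\searrow y$. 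Since the rational geodesics coalesce far back, $\pi^L$ and $\pi^R$ also coalesce below some depth. They therefore form a ``geodesic bubble'' that separates from a common point and rejoins, or rejoins only at the endpoint region near $y$.

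We choose a rational space--time point $(p_y,j_y)$ strictly inside the region enclosed between the zig-zags of $\pi^L[x\to(y,\ell)]$ and $\pi^R[x\to(y,\ell)]$. Then $\pi[x\to(p_y,j_y)]$, which is unique, is trapped between them and must hit $(y,\ell)$'s geodesic tree at a point above the bubble. We claim the map $y\mapsto(p_y,j_y)$ is injective on $N_\ell$. Suppose $y<y'$ both lie in $N_\ell$. By monotonicity of leftmost and rightmost geodesics in the endpoint, $\pi^R[x\to(y,\ell)]\le\pi^L[x\to(y',\ell)]$, so the two open bubble regions are disjoint and the chosen rational points differ. This gives $|N_\ell|\le|\Q\times\N|$.

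The main obstacle is justifying this monotonicity across different endpoints without circularity, and checking that the bubble region has nonempty interior containing a rational point. For the ordering, I would apply the ordering of rational geodesics $\pi[x\to(q,\ell)]\le\pi[x\to(q',\ell)]$ for $q<q'$, which holds by uniqueness and planarity, and pass to the monotone limits defining $\pi^L,\pi^R$ in Lemma \ref{lemma: geodesic approx}. Rationals $q\in(y,y')$ separate the two bubbles.

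For the nonempty interior, a potential degenerate case is when $\pi^L$ and $\pi^R$ differ only by an initial vertical segment at $y$, that is, they differ only at time $y$. In that case I would instead pick the rational point $(p,j)$ with $p<y$ close to $y$ on the horizontal segment just left of $y$ where the paths sit on different levels. This gives a genuine open set between them, because jump times are distinct from $y$ just to the left. If even this fails, I would use the level index $\ell$ together with the jump data at $y$ as the injective label. Uncountably many such vertical coincidences at distinct $y$ would force uncountably many disjoint horizontal strips, which is impossible.
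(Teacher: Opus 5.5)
Your argument is correct, and it takes a genuinely different route from the paper's.

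\textbf{The paper's route.} It uses coalescence to pick a rational point $(z,m)$ on the common tail of all $x$-directed geodesics ending in $[-n,n]\times\{\ell\}$. It then reduces to non-uniqueness of finite geodesics from $(z,m)$ to $(y,\ell)$ and transfers to independent Brownian motions by absolute continuity. Finally it inducts on $m-\ell$, identifying exceptional $y$ with right endpoints of excursions of the gap process $\max_{z\le s\le y}H(s)-H(y)$. This gives finer information on how non-uniqueness arises: either a two-point argmax $\{a,y\}$ for the jump into level $\ell$, or inheritance from level $\ell+1$ when that jump sits at $y$.

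\textbf{Your route.} You use a soft planarity argument instead. The nonempty open regions $\{(s,h):s<y,\ \pi^L[x\to(y,\ell)](s)<h<\pi^R[x\to(y,\ell)](s)\}$ are pairwise disjoint for distinct $y$, so they can be labelled injectively by rational points. This is the same device the paper uses for $\mathsf{NoCoalTouch}$ in Theorem \ref{thm: geod no coal h dim}. It needs no absolute continuity, no induction and not even coalescence. So your bubble/coalescence discussion, and the remark that $\pi[x\to(p_y,j_y)]$ is trapped, can simply be dropped.

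\textbf{Points to tighten.}

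First, the degenerate case you worry about is vacuous. Both extremal paths equal $\ell$ at time $y$ and are right-continuous. So if they differ, there are $s<y$ and $\delta>0$ with $\pi^L<\pi^R$ on $[s,s+\delta)$, and the fallback about horizontal strips is unnecessary.

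Second, when the two paths sit on adjacent levels there is no integer strictly between them, so a label in $\Q\times\N$ with height strictly inside the region may not exist. Use height $\pi^L[x\to(y,\ell)](p_y)+\tfrac12$ instead. Alternatively, label by $(p_y,\pi^R[x\to(y,\ell)](p_y))$, which is still injective by the ordering.

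Third, Lemma \ref{lemma: geodesic approx} approximates in direction and endpoint simultaneously, not in the endpoint alone at fixed $x$ as you wrote. A clean justification of the cross-endpoint ordering is as follows. Insert a rational $r\in(y,y')$ and use a common rational direction sequence $w_n\searrow x$ (respectively $z_n\nearrow x$) in the approximations. Together with ordering of unique geodesics with the same rational direction, this gives $\pi^R[x\to(y,\ell)]\le\pi^R[x\to(r,\ell)]=\pi[x\to(r,\ell)]=\pi^L[x\to(r,\ell)]\le\pi^L[x\to(y',\ell)]$. The equalities come from item (1) of Lemma \ref{lemma: semi-inf geod properties} for the fixed $x$ and rational $r$; this is exactly where fixing $x$ is used.
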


\begin{proof}
    For $n,\ell\in\N$, let
    \[
        \mathcal N^x_{n,\ell}
        :=
        \left\{y\in[-n,n]:
        \begin{array}{l}
            \text{there are at least two semi-infinite geodesics}\\
            \text{with direction $x$ ending at $(y,\ell)$}
        \end{array}
        \right\}.
    \]
    Observe Since
    \[
        \mathsf{NonUnique}^x_{\uparrow}
        =
        \bigcup_{n,\ell\in\N}\mathcal N^x_{n,\ell}\,,
    \]
    it therefore suffices to prove that $\mathcal N^x_{n,\ell}$ is
    almost surely countable for every fixed $n,\ell\in\N$.

    By the coalescence of geodesics with fixed asymptotic direction,
    item $(3)$ of Lemma~\ref{lemma: semi-inf geod properties}, and
    the ordering of the extremal geodesics, all semi-infinite
    geodesics with direction $x$ ending at $(y,\ell)$,
    $y\in[-n,n]$, eventually coalesce. Indeed, it suffices to
    consider the leftmost geodesic ending at $(-n,\ell)$ and the
    rightmost geodesic ending at $(n,\ell)$.
    We may therefore choose a point $(z,m)$ on this common tail
    with $z<-n$ rational and $m\ge\ell+1$.
    Restricting the geodesics to the portion after $(z,m)$ gives
    \[
        \mathcal N^x_{n,\ell}
        \subseteq
        \left\{y\in[-n,n]:
        \begin{array}{l}
            \text{there are at least two geodesics}\\
            \text{from $(z,m)$ to $(y,\ell)$}
        \end{array}
        \right\}.
    \]

    Since each finite collection of Airy lines is locally
    absolutely continuous with respect to independent rate two
    Brownian motions see Section \ref{sec: preliminaries}, it suffices to prove that, for every
    $n,\ell,m\in\N$ with $m\ge\ell+1$ and every rational $z<-n$,
    the set
    \[
        \mathsf{NonUnique}^{z,n,m,\ell}_{\uparrow}
        :=
        \left\{y\in[-n,n]:
        \begin{array}{l}
            \text{there are at least two Brownian last passage}\\
            \text{geodesics from $(z,m)$ to $(y,\ell)$}
        \end{array}
        \right\}
    \]
    is almost surely countable.

    Let $B$ be an environment of independent rate two Brownian
    motions. By geodesic uniqueness for deterministic endpoints (see the remark after Lemma \ref{lemma: geod location brown env lpp}),
    we may work on the probability-one event on which, for all
    $m\ge k\ge1$ and all rational $z<q$, the geodesic from
    $(z,m)$ to $(q,k)$ is unique.
    We will proceed by induction on the relative depth
    $m-\ell\ge1$.

    \underline{Base case, $m=\ell+1$}:
    Write
    \[
        W(s):=B_{\ell+1}(s)-B_\ell(s),\qquad
        M(t):=\max_{z\le s\le t}W(s),\qquad
        \widetilde W(t):=M(t)-W(t).
    \]
    By independence, $W$ is a rate four Brownian motion.
    By the metric composition law,
    Lemma~\ref{lemma: Metric Composition}, the possible jump
    times from level $\ell+1$ to level $\ell$ are precisely in the set
    \[
        A(y):=\operatorname*{argmax}_{z\le s\le y}W(s),
    \]
    whose minimum and maximum are
    $\inf A(y):= Z^L_m((z,m),(y,\ell))$ and
    $\sup A(y):= Z^R_m((z,m),(y,\ell))$, respectively.
    Incidentally, by L\'evy's theorem,
    see \cite[Theorem~2.34]{morters_peres_2010},
    $\widetilde W(z+\cdot)$ has the law of the absolute value
    of a rate four Brownian motion started at zero.

    Observe that the set of maximisers $A(y)$ cannot contain two points $a<b<y$ as this would violate geodesic
    uniqueness from $(z,m)$ to $(q,\ell)$.
    Hence, if $A(y)$ is not a singleton, then $A(y)=\{a,y\}$ for some $a < y$. In this case,
    \[
        \widetilde W(a)=\widetilde W(y)=0,
        \qquad
        \widetilde W(t)>0\quad\text{for }a<t<y,
    \]
    so $(a,y)$ is an excursion of $\widetilde W$ away from zero.
    Conversely, if $(a,b)$ is such an excursion with $b<\infty$,
    then $A(b)=\{a,b\}$.
    Thus $\mathsf{NonUnique}^{z,n,m,\ell}_{\uparrow}$
    consists precisely of the finite right endpoints in
    $[-n,n]$ of excursions of $\widetilde W$ away from zero.
    There are at most countably many such excursions, since
    $\{t>z:\widetilde W(t)>0\}$ is open.
    This concludes the proof of the base case.

    \underline{Induction step, $m\ge\ell+2$}:
    For $k\in\{\ell,\ell+1\}$ and $s\ge z$, write
    \[
        G_k(s):=B[(z,m)\to(s,k)],
        \qquad
        H(s):=G_{\ell+1}(s)-B_\ell(s).
    \]
    By the metric composition law, Lemma \ref{lemma: Metric Composition}
    \[
        G_\ell(y)
        =
        \max_{z\le s\le y}
        G_{\ell+1}(s)+B_\ell(y)-B_\ell(s)
        =
        B_\ell(y)+\max_{z\le s\le y}H(s).
    \]
    Thus the possible jump times from level $\ell+1$ to
    level $\ell$ are precisely in the set
    \[
        B(y):= \operatorname*{argmax}_{z\le s\le y}H(s).
    \]
    Observe that with the gap process
    \[
        \widetilde H(y)
        :=
        \max_{z\le s\le y}H(s)-H(y)
        =
        G_\ell(y)-G_{\ell+1}(y)\,,
    \]
    the excursion argument from the base case applies to $H, \tilde{H}$. Observe for any $y\in [-n, n]$, the set $B(y)$ cannot contain two $ z \le a  < b< y$, as this would violate uniqueness. Hence, whenever the jump time at $y$ is not unique, the maximizers in $B(y)$ are exactly $\{a,y\}$, where $(a,y)$ is an
    excursion of $\widetilde H$ away from zero.
    In particular, the set
    \[
        \mathcal E
        :=
        \left\{y\in[-n,n]:
        Z^L_{\ell+1}((z,m),(y,\ell))
        <
        Z^R_{\ell+1}((z,m),(y,\ell))
        \right\}
    \]
    is countable, by continuity.

    Now fix $y\in[-n,n]\setminus\mathcal E$ and let
    \[
        s
        :=
        Z^L_{\ell+1}((z,m),(y,\ell))
        =
        Z^R_{\ell+1}((z,m),(y,\ell)).
    \]
    Suppose, $s<y$, then all geodesics from $(z,m)$ to $(y,\ell)$
    share the horizontal segment from $(s,\ell)$ to $(y,\ell)$.
    Choose a rational $q\in(s,y)$.
    Their restrictions to $(q,\ell)$ are geodesics from
    $(z,m)$ to $(q,\ell)$, and so coincide.
    Since their remaining portions are the same horizontal
    segment, the original geodesics are identical as well.

    If $s=y$, any two distinct geodesics
    from $(z,m)$ to $(y,\ell)$ restrict to two distinct
    geodesics from $(z,m)$ to $(y,\ell+1)$.
    We therefore have
    \[
        \mathsf{NonUnique}^{z,n,m,\ell}_{\uparrow}
        \setminus\mathcal E
        \subseteq
        \mathsf{NonUnique}^{z,n,m,\ell+1}_{\uparrow}.
    \]
    The set on the right is countable by the induction
    hypothesis, and $\mathcal E$ is countable.
    This concludes the inductive step and the proof.
\end{proof}

We now obtain a more refined description of the set of directions of geodesic coalescence.

\begin{theorem}\label{thm: geod no coal h dim}
    Set
    \[
    \mathsf{NoCoal}_{\nocoal}
    :=
    \bigcup_{\substack{t<t'\in\R\\m\le m'\in\N}}
    \left\{x>0:
    \begin{aligned}
        &\pi^L[x\to(t,m)] \text{ and } \pi^R[x\to(t',m')]\\
        &\text{do not coalesce}
    \end{aligned}
    \right\}.
    \]
    Then, there exists an event $\Omega_0$ of full probability
    on which $\mathsf{NoCoal}_{\nocoal}$ has Hausdorff dimension
    equal to $\frac12$. However,
    \[
    \mathsf{NoCoalTouch}_{\nocoaltouch}
    :=
    \bigcup_{\substack{t<t'\in\R\\m\le m'\in\N}}
    \left\{x>0:
    \begin{aligned}
        &\pi^L[x\to(t,m)] \text{ and } \pi^R[x\to(t',m')]\\
        &\text{do not coalesce and are not disjoint}
    \end{aligned}
    \right\}
    \]
    is at most countable on $\Omega_0$.
\end{theorem}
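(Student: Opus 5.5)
We take $\Omega_0$ to be the intersection of the full-probability events from Lemma~\ref{lemma: semi-inf geod properties} (existence and uniqueness of geodesics with rational endpoints and directions, coalescence for rational directions), Lemma~\ref{lemma: geodesic approx}, Theorem~\ref{thm: busemann cont} and Corollary~\ref{corollary: geod two end points disjoint}, the latter taken for all rational $y_1<y_2$ and all $\ell$. The plan has three parts. First we reduce the uncountable union over $(t,m,t',m')$ to a countable one. Then we identify non-coalescence with disjointness of extremal geodesics, up to a countable error. Finally we prove countability of the touching set by a branching argument.

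\emph{Step 1 (reduction).} Fix $x$ in the set with data $t<t'$ and $m\le m'$. By geodesic monotonicity we have
\[
\pi^L[x\to(\lfloor t\rfloor-1,1)]\le\pi^L[x\to(t,m)]\quad\text{and}\quad \pi^R[x\to(t',m')]\le\pi^R[x\to(\lceil t'\rceil+1,m')].
\]
If the inner pair does not coalesce, the outer pair cannot coalesce either: a common tail of the outer pair would sandwich the inner pair into coalescing, by Lemma~\ref{lemma: geod tree} and uniqueness for rational endpoints. Hence $\mathsf{NoCoal}_{\nocoal}\subseteq\bigcup_{q<q'\in\Q,\ m'\in\N}\{x:\pi^L[x\to(q,1)],\pi^R[x\to(q',m')]\text{ do not coalesce}\}$. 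The reverse containment of each term is trivial, so dimension is unchanged.

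\emph{Step 2 (no-coalescence equals disjointness, up to a countable set).} Disjoint extremal pairs do not coalesce, so Corollary~\ref{corollary: geod two end points disjoint} and countable stability give $\dim\ge 1/2$.

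For the upper bound, take $x$ in a fixed term of Step 1 and suppose the two paths touch at some point $(z,j)$ but fail to coalesce. By the approximation lemma, Lemma~\ref{lemma: geodesic approx}, the two paths agree on $[z-1,q']$ with $\pi[r\to(q,1)]$ and $\pi[r'\to(q',m')]$ for rational $r<x<r'$ close to $x$. The point is that they touch and then separate again below $(z,j)$. We record rational data attached to this configuration: a rational $w<z$ lying on a horizontal segment of $\pi^L$ after the split, and a rational point $(w',j')$ on $\pi^R$. Because the two extremal paths are forced apart below the touching point while both are locally unique, $x$ is pinned down by this data. The pinning comes from the map $u\mapsto\pi^{\square}[u\to(q,1)]$ restricted to $[w,q]$, which is monotone and locally constant off a discrete set of switching directions; each switch is determined by which rational segment is visited. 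This is the argument for the second claim, and it shows the touching set is countable.

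It follows that $\mathsf{NoCoal}_{\nocoal}\setminus\mathsf{NoCoalTouch}_{\nocoaltouch}$ is contained in the disjointness sets of Corollary~\ref{corollary: geod two end points disjoint}. Each of these has dimension $1/2$, and countable unions together with countable sets preserve this, so $\dim=1/2$ on $\Omega_0$.

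\emph{Main obstacle.} The hardest step is the countability of $\mathsf{NoCoalTouch}_{\nocoaltouch}$. We would make the injection concrete as follows. To each such $x$, associate the rational triple $(q,m',\text{first level }j)$ and the maximal common segment beginning at $(z,j)$. Distinct $x$ with the same data would give disjoint open intervals of directions on which the Busemann profile $\Delta^{(q,1)}_{(q',m')}$ is constant; such intervals are provided by Proposition~\ref{prop: local variation geod disj}, since touching means the geodesics are not disjoint. The difficulty is that $x$ is then an endpoint of one of these constancy intervals, and there are only countably many such endpoints; all of the work goes into verifying that touching-without-coalescing forces $x$ to be such an endpoint. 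We would try to prove this by contradiction. If $x$ were interior to a constancy interval, Lemma~\ref{lemma: crossing equiavlence} would make the approximating geodesics from both sides share a common point, and uniqueness at rational directions would then force coalescence of the extremal paths.
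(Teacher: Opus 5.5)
Your architecture matches the paper's. You reduce to countably many endpoint pairs, take the lower bound from Corollary~\ref{corollary: geod two end points disjoint}, and split the non-coalescing directions into a disjoint part (dimension $\tfrac12$) and a touching part that must be shown countable. \textbf{The gap is in this last step}, on which both the second claim and your dimension upper bound rest.

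\textbf{The route in your final paragraph is wrong.} As you note yourself, Proposition~\ref{prop: local variation geod disj} (in the form you invoke for $\Delta^{(q,1)}_{(q',m')}$) puts a touching direction outside the closed set $\mathsf{LV}$. It therefore lies in the \emph{interior} of a constancy interval. The endpoints of maximal constancy intervals lie in $\mathsf{LV}$ and correspond to disjoint pairs. So the statement you set out to prove (touching without coalescing forces $x$ to be an endpoint) is false.

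The contradiction sketch fails accordingly. For rational $r<x<r'$, the geodesics $\pi[r\to(q,1)]$ and $\pi[r'\to(q',m')]$ have different directions and never coalesce, so their sharing a point merely reproduces the touching. The Busemann profile distinguishes disjoint from non-disjoint, but not coalescing from touching-then-separating. It therefore cannot count these directions, and nothing prevents many of them from lying in one constancy interval.

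\textbf{The pinning in Step~2 is only asserted.} Your switching mechanism needs $x$ to be a switching direction of $u\mapsto\pi^\square[u\to(q,1)]|_{[w,q]}$, which you never show. Your appeal to local uniqueness of the extremal paths points the wrong way: the touching configuration actually forces non-uniqueness at $x$.

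That fact is the missing idea, and it completes your switching route for data $(q,m)$, $(q',m')$ with $q<q'$ rational.
\begin{itemize}
\item Let $(s,\ell)$ be a contact point, and pick rational $w<s$ with $\pi^L[x\to(q,m)](w)\neq\pi^R[x\to(q',m')](w)$; this exists because the paths do not coalesce.
\item For rational $u\searrow x$, Lemma~\ref{lemma: geodesic approx} makes $\pi[u\to(q,m)]$ and $\pi[u\to(q',m')]$ agree on $[w,q]$ and $[w,q']$ with $\pi^R[x\to(q,m)]$ and $\pi^R[x\to(q',m')]$ respectively.
\item Suppose $\pi^R[x\to(q,m)]=\pi^L[x\to(q,m)]$ on $[w,q]$. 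Then this $u$-pair would touch at $(s,\ell)$ and differ at time $w$, yet still coalesce by item~(3) of Lemma~\ref{lemma: semi-inf geod properties}.
\item Rerouting through $(s,\ell)$ from a rational point on their common tail then gives two distinct geodesics from that point to $(q',m')$. This contradicts uniqueness for rational endpoints.
\end{itemize}
Hence $\pi^L[x\to(q,m)]\neq\pi^R[x\to(q,m)]$. The touching set therefore lies in the set of Proposition~\ref{prop: multiple geod fixed endpt} (with the remark following it), which is countable.

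The paper reaches countability differently. It attaches to each such $x$ the open region between the two extremal paths below their last contact. It then shows these regions are pairwise disjoint by comparing $\pi^R[x\to(t',m')]$ with $\pi^L[y\to(t,m)]$ on $(-\infty,s]$ for $x<y$.

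\textbf{A second, smaller gap is in Step~1.} Moving $t,t'$ to far-away integers preserves non-coalescence but not touching. To reduce $\mathsf{NoCoalTouch}$ itself (a union over real $t<t'$) to rational data, use rationals just outside $t$ and $t'$. Take them close enough that the extremal paths are unchanged near the contact point; the outward sandwich still preserves non-coalescence.
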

\begin{proof}
We start by computing the Hausdorff dimension of
$\mathsf{NoCoal}_{\nocoal}$.

\underline{Lower bound}: To establish the lower bound in the
Hausdorff dimension of $\mathsf{NoCoal}_{\nocoal}$, observe that
by Proposition \ref{prop: local variation geod disj}, almost
surely, for all $\ell \ge 1$,
\[
    \left\{x>0:
    \begin{aligned}
        &\pi^L[x\to(0,1)] \text{ and } \pi^R[x\to(0,\ell)]\\
        &\text{are disjoint}
    \end{aligned}
    \right\}
    =
    \mathsf{LV}\bigl((0,\infty),
    \Delta^{(0,1)}_{(0,\ell)}\bigr).
\]

This means $\mathsf{NoCoal}_{\nocoal}$ contains the set of
local variation of $\Delta^{(y_2,1)}_{(y_2,1)}$, $y_1<y_2$,
which by Corollary \ref{corollary: geod two end points disjoint}
has Hausdorff dimension equal to $1/2$.

\underline{Upper bound}: Observe that by geodesic monotonicity
and uniqueness (item $(2)$ in
Lemma \ref{lemma: semi-inf geod properties}), we have
\[
    \mathsf{NoCoal}_{\nocoal}
    \subseteq
    \bigcup_{n\in\N}
    \left\{x>0:
    \begin{aligned}
        &\pi^L[x\to(-n,1)] \text{ and } \pi^R[x\to(n,1)]\\
        &\text{do not coalesce}
    \end{aligned}
    \right\}.
\]
By Proposition \ref{prop: local variation geod disj},
$\mathsf{NoCoal}_{\nocoal}\setminus\mathsf{NoCoal}_{\nocoal}$ is thus contained in the union
of the sets of local variation of
$\Delta^{(-n,1)}_{(n,1)}$, $n\ge1$, which by
Corollary \ref{corollary: geod two end points disjoint}
have Hausdorff dimension equal to $1/2$. 

We now prove that the set of geodesic non-disjointness and no coalescence $\mathsf{NoCoalTouch}_{\nocoaltouch}$
is at most countable. This would then yield the upper bound on the Hausdorff dimension of $\mathsf{NoCoal}_{\nocoal}$.

By the approximation lemma,
Lemma \ref{lemma: geodesic approx} and geodesic monotonicity, we have
\[
    \mathsf{NoCoalTouch}_{\nocoaltouch}
    =
    \bigcup_{\substack{t<t'\in\Q\\m\le m'\in\N}}
    \left\{x>0:
    \begin{aligned}
        &\pi^L[x\to(t,m)] \text{ and } \pi^R[x\to(t',m')]\\
        &\text{do not coalesce and are not disjoint}
    \end{aligned}
    \right\}.
\]
We thus consider fixed $t<t'\in\Q$, $m\le m'\in\N$ in
the remainder of the proof and show that, almost surely,
\[
    \left\{x>0:
    \begin{aligned}
        &\pi^L[x\to(t,m)] \text{ and } \pi^R[x\to(t',m')]\\
        &\text{do not coalesce and are not disjoint}
    \end{aligned}
    \right\}
\]
is at most countable.

We start with the following observation. If
$\pi^L[x\to(t,m)]$ and $\pi^R[x\to(t',m')]$ do not
coalesce, then by geodesic uniqueness in the Airy line
ensemble for geodesics with rational endpoints (see the
remark after Lemma \ref{lemma: geod location brown env lpp}),
eventually the geodesics must become disjoint. Now, if they
were not entirely disjoint, then for every asymptotic direction $x$
in the set above, we can associate an open subset
$\mathcal{U}_x\subset(0,\infty)\times\R$ that is the space
between the geodesics $\pi^L[x\to(t,m)]$ and
$\pi^R[x\to(t',m')]$ after their last point of contact
(see Figure \ref{fig: geod coal not disj} for an illustration).

Moreover, we claim that for all $x\neq y$,
$\mathcal{U}_x\cap\mathcal{U}_y=\emptyset$. This would
give an injection of the set above into $\Q\cap (0, \infty)$,
concluding the proof of countability.

\begin{figure}[ht]
    \centering
    \includegraphics[width=0.65\linewidth]{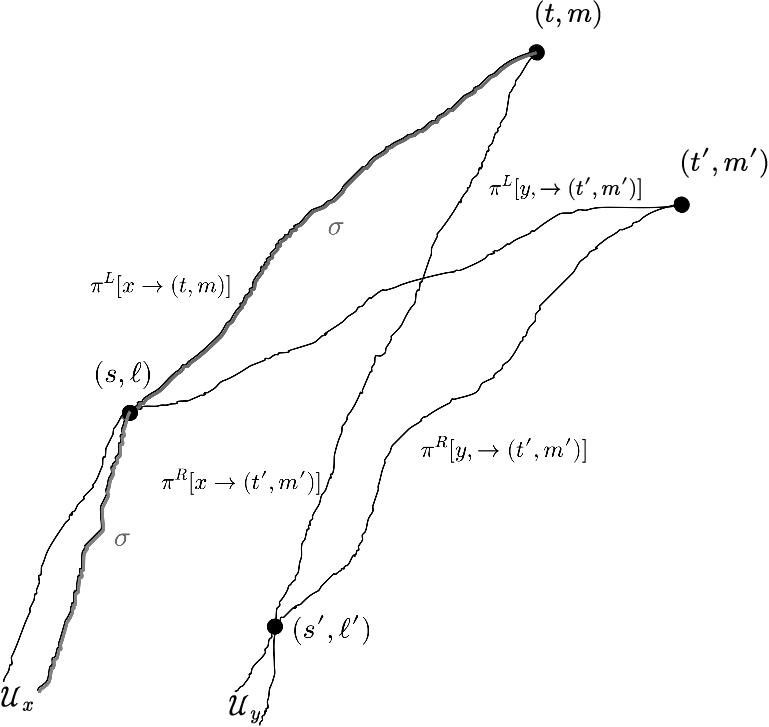}
    \caption{Illustration of non-coalescing geodesics
        which are not disjoint and the space between
        them given by open sets $\mathcal{U}_x$.}
    \label{fig: geod coal not disj}
\end{figure}

Indeed, let $x<y$. Then, observe that $\mathcal{U}_x$ is
bounded to the right by $\sigma$, which is a path given
by the concatenation of
$\pi^L[x\to(t,m)]|_{[s,t]}$ with
$\pi^R[x\to(t',m')]|_{(-\infty,s]}$, where $(s,\ell)$
is the last point of contact of the zig-zags of $\pi^L[x\to(t,m)]$ and
$\pi^R[x\to(t',m')]$
(see Figure \ref{fig: geod coal not disj}). Now, consider
the path $\pi^L[y\to(t,m)]$, which bounds $\mathcal{U}_y$
from the left. We claim that we have the ordering
$\sigma(z)\le\pi^L[y\to(t,m)](z)$ for all $z\le t$.

Now, by geodesic monotonicity, we have
\[
    \sigma|_{[s,t]}
    =
    \pi^L[x\to(t,m)]|_{[s,t]}
    \le
    \pi^L[y\to(t,m)]|_{[s,t]}
\]
pointwise. In particular,
$\sigma(s)=\pi^L[x\to(t,m)](s)
\le\pi^L[y\to(t,m)](s)$.
Moreover, by geodesic monotonicity
(by geodesic uniqueness, see the remark after Lemma \ref{lemma: geod location brown env lpp},
see also Figure \ref{fig: geod approx}), we have the
ordering (see Figure \ref{fig: geod coal not disj})
\begin{align*}
    \sigma|_{(-\infty,s]}
    &=
    \pi^R[x\to(t',m')]|_{(-\infty,s]}\\
    &\le
    \pi^R[x\to(s,\ell)]|_{(-\infty,s]}\\
    &\le
    \pi^L[y\to(s,\ell)]|_{(-\infty,s]}\\
    &\le
    \pi^L[y\to(t,m)]|_{(-\infty,s]}.
\end{align*}
\end{proof}

We consider for a fixed starting point, the set of directions for which there is geodesic non-uniqueness.

\begin{proposition}\label{prop: multiple geod fixed endpt}
    Fix $y\in \R$. There exists an event $\Omega_0$ of full probability such that on $\Omega_0$, the set of directions
    \[
    \mathsf{NonUnique}^y_{\downarrow}: = \{x> 0: \pi^L[x\to (y, 1)] \neq  \pi^R[x\to (y, 1)]\}
    \]
    is a dense countably infinite subset of $(0, \infty)$.
\end{proposition}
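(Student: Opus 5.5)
Without loss of generality I take $y=0$; the general case follows from the shift invariance in item $(4)$ of Lemma \ref{lemma: semi-inf geod properties}. The proof has two parts: countability, and density (which gives infinitude).

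\emph{Countability.} For $x<x'$, geodesic monotonicity gives the ordering
\[
\pi^L[x\to (0,1)]\le \pi^R[x\to(0,1)]\le \pi^L[x'\to(0,1)]
\]
pointwise on $(-\infty,0]$. If $x\in\mathsf{NonUnique}^0_{\downarrow}$, the two extremal paths differ. Both are geodesics ending at $(0,1)$. Lemma \ref{lemma: geod tree} and the uniqueness of finite geodesics with rational endpoints show that once they meet they agree all the way to $(0,1)$. Hence, below their last common point, their zig-zags bound a nonempty open region $\mathcal{U}_x\subset(-\infty,0)\times[1,\infty)$. Its nonemptiness uses that they first separate on a horizontal segment, or else at a jump where the vertical segments differ.

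The ordering above makes the regions $\mathcal{U}_x$ pairwise disjoint for distinct $x$. This is the same argument as the countability part of Theorem \ref{thm: geod no coal h dim}. Choosing a rational point in each $\mathcal{U}_x$ then gives an injection into $\Q^2$. The main technical point is that $\mathcal{U}_x$ has nonempty interior even when the paths differ only through jump positions. I would handle this by noting that distinct jump times at some level $k$ already produce a horizontal strip of positive length on level $k$ (or $k-1$) lying strictly between the two zig-zags.

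\emph{Density and infinitude.} Fix rationals $0<a<b$. By their distinct asymptotic shapes, $\pi[a\to(0,1)]$ and $\pi[b\to(0,1)]$ are different. Consider
\[
x^*:=\sup\{x\in[a,b]: \pi^L[x\to(0,1)]=\pi[a\to(0,1)] \text{ on } [t,0]\},
\]
for a fixed $t<0$ at which the two rational geodesics differ on $[t,0]$. The map $x\mapsto$ (geodesic restricted to $[t,0]$) takes finitely many values on compact intervals, since the paths have finitely many jumps there and Lemma \ref{lemma: geodesic approx} gives local stabilisation. So along $[a,b]$ this restricted path changes at some point $x^*\in[a,b]$. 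By Lemma \ref{lemma: geodesic approx}, $\pi^L[x^*\to(0,1)]|_{[t,0]}$ is the limit from the left and $\pi^R[x^*\to(0,1)]|_{[t,0]}$ is the limit from the right. Since a change occurs at $x^*$, these two restrictions differ, so $x^*\in\mathsf{NonUnique}^0_{\downarrow}\cap[a,b]$.

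The main obstacle is to justify that the restriction to $[t,0]$ is piecewise constant in $x$, so that a genuine jump exists rather than a continuum of changes. This follows from the eventual-agreement statement in Lemma \ref{lemma: geodesic approx}, applied at every $x\in[a,b]$, together with compactness of $[a,b]$. Because $a<b$ were arbitrary, the set is dense, and in particular it is infinite.
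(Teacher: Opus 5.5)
Your proposal is correct. The density half uses essentially the paper's mechanism: monotonicity in the direction plus the one-sided stabilisation of Lemma \ref{lemma: geodesic approx} make the geodesic, restricted to a window, piecewise constant in $x$. Since $\pi[a\to(0,1)]$ and $\pi[b\to(0,1)]$ differ, the geodesic must switch at some $x^*$, and at that point $\pi^L\neq\pi^R$.

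The paper phrases this as a contradiction using the jump times $Z^\square_\ell(\cdot,y)$ at every level. Your supremum over a fixed window $[t,0]$ involves only finitely many levels, which is slightly cleaner. Two remarks on it:
\begin{itemize}
\item You only need one-sided stabilisation at $x^*$, not the finitely-many-values claim.
\item Stabilisation at the rational endpoints, where $\pi^L=\pi^R$, is what rules out $x^*\in\{a,b\}$.
\end{itemize}

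For countability your route differs. The paper notes that each monotone map $x\mapsto Z_\ell(x,y)$ has at most countably many discontinuities. It then observes that $\pi^L[x\to(y,1)]\neq\pi^R[x\to(y,1)]$ forces $Z^L_\ell(x,y)\neq Z^R_\ell(x,y)$ for some $\ell$, and takes the union over $\ell$. You instead inject $\mathsf{NonUnique}^y_\downarrow$ into $\Q^2$ through pairwise disjoint open regions, as in Theorem \ref{thm: geod no coal h dim}. Both rest on the ordering $\pi^R[x\to(y,1)]\le\pi^L[x'\to(y,1)]$ for $x<x'$. The paper's version is shorter once jump times are available; yours is more geometric and avoids level-by-level bookkeeping.

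In your version the appeal to Lemma \ref{lemma: geod tree} is unnecessary, and your ``main technical point'' is immediate. Take $\mathcal{U}_x:=\{(s,z): s<y,\ \pi^L(s^-)<z<\pi^R(s)\}$. This set has the three properties you need:
\begin{itemize}
\item It is open, since $s\mapsto\pi^L(s^-)$ is upper semicontinuous and $\pi^R$ is lower semicontinuous.
\item It is nonempty once the paths differ: pick $s<y$ with $\pi^L(s)<\pi^R(s)$ and a small interval to its right on which both paths are constant, which yields a small rectangle.
\item It is disjoint from $\mathcal{U}_{x'}$ for $x\neq x'$ by the ordering above.
\end{itemize}
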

\begin{remark}
    The same holds replacing $(y, 1)$ with $(y, \ell)$, $\ell \ge 1$.
\end{remark}

\begin{proof}
    As usual work with event on which all geodesics with rational directions are unique and for all rational asymptotic directions any two semi-infinite geodesics sharing the former eventually coalesce. 
    
       We now show $\mathsf{NonUnique}^y_{\downarrow}$ is dense in $(0, \infty)$, hence at least countably infinite.

       Suppose for a contradiction that it is not dense. Let $0< u< v$ be such that for all $w\in [u, v]$ there is a unique $w$-directed semi-infinite geodesic, that is $\pi^L[w\to (y, 1)] = \pi^R[w\to (y, 1)]$. By monotonicity of geodesics, for $\ell \ge 2$ the geodesic jump times $Z^\square_\ell (w, y)$, $w\in [u, v]$ are monotone and by the second part of the approximation lemma, piecewise constant. By the supposed geodesic uniqueness on $[u, v]$ this means the jump times $Z^\square\ell(\cdot, y)$ coincide and so are continuous. This means that they must all coincide. Since $\ell$ was arbitrary, this means in fact that all geodesics $\pi[w\to (y, 1)]$ is a contradiction to the asymptotic parabolic trajectories of semi-infinite geodesics, in Definition \ref{def: semi-inf geo}.

       The countability of $\mathsf{NonUnique}^y_{\downarrow}$ follows by a similar argument using the monotonicity of the geodesic jump times above. 
\end{proof}

\section{Geometric properties of collections of geodesics}\label{sec: geod collections}

In this section, we study geometric and ergodic properties of collections of semi-infinite geodesics, Theorems \ref{thm: geodesic tree}, \ref{thm: geodesic gasket}, \ref{thm: ergodic geodesic tree} and \ref{thm: geodesic network haus dim}.

In particular, we consider \emph{collections} of geodesics as random objects in their own right. Fix $y \in \R$ and consider the union of all of the semi-infinite geodesics passing through $(y, 1)$. That is, we consider the `downward geodesic tree' (see Figure \ref{fig: geod down} for an illustration)
\begin{align}\label{eq: geod down tree}
    \mathcal{T}^y_\downarrow&:= \bigcup_{x> 0}\bigcup_{\square \in \{L, R\}}\mathsf{ZigZag}(\pi^{\square}[x\to (y, 1)])\\
    &\cup\{y\}\times [1, \infty)\cup (-\infty, y]\times \{1\}\subseteq (-\infty, y]\times[1, \infty)\,,
\end{align}
where we also include the `$0$' and `$\infty$'-directed geodesics with zig-zags $(-\infty, y]\times \{1\}$ and $\{0\}\times [1, \infty)$, respectively. 

We show that these downward geodesic collections admit a proper tree structure, compute their Hausdorff dimension and by their asymptotic parabolic shape, show that under a parabolic scaling, they fill the corner $(-\infty, y]\times[1, \infty)$.
\begin{figure}[ht]
    \centering
    \includegraphics[width=0.8\linewidth]{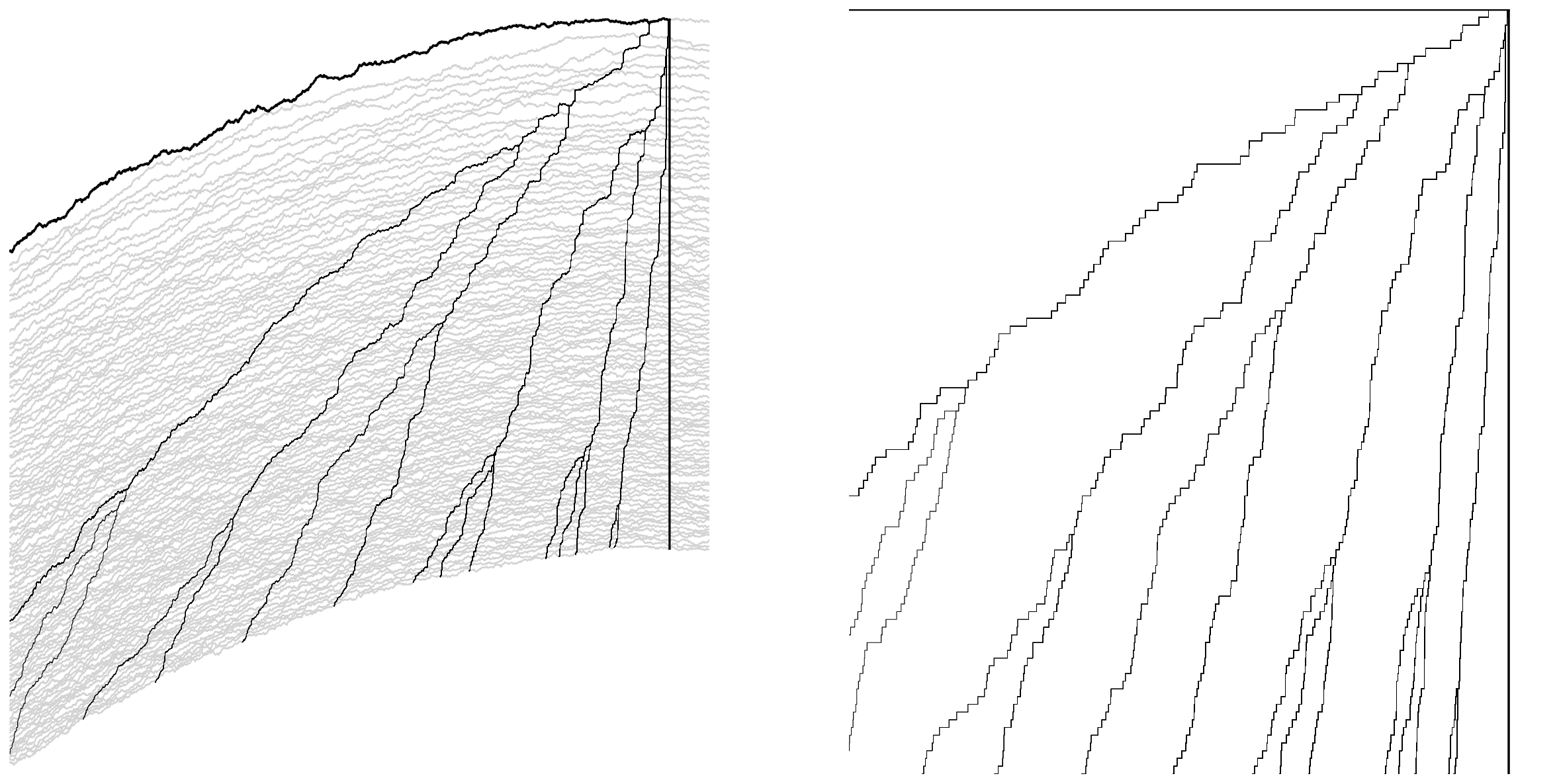}
    \caption{Illustration of a portion of $\mathcal{T}^y_\downarrow$ (excluding `$0$' and `$\infty$-directed geodesics) \textbf{left}: on the Airy line ensemble (in {\color{gray}gray}) and \textbf{right}: as zig-zag paths, for some fixed $y\in \R$ and $\theta$-directed geodesics with $\theta\ge 0$ in some bounded interval (bounded away from zero.}
    \label{fig: geod down}
\end{figure}

\begin{theorem}\label{thm: geodesic tree}
    There exists an event of full probability $\Omega_0$ such that on $\Omega_0$, the Hausdorff dimension of $\mathcal{T}^y_\downarrow$ is $1$ simultaneously for all $y\in \R$. 
    
    Moreover, for any fixed $y\in \R$, almost surely $\mathcal{T}^y_\downarrow$ contains no loops and there exists an at most countably infinite family $\Delta_n$, $n\ge 1$ of pairwise disjoint `wedge-like' open sets such that $\mathcal{T}^y_\downarrow\cup_n \Delta_n =  (-\infty, y]\times[1, \infty)$.

    Finally, under the scaling
    $\Psi_k: (x, y) \mapsto (1/k x, 1/k^2 y)$ $k\ge 1$, we have for any fixed $y\in \R$ almost surely that
    \[
    \bigcup_{k\ge 1}\Psi_k\circ \mathcal{T}^y_\downarrow
    \]
    is dense in $(-\infty, 0] \times [1, \infty)$. In particular, for any bounded and open subset $U\subset (-\infty, 0]\times [1, \infty)$ (with respect to the subspace topology), the Hausdorff distance of the closures 
    \[
    d_{\mathrm{Hausdorff}}(\overline{\cup_{1\le k \le M}\Psi_k\circ \mathcal{T}^y_\downarrow\cap U}, \overline{U}) \to 0
    \]
    as $M\to \infty$.
\end{theorem}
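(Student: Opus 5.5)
The proof splits into three parts: the Hausdorff dimension, the tree and wedge structure, and the parabolic density. The main tool throughout is the finiteness of jump times of semi-infinite geodesics on compacts, together with monotonicity and the approximation lemma, Lemma \ref{lemma: geodesic approx}.

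\textbf{Dimension.} The lower bound is immediate, since $\mathcal{T}^y_\downarrow$ contains the segment $(-\infty,y]\times\{1\}$. For the upper bound I will show that $\mathcal{T}^y_\downarrow$ is a countable union of rectifiable pieces. Fix a rectangle $[-R,y]\times[1,L]$ with $R,L$ integers. By monotonicity of geodesics in the direction $x$, a zig-zag of any $\pi^\square[x\to(y,1)]$ is determined on $[-R,y]$ by its jump times $Z^\square_k(x,y)$, $2\le k\le L$. Each map $x\mapsto Z^\square_k(x,y)$ is monotone and, by the second part of Lemma \ref{lemma: geodesic approx}, locally constant away from a set of discontinuities. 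So the restriction of $\mathcal{T}^y_\downarrow$ to the rectangle consists of the horizontal segments $[-R,y]\times\{k\}$, $k\le L$, together with vertical segments $\{Z^\square_k(x,y)\}\times[k-1,k]$. The vertical segments are indexed by the values of monotone functions. I will argue that these values form a countable set: on an interval of directions where $Z_k(\cdot,y)$ is constant there is one value, and the set of directions where some jump time changes lies in $\mathsf{NonUnique}^y_\downarrow$, which is countable by Proposition \ref{prop: multiple geod fixed endpt}. Hence the rectangle part is a countable union of segments and has dimension $1$.

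Simultaneity in $y$ is the delicate point, since Proposition \ref{prop: multiple geod fixed endpt} is stated for fixed $y$. I would try to avoid it by a different route: the union over all $y$ and $x$ of zig-zags is contained in $\bigcup_k \big(\R\times\{k\}\big)\cup\{\text{vertical segments at times } t \text{ such that } (t,k)\text{ is a jump location}\}$. The horizontal parts form a countable union of lines. Each vertical part is a segment of length at most $1$. The problem is that the jump locations may be uncountable for varying $y$, which is why I would fall back on the fixed-$y$ argument and then extend. For $y$ irrational, use Lemma \ref{lemma: geodesic approx}: each zig-zag for endpoint $y$ agrees on $[t,y]$ with zig-zags for rational endpoints, up to a final vertical segment at $y$. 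This makes $\mathcal{T}^y_\downarrow$ a subset of the union of the countably many rational trees, plus segments of the form $\{y\}\times[1,\infty)$, plus bounded horizontal pieces. Dimension $1$ then holds on one full-probability event. I expect this extension to be the main obstacle.

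\textbf{Tree and wedges.} The no-loop statement follows from the uniqueness of finite geodesics between rational points via the bubble argument of Lemma \ref{lemma: geod tree}. A loop would give two distinct geodesic arcs between two points of the tree; perturbing these points to rational points on horizontal segments contradicts uniqueness. The wedges $\Delta_n$ are the connected components of the complement $(-\infty,y]\times[1,\infty)\setminus\mathcal{T}^y_\downarrow$, which is open because the tree is closed. Closedness of the tree follows from the left and right limits in Lemma \ref{lemma: geodesic approx}. The complement has countably many components, since each open component contains a rational point. Each component lies between $\pi^R[x\to(y,1)]$ and $\pi^L[x'\to(y,1)]$ for consecutive exceptional directions, which gives the wedge shape.

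\textbf{Density.} Under $\Psi_k$, the geodesic $\pi[x\to(y,1)]$ has jump time $Z_j$ at level $j\approx 2x t^2$ for $t\to-\infty$, by Definition \ref{def: semi-inf geo}. The rescaled curve $\Psi_k(\pi)$ therefore converges to the parabola $\{(s, 2xs^2)\}$, uniformly on compacts away from the corner. The directions $x>0$ range over all of $(0,\infty)$, so these parabolas sweep out the whole quadrant. This gives density of $\bigcup_k\Psi_k\circ\mathcal{T}^y_\downarrow$, and the Hausdorff-distance convergence on bounded open $U$ follows by compactness of $\overline U$.
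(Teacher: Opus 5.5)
Your outline is sound and largely follows the paper: the no-loop bubble argument and the parabolic-rescaling density argument are the paper's own. Two points need correcting.

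\textbf{Simultaneity in $y$.} This is not an obstacle, and you can drop the rational-tree fallback.

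The paper gets countability of the vertical segments from Lemma~\ref{lemma: geodesic approx} plus compactness. Every direction $x$ has a neighbourhood on which the restrictions to $[t,y]$ of the extremal geodesics take at most the two values $\pi^L[x\to(y,1)]|_{[t,y]}$ and $\pi^R[x\to(y,1)]|_{[t,y]}$. Hence compact regions away from $\{y\}\times[1,\infty)$ meet only finitely many vertical segments, and this also gives closedness of the tree.

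Your variant uses countability of $\mathsf{NonUnique}^y_\downarrow$ and local constancy of $x\mapsto Z^\square_k(x,y)$ off that set, which gives countable range. It is equally valid, and it is deterministic on the single full-probability event where Lemma~\ref{lemma: geodesic approx} holds for all $(x,y)$ at once. Countability of $\mathsf{NonUnique}^y_\downarrow$ is just countability of the jumps of the monotone maps $x\mapsto Z_k(x,y)$, whose one-sided limits are $Z^L_k(x,y)$ and $Z^R_k(x,y)$. No fixed-$y$ input is needed.

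The fallback as written is also incomplete. The eventual-agreement part of Lemma~\ref{lemma: geodesic approx} concerns approximating \emph{directions} at a fixed endpoint. So identifying $\pi^\square[x\to(y,1)]|_{(-\infty,q]}$ with an extremal geodesic to a rational endpoint needs its own argument. Moreover, when the first jump occurs at $y$ itself (which happens for exceptional $y$), the rest of the path is rooted at $(y,m)$ with $m\ge 2$, so trees rooted on level one do not cover it.

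\textbf{Wedges and closedness.} Taking the $\Delta_n$ to be the components of the complement of the closed tree is a legitimate, cleaner way to get the covering statement. However, your description of these components is wrong. By Proposition~\ref{prop: multiple geod fixed endpt} the exceptional directions are dense, so there are no ``consecutive'' ones. For $x<x'$, the region between $\pi^R[x\to(y,1)]$ and $\pi^L[x'\to(y,1)]$ contains geodesics of every intermediate direction.

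Each component is in fact the open region between $\pi^L[x_n\to(y,1)]$ and $\pi^R[x_n\to(y,1)]$ beyond their last point of contact, for a single exceptional direction $x_n$. This includes possibly $x_n=0$, where $\pi^L[0\to(y,1)]\equiv 1$. This is how the paper defines $\Delta_n$, and it is what justifies ``wedge-like''.

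Showing that these regions exhaust the complement uses monotonicity of jump times together with $Z_\ell(x,y)\to y$ as $x\to\infty$ (the claim inside the proof of Proposition~\ref{prop: non-degen asymptotic}). That limit is also needed for the closedness you assert. Left and right limits at interior directions do not control accumulation of vertical segments at $\{y\}\times[1,\infty)$. The $x\searrow 0$ end is handled by Lemma~\ref{lemma: geodesic approx} at $x=0$.
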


\begin{proof}
    Since horizontal segments of geodesics lie in $\cup_n(-\infty, y]\times \{n\}$, and so have Hausdorff dimension $1$, it suffices to compute the Hausdorff dimension of the vertical portions of the zig-zags of geodesics.
     
    Almost surely, for all $x\ge 0$, $y\in \R$ by Lemma \ref{lemma: geodesic approx}, the left-most and right most semi-infinite geodesics $\pi^L[x\to (y, \ell)], \pi^R[x\to (y, \ell)]$ from $x$ to $(y, \ell)$ can be expressed as the pointwise limits
    \[
     \pi^L[x\to (y, \ell)] = \sup_{\substack{n\in \N\,,\\\Q\ni z_n \nearrow x}}\pi^\square[z_n, (y, \ell)]\text{ and } \pi^R[x\to (y, \ell)] = \inf_{\substack{n\in \N\,,\\\Q\ni w_n \searrow x}}\pi^{\square}[w_n, (y, \ell)].
    \]
    Moreover, almost surely, for any $t\le y$, the restrictions $\pi^\square[z_n, (y, 1)]|_{[t, y]}$ and $\pi^\square[w_n, (y, 1)]|_{[t, y]}$ agree with $\pi^L[x\to (y, 1)]|_{[t, y]}$ and $\pi^R[x\to (y, 1)]|_{[t, y]}$ for all $n$ sufficiently large (depending on $t$). 

    This means almost surely, for all $y\in \R$ and any $x>0, \ell\ge 2$, there exists an open interval $\mathcal{U}_x$ in $[0, \infty)$ (in the subspace topology) such that the geodesic jump times in $\mathcal{T}^y_{\downarrow}\cap \mathcal{U}_x\times [\ell, \ell-1]$ coincide with those of $\pi^\square[x\to (y, 1)]$. This means, using a compactness argument and the fact that $\ell\ge 2$ is arbitrary that any fixed compact subset of the plane away from $\{y\}\times[1, \infty)$ contains finitely many vertical geodesic segments of $\mathcal{T}^y_{\downarrow}$.
    
     We are now in a position to compute the full Hausdorff dimension of $\mathcal{T}^y_\downarrow$. To do this we have by monotonicity of the Hausdorff dimension
    \[
    \mathrm{dim}_{\mathscr{H}}(\mathcal{T}^y_\downarrow) = \sup_{\ell\ge 2}\mathrm{dim}_{\mathscr{H}}(\mathcal{T}^y_\downarrow\cap \R\times [\ell, \ell-1])\,.
    \]
    We can drop the endpoints $\ell, \ell-1$ as they do not contribute to the Hausdorff dimension (they only account for horizontal segments of geodesics which take discrete values). 
    \[
    \mathrm{dim}_{\mathscr{H}}(\mathcal{T}^y_\downarrow) = \sup_{\ell\ge 2}\mathrm{dim}_{\mathscr{H}}(\mathcal{T}^y_\downarrow\cap \R\times (\ell, \ell-1))\,.
    \]
    We can now express $\mathcal{T}^y_\downarrow\cap \R\times (\ell, \ell-1) = A_\ell\times (\ell, \ell-1)$, where $A_\ell$ is the set of jump times $Z^\square_\ell(x, y)$, $x\ge 0$. Note by the above we have computed 
    $\mathrm{dim}_{\mathscr{H}}(A_\ell)=1$ for all $\ell \ge 2$ (we showed in fact $A_\ell$, $\ell\ge 2$ are all countable). 
    
    Thus, by standard arguments, see \cite{Hatano1971Notes} we can estimate
    \begin{align*}
    1=&\mathrm{dim}_{\mathscr{H}}(A_\ell)+ \mathrm{dim}_{\mathscr{H}}((\ell, \ell-1)) \le \mathrm{dim}_{\mathscr{H}}(\mathcal{T}^y_\downarrow\cap \R\times (\ell, \ell-1))\\
    \le &\min\{1+ \mathrm{dim}_{\mathscr{H}}(A_\ell), 1+ \mathrm{dim}_{\mathscr{H}}((\ell, \ell-1))\} =1
    \end{align*}
    concluding that the Hausdorff dimension of $\mathcal{T}^y_\uparrow$ is $1$.

    For the reminder, we fix $y\in \R$. 
    
    Observe that by geodesic uniqueness in the Airy line ensemble for finite horizon geodesics with fixed endpoints, almost surely $T^y_\uparrow$ cannot contain any loops (see Figure \ref{fig: geod approx}).
    
    Now, consider the set of directions $x> 0$ for which geodesics $\pi^L[x\to (y, 1)]$ and $\pi^R[x\to (y, 1)]$ are not identical. On $\Omega_0$, there are at most countably many such asymptotic directions $x> 0$ (by Proposition \ref{prop: multiple geod fixed endpt}). Now, let $(x_n)_n$ be such a sequence (if it exists). Let $(\Delta_n)_n$ denote the interiors between the zig-zags of $\pi^L[x_n\to (y, 1)]$ and $\pi^R[x_n\to (y, 1)]$ after their last point of contact. Moreover, observe that taking $x\to 0$, we have by monotonicity of jump times that $\pi^R[x\to (y, 1)]$ converges to a parabolic path with asymptotic direction $0$, namely $\pi^R[0
    \to (y, 1)]$. If this path is not identically equal to $\pi^L[0\to (y, 1)]\equiv 1$, then add $\Delta_0$ defined as above. 

    As $x\to \infty$, for all $\ell \ge 2$, we have that the geodesic jump times $Z_\ell(x, y)\to 0$. For a proof, see Proposition \ref{prop: non-degen asymptotic}. Moreover, by monotonicity of jump times and the above, it follows that any point $(x, z)\in (-\infty, y]\times [1, \infty)$ lies either on the zig-zag of some parabolic path or is in some $\Delta_n$, concluding the proof of this part.

    For the final part, fix any bounded and open set $U$ of $(-\infty, 0]\times [0, \infty)$ (with respect to to the subspace topology). By the inclusion for all $M\ge 1$, $\cup_{1\le k \le M}\Psi_k\circ \mathcal{T}^y_\downarrow\cap U\subset U$ it suffices to show for any $\varepsilon> 0$, the $\varepsilon$-neighbourhood of $\Psi_k\circ \mathcal{T}^y_\downarrow\cap U$ contains $U$. By compactness, for all $\varepsilon'>0$, the closure of $U$, $\overline{U}$ can be covered by the $\varepsilon'$-neighbourhoods of $(t_i, 2x_it_i^2)$ for finitely many $x_i, t_i$.

    Observe that the asymptotic parabolic shape of geodesics $\lim_{t\to -\infty} \pi^\square[x\to (y, 1)](t)/t^2 = 2x$ simultaneously for all $(x, y)\in [0, \infty) \times \R$ means that for any fixed $x\ge 0, y\in\R$, the rescaled graph of the geodesic $\pi^\square[x\to (y, 1)]$, $\Psi_k\circ \pi^\square[x\to (y, 1)](\cdot) = 1/k^2\cdot \pi^\square[x\to (y, 1)](k \cdot(\cdot))$ converges pointwise to $2xt^2$ as $k\to \infty$. Thus, taking $k\to \infty$ sufficiently large so that for all $x_i, t_i$, $\Psi_k\circ \pi^\square[x_i\to (y, 1)](t_i)$ are within $\varepsilon'/2$ of $(t_i, 2x_it_i^2)$, the $3/2\varepsilon'$-neighbourhood of $\Psi_k\circ \mathcal{T}^y_{\uparrow}$ contains $U$ for all $k$ sufficiently large. Since $\varepsilon'$ was arbitrary, this concludes the proof.
\end{proof}

As a direct consequence of the arguments in the proof of Theorem \ref{thm: geodesic tree}, we obtain the following structural result for semi-infinite geodesic jump times as a function of their asymptotic direction.

\begin{corollary}\label{cor: geod jump piecewise constant}
    As a consequence, for fixed $y\in \R$, we obtain that for all $\ell \ge 2$ the jump times $(0, \infty)\ni x\mapsto Z^{\square}_\ell(x, y)$ are piecewise constant. Since they cannot be globally constant as they converge to zero as $x\to \infty$, this means that there must exist asymptotic directions $x> 0$ such that geodesics $\pi^{\square}[x\to (y, 1)]$ are not identical. Moreover, since we can take $Z^{\square}_2(x, y)$ to be a.s. positive for all $x> 0$ on $\Omega_0$, we have the set of asymptotic directions for which geodesic non-uniqueness occurs is \emph{countably infinite}.
\end{corollary}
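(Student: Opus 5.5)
The plan is to fix $y\in\R$ and $\ell\ge2$, work on the full-probability event $\Omega_0$ of Theorem~\ref{thm: geodesic tree} and Proposition~\ref{prop: multiple geod fixed endpt}, and argue in three steps: piecewise constancy, non-constancy, and countable infinitude of the exceptional set.

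\emph{Piecewise constancy.} By geodesic monotonicity, $x\mapsto Z^{\square}_\ell(x,y)$ is non-decreasing. The proof of Theorem~\ref{thm: geodesic tree} shows that each $x>0$ has an open neighbourhood $\mathcal U_x$ on which the jump times agree with those of $\pi^\square[x\to(y,1)]$, apart from the one-sided switch from $\pi^L$ to $\pi^R$ at $x$ itself. This comes from the second part of Lemma~\ref{lemma: geodesic approx} applied with $t$ below $Z_\ell$. Hence every compact $[a,b]\subset(0,\infty)$ is covered by finitely many such neighbourhoods. I will conclude that $Z^\square_\ell(\cdot,y)$ takes finitely many values on $[a,b]$, jumping only at points where $Z^L_\ell(x,y)\neq Z^R_\ell(x,y)$. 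Equivalently, only finitely many vertical segments of $\mathcal T^y_\downarrow$ lie in $[a',y)\times[\ell-1,\ell]$ away from the line $\{y\}\times[1,\infty)$.

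\emph{Non-constancy and existence of exceptional directions.} The Claim in the proof of Proposition~\ref{prop: non-degen asymptotic} gives $Z^\square_\ell(x,y)\to y$ as $x\to\infty$ (stated there for $y$; the shift invariance of Lemma~\ref{lemma: semi-inf geod properties}(4) reduces it to $0$). On the other hand, the asymptotic shape $\pi(t)/t^2\to 2x$ forces $Z_\ell(x,y)\to-\infty$ as $\ell\to\infty$ for fixed $x$. So for large $\ell$ and small $x$, $Z_\ell(x,y)<y-1$, while for large $x$ it is near $y$. Thus $Z^\square_\ell(\cdot,y)$ is not globally constant. By piecewise constancy it has a jump at some $x_0$, i.e.\ $Z^L_\ell(x_0,y)<Z^R_\ell(x_0,y)$, so $\pi^L[x_0\to(y,1)]\ne\pi^R[x_0\to(y,1)]$.

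\emph{Countable infinitude.} Countability of the non-uniqueness set is the content of Proposition~\ref{prop: multiple geod fixed endpt}. For infinitude, I would use that $Z^\square_2(x,y)<y$ for all $x>0$ on $\Omega_0$, by the first paragraph of the proof of Proposition~\ref{prop: non-degen asymptotic} transported by shift invariance. Combined with $Z_\ell\to y$ as $x\to\infty$, this means the level-$\ell$ jump times, for any fixed $\ell$, never reach $y$ but approach it. A piecewise-constant non-decreasing function on $(0,\infty)$ that converges to $y$ without attaining it must have infinitely many jumps accumulating at $\infty$. Each jump is a direction of non-uniqueness, so the exceptional set is infinite. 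Together with countability, it is countably infinite.

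The main obstacle is the piecewise-constancy step: showing that the local stabilisation of Lemma~\ref{lemma: geodesic approx} upgrades to \emph{finitely many} values on compacts, rather than a Cantor-like staircase. I would first try a compactness argument using the open neighbourhoods $\mathcal U_x$. Where this needs care, I would invoke geodesic uniqueness at rational endpoints, as in Figure~\ref{fig: geod approx}, to rule out accumulation of distinct jump times.
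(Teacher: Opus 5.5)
Your proposal is correct and takes essentially the same approach as the paper, which presents this corollary as a consequence of the proof of Theorem~\ref{thm: geodesic tree}. Piecewise constancy comes from the local stabilisation in Lemma~\ref{lemma: geodesic approx} together with compactness. The Claim in Proposition~\ref{prop: non-degen asymptotic}, which you correctly read as $Z^\square_\ell(x,y)\to y$ rather than $\to 0$, combined with $Z^\square_\ell\le Z^\square_2<y$, forces infinitely many jumps, and each jump is a direction where $\pi^L\neq\pi^R$. Countability comes from Proposition~\ref{prop: multiple geod fixed endpt}, or directly from monotonicity of each $Z^\square_\ell(\cdot,y)$. You also note that convergence to $y$ alone does not rule out global constancy, and you supply the needed strictness via $Z^\square_2<y$; this fills in what the paper's one-line justification leaves implicit.
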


Theorem \ref{thm: geodesic tree} allows us to interpret the downward geodesic trees $\mathcal{T}^y_\downarrow$ as bona fide random objects taking values in some Polish space. In particular, one can view them as random closed sets of the plane, $\mathcal{F}(\R^2)$ equipped with the \emph{Fell} topology, see the Appendix, \ref{sec: polish meas} for more details on the topology and proof of measurability. We now show these geodesic trees emanating from a point are ergodic.

First recall some basic definitions in ergodic theory pertaining to continuous time continuous stochastic processes taking values in some Polish space $(X, \rho)$. Here, we identify continuous $\N\times \R $-indexed line ensembles as continuous stochastic processes on $\R^\N$ endowed with the topology of convergence on $\R^D\subset \R^\N$, $D\subset \N$ finite. In the remainder of this section, by a process $L$ we mean a continuous time process (i.e. measurable map from some sample space $\Omega \to (\mathcal{C}(\R; X), \mathcal{B})$, where  $\mathcal{B}$ is the Borel sigma algebra generated by the uniform-on-compacts topology) taking values in $(X, \rho)$ as above.

For $s\in \R$ define the \textbf{shift map} on paths
\[
    \theta_s: \sigma(\pi_t: t\in [s,\infty))\to \sigma(\pi_t: t\in [0,\infty))\,, \quad\mathcal{C}(\R; X)\ni A\mapsto \theta_s\circ \{\omega(s+\cdot): \omega \in A\} \,,
\]
where $\pi_t: \mathcal{C}([0,\infty);X)\to X$, $t\in \R$ are the coordinate projections. Note these shift maps also extend to measurable maps from path space in the natural way, i.e. for a measurable map $Z: \mathcal{C}(\R; X)\to \R$,
\[
\omega \mapsto \theta_s \circ Z (\omega) \equiv Z(\theta _s\circ \omega)\,, \quad s\in \R\,.
\]

\begin{definition}\label{def: ergodicity}
Suppose $L$ is a horizontal shift-invariant process on the probability space $ (\Omega, C , \PP ) $. We say $ A\in C$ is \textbf{shift-invariant} if $ \theta_s\circ A:=\{\theta_s \circ\omega\,| \omega \in A  \}  =A $ for all $ s \in\R $. Then $ L $ is \textbf{ergodic} if for all shift-invariant $ A $, $ \PP[A] =0 $ or 1.
\end{definition}

We record the following theorem which shows that the stationary Airy line ensemble is \textit{ergodic}.

\begin{theorem}(\cite[Theorem 1.6]{corwin2014ergodicityairylineensemble})\label{thm: erg airy line ensemble}
    The stationary Airy line ensemble is ergodic with respect to horizontal shifts. 
\end{theorem}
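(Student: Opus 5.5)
The statement is \cite[Theorem 1.6]{corwin2014ergodicityairylineensemble}, which the paper imports as an external result. Rather than reprove the cited work in full, I would reconstruct its argument. The plan is to prove the stronger property of \emph{mixing} under horizontal shifts, since mixing implies ergodicity. If $A$ is shift-invariant and $\PP(\theta_s A\cap A)\to\PP(A)^2$, then $\PP(A)=\PP(A)^2$, so $\PP(A)\in\{0,1\}$.

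By a $\pi$--$\lambda$ argument it suffices to prove mixing for cylinder events. These depend on finitely many lines $\mathcal{R}_1,\dots,\mathcal{R}_k$ evaluated at finitely many times in a compact window $[-T,T]$. It is in fact enough to show that finite-dimensional distributions decorrelate: for bounded continuous $F,G$ of such evaluations,
\[
\mathbb{E}[F(\mathcal{R})\,G(\theta_s\mathcal{R})]\to\mathbb{E}F\,\mathbb{E}G .
\]

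First step: use the determinantal structure. The finite-dimensional marginals of the stationary Airy line ensemble are given by the extended Airy kernel $K(s,x;t,y)$. I would take $F,G$ of the form $\prod(1+f_i)$, so that the expectations become Fredholm determinants $\det(I+Kf)$ on $L^2$ of the times in both windows. The joint kernel is then a $2\times 2$ block operator. The diagonal blocks are the kernels restricted to each window. The off-diagonal blocks involve $K(t_i,x;t_j+s,y)$ with a time separation of order $s$.

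Second step: show that the off-diagonal blocks tend to zero in trace norm as $s\to\infty$. This follows from the double contour-integral formula for the extended Airy kernel. For a separation $\tau=t_j+s-t_i$, the kernel behaves like $\int e^{-\lambda\tau}\mathrm{Ai}(x+\lambda)\mathrm{Ai}(y+\lambda)\,d\lambda$, or like the heat-kernel term minus that integral when $\tau<0$. Both are small when $|\tau|$ is large, uniformly on $x,y$ bounded below, with superexponential decay in $x,y\to+\infty$ coming from the Airy asymptotics. With this decay in hand, continuity of the Fredholm determinant in trace norm gives factorization of the determinant into the product of the two single-window determinants.

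The main obstacle is the Hilbert--Schmidt and trace-class bounds. The functions $f$ need not decay at $-\infty$, and the kernel is not small there. I would handle this by restricting to functions supported on $[-M,\infty)$, which suffices for cylinder sets of the form $\{\mathcal{R}_i(t)\in\cdot\}$ after truncation. The error of this truncation is controlled by tightness: by the lower-tail estimates of Lemma~\ref{lemma: airy counting 1}, only finitely many lines lie above $-M$, up to small probability. Approximating general bounded continuous $F,G$ by such determinantal test functions, then letting $s\to\infty$ followed by $M\to\infty$, completes mixing and hence ergodicity.
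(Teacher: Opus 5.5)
Your proposal is correct, and it reconstructs essentially the determinantal argument behind the cited result of Corwin and Sun, which the paper imports without proof: mixing (hence ergodicity) is reduced to finite-dimensional cylinder events, and these factor as $s\to\infty$ because the extended Airy kernel between the two time windows vanishes, with the restriction to $[-M,\infty)$ justified by the lower tail of $\mathcal{A}_k(0)$ (the top $k$ lines lie above $-M$ with high probability). The one step to phrase more carefully is the block with $\tau<0$: bound it through the representation $-\int_{-\infty}^{0}e^{-\lambda\tau}\mathrm{Ai}(x+\lambda)\mathrm{Ai}(y+\lambda)\,d\lambda$, which is $O(1/|\tau|)$ for $x,y\ge -M$ and factors into two Hilbert--Schmidt operators of vanishing norm; do not write it as a heat-kernel term minus an $\int_0^\infty$ integral and bound the pieces separately, because for bounded $x,y$ each piece grows like $e^{|\tau|^3/12}$ and only their difference is small.
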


The ergodicity of the Airy line ensemble also translates to an ergodic statement on the semi-infinite geodesics therein, which is the content of the following theorem.

\begin{theorem}\label{thm: ergodic geodesic tree}
    For $n\ge 1$, consider the map $T_n :(x, y) \mapsto (x+n, y)$. Then, for any open subset $U$ with respect to the Fell topology of closed subsets of $\R^2$ containing an element of the topological support of $\mathcal{T}^0_\downarrow$, almost surely the shifted trees $T_{n}\circ\mathcal{T}^{-n}_\downarrow$, are infinitely often in $U$. 

    Moreover, for any $f: \mathcal{F}(\R^2) \to \R$ bounded measurable almost surely,
    \[
    \lim_{n\to \infty}\frac{1}{n}\sum_{i=1}^n f(\theta_{-n}\circ\mathcal{T}^{0}_{\downarrow}) = \int_{\mathcal{K}}f \diff\nu\,,
    \]
    where $\nu$ denotes the common law on $\mathcal{F}(\R^2)$ (on the associated Borel sigma algebra) of $\theta_{-n}\circ\mathcal{T}^{0}_{\downarrow}$, $n\ge 1$.
\end{theorem}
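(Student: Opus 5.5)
The plan is to realise the shifted trees as a fixed measurable functional of the shifted stationary Airy line ensemble and then transfer ergodicity from Theorem~\ref{thm: erg airy line ensemble}.

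\emph{Step 1: a factor map.} Using the appendix (Section~\ref{sec: polish meas}), I would fix a measurable map $\Phi:\mathcal{C}^{\N}\to\mathcal{F}(\R^2)$ with $\mathcal{T}^0_\downarrow=\Phi(\mathcal{R})$ almost surely, where $\mathcal{R}$ is the stationary Airy line ensemble. The construction goes through rational directions and endpoints, $\Phi$ is defined by the finite-depth geodesic limits of Lemma~\ref{lemma: semi-inf geod properties}, and the closure is taken via Lemma~\ref{lemma: geodesic approx}. Two facts make this usable. First, a common additive function added to all lines does not change geodesics, so the parabola may be subtracted. Second, the proof of item $(4)$ of Lemma~\ref{lemma: semi-inf geod properties} shows that geodesics in $\mathcal{A}(\cdot+c)$ are the translates of geodesics in $\mathcal{A}$. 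Together these give the equivariance
\[
T_n\circ\mathcal{T}^{-n}_\downarrow=\Phi(\theta_{-n}\circ\mathcal{R})\quad\text{a.s., for each } n.
\]
The main obstacle is that the null set in Lemma~\ref{lemma: semi-inf geod properties} depends on the parameters. I would avoid this by working on the countable intersection of the good events for all integer shifts, which still has full probability.

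\emph{Step 2: ergodicity of the factor.} The law of $\theta_{-n}\circ\mathcal{R}$ does not depend on $n$, so the sequence $\Phi(\theta_{-n}\mathcal{R})$ is stationary with common law $\nu$. For any $\Phi$-pulled-back invariant set $A$, the set $\Phi^{-1}(A)$ is shift invariant for $\mathcal{R}$ up to null sets. Theorem~\ref{thm: erg airy line ensemble} gives continuous-time ergodicity, and I need the integer shift $\theta_{-1}$. Two routes are available:
\begin{enumerate}
\item use mixing of the Airy line ensemble, which is proved in \cite{corwin2014ergodicityairylineensemble} and implies ergodicity of every discrete skeleton;
\item average the continuous-time ergodic theorem over $s\in[0,1]$.
\end{enumerate}
I would try route 1 first.

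\emph{Step 3: conclusions.} Birkhoff's ergodic theorem applied to $f\circ\Phi$ under $\theta_{-1}$ yields
\[
\frac1n\sum_{i\le n} f\bigl(\Phi(\theta_{-i}\mathcal{R})\bigr)\to\int f\,\diff\nu
\]
almost surely. For the recurrence statement, take $U$ Fell-open meeting $\operatorname{supp}\nu$, so that $\nu(U)>0$. Then $f=\mathbf 1_U$ is bounded and measurable, because $U$ is Borel in the Polish space $\mathcal{F}(\R^2)$. The ergodic averages converge to $\nu(U)>0$, so $T_n\circ\mathcal{T}^{-n}_\downarrow\in U$ for infinitely many $n$, almost surely.
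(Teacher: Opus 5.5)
Your argument is correct if you commit to route~1, but it is organised differently from the paper's. The paper proves recurrence on its own. Reverse Fatou gives $\PP(\text{visits to } U \text{ i.o.})\ge\limsup_n\PP(T_n\circ\mathcal{T}^{-n}_\downarrow\in U)=\nu(U)>0$. The paper then calls the i.o.\ event shift invariant and uses Theorem~\ref{thm: erg airy line ensemble} to upgrade this to probability one, citing Birkhoff separately for the averages. You get both claims from a single application of Birkhoff to $g=f\circ\Phi$ on path space under $\theta_{-1}$, with recurrence as the case $f=\mathbf 1_U$. You do not actually need to discuss invariant sets of the factor. This route also gives the stronger conclusion that visits to $U$ occur with asymptotic frequency $\nu(U)$. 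Your explicit equivariance $T_n\circ\mathcal{T}^{-n}_\downarrow=\Phi(\theta_{-n}\mathcal{R})$, taken on a countable intersection of good events, is what the paper's garbled identity in its proof is meant to express.

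The more substantive difference is your Step~2. The i.o.\ event, and the invariant $\sigma$-field governing discrete Birkhoff averages, are invariant only under integer shifts. Definition~\ref{def: ergodicity} and Theorem~\ref{thm: erg airy line ensemble}, by contrast, concern invariance under all $\theta_s$ with $s\in\R$. Ergodicity of a flow does not in general pass to its time-one map: the flow $\omega\mapsto\omega+t$ on $\R/\Z$ is ergodic, but its time-one map is the identity. The paper's proof passes over this point. Mixing of $(\theta_s)$ does imply ergodicity of $\theta_1$, so route~1 closes the gap, provided the mixing statement you cite is actually in the reference.

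Route~2 would not work, so drop it. Averaging the continuous-time ergodic theorem over $s\in[0,1]$ only controls $\frac1n\int_0^n g(\theta_{-t}\omega)\,\diff t$. It says nothing about the discrete sums $\frac1n\sum_{i\le n}g(\theta_{-i}\omega)$.
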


\begin{proof}
    Observe that by the approximation lemma, Lemma \ref{lemma: geodesic approx} all geodesics in the tree $\mathcal{T}^0_\downarrow$ can be obtained as pointwise limits of geodesics with rational asymptotic directions $\pi[q \to (0, 1)]$ for $q\ge 0$. By item (4) in Lemma \ref{lemma: semi-inf geod properties}, the laws of $\theta_{-n}\circ \mathcal{T}^0_\downarrow$ are therefore identical for all $n\ge 1$. In particular, again by item (4) of Lemma \ref{lemma: semi-inf geod properties}, we have almost surely for all $n\ge 1$, $\theta_{-n}\circ \mathcal{T}^0_\downarrow = \theta_{-n}\circ \mathcal{T}^0_\downarrow$ as elements of $\mathcal{F}(\R^2)$. Also observe the trees $\theta_{-n}\circ \mathcal{T}^0_\downarrow$ depend on the sigma algebra
    \[
    \sigma\bigg(\{\mathcal{A}_i(x): i\ge 1\,, x\le -n \}\bigg)\,.
    \]
    Thus, for $U$ as above, we have $\nu(U)> 0$. This means for all $n\ge 1$, 
    \[
    \PP(\theta_{-n}\circ\mathcal{T}^{0}_{\downarrow}\in U) = \nu(U)> 0
    \]
    and so
    \[
    \PP(\theta_{-n}\circ\mathcal{T}^{0}_{\downarrow}\in U\text{ infinitely often}) \ge \limsup_n \PP(\theta_{-n}\circ\mathcal{T}^{0}_{\downarrow}\in U) = \nu(U)> 0\,.
    \]
    Thus, the event $\theta_{-n}\circ\mathcal{T}^{0}_{\downarrow}\in U\text{ infinitely often}$ is shift invariant (see Definition \ref{def: ergodicity}). Using Theorem \ref{thm: erg airy line ensemble} we see that it must occur almost surely, being positive.
    
    The second part follows from an application of Birkhoff's ergodic theorem, treating for $f$ bounded measurable, $f(\mathcal{T}^{0}_{\downarrow})$ as measurable functionals of the Airy line ensemble (which can be done by the above discussion).
\end{proof}

We now also consider the `upward' tree starting from a fixed asymptotic direction $x\ge 0$. More precisely, for $x> 0$ fixed consider the union of all of the semi-infinite $x$-directed geodesics. That is consider the `upward geodesic collection' (see Figure \ref{fig: geod up} for an illustration)
\[
\mathcal{T}^x_\uparrow: = \bigcup_{y\in \R}\bigcup_{\square \in \{L, R\}}\mathsf{ZigZag}(\pi^{\square}[x\to (y, 1)])\subseteq (-\infty, y]\times[1, \infty)\,.
\]

\begin{figure}[ht]
    \centering
    \includegraphics[width=0.8\linewidth]{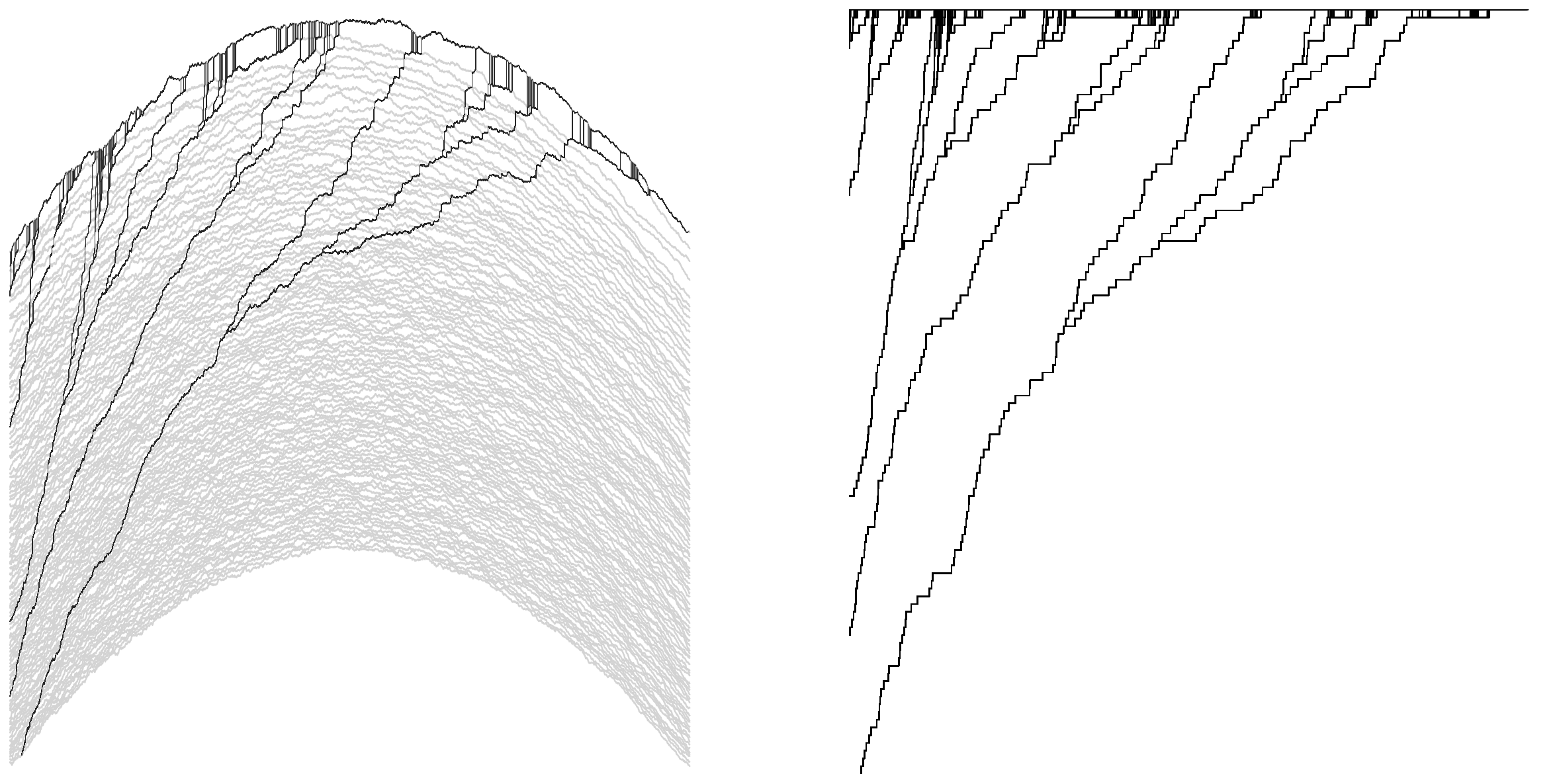}
    \caption{Illustration of a portion of $\mathcal{T}^x_\uparrow$, \textbf{left}: on the Airy line ensemble (in {\color{gray}gray}) and \textbf{right}: as zig-zag paths, some fixed $x> 0$ and geodesics with endpoints in some bounded interval. }
    \label{fig: geod up}
\end{figure}

We now show that the upward geodesic collections $\mathcal{T}^x_\uparrow$ for fixed $x> 0$ have a non-integer Hausdorff dimension, equal to three halves.

\begin{theorem}\label{thm: geodesic gasket}
    There exists an event of full probability $\Omega_0$ such that on $\Omega_0$, the Hausdorff dimension of $\mathcal{T}^x_\uparrow$ is $\frac{3}{2}$ simultaneously for all $x > 0$ . 
    
    Moreover, leftmost/rightmost collections of geodesics $\mathcal{T}^{x, \square}_{\uparrow}$, have a tree-like structure (they contain no loops). Thus, $\mathcal{T}^x_\uparrow$ can be constructed as a union of two trees.
\end{theorem}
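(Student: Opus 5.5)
Following the proof of Theorem~\ref{thm: geodesic tree}, I will split $\mathcal{T}^x_\uparrow$ into its horizontal part, contained in $\bigcup_n \R\times\{n\}$ and of dimension $1$, and its vertical parts. For each $\ell\ge 2$ set
\[
\mathcal{T}^x_\uparrow\cap \R\times(\ell-1,\ell)=B^x_\ell\times(\ell-1,\ell),\qquad B^x_\ell:=\{Z^\square_\ell(x,y): y\in\R,\ \square\in\{L,R\}\}.
\]
By the product dimension bounds already used in Theorem~\ref{thm: geodesic tree} (upper and lower, since one factor is an interval), it suffices to show $\dim_{\mathscr H}(B^x_\ell)=1/2$ for all $\ell\ge 2$, simultaneously for all $x>0$. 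Then $\dim_{\mathscr H}\mathcal T^x_\uparrow=\sup_\ell(1+\dim B^x_\ell)=3/2$.

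\emph{Lower bound.} Proposition~\ref{prop: semi-inf geod jump simultaneous} and its inductive version in the proof of Theorem~\ref{thm: busemann cont} characterise $Z^\square_\ell(x,y)$ as an argmax over $s\le y$ of a continuous function $G_x(s)$. For $\ell=2$ this is $\mathcal S(x,s)-\mathcal S(0,s)$; in general it is $\Delta^{(0,\ell)}_{(s,\ell-1)}(x)-\mathcal A_{\ell-1}(s)$. The running maximum $y\mapsto\max_{s\le y}G_x(s)$ increases exactly at points $y$ that are strict records of $G_x$, and for such $y$ we have $Z^L_\ell(x,y)=Z^R_\ell(x,y)=y$. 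Hence $B^x_\ell$ contains the set of local variation of the running maximum, minus countably many endpoints. This is the set $A_x$ used in Proposition~\ref{prop: semi-inf geod jump simultaneous}, where it was shown to have dimension at least $1/2$ simultaneously in $x$ via H\"older $1/2^-$ continuity and a mass-distribution argument.

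\emph{Upper bound.} The set $B^x_\ell$ lies inside the record set of $G_x$. For $\ell=2$ this is the record set of the nondecreasing profile $\mathcal S(x,\cdot)-\mathcal S(0,\cdot)$, that is $\mathsf{LV}$ of an Airy difference profile, which has dimension $1/2$ by \cite{Basu2021Fractal}. For general $\ell$ I will use the Brownian absolute continuity of finitely many levels, as in the excursion argument of Theorem~\ref{thm: fixed dir coal}, on a window after a rational coalescence point $(z,m)$. There the record set is the zero set of a gap process $\widetilde H$ that is locally absolutely continuous with respect to a reflected Brownian motion, hence of dimension $1/2$. Making this simultaneous in $x$ is the main obstacle. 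I plan to use monotonicity in $x$ together with Lemma~\ref{lemma: geodesic approx}: on each bounded window, for $x$ between consecutive rationals $q<q'$, the jump times of $x$-geodesics are sandwiched between those for $q$ and $q'$. I then want to show the relevant record sets are controlled by the countable union over rational $q$, with countably many exceptional points added from the directions where non-uniqueness occurs. Proving this sandwich is the delicate step; I would first try proving it at fixed $x$ and then pass to all $x$ by monotonicity of the sets $B^x_\ell\cap[a,b]$.

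\emph{Tree structure.} For fixed $\square$, take two geodesics in $\mathcal T^{x,\square}_\uparrow$ with endpoints $y<y'$. By Lemma~\ref{lemma: geod tree} and uniqueness on rational endpoints, once their zig-zags meet they coalesce. Otherwise a geodesic bubble with rational corners would form, as in Figure~\ref{fig: geod approx}, contradicting uniqueness. So $\mathcal T^{x,\square}_\uparrow$ has no loops, and $\mathcal T^x_\uparrow=\mathcal T^{x,L}_\uparrow\cup\mathcal T^{x,R}_\uparrow$.
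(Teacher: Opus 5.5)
Most of your plan matches the paper: the horizontal/vertical split with the product-dimension bounds, the lower bound from strict records of $\mathcal S(x,\cdot)-\mathcal S(0,\cdot)$ via H\"older continuity and mass distribution, and the no-loops argument. The gap is the upper bound simultaneously in $x$. You leave it open, and the sandwich you propose would not close it. The pointwise ordering $Z_\ell(q,y)\le Z^\square_\ell(x,y)\le Z_\ell(q',y)$ for rationals $q<x<q'$ constrains each individual jump time. It does not constrain the \emph{set} $\{Z^\square_\ell(x,y):y\in\R\}$: it does not place $B^x_\ell$ inside $B^q_\ell\cup B^{q'}_\ell$, or inside any set whose dimension is controlled at rational directions. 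Both of your inputs are fixed-$x$ statements: the coalescence point $(z,m)$ chosen for the given $x$, and \cite{Basu2021Fractal}, which concerns a fixed pair. So as written you only get the upper bound for countably many $x$.

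The missing idea, which is what the paper uses, is that no coalescence point is needed. Given $s\in B^x_\ell$, take a rational $r<s$ and set $k:=\pi^\square[x\to(y,1)](r)\ge\ell$. By the restriction property, the path on $[r,y]$ is a finite geodesic from $(r,k)$. By the metric composition law, $s$ is then a record time on $[r,\infty)$ of $u\mapsto\mathcal A[(r,k)\to(u,\ell)]-\mathcal A_{\ell-1}(u)$, a process that depends on neither $x$ nor $y$. Hence $B^x_\ell\subseteq\bigcup_{r\in\Q,\,k\ge\ell}\mathrm{Rec}_{r,k,\ell}$ for every $x>0$ at once. Each of these countably many record sets has dimension at most $1/2$ almost surely, by Brownian Gibbs, the melon representation of the last passage profile, and L\'evy's theorem; this is exactly the computation you sketch on the window after $(z,m)$.

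The same formulation repairs a second problem. For $\ell=2$, knowing only that $Z^\square_2(x,y)$ maximises the \emph{nondecreasing} profile $\mathcal S(x,\cdot)-\mathcal S(0,\cdot)$ over $(-\infty,y]$ does not put it in $\mathsf{LV}$, since every point of the final flat stretch is a maximiser. You need the unreflected process whose running maximum is that profile, and the finite-geodesic formulation supplies it automatically. Finally, your lower-bound claim for $\ell\ge 3$ uses the characterisation for geodesics ending at $(y,\ell-1)$ rather than at the top line, but only $\ell=2$ is needed for the lower bound.
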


\begin{proof}
Since the Hausdorff dimension of the horizontal segments of geodesics in $\mathcal{T}^x_\uparrow$ is $1$, we restrict our attention to computing the Hausdorff dimension of `horizontal fibres' $\mathcal{T}^x_\uparrow$.

We will work on the almost sure event of geodesic existence from item $(1)$ in lemma \ref{lemma: semi-inf geod properties} and the remark after Lemma \ref{lemma: geod location brown env lpp}.

    \underline{Upper bound}: 

    Fix $x> 0$. We will estimate the Hausdorff dimension of vertical segments of geodesics in $\mathcal{T}^x_\uparrow \cap [1, \infty) \times [1, 2]$, that is the collection of last jump times of semi-infinite geodesics in with asymptotic direction $x$ and endpoint $y$ at least $1$. Observe that for any such path, that is $\pi^\square[x\to (y, 1)]$, the restriction property of semi-infinite geodesics implies that $\pi^\square[x\to (y, 1)]|_{[0, y]}$ is a leftmost/rightmost geodesic in the Airy line ensemble from $(0, \pi^\square[x\to (y, 1)](0))$ to $(y, 1)$. By the metric composition law, Lemmas \ref{lemma: Metric Composition} and \ref{lemma: geod location brown env lpp} this means in particular that the last jump times $Z^\square_2(x, y)$ satisfy
    \[
    Z^\square_2(x, y) \in \operatorname*{argmax}_{0\le s\le y}\mathcal{A}[(0, \pi^\square[x\to (y, 1)](0))\to (s, 2)]+\mathcal{A}_1(y)-\mathcal{A}_1(s)
    \]
    or, equivalently
    \[
    Z^\square_2(x, y) \in \operatorname*{argmax}_{0\le s\le y}\mathcal{A}[(0, \pi^\square[x\to (y, 1)](0))\to (s, 2)]-\mathcal{A}_1(s)\,.
    \]
    To establish the upper bound on the Hausdorff dimension of $\mathcal{T}^x_\uparrow \cap [1, \infty) \times [1, 2]$, it thus suffices to show almost surely for al $\ell \in \N$, the Hausdorff dimension
    \[
    \bigg\{\operatorname*{argmax}_{0\le s\le y}\mathcal{A}[(0, \ell)\to (s, 2)]-\mathcal{A}_1(s): y\ge 1\bigg\}
    \]
    is at most $1/2$. By monotonicity of the Hausdorff dimension this is equivalent to showing the Hausdorff dimension of
    \[
    \bigg\{\operatorname*{argmax}_{0\le s\le y}\mathcal{A}[(0, \ell)\to (s, 2)]-\mathcal{A}_1(s): y\in [n, n+1]\bigg\}
    \]
    is at most $1/2$ almost surely for all $\ell, n\in \N$.

    Indeed, by the Brownian Gibbs property of the Airy line ensemble (recall from subsection \ref{subsec: Airy line ensemble}), the first $\ell$ levels of the Airy line ensemble are absolutely continuous with respect to $\ell$ independent rate two Brownian motions on $[n, n+1]$. This means it suffices to show for an environment of independent Brownian motions $B$
    \[
    \bigg\{\operatorname*{argmax}_{0\le s\le y}B[(0, \ell)\to (s, 2)]-B_1(s): y\in [n, n+1]\bigg\}
    \]
    is at most $1/2$ almost surely for all $\ell, n\in \N$. By independence of the lines in the Brownian environment and the distributional equality of $B[(0, \ell)\to (\cdot, 2)]$ to an $\ell-1$-level Dyson Brownian motion \cite{o2002representation}, which is mutually absolutely continuous with respect to rate two Brownian motion on compacts away from the origin (recall from subsection \ref{subsec: melons}), it suffices to show the Hausdorff dimensions of 
    \[
    \bigg\{\operatorname*{argmax}_{0\le s\le y}W(s): y\in [n, n+1]\bigg\}
    \]
    are at most $1/2$ almost surely for all $\ell, n\in \N$ where $W$ is now a rate four Brownian motion. Since any such time is a record time, it suffices to bound the Hausdorff dimension of record times 
    \[
    \bigg\{s\in [0, n+1]: W(s) = \max_{0\le r\le s}W(r)\bigg\}\,.
    \]
    By L\'evy's theorem for reflected Brownian motion, see \cite[Theorem~ 2.34]{morters_peres_2010}, $\max_{0\le r\le s}W(r)-W(s)$ has the same law on paths on $\mathcal{C}_0 [0, \infty)$ as $|W(\cdot)|$. Hence, since the Hausdorff dimension of the zeros of Brownian motion has Hausdorff dimension at most $1/2$ on any bounded interval, we conclude the about the record times, thereby establishing the desired upper bounds on the Hausdorff dimension. 

    To conclude the proof of the first part, observe one can by similar arguments consider $\mathcal{T}^x_\uparrow \cap [y, \infty) \times [1, 2]$ almost surely simultaneously for all $y\in \R$ by repeating the argument replacing the origin with any rational $r< y$. The argument for $\mathcal{T}^x_\uparrow \cap [y, \infty) \times [\ell, \ell-1]$ proceeds entirely analogously, as one is reduced to considering the last jump times of geodesics ending at level $\ell-1$. 

    \underline{Lower bound}: To establish the lower bound, we need a slight refinement of Proposition \ref{prop: semi-inf geod jump} for the jump times $Z^\square_2(x, y)$. 

    Proposition \ref{prop: semi-inf geod jump} gives almost surely for any fixed $x> 0, y\in \R$
    \[
    Z^\square_2(x, y) \in  \displaystyle\operatorname*{argmax}_{s\in (-\infty, y]}\bigg(\Delta_{(s, 1)}^{(0, 1)}(x)+\mathcal{A}_1(y)-\mathcal{A}_1(s)\bigg)\,.
    \]
    We will show this to be simultaneous for all $x> 0, y\in \R$.
    
    Lemma 2.24 in \cite{dauvergne2022disjointoptimizersdirectedlandscape} gives almost surely simultaneously for all $x > 0$ and $y< y'\in \R$ and any parabolic path $\pi:(-\infty, y]\to \N$ with $\lim_{z\to -\infty}\pi(z)/z^2 = 2x$,
    \[
    \lim_{z\to -\infty}\mathcal{A}[(z, \pi(z))\to (y, 1)]-\mathcal{A}[(z, \pi(z))\to (y', 1)] = \mathcal{S}(x, y)-\mathcal{S}(x,y')\,.
    \]

    Now, taking $\pi$ to be leftmost/rightmost geodesics $\pi^\square[x\to (y, 1)]$, we have by the metric composition law for all $z\le Z^\square_2(x, y)$
    \[
    Z^\square_2(x, y) \in  \displaystyle\operatorname*{argmax}_{s\in [z, y]}\bigg(\mathcal{A}[(z, \pi(z))\to (s, 1)]+\mathcal{A}_1(y)-\mathcal{A}_1(s)\bigg)
    \]
    or equivalently, for any fixed $\alpha < Z^\square_2(x, y)$, 
    \[
    Z^\square_2(x, y) \in  \displaystyle\operatorname*{argmax}_{s\in [\alpha, y]}\bigg(\mathcal{A}[(z, \pi(z))\to (s, 1)]-\mathcal{A}[(z, \pi(z))\to (y, 1)]-\mathcal{A}_1(s)\bigg)\,.
    \]
    By the preceding discussion we have the pointwise convergence for $\mathcal{A}[(z, \pi(z))\to (s, 1)]-\mathcal{A}[(z, \pi(z))\to (y, 1)]\to \mathcal{S}(x, s)-\mathcal{S}(x, y)$ as $z\to -\infty$. This means taking $z\to -\infty$,
    \[
    Z^\square_2(x, y) \in \displaystyle\operatorname*{argmax}_{s\in [\alpha, y]}\bigg(\mathcal{S}(x, s)-\mathcal{A}_1(s)\bigg)\,,
    \]
    or 
    \[
    Z^\square_2(x, y) \in \displaystyle\operatorname*{argmax}_{s\in [\alpha, y]}\bigg(\mathcal{S}(x, s)-\mathcal{S}(0, s)\bigg)\,.
    \]
    Since $\alpha<Z^\square_2(x, y)$ was arbitrary we finally have the first jump times $Z^\square_2(x,y)$ are the locations of the maxima of 
    \[
    s\mapsto \mathcal{S}(x, s)-\mathcal{S}(0,s) \text{ on } (-\infty, y].
    \]
    
    By shape estimates for Airy sheet \eqref{eq: airy shape bnds}, have that as $y\to \infty$, the above maximum diverges so arguing as in the proof of the lower bound in Theorem \ref{thm: airy sheet monotone haus dim} (that is by a `mass distribution' argument) we obtain the set of local variation of 
    \[
    s\mapsto \max_{0\infty< r\le s}\mathcal{S}(x, r)-\mathcal{S}(0,r) = \mathcal{S}(x, s)-\mathcal{S}(0,s)
    \]
    has Hausdorff dimension $1/2$. Since on this set, we have that the argmax must be unique, we deduce the set $ \{Z^\square_2(x, y): y\in \R\}$ must contain this set of local variation, which has Hausdorff dimension at least $1/2$.

    We have thus established that the Hausdorff dimension of first jump times of of geodesics in $\mathcal{T}^x_\uparrow$ is $1/2$ simultaneously for all $x>0$. 
    
    We now compute the full Hausdorff dimension of $\mathcal{T}^x_\uparrow$. To do this we have
    \[
    \mathrm{dim}_{\mathscr{H}}(\mathcal{T}^x_\uparrow) = \sup_{\ell\ge 2}\mathrm{dim}_{\mathscr{H}}(\mathcal{T}^x_\uparrow\cap \R\times [\ell, \ell-1])\,.
    \]
    We can drop the endpoints $\ell, \ell-1$ as they do not contribute to the Hausdorff dimension (they only account for horizontal segments of geodesics which take discrete values). 
    \[
    \mathrm{dim}_{\mathscr{H}}(\mathcal{T}^x_\uparrow) = \sup_{\ell\ge 2}\mathrm{dim}_{\mathscr{H}}(\mathcal{T}^x_\uparrow\cap \R\times (\ell, \ell-1))\,.
    \]
    We can now express $\mathcal{T}^x_\uparrow\cap \R\times (\ell, \ell-1) = A_\ell\times (\ell, \ell-1)$, where $A_\ell$ is the set of jump times $Z^\square_\ell(x, y)$, $y\in \R$. Note by the above we have computed 
    \[
    \mathrm{dim}_{\mathscr{H}}(A_\ell)\le \frac{1}{2}\text{ and } \mathrm{dim}_{\mathscr{H}}(A_2) = \frac{1}{2}\,.
    \]
    Thus, by standard arguments, see \cite{Hatano1971Notes} we can estimate
    \begin{align*}
    &\mathrm{dim}_{\mathscr{H}}(A_\ell)+ \mathrm{dim}_{\mathscr{H}}((\ell, \ell-1))(= \frac{3}{2} \text{ when }\ell = 2)\le \mathrm{dim}_{\mathscr{H}}(\mathcal{T}^x_\uparrow\cap \R\times (\ell, \ell-1))\\
    \le &\min\{1+ \mathrm{dim}_{\mathscr{H}}(A_\ell), 1+ \mathrm{dim}_{\mathscr{H}}((\ell, \ell-1))\} (= \frac{3}{2} \text{ when }\ell = 2)\,.
    \end{align*}
    
    For the last part, this follows from the fact that two geodesics $\pi^\square[x\to (y_1, 1)]$ and $\pi^\square[x\to (y_2, 1)]$ with $y_1< y_2$ $\square\in \{L, R\}$ are either entirely disjoint of coalesce for an interval and then become separate. If there were any `loops', the the uniqueness of leftmost/rightmost geodesics (item $(1)$ in Lemma \ref{lemma: semi-inf geod properties}) would be violated.
\end{proof}

We can now consider the entire network of semi-infinite geodesics in the Airy line ensemble and study its geometric properties. In particular, we compute its Hausdorff dimension.

\begin{theorem}\label{thm: geodesic network haus dim}
    Almost surely, the Hausdorff dimension of the \emph{full semi-infinite geodesic network} \emph{excluding} extremal ` $0$' and `$\infty$'-directed geodesics
    \[
    \mathcal{T}: = \bigcup_{(x,y)\in [0, \infty)\times \R}\mathcal{T}^x_\uparrow\cup \bigg(\mathcal{T}^y_\downarrow\setminus \big(\{y\}\times [1, \infty)\cup (-\infty, y]\times \{1\}\big) \bigg)  \text{ is } \frac{3}{2}\,.
    \]
\end{theorem}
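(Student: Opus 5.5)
The lower bound is immediate. For any fixed $x>0$ the collection $\mathcal{T}^x_\uparrow$ is contained in $\mathcal{T}$, and by Theorem~\ref{thm: geodesic gasket} it has Hausdorff dimension $3/2$ on an event of full probability. Monotonicity of Hausdorff dimension then gives $\dim_{\mathscr{H}}\mathcal{T}\ge 3/2$. The work is therefore in the upper bound.

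The first step is to reduce the downward trees to the upward collections. Every zig-zag in $\mathcal{T}^y_\downarrow$ other than the excluded extremal ones is the zig-zag of some $\pi^\square[x\to(y,1)]$ with $x>0$, and it therefore already belongs to $\mathcal{T}^x_\uparrow$. Hence
\[
\mathcal{T}=\bigcup_{x\ge 0}\mathcal{T}^x_\uparrow .
\]
Here $\mathcal{T}^0_\uparrow$ consists of the trivial path together with limits of the rightmost paths as the direction decreases to $0$, which I would treat by the same argument. The horizontal segments lie in $\R\times\N$, which has dimension $1$. As in the proofs of Theorems~\ref{thm: geodesic tree} and \ref{thm: geodesic gasket}, it therefore suffices to show the following. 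For every $\ell\ge 2$ and every $n\in\N$, the set
\[
A_{\ell,n}:=\{Z^\square_\ell(x,y): x\ge 0,\ y\in\R,\ Z^\square_\ell(x,y)\in[-n,n]\}
\]
of all vertical-segment locations at level $\ell$ has Hausdorff dimension at most $1/2$. Granting this, $\mathcal{T}\cap(\R\times(\ell-1,\ell))\subseteq A_\ell\times(\ell-1,\ell)$, which has dimension at most $1+\dim_{\mathscr{H}}A_\ell\le 3/2$. Taking a countable union over $\ell$ and $n$ finishes the upper bound.

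To bound $\dim_{\mathscr{H}}A_{\ell,n}$, I would reuse the argmax description from the upper bound in Theorem~\ref{thm: geodesic gasket}, but uniformly over directions. Fix a rational $r<-n$. Any geodesic in $\mathcal{T}$ whose jump from level $\ell$ to $\ell-1$ lies in $[-n,n]$, and whose endpoint is at $y\ge -n$, restricts on $[r,y]$ to a leftmost or rightmost finite geodesic from $(r,m)$ to $(y,1)$ for some $m\in\N$. By Lemma~\ref{lemma: geod location brown env lpp}, its jump time then lies in
\[
\operatorname*{argmax}_{r\le s\le y}\Big(\mathcal{A}[(r,m)\to(s,\ell)]+\mathcal{A}[(s,\ell-1)\to(y,1)]\Big).
\]
This condition no longer involves $x$; the direction is absorbed into the countable parameter $m$. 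I would then decompose according to the level $j\le \ell-1$ at which the geodesic sits at time $s$ after the jump, and argue inductively on the levels as in Theorem~\ref{thm: fixed dir coal}. The claim to reach is that each such $s$ is either a record time of the process
\[
H_m(s):=\mathcal{A}[(r,m)\to(s,\ell)]-\mathcal{A}[(r,m)\to(s,\ell-1)]
\]
or belongs to a countable exceptional set.

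Next I would use the Brownian Gibbs property to transfer to independent Brownian motions on $[r,n+1]$, and then the Dyson and melon identities. Under these, the gap process $\max_{r\le u\le s}H_m(u)-H_m(s)$ becomes absolutely continuous with respect to reflected Brownian motion. By L\'evy's theorem its zero set has dimension at most $1/2$, and the same then holds for the record set. A countable union over $m$, $r$, $\ell$ and $n$ preserves the bound.

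I expect the main obstacle to be justifying the reduction from all jump locations, simultaneously over all $(x,y)$, to record times of finitely many fixed processes. In particular, I must check that a jump at level $\ell$ not landing directly on level $\ell-1$'s terminal segment is still a record point of $H_m$, possibly after modifying the countable exceptional set. If this direct approach fails, I would fall back on the induction over depth used in Theorem~\ref{thm: fixed dir coal}, controlling intermediate levels via the concatenation structure.
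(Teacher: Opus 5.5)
Your proposal follows the paper's proof: the lower bound is taken from Theorem~\ref{thm: geodesic gasket}, and the upper bound reruns that theorem's record-time argument (argmax characterisation of jump times, Brownian Gibbs transfer, the melon/Dyson identity and L\'evy's theorem) on bounded strips, with the direction absorbed into the countable starting level exactly as you observe. The step you flag as the main obstacle needs no exceptional set or induction: truncating the geodesic at the points $(Z_\ell,\ell)$ and $(Z_\ell,\ell-1)$ of its zig-zag gives two geodesics from $(r,m)$ of equal length, so $H_m(Z_\ell)=0$; since $H_m\le 0$ is exactly minus the gap process of $s\mapsto\mathcal{A}[(r,m)\to(s,\ell)]-\mathcal{A}_{\ell-1}(s)$, this says $Z_\ell$ is a record time of that process, which is the paper's reduction to ``last jump times of geodesics ending at level $\ell-1$''.
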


\begin{proof}
    The lower bound of $3/2$ is immediate from Theorem \ref{thm: geodesic gasket}. 

    To prove the upper bound, one can proceed in the same way as for the proof of the upper bound in Theorem \ref{thm: geodesic gasket} to estimate the Hausdorff dimensions of 
    \[
    \bigcup_{(x,y)\in [0, \infty)\times \R}\mathcal{T}^x_\uparrow\cup \mathcal{T}^y_\downarrow \cap [n, n+1]\times [1, \infty)\,, n\in \Z\,.
    \]
\end{proof}

\section{Appendix}

\section{A Polish space for the embedded downward geodesic trees}\label{sec: polish meas}

In this section we briefly describe a natural topology on the space of closed subsets of the plane generated by a complete metric, allowing us to treat downward collections of geodesics as bona fide random objects. We start with a definition.
\begin{definition}
Let $\F(\R^2)$ be the family of all closed subsets of $\R^2$, including the empty set. The topology generated by
\[
 \{F:F\cap U\ne\varnothing\}\quad(U\subseteq\R^2\text{ open})\,,
 \qquad
 \{F:F\cap K=\varnothing\}\quad(K\subseteq\R^2\text{ compact})\,,
\]
is called the Fell topology.
\end{definition}

We also collect the fact that the Fell topology is generated by a complete metric, meaning it can be turned into a Polish space, making $\mathcal{F}(\R^2)$ amenable to probabilistic methods.

\begin{proposition}\label{prop: fell}
The Fell topology is generated by a complete metric $d_F$. Hence, $(\mathcal{F}(\R^2), d_F)$ is a Polish space. Moreover, its Borel sigma-field is generated by the open-set hitting events $\{F:F\cap U\ne\varnothing\}$.
\end{proposition}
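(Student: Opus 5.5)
The plan is to write down an explicit metric and verify its properties using standard facts about the Fell topology on a locally compact, second countable Hausdorff space. The metric comes from the one-point compactification: embed $\R^2$ into the sphere $S^2 = \R^2\cup\{\infty\}$ via inverse stereographic projection, with chordal metric $\rho$. To each $F\in\F(\R^2)$ associate the compact set $\widehat F := F\cup\{\infty\}\subseteq S^2$, and define
\[
d_F(F,G) := d_{\mathrm{Haus}}^{\rho}(\widehat F,\widehat G)\,,
\]
the Hausdorff distance in $(S^2,\rho)$. This is well defined, since $\widehat F$ is nonempty and compact even when $F=\varnothing$. It is also injective, since $F=\widehat F\cap\R^2$.

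\emph{Step 1: $d_F$ generates the Fell topology.} Fix $F$ and compare neighbourhood bases. Given $\delta>0$, first shrink it so that the $\delta$-neighbourhood of $\infty$ is contained in $\{|z|>R\}$. Then $d_F(F,G)<\delta$ amounts to two conditions. First, $G$ misses the compact set $K_\delta=\{z:|z|\le R,\ \rho(z,\widehat F)\ge\delta\}$. Second, $G$ hits each of finitely many small open balls covering $F\cap\{|z|\le R\}$. Both are Fell-open conditions. Conversely, take a basic Fell neighbourhood $\{G: G\cap K=\varnothing,\ G\cap U_i\neq\varnothing,\ i\le k\}$ of $F$, and choose points $z_i\in F\cap U_i$. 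Let $\delta$ be smaller than $\rho(K,\widehat F)$, which is positive because $K$ is compact and disjoint from $\widehat F$. Shrink $\delta$ further so that $\rho$-balls of radius $\delta$ around each $z_i$ lie inside $U_i$. Then $d_F(F,G)<\delta$ places $G$ in the basic neighbourhood.

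\emph{Step 2: completeness.} The Hausdorff metric on the nonempty compact subsets of a compact metric space is compact, and hence complete. The image $\{\widehat F\}$ is exactly the set of compact $C\subseteq S^2$ with $\infty\in C$. This set is closed under Hausdorff limits, because the condition $\rho(\infty,C)=0$ passes to the limit. A closed subset of a compact space is compact, so $(\F(\R^2),d_F)$ is compact, and in particular complete and separable. This makes it Polish.

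\emph{Step 3: the Borel $\sigma$-field.} Let $\mathcal G$ be the $\sigma$-field generated by the hitting events $\{F:F\cap U\neq\varnothing\}$. These events are open, so $\mathcal G\subseteq\mathcal B$. For the reverse inclusion, write a compact $K$ as $K=\bigcap_n U_n$, where $U_n$ is the open $1/n$-neighbourhood of $K$. Then
\[
\{F:F\cap K\neq\varnothing\}=\bigcap_n\{F:F\cap U_n\neq\varnothing\}.
\]
The nontrivial inclusion $\supseteq$ holds because if $F$ contains points $z_n$ with $\mathrm{dist}(z_n,K)<1/n$, a subsequence converges to a point of $K$, and that point lies in $F$ since $F$ is closed. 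Hence the missing events $\{F: F\cap K=\varnothing\}$ lie in $\mathcal G$. The topology is second countable, having a countable base built from finitely many rational open balls and finite unions of closed rational balls. So every open set is a countable union of finite intersections of generators, and therefore $\mathcal B\subseteq\mathcal G$.

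The main obstacle is Step 1, in particular the handling of unbounded and empty sets near $\infty$. This is the reason for adjoining $\infty$ to every set. If one tried to use the Hausdorff distance in $\R^2$ directly, the resulting topology would not be Fell.
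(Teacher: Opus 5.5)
Your argument is correct and uses exactly the paper's construction ($F^*=\sigma(F)\cup\{N\}$ with the Hausdorff distance on $S^2$). The paper only states the properties and defers to Beer, Ch.~5, so you are supplying the verification that the paper omits.

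There is one small slip in Step 1: the inclusion you wrote is backwards. What the argument needs is $\{|z|>R\}\subseteq B_\rho(\infty,\delta)$, which you get by choosing $R$ large depending on $\delta$. With the inclusion as you wrote it, points with $|z|>R$ that are far from $\widehat F$ would escape both of your conditions.
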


An explicit construction of such a complete metric $d_F$ is available, by identifying the plane with its one point compactification, namely the (unit) sphere $S^2$ in $\R^3$. Let $N=(0,0,1)$, and setting
\[
 \sigma(u,v)=\frac{(2u,2v,u^2+v^2-1)}{1+u^2+v^2}.
\]
Thus the map $\sigma$ identifies the one-point compactification of $\R^2$ with $S^2$, with `infinity' mapped to $N$. In fact it is a homeomorphism when both spaces are equipped with their natural topologies. For $F\in\F(\R^2)$ put
\[
 F^*=\sigma(F)\cup\{N\},\qquad
 d_F(F,G)=d_H^{S^2}(F^*,G^*),
\]
where $d_H^{S^2}$ denotes the Hausdorff distance on $S^2$, viewed as a metric space with metric inherited from the Euclidean metric on $\R^3$. Note this constructions is analogous to the hypograph topology considered in \cite{VW25}.

See \cite[Chapter 5]{beer1993topologies} for more details about this topology.

Let $(\Omega,\mathscr F,\mathbb P)$ be some complete probability space carrying the Airy line ensemble as a measurable $\mathcal{C}^\N$-valued variable. Fix a deterministic $y\in\R$. 

\begin{lemma}[Measurable jump times]\label{lemma: jump meas}
For all $j\ge 2$, $q\in (\Q\cap(0, \infty))$ and fixed $y\in \R$, the semi-infinite geodesic jump times $Z_j(q,y)$ are random variables.
\end{lemma}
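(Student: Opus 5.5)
We will prove that for fixed $j\ge 2$, rational $q>0$ and $y\in\R$, the map $\omega\mapsto Z_j(q,y)(\omega)$ is $\mathscr F$-measurable. The idea is to write it as an almost sure limit of measurable functionals of the Airy line ensemble; completeness of $\mathscr F$ then absorbs the exceptional null sets. Throughout we write $q_k:=-\sqrt{k/(2q)}$.

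\emph{Finite depth.} Fix $k\ge j$. On the event of geodesic uniqueness for the fixed endpoints $(q_k,k)$ and $(y,1)$ (see the remark after Lemma \ref{lemma: geod location brown env lpp}), the jump time $Z^{(k)}_j$ of $\pi[(q_k,k)\to(y,1)]$ from level $j$ to $j-1$ is, by Lemma \ref{lemma: geod location brown env lpp}, the unique maximiser over $s\in[q_k,y]$ of
\[
G_k(s):=\mathcal A[(q_k,k)\to(s,j)]+\mathcal A[(s,j-1)\to(y,1)].
\]
Each last passage value is a supremum of length functionals over paths. By continuity of the environment it may be taken over paths with rational jump times, so it is a countable supremum of continuous functions of finitely many coordinates of $\mathcal A$. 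Hence $(s,\omega)\mapsto G_k(s)$ is measurable in $\omega$ and continuous in $s$, that is, $G_k$ is a random element of $\mathcal C[q_k,y]$.

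\emph{The argmax is measurable.} The argmax functional is measurable on the set of continuous functions with a unique maximiser. Concretely, for a rational $t$ we have
\[
\{Z^{(k)}_j\le t\}=\Big\{\sup_{s\in[q_k,t]\cap\Q}G_k(s)\ge \sup_{s\in[q_k,y]\cap\Q}G_k(s)\Big\}
\]
up to the null set where uniqueness fails. The right-hand event is measurable, so $Z^{(k)}_j$ is a random variable.

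\emph{Passing to the limit.} By the proof of Lemma \ref{lemma: semi-inf geod properties} (item (1)) and Lemma \ref{lemma: geodesic approx}, $\pi[(q_k,k)\to(y,1)]$ converges almost surely to $\pi[q\to(y,1)]$ in the topology of jump times. Indeed, for each fixed level $j$ the jump times eventually stabilise. Hence $Z_j(q,y)=\lim_k Z^{(k)}_j$ almost surely, and we may set $Z_j(q,y):=\limsup_k Z^{(k)}_j$ everywhere. A limsup of random variables is measurable, and it agrees with $Z_j(q,y)$ off a null set, which lies in $\mathscr F$ by completeness.

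\emph{Main obstacle.} The delicate step is the rigorous measurability of $\omega\mapsto\mathcal A[(a,k)\to(b,m)]$ when the endpoint $a=q_k$ is irrational. One route is to restrict the supremum to jump times in $(\Q\cup\{q_k\})\cap[q_k,y]$; a second is to use the mesh approximation $\Lambda_{k,D_n}$ of Lemma \ref{lemma: finite-depth-tail}, which converges monotonically to the last passage value. In either case the last passage value is a countable supremum of continuous evaluations, and continuity of the Airy lines guarantees that this countable supremum equals the full one.
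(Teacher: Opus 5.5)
Your proposal is correct and follows essentially the same route as the paper: approximate $Z_j(q,y)$ almost surely by the jump times of the finite-horizon geodesics $\pi[(q_k,k)\to(y,1)]$, identify each of these as the almost surely unique argmax from Lemma \ref{lemma: geod location brown env lpp}, and use completeness of $\mathscr F$ to absorb the null sets. You also supply details the paper leaves implicit, namely the countable-supremum measurability of last passage values with an irrational endpoint and the explicit event $\{Z^{(k)}_j\le t\}$ showing the argmax is measurable.
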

\begin{proof}
Fix $j\ge 2, q\in(\Q\cap(0, \infty))$. Recall from Lemma \ref{lemma: semi-inf geod properties} the existence of the almost surely unique $q$-directed semi-infinite geodesic $\pi[q\to(y,1)]$ with right endpoint $(y, 1)$. Lemma \ref{lemma: semi-inf geod properties} also implies that the semi-infinite jump times $Z_j(q,y)$ are almost sure limits of the finite-horizon jump times from level $j$ to $j-1$ of geodesics from $(-\sqrt{q/(2k)}, k)$ to $(y, 1)$ as $k\to \infty$. It thus suffices to that the jump times of geodesics over the Airy line ensemble from $(a, k)$ to $(b, 1)$ for $a < b$ finite and $k\ge 1$, are random variables. 

Fix $a< b$ finite and $k\ge 1$. Since $(\Omega,\mathscr F,\mathbb P)$ is assumed complete, by the remark after \ref{lemma: geod location brown env lpp} there exists an event $\Omega_0\in \mathscr{F}$ of full probability such that there exists a unique geodesic from $(a, k)$ to $(b, 1)$. By Lemma \ref{lemma: geod location brown env lpp}, the jump times are the now unique locations of maximisers over last passage values over the Airy line ensemble, which are random variables, concluding the proof.
\end{proof}

Recall from \eqref{eq: geod down tree}, the construction of the geodesic tree
\[
\mathcal{T}^y_\downarrow: = \bigcup_{x> 0}\bigcup_{\square \in \{L, R\}}\mathsf{ZigZag}(\pi^{\square}[x\to (y, 1)])\cup\{y\}\times [1, \infty)\cup (-\infty, y]\times \{1\}\subseteq (-\infty, y]\times[1, \infty)\,.
\]
We will now consider a 'dense subtree' thereof, namely
\[
\mathcal{T}^{y, \Q}_\downarrow: = \bigcup_{q \in (\Q\cap(0, \infty))}\mathsf{ZigZag}(\pi[q\to (y, 1)])\cup\{y\}\times [1, \infty)\cup (-\infty, y]\times \{1\}\subseteq (-\infty, y]\times[1, \infty)\,,
\]
and its closure in the Euclidean topology of $\R^2$,
\[
\widehat{\mathcal{T}}^{y, \Q}_\downarrow: = cl_{\R^2}\left(\mathcal{T}^{y, \Q}_\downarrow\right)\,.
\]
We will prove that $\widehat{\mathcal{T}}^{y, \Q}_\downarrow$ is a $(\mathcal{F}(\R^2), \mathcal{B}(d_F))$-valued random variable that up to null sets coincides with $\mathcal{T}^y_\downarrow$.

\begin{theorem}[Measurability of the downward geodesic tree]\label{thm: meas}
On the complete probability space $(\Omega, \mathscr{F}, \PP)$, the map
\[
 \omega\longmapsto \mathcal{T}^{y}_\downarrow(\omega)
\]
is measurable from $(\Omega,\mathscr F)$ to $(\F(\R^2),\mathcal B(d_F))$.
\end{theorem}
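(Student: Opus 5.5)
The plan is to show two things: that $\widehat{\mathcal{T}}^{y,\Q}_\downarrow$ is a measurable $\F(\R^2)$-valued map, and that it coincides with $\mathcal{T}^y_\downarrow$ on an event of full probability. Completeness of $(\Omega,\mathscr F,\PP)$ then transfers measurability to $\mathcal{T}^y_\downarrow$ itself.

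\emph{Measurability of the countable approximation.} By Proposition~\ref{prop: fell}, the Borel $\sigma$-field of $(\F(\R^2),d_F)$ is generated by the hitting events $\{F: F\cap U\neq\varnothing\}$, $U$ open. It suffices to take $U$ ranging over open rectangles with rational corners, since every open set is a countable union of these. For a closed set $F$ and open $U$, we have $\mathrm{cl}(A)\cap U\neq\varnothing$ if and only if $A\cap U\neq\varnothing$. Hence
\[
\{\widehat{\mathcal{T}}^{y,\Q}_\downarrow\cap U\neq\varnothing\}=\bigcup_{q\in\Q\cap(0,\infty)}\{\mathsf{ZigZag}(\pi[q\to(y,1)])\cap U\neq\varnothing\}\,\cup\,\{\text{extremal rays hit }U\}.
\]
The last event is deterministic. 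Each zig-zag is determined by its countable sequence of jump times $(Z_j(q,y))_{j\ge2}$, and these are random variables by Lemma~\ref{lemma: jump meas}. The zig-zag meets a rational rectangle $(a,b)\times(c,d)$ exactly when either
\begin{itemize}
\item[(i)] some horizontal segment $[Z_{j+1},Z_j]\times\{j\}$ with $j\in(c,d)$ meets $(a,b)$, or
\item[(ii)] some vertical segment $\{Z_j\}\times[j-1,j]$ has $Z_j\in(a,b)$ and $[j-1,j]\cap(c,d)\neq\varnothing$.
\end{itemize}
Both conditions are countable Boolean combinations of events $\{Z_j(q,y)<r\}$, $r\in\Q$, so each hitting event is measurable. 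This step is routine once the jump-time description is written out carefully, including the convention $Z_1=y$.

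\emph{Identification with $\mathcal{T}^y_\downarrow$.} Work on the full-probability event of Lemma~\ref{lemma: geodesic approx} and Lemma~\ref{lemma: semi-inf geod properties}. First, $\mathcal{T}^{y,\Q}_\downarrow\subseteq\mathcal{T}^y_\downarrow$. Next, $\mathcal{T}^y_\downarrow\subseteq\widehat{\mathcal{T}}^{y,\Q}_\downarrow$: by Lemma~\ref{lemma: geodesic approx}, for each $x>0$ and $\square\in\{L,R\}$, and each $t\le y$, the restriction of $\pi^\square[x\to(y,1)]$ to $[t,y]$ agrees with $\pi[q_n\to(y,1)]$ for rational $q_n\to x$ and $n$ large. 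The zig-zag of $\pi^\square[x\to(y,1)]$ is therefore contained in $\mathcal{T}^{y,\Q}_\downarrow$. It remains to show $\mathcal{T}^y_\downarrow$ is closed, so that the closure adds nothing.

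\emph{Closedness, the main obstacle.} Take $(t_n,z_n)\in\mathcal{T}^y_\downarrow$ converging to $(t,z)$. If infinitely many of these points lie on the extremal rays, the limit does too. Otherwise they lie on zig-zags of $\pi^{\square_n}[x_n\to(y,1)]$. Pass to a monotone subsequence $x_n\to x\in[0,\infty]$.
\begin{itemize}
\item If $x\in(0,\infty)$, the jump times restricted to a bounded window $[t-1,y]$ stabilise to those of $\pi^L[x\to(y,1)]$ or $\pi^R[x\to(y,1)]$, by monotonicity and the stabilisation part of Lemma~\ref{lemma: geodesic approx}. So $(t,z)$ lies on one of these zig-zags.
\item If $x=\infty$, the proof of Proposition~\ref{prop: non-degen asymptotic} gives $Z_\ell(x_n,y)\to y$. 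Then with $z$ bounded (since $z_n\to z$), the limit lies on $\{y\}\times[1,\infty)$.
\item If $x=0$, the zig-zags converge to $\pi^R[0\to(y,1)]$, which belongs to the tree by its definition with $x=0$ via Lemma~\ref{lemma: semi-inf geod properties}(2). Alternatively, the limit lies on the ray $(-\infty,y]\times\{1\}$.
\end{itemize}
The delicate point is this boundary behaviour as $x\to0$ and $x\to\infty$. If $\pi^R[0\to(y,1)]$ is not explicitly included in $\mathcal{T}^y_\downarrow$, I would verify it is a limit of rational-direction zig-zags, hence contained in $\widehat{\mathcal{T}}^{y,\Q}_\downarrow$. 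Equality then still holds up to adjoining this path, which is itself measurable by the same jump-time argument.

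Combining the steps, $\mathcal{T}^y_\downarrow=\widehat{\mathcal{T}}^{y,\Q}_\downarrow$ almost surely, and completeness gives the measurability claimed in the theorem.
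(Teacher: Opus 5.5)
Your proposal has the same architecture as the paper's proof:
\begin{itemize}
\item measurability of $\widehat{\mathcal{T}}^{y,\Q}_\downarrow$ via hitting events of countably many zig-zags, built from the jump times of Lemma~\ref{lemma: jump meas};
\item almost sure identification of this closure with $\mathcal{T}^y_\downarrow$;
\item transfer of measurability by completeness.
\end{itemize}

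The genuine difference is how closedness of $\mathcal{T}^y_\downarrow$ is obtained. The paper imports it from Theorem~\ref{thm: geodesic tree}, via the wedge decomposition. You prove it directly: pass to a monotone subsequence of directions $x_n\to x\in[0,\infty]$, sandwich $\pi^{\square_n}[x_n\to(y,1)]$ between rational-direction geodesics (this is what your ``by monotonicity'' must mean), and apply the stabilisation part of Lemma~\ref{lemma: geodesic approx}. For $x=\infty$ you use $Z_\ell(x,y)\to y$ from the proof of Proposition~\ref{prop: non-degen asymptotic}.

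What each route buys:
\begin{itemize}
\item Your route is self-contained, and it orients the two inclusions cleanly. The approximation lemma in fact gives $\mathcal{T}^y_\downarrow\subseteq\mathcal{T}^{y,\Q}_\downarrow$ with no closure at all. So on the good event $\mathcal{T}^y_\downarrow=\mathcal{T}^{y,\Q}_\downarrow$, and closedness is exactly what places it in $\F(\R^2)$. The paper's text for this inclusion instead argues about limit points of rational zig-zags, which actually proves the other direction.
\item The paper's route is shorter, but it inherits Theorem~\ref{thm: geodesic tree}'s own hedge about the $0$-directed path.
\end{itemize}

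On that point, you should drop your fallback ``equality up to adjoining $\pi^R[0\to(y,1)]$''. It would not prove the statement, because a non-closed $\mathcal{T}^y_\downarrow$ is not an element of $\F(\R^2)$. It is also unnecessary. Lemma~\ref{lemma: geodesic approx} is stated for $x\ge 0$, so stabilisation along rational $w_n\searrow 0$ puts every point of $\mathsf{ZigZag}(\pi^R[0\to(y,1)])$ on some $\mathsf{ZigZag}(\pi[w_n\to(y,1)])$. That point therefore already lies in $\mathcal{T}^{y,\Q}_\downarrow\subseteq\mathcal{T}^y_\downarrow$. In this $x=0$ case, sandwich from below by rational $q_n\searrow 0$, not by $\pi^L[0\to(y,1)]\equiv 1$.
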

\begin{proof}

Fix $y\in \R$. We first show the closure of the dense subtree of $\mathcal{T}^y_\downarrow$, $\widehat{\mathcal{T}}^{y, \Q}_\downarrow$ is measurable from $(\Omega,\mathscr F)$ to $(\F(\R^2),\mathcal B(d_F))$.

Observe that for every open $U\subseteq\R^2$ and $T\subseteq \R^2$, the event $T$ hits $U$ is the same as the event $cl_{\R^2}(T)$ hits $U$. Hence
\begin{align}\label{eq: hit}
 \{\widehat{\mathcal{T}}^{y, \Q}_\downarrow\cap U\ne\varnothing\}=\{\mathcal{T}^{y, \Q}_\downarrow\cap U\ne\varnothing\}
 &=\{[1, \infty)\cup (-\infty, y]\times \{1\}\cap U\ne\varnothing\}\nonumber\\
 &\cup\bigcup_{q\in(\Q\cap(0, \infty))}\{\mathsf{ZigZag}(\pi[q\to (y, 1)])\cap U\ne\varnothing\}\,.
\end{align}
By the measurability of the jump times of the geodesic $s\pi[q\to (y, 1)]$, Lemma \ref{lemma: jump meas}, every event on the right is measurable. Thus by Proposition, \ref{prop: fell}, $\widehat{\mathcal{T}}^{y, \Q}_\downarrow$ is measurable from $(\Omega,\mathscr F)$ to $(\F(\R^2),\mathcal B(d_F))$.

We now show that $\widehat{\mathcal{T}}^{y, \Q}_\downarrow$ and $\mathcal{T}^{y}_\downarrow$ coincide on an event of full probability. Henceforth, we will work on the event of full probability which guarantees the existence and uniqueness of semi-infinite geodesics in the Airy line with rational asymptotic directions and right endpoint $y$. 

Theorem \ref{thm: geodesic tree} gives $\mathcal{T}^{y}_\downarrow$ is a closed subset of $\R^2$. This means in particular we have the inclusion $\widehat{\mathcal{T}}^{y, \Q}_\downarrow\subseteq \mathcal{T}^{y}_\downarrow$, since $\mathcal{T}^{y}_\downarrow$ contains all geodesic with rational asymptotic directions (as well as the two extremal, that is zero and $\infty$-directed geodesics). To prove the inclusion $\mathcal{T}^{y}_\downarrow\subseteq  \widehat{\mathcal{T}}^{y, \Q}_\downarrow$, observe that by the approximation lemma, Lemma \ref{lemma: geodesic approx}, a point in the closure of $\mathcal{T}^{y, \Q}_\downarrow$ is either in $\{[1, \infty)\cup (-\infty, y]\times \{1\}$, or is a limit of points on the zig-zags of geodesic with rational asymptotic directions. In the latter case, this means the aforementioned points lie up to some fixed depth. By passing to a monotone subsequence of such asymptotic directions, the above lemma implies that the point lies on some $\pi^\square[x\to (y, 1)]$, $x > 0$, concluding the proof of equality thus establishing that on a set of full probability (that is the set of geodesic uniqueness in the Airy line ensemble with rational endpoints also including $y\in \R$) we have, $\mathcal{T}^{y}_\downarrow = \widehat{\mathcal{T}}^{y, \Q}_\downarrow$ is measurable from $(\Omega,\mathscr F)$ to $(\F(\R^2),\mathcal B(d_F))$.
\end{proof}

\color{black}

\bibliographystyle{alpha}
\bibliography{refs}

@article{DOV,
author = {Duncan {D}auvergne and {J}anosch {O}rtmann and {B}{\'a}lint {V}ir{\'a}g},
title = {{The directed landscape}},
volume = {229},
journal = {Acta {M}athematica},
number = {2},
publisher = {Institut {M}ittag-{L}effler},
pages = {201 -- 285},
year = {2022},
doi = {10.4310/ACTA.2022.v229.n2.a1},
URL = {https://doi.org/10.4310/ACTA.2022.v229.n2.a1}
}

@article{sarkar2021brownian,
  author = {Sourav Sarkar and B{\'a}lint Vir{\'a}g},
title = {{{B}rownian absolute continuity of the {{KPZ}} fixed point with arbitrary initial condition}},
volume = {49},
journal = {The Annals of Probability},
number = {4},
publisher = {Institute of Mathematical Statistics},
pages = {1718 -- 1737},
year = {2021},
doi = {10.1214/20-AOP1491},
URL = {https://doi.org/10.1214/20-AOP1491}
}

@article{kardar1986dynamic,
  title={Dynamic scaling of growing interfaces},
  author={Kardar, Mehran and Parisi, Giorgio and Zhang, Yi-Cheng},
  journal={Physical Review Letters},
  volume={56},
  number={9},
  pages={889},
  year={1986},
  publisher={APS}
}

@article{corwin2014brownian,
  title={{B}rownian {G}ibbs property for {A}iry line ensembles},
  author={Corwin, Ivan and Hammond, Alan},
  journal={Inventiones Mathematicae},
  volume={195},
  pages={441--508},
  year={2014},
  publisher={Springer}
}

@inproceedings{hammond2019patchwork,
  title={A patchwork quilt sewn from {B}rownian fabric: regularity of polymer weight profiles in {B}rownian last passage percolation},
  author={Hammond, Alan},
  booktitle={Forum of Mathematics, Pi},
  volume={7},
  pages={e2},
  year={2019},
  organization={Cambridge University Press}
}

@misc{basu2018coalescencegeodesicsexactlysolvable,
      title={Coalescence of geodesics in exactly solvable models of last passage percolation}, 
      author={Riddhipratim Basu and Sourav Sarkar and Allan Sly},
      year={2018},
      eprint={1704.05219},
      archivePrefix={Preprint, available at arXiv},
      primaryClass={math.PR},
      howpublished = {Preprint, available at arXiv:1704.05219},
      url={https://arxiv.org/abs/1704.05219}, 
}

@article{Dauvergne_2021,
   title={Bulk properties of the {A}iry line ensemble},
   volume={49},
   ISSN={0091-1798},
   url={http://dx.doi.org/10.1214/20-AOP1492},
   DOI={10.1214/20-aop1492},
   number={4},
   journal={The Annals of Probability},
   publisher={Institute of Mathematical Statistics},
   author={Dauvergne, Duncan and Virág, Bálint},
   year={2021},
   month=may }

@article{o2002representation,
author = {Neil O'Connell and Marc Yor},
title = {{A representation for non-colliding random walks}},
volume = {7},
journal = {Electronic Communications in Probability},
number = {none},
publisher = {Institute of Mathematical Statistics and Bernoulli Society},
pages = {1 -- 12},
year = {2002},
doi = {10.1214/ECP.v7-1042},
URL = {https://doi.org/10.1214/ECP.v7-1042}
}

@misc{corwin2014ergodicityairylineensemble,
      title={Ergodicity of the {A}iry line ensemble}, 
      author={Ivan Corwin and Xin Sun},
      year={2014},
      eprint={1405.0464},
      archivePrefix={Preprint, available at arXiv:1405.0464},
      primaryClass={math.PR},
      url={https://arxiv.org/abs/1405.0464}, 
}

@misc{wu2025applicationsoptimaltransportdyson,
      title={Applications of optimal transport to {D}yson {B}rownian Motions and beyond}, 
      author={Xuan Wu},
      year={2025},
      howpublished={Preprint, available at arXiv:2412.17389},
}

@article{Hammond_2019,
   title={Exponents governing the rarity of disjoint polymers in {B}rownian last passage percolation},
   volume={120},
   ISSN={1460-244X},
   url={http://dx.doi.org/10.1112/plms.12292},
   DOI={10.1112/plms.12292},
   number={3},
   journal={Proceedings of the London Mathematical Society},
   publisher={Wiley},
   author={Hammond, Alan},
   year={2019},
   month=sep, pages={370–433} }

@article{ganguly2021random,
  author    = {Shirshendu Ganguly},
  title     = {Random Metric Geometries on the Plane and {K}ardar-{P}arisi-{Z}hang Universality},
  journal   = {Preprint, available at arXiv:2110.11287},
  year      = {2021}
}

@article{prahofer2002scale,
  author    = {Michael Pr{\"a}hofer and Herbert Spohn},
  title     = {Scale Invariance of the {PNG} Droplet and the {A}iry Process},
  journal   = {Journal of Statistical Physics},
  volume    = {108},
  number    = {5--6},
  pages     = {1071--1106},
  year      = {2002}
}

@article{corwin2014ergodicity,
  author    = {Ivan Corwin and Xin Sun},
  title     = {Ergodicity of the {A}iry Line Ensemble},
  journal   = {Electronic Communications in Probability},
  volume    = {19},
  pages     = {1--11},
  year      = {2014}
}

@article{Pimentel2016,
  author    = {L. P. R. Pimentel},
  title     = {Duality between coalescence times and exit points in last-passage percolation models},
  journal   = {Annals of Probability},
  volume    = {44},
  number    = {3},
  pages     = {1925--1983},
  year      = {2016}
}

@article{SeppalainenShen2020,
  author    = {T. Sepp{\"a}l{\"a}inen and X. Shen},
  title     = {Coalescence estimates for the corner growth model with exponential weights},
  journal   = {Electronic Journal of Probability},
  volume    = {25},
  number    = {85},
  pages     = {1--31},
  year      = {2020}
}

@article{Zhang2020,
  author    = {L. Zhang},
  title     = {Optimal exponent for coalescence of finite geodesics in exponential last passage percolation},
  journal   = {Electronic Communications in Probability},
  volume    = {25},
  pages     = {1--14},
  year      = {2020}
}

@article{BusaniFerrari2022,
  author    = {O. Busani and P. L. Ferrari},
  title     = {Universality of the geodesic tree in last passage percolation},
  journal   = {Annals of Probability},
  volume    = {50},
  number    = {1},
  pages     = {90--130},
  year      = {2022}
}

@article{RassoulAghaSeppalainenShen2023,
  author    = {F. Rassoul-Agha and T. Sepp{\"a}l{\"a}inen and X. Shen},
  title     = {Coalescence and total-variation distance of semi-infinite inverse-gamma polymers},
  journal   = {Preprint, available at arXiv:2305.10265},
  year      = {2023}
}

@article{GangulyHegdeZhang2023,
  author    = {S. Ganguly and M. Hegde and L. Zhang},
  title     = {{B}rownian bridge limit of path measures in the upper tail of {KPZ} models},
  journal   = {Preprint, available at arXiv:2311.12009},
  year      = {2023}
}

@inproceedings{Newman1995,
  author    = {C. M. Newman},
  title     = {A surface view of first-passage percolation},
  booktitle = {Proceedings of the International Congress of Mathematicians: August 3–11, 1994, Zürich, Switzerland},
  pages     = {1017--1023},
  publisher = {Springer},
  year      = {1995}
}

@article{DembinElboimPeled2024,
  author    = {B. Dembin and D. Elboim and R. Peled},
  title     = {Coalescence of geodesics and the {BKS} midpoint problem in planar first-passage percolation},
  journal   = {Geometric and Functional Analysis},
  volume    = {34},
  number    = {3},
  pages     = {733--797},
  year      = {2024}
}

@article{Alexander2023,
  author    = {K. S. Alexander},
  title     = {Geodesics, bigeodesics, and coalescence in first passage percolation in general dimension},
  journal   = {Electronic Journal of Probability},
  volume    = {28},
  pages     = {1--83},
  year      = {2023}
}

@article{MillerQian2023,
  author    = {J. Miller and W. Qian},
  title     = {Geodesics in the {B}rownian map: Strong confluence and geometric structure},
  journal   = {Preprint, available at arXiv:2008.02242},
  year      = {2023}
}

@article{AngelKolesnikMiermont2017,
  author    = {O. Angel and B. Kolesnik and G. Miermont},
  title     = {Stability of geodesics in the {B}rownian map},
  journal   = {Annals of Probability},
  volume    = {45},
  number    = {5},
  pages     = {3451--3479},
  year      = {2017}
}

@article{LeGall2010,
  author    = {J.-F. Le Gall},
  title     = {Geodesics in large planar maps and in the {B}rownian map},
  journal   = {Acta Mathematica},
  volume    = {205},
  number    = {2},
  pages     = {287--360},
  year      = {2010}
}

@article{Gwynne2021,
  author    = {E. Gwynne},
  title     = {Geodesic networks in Liouville quantum gravity surfaces},
  journal   = {Probability and Mathematical Physics},
  volume    = {2},
  number    = {3},
  pages     = {643--684},
  year      = {2021}
}

@article{ganguly2021local,
  title={Local and global comparisons of the Airy difference profile to Brownian local time},
  author={Ganguly, Shirshendu and Hegde, Milind},
  journal={arXiv preprint arXiv:2103.12029},
  year={2021}
}

@misc{tassopoulos2026kpzfixedpointbrownian,
      title={The {KPZ} fixed point and Brownian motion share the same null sets}, 
      author={Pantelis Tassopoulos and Sourav Sarkar},
      howpublishedf={Preprint, availabe at arXiv:2604.03358}, 
      year={2026},
}

@article{dauvergne2022disjointoptimizersdirectedlandscape,
  author        = {Dauvergne, Duncan and Zhang, Lingfu},
  title         = {Disjoint optimizers and the directed landscape},
  journal       = {Memoirs of the American Mathematical Society},
  note          = {To appear},
  eprint        = {2102.00954},
  archivePrefix = {arXiv},
  primaryClass  = {math.PR},
  year          = {2021}
}

@article{Basu2021Fractal,
  author    = {Basu, Riddhipratim and Ganguly, Shirshendu and Hammond, Alan},
  title     = {Fractal geometry of {Airy}$_{2}$ processes coupled via the {Airy} sheet},
  journal   = {The Annals of Probability},
  year      = {2021},
  volume    = {49},
  number    = {1},
  pages     = {485--505},
  publisher = {Institute of Mathematical Statistics},
  doi       = {10.1214/20-aop1444}
}

@article{Hatano1971Notes,
  author  = {Hatano, Kaoru},
  title   = {Notes on Hausdorff dimensions of {Cartesian} product sets},
  journal = {Hiroshima Mathematical Journal},
  year    = {1971},
  volume  = {1},
  number  = {1},
  pages   = {117--122},
  doi     = {10.32917/hmj/1206138139}
}

@book{morters_peres_2010, 
  place={Cambridge}, 
  title={Brownian Motion}, 
  publisher={Cambridge University Press}, 
  author={M{\"o}rters, Peter and Peres, Yuval}, 
  year={2010}
}

@article{sarkarthrehalves2022,
author = {Duncan {D}auvergne and {S}ourav {S}arkar and B{\'a}lint Vir{\'a}g},
title = {{Three-halves variation of geodesics in the directed landscape}},
volume = {50},
journal = {The Annals of Probability},
number = {5},
publisher = {Institute of Mathematical Statistics},
pages = {1947 -- 1985},
year = {2022},
doi = {10.1214/22-AOP1574},
URL = {https://doi.org/10.1214/22-AOP1574}
}

@misc{dauvergne2022scalinglimitlongestincreasing,
      title={The scaling limit of the longest increasing subsequence}, 
      author={Duncan Dauvergne and Bálint Virág},
      year={2022},
      eprint={2104.08210},
      archivePrefix={arXiv},
      primaryClass={math.PR},
      howpublished = {Preprint, available at arXiv:2104.08210}
}

@article{balazs2020non,
  author    = {Bal{\'a}zs, M{\'a}rton and Busani, Ofer and Sepp{\"a}l{\"a}inen, Timo},
  title     = {Non-existence of bi-infinite geodesics in the exponential corner growth model},
  journal   = {Forum of Mathematics, Sigma},
  volume    = {8},
  pages     = {e46},
  year      = {2020},
  publisher = {Cambridge University Press},
  doi       = {10.1017/fms.2020.31}
}

@article{cator2019attractiveness,
  author    = {Cator, Eric A. and L{\'o}pez, Sergio I. and Pimentel, Leandro P. R.},
  title     = {Attractiveness of {Brownian} queues in tandem},
  journal   = {Queueing Systems},
  volume    = {92},
  number    = {1--2},
  pages     = {25--45},
  year      = {2019},
  publisher = {Springer},
  doi       = {10.1007/s11134-019-09609-y}
}

@misc{Gangulyetal2022,
  author  = {Ganguly, Shirshendu and Zhang, Lingfu},
  title   = {Fractal geometry of the space-time difference profile in the directed landscape via construction of geodesic local times},
  journal = {arXiv},
  year    = {2022},
  howpublished = {Preprint, available at arXiv:2204.01674}
}

@book{anderson2010introduction,
  author    = {Anderson, Greg W. and Guionnet, Alice and Zeitouni, Ofer},
  title     = {An Introduction to Random Matrices},
  year      = {2010},
  publisher = {Cambridge University Press},
  series    = {Cambridge Studies in Advanced Mathematics},
  volume    = {118},
  doi       = {10.1017/CBO9780511801334},
  isbn      = {9780521194525}
}

@misc{baryshnikov2000random,
  author       = {Baryshnikov, Yuliy},
  title        = {Random Matrices and Queues in Series},
  year         = {2000},
  month        = dec,
  howpublished = {Algorithms Seminar, INRIA},
  note         = {Talk given December 11, 2000; summary by Marianne Durand},
  url          = {https://algo.inria.fr/seminars/sem00-01/baryshnikov.pdf}
}

@incollection{ledoux2007deviation,
  author    = {Ledoux, M.},
  title     = {Deviation Inequalities on Largest Eigenvalues},
  booktitle = {Geometric Aspects of Functional Analysis},
  editor    = {Milman, Vitali D. and Schechtman, Gideon},
  series    = {Lecture Notes in Mathematics},
  volume    = {1910},
  pages     = {167--219},
  year      = {2007},
  publisher = {Springer},
  address   = {Berlin, Heidelberg},
  doi       = {10.1007/978-3-540-72053-9_10},
  url       = {https://doi.org/10.1007/978-3-540-72053-9_10}
}

@article{BatesGangulyHammond2022,
  author  = {Bates, Erik and Ganguly, Shirshendu and Hammond, Alan},
  title   = {{Hausdorff} dimensions for shared endpoints of disjoint
             geodesics in the directed landscape},
  journal = {Electronic Journal of Probability},
  volume  = {27},
  year    = {2022},
  pages   = {1--44},
  note    = {Paper No. 1},
  doi     = {10.1214/21-EJP706}
}

@article{VW25,
  author        = {Vir{\'a}g, B{\'a}lint and Wu, Xuan},
  title         = {Actions in the {Airy} line ensemble and convergence to the {Airy} sheet},
  year          = {2025},
  journal       = {arXiv preprint arXiv:2511.11207},
  eprint        = {2511.11207},
  archivePrefix = {arXiv},
  primaryClass  = {math.PR},
  doi           = {10.48550/arXiv.2511.11207},
  url           = {https://arxiv.org/abs/2511.11207}
}

@book{beer1993topologies,
  author    = {Beer, Gerald},
  title     = {Topologies on Closed and Closed Convex Sets},
  series    = {Mathematics and Its Applications},
  volume    = {268},
  publisher = {Kluwer Academic Publishers},
  address   = {Dordrecht},
  year      = {1993},
  doi       = {10.1007/978-94-015-8149-3}
}

@article{gustavsson2005gaussian,
  author  = {Gustavsson, Jonas},
  title   = {Gaussian fluctuations of eigenvalues in the {GUE}},
  journal = {Annales de l'Institut Henri Poincar\'e,
             Probabilit\'es et Statistiques},
  year    = {2005},
  volume  = {41},
  number  = {2},
  pages   = {151--178},
  doi     = {10.1016/j.anihpb.2004.04.002}
}

@article{ledoux2010small,
  author  = {Ledoux, Michel and Rider, Brian},
  title   = {Small deviations for beta ensembles},
  journal = {Electronic Journal of Probability},
  year    = {2010},
  volume  = {15},
  number  = {41},
  pages   = {1319--1343},
  doi     = {10.1214/EJP.v15-798}
}

@misc{dauvergne2026downthemiddle,
  author        = {Dauvergne, Duncan and Syed, Fardin},
  title         = {Down-the-middle isometries for the {Airy} and {KPZ} sheets},
  year          = {2026},
  eprint        = {2609.26902},
  archivePrefix = {arXiv},
  url           = {https://arxiv.org/abs/2609.26902}
}

@article{konig2001eigenvalues,
  author  = {K{\"o}nig, Wolfgang and O'Connell, Neil},
  title   = {Eigenvalues of the {Laguerre} process as non-colliding
             squared {Bessel} processes},
  journal = {Electronic Communications in Probability},
  year    = {2001},
  volume  = {6},
  number  = {11},
  pages   = {107--114},
  doi     = {10.1214/ECP.v6-1040}
}

@inproceedings{newman1995surface,
  author    = {Newman, Charles M.},
  title     = {A surface view of first-passage percolation},
  booktitle = {Proceedings of the International Congress of Mathematicians,
               Vol. 1, 2 (Z{\"u}rich, 1994)},
  pages     = {1017--1023},
  publisher = {Birkh{\"a}user},
  address   = {Basel},
  year      = {1995}
}

@article{hoffman2008geodesics,
  author  = {Hoffman, Christopher},
  title   = {Geodesics in first passage percolation},
  journal = {The Annals of Applied Probability},
  volume  = {18},
  number  = {5},
  pages   = {1944--1969},
  year    = {2008}
}

@article{georgiou2017stationary,
  author  = {Georgiou, Nicos and Rassoul-Agha, Firas
             and Sepp{\"a}l{\"a}inen, Timo},
  title   = {Stationary cocycles and {Busemann} functions
             for the corner growth model},
  journal = {Probability Theory and Related Fields},
  volume  = {169},
  number  = {1-2},
  pages   = {177--222},
  year    = {2017}
}

\end{document}